\newcommand{\zerodisplayskips}{%
  \setlength{\abovedisplayskip}{1pt}%
  \setlength{\belowdisplayskip}{1pt}%
  \setlength{\abovedisplayshortskip}{1pt}%
  \setlength{\belowdisplayshortskip}{1pt}}
\appto{\normalsize}{\zerodisplayskips}
\appto{\small}{\zerodisplayskips}
\appto{\footnotesize}{\zerodisplayskips}
\newtheorem{theorem}{Theorem}[section]
\newtheorem{lemma}[theorem]{Lemma}
\newtheorem{proposition}[theorem]{Proposition}
\theoremstyle{definition}
\newtheorem{remark}[theorem]{Remark}
\numberwithin{equation}{section}
\newcommand{\1}{\mbox{1\hspace{-1mm}I}}
\numberwithin{equation}{section}
\begin{document}
\selectlanguage{american}%
\global\long\def\1{\mbox{1\hspace{-1mm}I}}%

\title{CONTROL ON HILBERT SPACES AND MEAN FIELD CONTROL: THE COMMON NOISE
CASE }
\author{Alain Bensoussan\\
International Center for Decision and Risk Analysis\\
Naveen Jindal School of Management, University of Texas at Dallas\\
\lowercase{axb046100@utdallas.edu}\\ \\
P. Jameson Graber\\
Department of Mathematics, Baylor University\\
J\lowercase{ameson}\_G\lowercase{raber@baylor.edu}\\ \\
Sheung Chi Phillip Yam\\
Department of Statistics, The Chinese University of Hong Kong\\
\lowercase{scpyam@sta.cuhk.edu.hk}}
\thanks{Alain Bensoussan gratefully acknowledges the support of the National Science Foundation through grant DMS-2204795. Jameson Graber gratefully acknowledge the support of the National Science Foundation through grant DMS-2045027. Phillip Yam acknowledges the financial support from HKGRF-14301321 with the project title 
	``General Theory for Infinite Dimensional Stochastic Control: Mean Field and Some Classical Problems'', and 
	HKGRF-14300123 with the project title 
	``Well-posedness of Some Poisson-driven Mean Field Learning Models and their Applications''. 
	He also thanks the University of Texas at Dallas for the kind invitation to be a Visiting Professor in the Naveen Jindal School of Management during his sabbatical leave. 
	The work described in this article was supported by a grant from the Germany/Hong Kong Joint Research Scheme sponsored by the Research Grants Council of Hong Kong and the German Academic Exchange Service of Germany (Reference No. G-CUHK411/23).}

\maketitle
\begin{abstract}
The objective of this paper is to provide an equivalent of the theory developed in P.~Cardaliaguet, F.~Delarue, J.M.~Lasry, P.L.~Lions \cite{CDLL},
following the approach of control on Hilbert spaces introduced by
the authors in \cite{BGY-2}. We include the common noise in this
paper, so the alternative is now complete. Since we consider a control
problem, our theory applies only to Mean field control and not to
mean field games. The assumptions are adapted to guarantee a unique
optimal control, so they insure that the cost functional is strictly
convex and coercive. 
\end{abstract}

\section{INTRODUCTION }

A mean field control problem is a control problem for a dynamic system
whose state is a probability measure on $\mathbb{R}^{n}.$ The evolution is
described by a McKean-Vlasov equation. If one does not assume that
the probability measures have densities, the natural functional space
for the state of the system is the Wasserstein space of probability
measures on $\mathbb{R}^{n}$. Since it is a metric space and not a vector space, the classical methods of control theory are difficult to apply. One way to circumvent this difficulty, is to use the lifting idea of P.L.~Lions, introduced in his lectures at College de France, see
R.~Carmona, F.~Delarue \cite{RCD} and P.~Cardialaguet et al. \cite{CDLL}
for full details. To a probability measure, one associates a random
variable whose probability is the initial probability measure. Assuming
that the random variable is square integrable, we have a convenient
Hilbert space associated to the Wasserstein space.

One way to implement
this approach in mean field control is to derive first an optimality
principle of Dynamic Progamming in the Wasserstein space, to translate
it into the Hilbert space of square-integrable random variables and
use a viscosity theory argument, to finally obtain a Bellman equation
in the Hilbert space. Viscosity theory applies indeed to Bellman equations
in Hilbert spaces. This is the approach used in Pham and Wei \cite{PHW}.
{The theory of viscosity solutions for Bellman equations on the Wasserstein space has been a subject of active research in recent years. Without giving an exhaustive list of works, we refer the reader to \cite{B,CGKPR,CHH,DS,DJS} and works cited therein. The present article deals mainly with smooth, classical solutions rather than viscosity solutions.}

Our approach in \cite{ABY}, for two of the authors, then in \cite{BGY}
is different. We reformulate the control problem in the Wasserstein
space into a control problem in the Hilbert space and solve it up
to the Bellman equation, without using the Wasserstein space. When
sufficient smoothness is available, viscosity theory is not needed.

In \cite{ABY}, we assume there is no local noise, nor common noise,
which means that randomness of the dynamics originates only from the
initial condition. In this case, the control problem reduces to a
deterministic control problem in the Hilbert space. In fact, we can
take a general Hilbert space. The approach works remarkably well.
We have obtained the Bellman equation and the Master equation and
solved them completely. The fact that the Hilbert space is a space
of square integrable random variables plays only a role at the level
of interpretation, when we want to check that we have solved the original
problem.

In \cite{BGY}, we have a local noise. So there are two types
of noises, the initial condition, and the Wiener process which models
the local noise. This leads to a difficulty. Even though the initial
randomness is independent of the Wiener process, it is not true at
any positive time. Indeed, the state of the system depends on both
the initial condition, and of the local noise. We have still used
the Hilbert space of random variables (and not a general Hilbert
space), keeping track of the two noises. The approach works, but
we find difficulties at the level of interpretation. Indeed, there
is not a full equivalence of concepts of derivatives (at second order) in the Hilbert space of square-integrable random variables and in
the space of probability measures.

In \cite{BGY-2}, we have introduced
a different approach, which overcomes the difficulty. We extend the
initial condition by taking a random variable depending on a parameter.
To this parameter we associate the initial probability measure, but
we do not consider the parameter as a random variable. The payoff
to optimize extends the original payoff, by incorporating the parameter
dependent random variable and reduces to it when the random variable
is simply the identity, which associates to the parameter, the parameter
itself. The big advantage of the extended control problem is that
it is a control problem in a Hilbert space. The Hilbert space is not
the Hilbert space of square integrable random variables, but control
theory in Hilbert spaces can be fully applied. Since it is an extension
of the original control problem, we recover it as a particular case.
The master equation is the gradient of the classical master equation,
but we can recover the classical one, without much difficulties. The
control problem we solve is new. It is not just a reformulation of
the initial problem.

In this paper, we extend our previous work, \cite{BGY-2}
to cover the situation of the book \cite{CDLL}, so we can see that
our approach, by itself does not carry any specific limitation. We
have also included the common noise, so the alternative is now complete. 
Whereas in \cite{CDLL} the master equation is studied via a system of partial differential equations, our approach uses entirely the control theory for stochastic McKean-Vlasov type equations.
The monotonicity condition we require--see Equation \eqref{eq:3-6}--is no longer of Lasry-Lions type as in \cite{CDLL}, but it is more closely related to displacement monotonicity, cf.~\cite{GMMZ,GM}.
In contrast to \cite{GMMZ,GM}, in this work we build solutions to the master equation by developing a complete theory of classical solutions to a Hamilton-Jacobi equation on the Hilbert space introduced below.
The results are comparable, but the technique is quite distinct.
Although the results presented here apply only to mean field type control problems (or to potential mean field games), one can use similar techniques to obtain solutions to master equations for mean field games that are not potential \cite{BTWY}.

\section{\label{sec:FORMALISM}FORMALISM }

\subsection{WASSERSTEIN SPACE}

We consider the space of probability measures on $\mathbb{R}^{n}$, with second
moment, namely $\int_{\mathbb{R}^{n}}|x|^{2}dm(x)<+\infty$, if $m$ is a probability
measure on $\mathbb{R}^{n}.$ We call $\mathcal{P}_{2}(\mathbb{R}^{n})$ this space.
A metric can be defined on $\mathcal{P}_{2}(\mathbb{R}^{n}).$ A convenient
way to define the metric is to associate to $m$ a random variable
$X_{m}$ in $L^{2}(\Omega,\mathcal{A},\mathbb{P};\mathbb{R}^{n})$, where $(\Omega,\mathcal{A},\mathbb{P})$
is an atomless probability space, whose probability law $\mathcal{L}X_{m}=m$.
Then the metric is defined by 
\begin{equation}
W_{2}^{2}(m,m') = \inf \left\{\mathbb{E} \left(|X_{m}-X_{m'}|^{2}\right) : \mathcal{L}X_m = m, \ \mathcal{L}X_{m'} = m' \right\}.\label{eq:1-1}
\end{equation}
The infimum is attained, so we can find $\hat{X}_{m},\hat{X}_{m'}$
such that 
\begin{equation}
W_{2}^{2}(m,m')=\mathbb{E}\left(|\hat{X}_{m}-\hat{X}_{m'}|^{2}\right).\label{eq:1-2}
\end{equation}
A family $m_{k}$ converges to $m$ in $\mathcal{P}_{2}(\mathbb{R}^{n})$ if
and only if it converges in the sense of the weak convergence and 
\begin{equation}
\int_{\mathbb{R}^{n}}|x|^{2}dm_{k}(x)\rightarrow\int_{\mathbb{R}^{n}}|x|^{2}dm(x).\label{eq:1010}
\end{equation}
We refer to Carmona-Delarue \cite{RCD} for details. 

\subsection{FUNCTIONALS}

Consider a functional $F(m)$ on $\mathcal{P}_{2}(\mathbb{R}^{n}).$ Continuity
is clearly defined by the metric. For the concept of derivative in
$\mathcal{P}_{2}(\mathbb{R}^{n}),$ we use the concept of functional derivative.
The functional derivative of $F(m)$ at $m$ is a function on $\mathcal{P}_{2}(\mathbb{R}^{n})\times \mathbb{R}^{n},$
$m,x\mapsto \dfrac{dF}{d\nu}(m)(x)$ such that $m\times x\mapsto \dfrac{dF}{d\nu}(m)(x)$
continuous, satisfying 
\begin{equation}
\int_{\mathbb{R}^{n}}\left|\dfrac{dF}{d\nu}(m)(x)\right|^{2}dm(x)\leq c(m),\label{eq:2-104}
\end{equation}
 and 
\begin{equation}
\dfrac{F(m+\epsilon(m'-m))-F(m)}{\epsilon} \rightarrow \int\dfrac{dF}{d\nu}(m)(x)(dm'(x)-dm(x)),\ \text{as} \ \epsilon\rightarrow 0,  \label{eq:2-200}
\end{equation}
for any $m'$$\in\mathcal{P}_{2}(\mathbb{R}^{n}).$ Note that the definition
(\ref{eq:2-200}) implies that
\begin{equation}
\dfrac{d}{d\theta}F(m+\theta(m'-m))=\int_{\mathbb{R}^{n}}\dfrac{dF}{d\nu}(m+\theta(m'-m))(x)(dm'(x)-dm(x)),\label{eq:2-210}
\end{equation}
 and 
\begin{equation}
F(m')-F(m)=\int_{0}^{1}\int_{\mathbb{R}^{n}}\dfrac{dF}{d\nu}(m+\theta(m'-m))(x)(dm'(x)-dm(x))d\theta.\label{eq:2-211}
\end{equation}
Of course $\dfrac{dF}{d\nu}(m)(x)$ is just a notation. We have not
written $\dfrac{dF}{dm}(m)(x)$ to make the difference between the
notation $\nu$ and the argument $m.$ Also we prefer the notation
$\dfrac{dF}{d\nu}(m)(x)$ to $\dfrac{\delta F}{\delta m}(m)(x)$ used
in R.~Carmona and F.~Delarue \cite{RCD}, because there is no risk of confusion
and it works pretty much like an ordinary G\^ateaux derivative. We turn
to the second derivative. If $\dfrac{d}{d\theta}F(m+\theta(m'-m))$
is continuously differentiable in $\theta$, with the formula 
\begin{equation}
\dfrac{d^{2}}{d\theta^{2}}F(m+\theta(m'-m))=\int_{\mathbb{R}^{n}}\int_{\mathbb{R}^{n}}\dfrac{d^{2}F}{d\nu^{2}}(m+\theta(m'-m))(x,x^{1})(dm'(x)-dm(x))(dm'(x^{1})-dm(x^{1})),\label{eq:2-213}
\end{equation}
where $m,x,x^{1}$ $\mapsto\dfrac{d^{2}F}{d\nu^{2}}(m)(x,x^{1})$
is continuous and satisfies 
\begin{equation}
\int_{\mathbb{R}^{n}}\int_{\mathbb{R}^{n}}\left|\dfrac{d^{2}F}{d\nu^{2}}(m)(x,x^{1})\right|^{2}dm(x)dm(x^{1})\leq c(m),\label{eq:2-212}
\end{equation}
then the function $\dfrac{d^{2}F}{d\nu^{2}}(m)(x,x^{1})$ is called
the second-order functional derivative. Moreover, we have the formula 
\begin{align}
  F(m')-F(m)&=\int_{\mathbb{R}^{n}}\dfrac{dF}{d\nu}(m)(x)(dm'(x)-dm(x))\nonumber\\
  &\qquad +\int_{0}^{1}\int_{0}^{1}\theta\dfrac{d^{2}F}{d\nu^{2}}(m+\lambda\theta(m'-m))(x,x^{1})(dm'(x)-dm(x))(dm'(x^{1})-dm(x^{1}))d\lambda d\theta.
 \end{align}
Formulas (\ref{eq:2-200}), (\ref{eq:2-210}) do not change if we
had a constant to $\dfrac{dF}{d\nu}(m)(x)$. It is customary to assume
the normalisation $\displaystyle \int_{\mathbb{R}^{n}}\dfrac{dF}{d\nu}(m)(x)dm(x)=0.$ Similarly,
in formulas (\ref{eq:2-213}), (\ref{eq:2-212}), we can replace $\dfrac{d^{2}F}{d\nu^{2}}(m)(x,x^{1})$
with $\dfrac{1}{2}\left(\dfrac{d^{2}F}{d\nu^{2}}(m)(x,x^{1})+\dfrac{d^{2}F}{d\nu^{2}}(m)(x^{1},x)\right)$
with no change of value. So we may assume that the function $(x,x^{1})\mapsto\dfrac{d^{2}F}{d\nu^{2}}(m)(x,x^{1})$
is symmetric. Also we can add to a symmetric $\dfrac{d^{2}F}{d\nu^{2}}(m)(x,x^{1})$
a function of the form $\varphi(x)+\psi(x^{1})$. We can neglect such
a contribution. We also have the property 
\begin{equation}
\dfrac{\dfrac{d}{d\nu}F(m+\epsilon(m'-m))(x)-\dfrac{dF}{d\nu}(m)(x)}{\epsilon}\rightarrow\int_{\mathbb{R}^{n}}\dfrac{d^{2}F}{d\nu^{2}}(m)(x,x^{1})(dm'(x^{1})-dm(x^{1})),\label{eq:2-2100}
\end{equation}
for any $x$. If the limit function $\dfrac{d^{2}F}{d\nu^{2}}(m)(x,x^{1})$
is not symmetric, we keep it but the real one is the symmetric expression
$\dfrac{1}{2}\left(\dfrac{d^{2}F}{d\nu^{2}}(m)(x,x^{1})+\dfrac{d^{2}F}{d\nu^{2}}(m)(x^{1},x)\right).$
This is for convenience and will be used below. In the sequel, we
shall make use of the formulas 
\begin{numcases}{}
    \dfrac{dF}{d\nu}(m)(x)=\dfrac{dF}{d\nu}(m)(0)+\int_{0}^{1}D\dfrac{dF}{d\nu}(m)(\theta x)\cdot x\,d\theta,\nonumber\\
    \dfrac{d^{2}F}{d\nu^{2}}(m)(x,x^{1})=\dfrac{d^{2}F}{d\nu^{2}}(m)(x,0)+\dfrac{d^{2}F}{d\nu^{2}}(m)(0,x^{1})-\dfrac{d^{2}F}{d\nu^{2}}(m)(0,0)\label{eq:2-2101}\\
    \qquad\qquad\qquad\qquad+\int_{0}^{1}\int_{0}^{1}DD_{1}\dfrac{d^{2}F}{d\nu^{2}}(m)(\lambda x,\mu x^{1})x^{1}\cdot x\,d\lambda d\mu,\nonumber
\end{numcases}
% \begin{equation}
% \left\{\begin{aligned}
% \dfrac{dF}{d\nu}(m)(x)=&\dfrac{dF}{d\nu}(m)(0)+\int_{0}^{1}D\dfrac{dF}{d\nu}(m)(\theta x)\cdot x\,d\theta,\\
% \dfrac{d^{2}F}{d\nu^{2}}(m)(x,x^{1})=&\dfrac{d^{2}F}{d\nu^{2}}(m)(x,0)+\dfrac{d^{2}F}{d\nu^{2}}(m)(0,x^{1})-\dfrac{d^{2}F}{d\nu^{2}}(m)(0,0)\\
% &+\int_{0}^{1}\int_{0}^{1}DD_{1}\dfrac{d^{2}F}{d\nu^{2}}(m)(\lambda x,\mu x^{1})x^{1}\cdot x\,d\lambda d\mu,
% \end{aligned}\right.\label{eq:2-2101}
% \end{equation}
which allows to recover the first- and second-order functional derivatives,
knowing the first and second gradient. If the matrix $DD_{1}\dfrac{d^{2}F}{d\nu^{2}}(m)(x,x^{1})$
is symmetric in $x,x^{1}$, then this formula gives a symmetric function in $x,x^{1}$. 

\subsection{\label{subsec:HILBERT-SPACE}HILBERT SPACE }

We consider an atomless probability space, $(\Omega,\mathcal{A},\mathbb{P})$. Later on, we shall need this space to be sufficiently large. Our
working Hilbert space will be $\mathcal{H}_{m}=L^{2}(\Omega,\mathcal{A},\mathbb{P};L_{m}^{2}(\mathbb{R}^{n};\mathbb{R}^{n})).$
An element of $\mathcal{H}_{m}$ will be denoted by $Z_{x}$, in
which for each $x$ we have an element of $L^{2}(\Omega,\mathcal{A},\mathbb{P};\mathbb{R}^{n})$.
However, we shall also omit $x$, as we omit $\omega$ for random variables.
We denote the norm 
\begin{equation}
||Z||^{2}=||Z||_{\mathcal{H}_{m}}^{2} :=\mathbb{E} \left(\int_{\mathbb{R}^{n}}|Z_{x}|^{2}dm(x)\right),\label{eq:2-215}
\end{equation}
and the corresponding inner product $\langle Z, Y \rangle$.

We then consider the pushforward probability on $\mathbb{R}^{n},$ $Z\#\mathbb{P}\otimes m\in\mathcal{P}_{2}(\mathbb{R}^{n})$.
Since there is no interest in keeping $\mathbb{P}$ in the notation, and since
we shall use $Z$ and $m$ as arguments, we prefer the notation $Z_{\cdot}\otimes m$. So we write 
\begin{equation}
\int_{\mathbb{R}^{n}}\varphi(\xi)dZ_{\cdot}\otimes m(\xi)=\mathbb{E}\left(\int_{\mathbb{R}^{n}}\varphi(Z_{x})dm(x)\right).\label{eq:2-216}
\end{equation}
If $Z_{x}=z_{x}$ a deterministic function, then $z_{\cdot}\in L_{m}^{2}(\mathbb{R}^{n};\mathbb{R}^{n}).$
Since $m$ belongs to $\mathcal{P}_{2}(\mathbb{R}^{n})$, the identity $J_{x}=x$
belongs to $\mathcal{H}_{m}$ and $J_{\cdot}\otimes m=m.$ Let $m\mapsto F(m)$
be a functional on $\mathcal{P}_{2}(\mathbb{R}^{n})$, the map $Z_{\cdot}\mapsto F(Z_{\cdot}\otimes m$)
becomes a functional on $\mathcal{H}_{m}.$ It is continuous as soon
as $F$ is continuous on $\mathcal{P}_{2}(\mathbb{R}^{n}).$ It is G\textroundcap{a}teaux
differentiable if 
\begin{equation}
\dfrac{F((Z_{\cdot}+\epsilon Y_{\cdot})\otimes m)-F(Z_{\cdot}\otimes m)}{\epsilon}\rightarrow\langle D_{X}F(Z_{\cdot}\otimes m),Y\rangle\ \text{as}\ \epsilon\rightarrow0,\forall Y\in\mathcal{H}_{m},\label{eq:2-217}
\end{equation}
and $D_{X}F(Z_{\cdot}\otimes m)$ is $\sigma(Z_{\cdot})$-measurable. If $F(m)$
has a functional derivative $\dfrac{d}{d\nu}F(m)(x)$ which is continuous
in both arguments and satisfies a growth condition:
\begin{equation}
\left|D\dfrac{d}{d\nu}F(m)(x)\right|\leq c(m)(1+|x|),\label{eq:2-218}
\end{equation}
with $c(m)$ is bounded on bounded sets then one can check the formula: 
\begin{equation}
D_{X}F(Z_{\cdot}\otimes m)=D\dfrac{d}{d\nu}F(Z_{\cdot}\otimes m)(Z_{x}).\label{eq:2-2000}
\end{equation}
Note that 
\begin{equation}
\langle D_{X}F(Z_{\cdot}\otimes m),Y_{\cdot} \rangle =0,\text{ if }Y_{x}\ \text{is independent of }Z_{\cdot}\ \text{and }\mathbb{E}(Y_{x})=0.\label{eq:2-3000}
\end{equation}
In particular taking $Z_{\cdot}=J_{\cdot}$, $D_{X}F(m)$ is simply a deterministic
function of $x$, $(D_{X}F(m))_{x}$ with values in $\mathbb{R}^{n}.$ If we
can show that this function is the gradient of a continuous function
of $m$, then we necessarily have:
\begin{equation}
(D_{X}F(m))_{x}=D\dfrac{d}{d\nu}F(m)(x),\label{eq:2-3500}
\end{equation}
and we can recover $\dfrac{d}{d\nu}F(m)(x)$ by the first relation
(\ref{eq:2-2101}). 

The second-order G\textroundcap{a}teaux differential of $F(Z_{\cdot}\otimes m)$
in $\mathcal{H}_{m}$ is a linear operator from $\mathcal{H}_{m}$
to $\mathcal{H}_{m}$ denoted by $D_{X}^{2}F(Z_{\cdot}\otimes m)$, such that
$D_{X}^{2}F(Z_{\cdot}\otimes m)(Y_{\cdot})$ is $\sigma(Z_{\cdot},Y_{\cdot})$-measurable for all
$Y_{\cdot}\in\mathcal{H}_{m}$ and 
\begin{equation}
\dfrac{\langle D_{X}F((Z_{\cdot}+\epsilon Y_{\cdot})\otimes m)-D_{X}F(Z_{\cdot}\otimes m),Y_{\cdot} \rangle}{\epsilon}\rightarrow\langle D_{X}^{2}F(Z_{\cdot}\otimes m)(Y_{\cdot}),Y_{\cdot} \rangle,\text{ as }\epsilon\rightarrow0.\label{eq:2-2005}
\end{equation}
The linear operator $D_{X}^{2}F(Z_{\cdot}\otimes m)$ is necessarily self-adjoint. As a consequence of (\ref{eq:2-3000}), we have:
\begin{equation}
\Big\langle D_{X}^{2}F(Z_{\cdot}\otimes m)(Y_{\cdot}),Y_{\cdot} \Big\rangle = \left\langle D^{2}\dfrac{dF}{d\nu}(Z_{\cdot}\otimes m)(Z_{\cdot})Y_{\cdot},Y_{\cdot} \right\rangle,\label{eq:2-3001}
\end{equation}
if $Y_{\cdot}(\omega)$ is independent of $Z_{\cdot}(\omega)$ and $\mathbb{E}(Y_{\cdot})=0$. Indeed,
\begin{align}
\dfrac{\langle D_{X}F((Z_{\cdot}+\epsilon Y_{\cdot})\otimes m)-D_{X}F(Z_{\cdot}\otimes m),Y_{\cdot} \rangle}{\epsilon}=&\dfrac{\left\langle D\dfrac{dF}{d\nu}((Z_{\cdot}+\epsilon Y_{\cdot})\otimes m)(Z_{\cdot}+\epsilon Y_{\cdot})-D\dfrac{dF}{d\nu}((Z_{\cdot}+\epsilon Y_{\cdot})\otimes m)(Z_{\cdot}),Y_{\cdot} \right\rangle}{\epsilon}\nonumber\\
&+ \left\langle \dfrac{F((Z_{\cdot}+\epsilon Y_{\cdot})\otimes m)-F(Z_{\cdot}\otimes m)}{\epsilon}(Z_{\cdot}),Y_{\cdot} \right\rangle.\nonumber
\end{align}

The second term tends to $0,$ according to (\ref{eq:2-3000}). The
first term is equal to:
\[
\left\langle \int_{0}^{1}D^{2}\dfrac{dF}{d\nu}((Z_{\cdot}+\epsilon Y_{\cdot})\otimes m)(Z_{\cdot}+\theta\epsilon Y_{\cdot})Y_{\cdot},Y_{\cdot} d\theta \right\rangle \rightarrow \left\langle D^{2}\dfrac{dF}{d\nu}(Z_{\cdot}\otimes m)(Z_{\cdot})Y_{\cdot},Y_{\cdot} \right\rangle.
\]
In general, we have the formula:
\begin{equation}
D_{X}^{2}F(Z_{\cdot}\otimes m)(Y_{\cdot})=D^{2}\dfrac{dF}{d\nu}(Z_{\cdot}\otimes m)(Z_{x})Y_{x}+\mathbb{E}^{1} \left(\int_{\mathbb{R}^{n}}DD_{1}\dfrac{d^{2}F}{d\nu^{2}}(Z_{\cdot}\otimes m)(Z_{x},Z_{x^{1}}^{1})Y_{x^{1}}^{1}dm(x^{1})\right),\label{eq:2-3002}
\end{equation}
where $Z_{x^{1}}^{1},Y_{x^{1}}^{1}$ are independent copies of $Z_{x},Y_{x}$,
and the expectation $\mathbb{E}^{1}$ affects only this independent copy. We
can recover the second-order functional derivative $\dfrac{d^{2}F}{d\nu^{2}}(m)(x,x^{1})$
from the second-order G\textroundcap{a}teaux differential $D_{X}^{2}F(Z_{\cdot}\otimes m)(Y_{\cdot})$
by taking $Z_{\cdot}=J_{\cdot}$ and $Y_{\cdot}$ as a deterministic function $y_{x}$
in $L_{m}^{2}(\mathbb{R}^{n},\mathbb{R}^{n}).$ Then (\ref{eq:2-3002}) becomes 
\begin{equation}
(D_{X}^{2}F(m)(y_{\cdot}))_{x}=D^{2}\dfrac{dF}{d\nu}(m)(x)y_{x}+\int_{\mathbb{R}^{n}}DD_{1}\dfrac{d^{2}F}{d\nu^{2}}(m)(x,x^{1})y_{x^{1}}dm(x^{1}),\label{eq:2-3003}
\end{equation}
which defines $DD_{1}\dfrac{d^{2}F}{d\nu^{2}}(m)(x,x^{1})$ as the
kernel of a linear operator in $L_{m}^{2}(\mathbb{R}^{n};\mathbb{R}^{n}).$ The regularity
of the kernel depends on the regularity of the function $(D_{X}^{2}F(m)(y_{\cdot}))_{x}.$
Once we have the function $DD_{1}\dfrac{d^{2}F}{d\nu^{2}}(m)(x,x^{1}),$
formula (\ref{eq:2-2101}) allows us to obtain the second-order functional
derivative. 

The second-order G\textroundcap{a}teaux differential allows to write the Taylor expansion: 
\begin{equation}
F((Z_{\cdot}+Y_{\cdot})\otimes m)=F(Z_{\cdot}\otimes m)+ \langle D_{X}F(Z_{\cdot}\otimes m),Y_{\cdot} \rangle +\int_{0}^{1}\int_{0}^{1}\lambda \langle D_{X}^{2}F((Z_{\cdot}+\lambda\mu Y_{\cdot})\otimes m)(Y_{\cdot}),Y_{\cdot} \rangle d\lambda d\mu.\label{eq:2-3004}
\end{equation}

\subsection{CONDITIONAL PROBABILITY MEASURE}

As an element of $\mathcal{P}_{2}(\mathbb{R}^{n})$, $Z_{\cdot}\otimes m$ is
deterministic. We are now going to introduce random probability measures
by considering a sub $\sigma$-algebra $\mathcal{B}$ of $\mathcal{A}$
and introducing $\mathbb{P}^{\mathcal{B}}$, the conditional probability on
$\Omega$ given $\mathcal{B}$. We define the random probability
measure $(Z_{\cdot}\otimes m)^{\mathcal{B}}$ by the formula 
\begin{equation}
\int_{\mathbb{R}^{n}}\varphi(\xi)d(Z_{\cdot}\otimes m)^{\mathcal{B}}(\xi)=\mathbb{E}^{\mathcal{B}} \left(\int_{\mathbb{R}^{n}}\varphi(Z_{x})dm(x)\right).\label{eq:2-2160}
\end{equation}
We can see that $(Z_{\cdot}\otimes m)^{\mathcal{B}}$ is the pushforward
of $\mathbb{P}^{\mathcal{B}}\otimes m$ by the map $(\omega,x)\mapsto Z_{x}(\omega).$
$(Z_{\cdot}\otimes m)^{\mathcal{B}}$ is a random variable with values
in $\mathcal{P}_{2}(\mathbb{R}^{n})$ which is $\mathcal{B}$-measurable. We introduce
the possibility of composition. Consider two random fields $Y_{x}(\omega)$
and $Z_{z}(\omega)$ and two sub $\sigma$-algebras $\mathcal{C}$
and $\mathcal{B}$. We have first the conditional probability measure
$(Y_{\cdot}\otimes m)^{\mathcal{C}}$. We can next consider $(Z_{\cdot}\otimes(Y_{\cdot}\otimes m)^{\mathcal{C}})^{\mathcal{B}}.$
We then have 
\begin{equation} \label{eq:2-2161}
    \int_{\mathbb{R}^{n}}\varphi(\xi)d\left(Z_{\cdot}\otimes(Y_{\cdot}\otimes m)^{\mathcal{C}}\right)^{\mathcal{B}}(\xi) =\mathbb{E}^{\mathcal{B}} \left(\int_{\mathbb{R}^{n}}\varphi(Z_{z})d(Y_{\cdot}\otimes m)^{\mathcal{C}}(z)\right)=\mathbb{E}^{\mathcal{B}}\left(\widetilde{\mathbb{E}}^{\mathcal{C}} \left(\int_{\mathbb{R}^{n}}\varphi(Z_{\tilde{Y}_{\tilde{x}}})dm(\tilde{x})\right)\right),
\end{equation}
where $\widetilde{\mathbb{E}}^{\mathcal{C}}$ operates only on $\tilde{Y}_{\tilde{x}}$. 
\begin{remark}
\label{rem2-2} We have 
\begin{equation}
\mathbb{E} \left((Z_{\cdot}\otimes m)^{\mathcal{B}}\right) =Z_{\cdot}\otimes m.\label{eq:2-2260}
\end{equation}
\end{remark}

\subsection{FUNCTIONALS OF CONDITIONAL PROBABILITY MEASURES}

Let $m\mapsto F(m)$ be a functional on $\mathcal{P}_{2}(\mathbb{R}^{n}).$
We assume the following properties: 
\begin{equation}
m\mapsto  F(m),\ \text{continuous},\ |F(m)|\leq C\left(1+\int_{\mathbb{R}^{n}}|x|^{2}dm(x)\right).\label{eq:2-2162}
\end{equation}
The functional $F(m)$ has a functional derivative $\dfrac{dF}{d\nu}(m)(x)$
satisfying:
\begin{equation}
\left\{\begin{aligned}
    &(x,m)\mapsto\dfrac{dF}{d\nu}(m)(x),D\dfrac{dF}{d\nu}(m)(x)\ \text{continuous,}\\\label{eq:2-2163}
    &\left|\dfrac{dF}{d\nu}(m)(x)\right|\leq C(1+|x|^{2}),\  \left\lVert D\dfrac{dF}{d\nu}(m)(x) \right\rVert \leq C(1+|x|^{2})^{\frac{1}{2}},\\
    &\left\lVert D\dfrac{dF}{d\nu}(m)(x_{1})-D\dfrac{dF}{d\nu}(m)(x_{2})\right\rVert\leq C|x_{1}-x_{2}|.
\end{aligned}\right.
\end{equation}
Let $\mathcal{B}$ be a sub $\sigma$-algebra of $\mathcal{A}$. We
define a functional on $\mathcal{H}_{m}$ as follows:
\begin{equation}
Z_{\cdot}\mapsto \mathbb{E}\left(F\left((Z_{\cdot}\otimes m)^{\mathcal{B}}\right)\right).\label{eq:2-2164}
\end{equation}
It generalizes the functional $Z_{\cdot}\mapsto F(Z_{\cdot}\otimes m)$
introduced in our previous work \cite{BGY-2}. We can state the following proposition.
\begin{proposition}
\label{prop2-1}With the assumptions (\ref{eq:2-2162}), (\ref{eq:2-2163})
the functional $Z_{\cdot}\mapsto \mathbb{E}\left(F((Z_{\cdot}\otimes m)^{\mathcal{B}})\right)$
is continuously G\textroundcap{a}teaux differentiable on $\mathcal{H}_{m},$
with the formula 
\begin{equation}
\begin{aligned}
&D_{X}\mathbb{E}\left(F((Z_{\cdot}\otimes m)^{\mathcal{B}})\right)=D\dfrac{dF}{d\nu}((Z_{\cdot}\otimes m)^{\mathcal{B}})(Z_{\cdot})\in\mathcal{H}_{m},\label{eq:2-2165}\\
&\left\lVert D_{X}\mathbb{E}\left(F((Z_{\cdot}\otimes m)^{\mathcal{B}})\right)\right\rVert\leq C(1+||Z||).\nonumber
\end{aligned}
\end{equation}
\end{proposition}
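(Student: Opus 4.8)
The plan is to compute the Gâteaux derivative directly from the difference quotient, combining the pathwise fundamental-theorem-of-calculus formula (\ref{eq:2-211}) with the pushforward identity (\ref{eq:2-2160}). Fix $Z_{\cdot},Y_{\cdot}\in\mathcal{H}_{m}$ and set $g(\epsilon)=\mathbb{E}\left(F(((Z_{\cdot}+\epsilon Y_{\cdot})\otimes m)^{\mathcal{B}})\right)$; the goal is to show $g'(0)=\left\langle D\frac{dF}{d\nu}((Z_{\cdot}\otimes m)^{\mathcal{B}})(Z_{\cdot}),Y_{\cdot}\right\rangle$. Writing $\mu_{\theta}$ for the linear interpolation between the two $\mathcal{B}$-measurable random measures $(Z_{\cdot}\otimes m)^{\mathcal{B}}$ and $((Z_{\cdot}+\epsilon Y_{\cdot})\otimes m)^{\mathcal{B}}$, I would apply (\ref{eq:2-211}) for each realization of these $\mathcal{B}$-measurable measures. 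The crucial observation is that $\mu_{\theta}$ is $\mathcal{B}$-measurable, so $\frac{dF}{d\nu}(\mu_{\theta})(\cdot)$ is a $\mathcal{B}$-measurable function and may be frozen under $\mathbb{E}^{\mathcal{B}}$; identity (\ref{eq:2-2160}) then rewrites each integral against $d(\,\cdot\,)^{\mathcal{B}}$ as a conditional expectation of an $m$-integral of $\frac{dF}{d\nu}(\mu_{\theta})$ evaluated at $Z_{x}+\epsilon Y_{x}$ respectively $Z_{x}$.

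Taking the total expectation so that $\mathbb{E}^{\mathcal{B}}$ disappears by the tower property, dividing by $\epsilon$, and applying the mean value theorem in the spatial variable, the difference quotient becomes
\[
\frac{g(\epsilon)-g(0)}{\epsilon}=\int_{0}^{1}\!\!\int_{0}^{1}\mathbb{E}\left(\int_{\mathbb{R}^{n}}D\frac{dF}{d\nu}(\mu_{\theta})(Z_{x}+s\epsilon Y_{x})\cdot Y_{x}\,dm(x)\right)ds\,d\theta.
\]
The main work is then to pass to the limit $\epsilon\to0$ under this triple integral. For this I would first note the elementary bound
\[
W_{2}^{2}\big(((Z_{\cdot}+\epsilon Y_{\cdot})\otimes m)^{\mathcal{B}},\,(Z_{\cdot}\otimes m)^{\mathcal{B}}\big)\leq\epsilon^{2}\,\mathbb{E}^{\mathcal{B}}\Big(\int_{\mathbb{R}^{n}}|Y_{x}|^{2}\,dm(x)\Big),
\]
whose right-hand side is finite almost surely because $Y_{\cdot}\in\mathcal{H}_{m}$; hence, by weak convergence together with convergence of second moments, cf.~(\ref{eq:1010}), $\mu_{\theta}\to(Z_{\cdot}\otimes m)^{\mathcal{B}}$ in $\mathcal{P}_{2}(\mathbb{R}^{n})$ almost surely for each $\theta$. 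Joint continuity of $(m,x)\mapsto D\frac{dF}{d\nu}(m)(x)$ from (\ref{eq:2-2163}) then yields pointwise convergence of the integrand to $D\frac{dF}{d\nu}((Z_{\cdot}\otimes m)^{\mathcal{B}})(Z_{x})\cdot Y_{x}$, while the linear growth bound $\|D\frac{dF}{d\nu}(m)(x)\|\leq C(1+|x|^{2})^{1/2}$ dominates the integrand, for $|\epsilon|\leq1$, by $C(1+|Z_{x}|+|Y_{x}|)|Y_{x}|$, which is $\mathbb{P}\otimes m$-integrable by Cauchy--Schwarz. Dominated convergence gives the asserted formula, and the same growth bound shows $D\frac{dF}{d\nu}((Z_{\cdot}\otimes m)^{\mathcal{B}})(Z_{\cdot})\in\mathcal{H}_{m}$ with norm at most $C(1+\|Z\|)$.

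It remains to upgrade this to continuous differentiability, that is, to show the map $Z_{\cdot}\mapsto D\frac{dF}{d\nu}((Z_{\cdot}\otimes m)^{\mathcal{B}})(Z_{\cdot})$ is continuous from $\mathcal{H}_{m}$ to $\mathcal{H}_{m}$. Given $Z^{(k)}_{\cdot}\to Z_{\cdot}$ in $\mathcal{H}_{m}$, I would pass to a subsequence along which $Z^{(k)}_{x}\to Z_{x}$ $\mathbb{P}\otimes m$-a.e.\ and $(Z^{(k)}_{\cdot}\otimes m)^{\mathcal{B}}\to(Z_{\cdot}\otimes m)^{\mathcal{B}}$ in $\mathcal{P}_{2}$ a.s.\ (again via the $W_{2}$ bound). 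Joint continuity of $D\frac{dF}{d\nu}$, aided by the Lipschitz-in-$x$ estimate in the third line of (\ref{eq:2-2163}) to control the spatial argument, gives pointwise convergence, and the uniform integrability of $\{|Z^{(k)}_{x}|^{2}\}$ coming from $L^{2}$-convergence upgrades this to convergence in $\mathcal{H}_{m}$ via Vitali's theorem; since the limit does not depend on the subsequence, the whole sequence converges.

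I expect the principal obstacle to be the limit passage in the second paragraph: one must simultaneously exploit continuity of $\frac{dF}{d\nu}$ in its \emph{measure} argument---which forces the a.s.\ $\mathcal{P}_{2}$-convergence of the random interpolating measures $\mu_{\theta}$ to be established carefully---and its merely linear growth in the \emph{spatial} argument, so as to produce a single $\mathbb{P}\otimes m$-integrable dominating function. A minor point worth flagging is the measurability bookkeeping: unlike in the unconditional case (\ref{eq:2-2000}), the gradient here is $\sigma(Z_{\cdot})\vee\mathcal{B}$-measurable, reducing to $\sigma(Z_{\cdot})$-measurability precisely when $\mathcal{B}$ is trivial.
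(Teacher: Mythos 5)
Your proposal is correct and takes essentially the same route as the paper's proof: both expand the difference quotient with the functional-derivative identity (\ref{eq:2-211}), rewrite the integrals against the conditional measures via (\ref{eq:2-2160}), pass to the limit using the continuity and growth assumptions (\ref{eq:2-2163}), and obtain the norm bound by Cauchy--Schwarz. The additional detail you supply---the $W_{2}$ coupling bound giving a.s.\ $\mathcal{P}_{2}$-convergence of the interpolated measures, the explicit dominating function, and the subsequence/Vitali argument for continuity of the gradient map---simply fills in steps the paper leaves implicit.
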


\begin{proof}
We have 
\begin{align}
    &\ \ \ \ \dfrac{1}{\epsilon} \left[F\left(((Z_{\cdot}+\epsilon Y_{\cdot})\otimes m)^{\mathcal{B}} \right) -F \left((Z_{\cdot}\otimes m)^{\mathcal{B}} \right) \right]\\\nonumber
    &=\dfrac{1}{\epsilon}\int_{0}^{1} \bigg[\dfrac{dF}{d\nu} \left((Z_{\cdot}\otimes m)^{\mathcal{B}}+\theta\left(((Z_{\cdot}+\epsilon Y_{\cdot})\otimes m)^{\mathcal{B}}-(Z_{\cdot}\otimes m)^{\mathcal{B}}\right) \right)(x)\\\nonumber
    &\qquad\qquad \left(d \left((Z_{\cdot}+\epsilon Y_{\cdot})\otimes m\right)^{\mathcal{B}}(x)-d(Z_{\cdot}\otimes m)^{\mathcal{B}}(x) \right) \bigg] d\theta\\\nonumber
    &=\dfrac{1}{\epsilon}\int_{0}^{1}\mathbb{E}^{\mathcal{B}}\bigg[\int_{\mathbb{R}^{n}}\dfrac{dF}{d\nu}\left((Z_{\cdot}\otimes m)^{\mathcal{B}}+\theta \Big(((Z_{\cdot}+\epsilon Y_{\cdot})\otimes m)^{\mathcal{B}}-(Z_{\cdot}\otimes m)^{\mathcal{B}}\Big)\right)(Z_{x}+\epsilon Y_{x})dm(x)\\\nonumber
    &\qquad\qquad\qquad-\int_{\mathbb{R}^{n}}\dfrac{dF}{d\nu} \left((Z_{\cdot}\otimes m)^{\mathcal{B}}+\theta\Big(((Z_{\cdot}+\epsilon Y_{\cdot})\otimes m)^{\mathcal{B}}-(Z_{\cdot}\otimes m)^{\mathcal{B}}\Big)\right)(Z_{x})dm(x)\bigg] d\theta\\\nonumber
    &\longrightarrow
    \mathbb{E}^{\mathcal{B}}\left(\int_{\mathbb{R}^{n}}D\dfrac{dF}{d\nu}((Z_{\cdot}\otimes m)^{\mathcal{B}})(Z_{x}) \cdot Y_{x}dm(x)\right), \text{ a.s.. }
\end{align}
% \[
% \dfrac{1}{\epsilon}[F(((Z_{\cdot}+\epsilon Y_{\cdot})\otimes m)^{\mathcal{B}})-F((Z_{\cdot}\otimes m)^{\mathcal{B}})]=
% \]

% \[
% \dfrac{1}{\epsilon}\int_{0}^{1}[\dfrac{dF}{d\nu}((Z_{\cdot}\otimes m)^{\mathcal{B}}+\theta(((Z_{\cdot}+\epsilon Y_{\cdot})\otimes m)^{\mathcal{B}}-(Z_{\cdot}\otimes m)^{\mathcal{B}}))(x)
% \]
% \[
% (d((Z_{\cdot}+\epsilon Y_{\cdot})\otimes m)^{\mathcal{B}}(x)-d(Z_{\cdot}\otimes m)^{\mathcal{B}}(x))d\theta=
% \]

% \[
% \dfrac{1}{\epsilon}\int_{0}^{1}E^{\mathcal{B}}[\int_{\mathbb{R}^{n}}\dfrac{dF}{d\nu}((Z_{\cdot}\otimes m)^{\mathcal{B}}+\theta(((Z_{\cdot}+\epsilon Y_{\cdot})\otimes m)^{\mathcal{B}}-(Z_{\cdot}\otimes m)^{\mathcal{B}}))(Z_{x}+\epsilon Y_{x})dm(x)
% \]
% \[
% -\int_{\mathbb{R}^{n}}\dfrac{dF}{d\nu}((Z_{\cdot}\otimes m)^{\mathcal{B}}+\theta(((Z_{\cdot}+\epsilon Y_{\cdot})\otimes m)^{\mathcal{B}}-(Z_{\cdot}\otimes m)^{\mathcal{B}}))(Z_{x})dm(x)\rightarrow
% \]

% \[
% E^{\mathcal{B}}\int_{\mathbb{R}^{n}}D\dfrac{dF}{d\nu}((Z_{\cdot}\otimes m)^{\mathcal{B}})(Z_{x}).Y_{x}dm(x),\text{a.s. }
% \]
We then conclude that 
\[
\dfrac{1}{\epsilon} \Bigg[\mathbb{E}\left(F\left(((Z_{\cdot}+\epsilon Y_{\cdot})\otimes m)^{\mathcal{B}}\right)\right)-\mathbb{E}\left(F\left((Z_{\cdot}\otimes m)^{\mathcal{B}}\right)\right)\Bigg]\rightarrow \mathbb{E}\left(\int_{\mathbb{R}^{n}}D\dfrac{dF}{d\nu}\left((Z_{\cdot}\otimes m)^{\mathcal{B}}\right)(Z_{x})\cdot Y_{x}dm(x)\right),
\]
which proves formula (\ref{eq:2-2165}). We have also 
\begin{equation} \hspace{-1cm}
    \left\lVert D_{X}\mathbb{E} \left(F\left((Z_{\cdot}\otimes m)^{\mathcal{B}}\right)\right)\right\rVert =\sqrt{\mathbb{E} \left(\int_{\mathbb{R}^{n}} \left|D\dfrac{dF}{d\nu}((Z_{\cdot}\otimes m)^{\mathcal{B}})(Z_{x})\right|^{2}dm(x)\right)} \leq C\sqrt{\mathbb{E}\left(\int_{\mathbb{R}^{n}}(1+|Z_{x}|^{2})dm(x)\right)}=C(1+||Z||).
\end{equation}

The function $Z_{\cdot}\mapsto D_{X}\mathbb{E} \left(F((Z_{\cdot}\otimes m)^{\mathcal{B}})\right)$
is continuous. This completes the proof. 
\end{proof}
\begin{remark}
\label{rem2-1} $D_{X}\mathbb{E}\left(F((Z_{\cdot}\otimes m)^{\mathcal{B}})\right)$ is a notation
for the G\textroundcap{a}teaux derivative of $\mathbb{E}\left(F((Z_{\cdot}\otimes m)^{\mathcal{B}})\right).$
There is no interchange of the symbols $D_{X}$ and expectation $\mathbb{E}$.
Formula (\ref{eq:2-2165}) shows that $D_{X}\mathbb{E}\left(F((Z_{\cdot}\otimes m)^{\mathcal{B}})\right)$
is a random variable $\mathcal{B\ \cup\,\sigma}(X_{\cdot})$ measurable.
\end{remark}

If $Y_{\cdot}$ is independent of $Z_{\cdot}$ and $\mathcal{B}$ and $\mathbb{E}(Y_{x})=0$,
we have 
\begin{equation}
\left\langle D_{X}\mathbb{E} \left(F((Z_{\cdot}\otimes m)^{\mathcal{B}})\right),Y_{\cdot} \right\rangle =0.\label{eq:2-3100}
\end{equation}
As mentioned earlier, the fact that $Z_{\cdot}\in\mathcal{H}_{m}$ is
fine, when $m$ does not change and is just a parameter (a reference
probability). However it complicates the study of the map $m\mapsto F(Z_{\cdot}\otimes m)$
since $Z_{\cdot}$ is coupled with $m$. To decouple, we may assume the
following stronger assumption on $Z_{\cdot}$, namely $\dfrac{Z_{x}}{(1+|x|^{2})^{\frac{1}{2}}}\in L^{\infty}(\mathbb{R}^{n};L^{2}(\Omega,\mathcal{A},\mathbb{P};\mathbb{R}^{n}))$,
which means 
\begin{equation}
\mathbb{E}(|Z_{x}|^{2})\leq c(1+|x|^{2}),\label{eq:2-219}
\end{equation}
where $c$ is a constant. Then $Z_{\cdot}\in\mathcal{H}_{m}$ for any
$m,$ and $Z$ and $m$ can be considered as two separate arguments
in $\mathbb{E}\left(F((Z_{\cdot}\otimes m)^{\mathcal{B}})\right)$. So we can consider the
functional $m\mapsto \mathbb{E}\left(F((Z_{\cdot}\otimes m)^{\mathcal{B}})\right)$ and
look at its functional derivative. We denote it $\dfrac{\partial}{\partial m}\mathbb{E}\left(F((Z_{\cdot}\otimes m)^{\mathcal{B}})\right)$. It is indeed a partial derivative. In fact, we see easily that 
\begin{equation}
\dfrac{\partial}{\partial m}\mathbb{E}\left(F\left((Z_{\cdot}\otimes m)^{\mathcal{B}}\right)\right)=\mathbb{E}\left(\dfrac{dF}{d\nu}\left((Z_{\cdot}\otimes m)^{\mathcal{B}}\right)(Z_{x})\right).\label{eq:2-220}
\end{equation}

\subsection{SECOND-ORDER G\^{A}TEAUX DERIVATIVE IN THE HILBERT SPACE}

We next make further assumptions: 
\begin{align}
    &(m,x)\mapsto  D^{2}\dfrac{dF}{d\nu}(m)(x)\ \text{continuous},\label{eq:2-221} \  \Bigg|D^{2}\dfrac{dF}{d\nu}(m)(x)\Bigg|\leq C,\\
    &(m,x,x^{1})\mapsto DD_{1}\dfrac{d^{2}}{d\nu^{2}}F(m)(x,x^{1})\ \text{continuous},\label{eq:2-222} \   \Bigg|DD_{1}\dfrac{d^{2}}{d\nu^{2}}F(m)(x,x^{1})\Bigg|\leq C,
\end{align}
where $D$ is the gradient with respect to the first argument $x$
and $D_{1}$ is the gradient with respect to the second argument $x^{1}.$

We say that the functional $Z_{\cdot}\mapsto \mathbb{E}\left(F((Z_{\cdot}\otimes m)^{\mathcal{B}})\right)$
has a second-order G\^ateaux derivative in $\mathcal{H}_{m}$, denoted
$D_{X}^{2}\mathbb{E}\left(F((Z_{\cdot}\otimes m)^{\mathcal{B}})\right)\in\mathcal{L}(\mathcal{H}_{m};\mathcal{H}_{m})$,
if 
\begin{equation}
\dfrac{\left\langle D_{X}\mathbb{E}\left(F(((Z_{\cdot}+\epsilon Y_{\cdot})\otimes m)^{\mathcal{B}})\right)-D_{X}\mathbb{E}\left(F((Z_{\cdot}\otimes m)^{\mathcal{B}})\right),Y_{\cdot} \right\rangle }{\epsilon}\rightarrow \bigg\langle D_{X}^{2}\mathbb{E}\left(F\left((Z_{\cdot}\otimes m)^{\mathcal{B}}\right)\right)(Y_{\cdot}),Y_{\cdot} \bigg\rangle ,\label{eq:2-227}
\end{equation}
as $\epsilon\rightarrow 0$, for all $Y_\cdot \in \mathcal{H}_m$. We then state the following result.
\begin{proposition}
\label{prop2-2} We make the assumptions of Proposition \ref{prop2-1}
and (\ref{eq:2-221}), (\ref{eq:2-222}). Then the functional $Z_{\cdot}\mapsto \mathbb{E}\left(F((Z_{\cdot}\otimes m)^{\mathcal{B}})\right)$
has a second-order G\^ateaux derivative in $\mathcal{H}_{m}$, given
by the formula:
\begin{align}
D_{X}^{2}\mathbb{E}\left(F \left((Z_{\cdot}\otimes m)^{\mathcal{B}}\right)\right)(Y_{\cdot})&=D^{2}\dfrac{dF}{d\nu}\left((Z_{\cdot}\otimes m)^{\mathcal{B}}\right)(Z_{\cdot})Y_{\cdot}\label{eq:2-100}\\
&\quad +\mathbb{E}^{1\mathcal{B}}\left(\int_{\mathbb{R}^{n}}DD_{1}\dfrac{d^{2}}{d\nu^{2}}F\left((Z_{\cdot}\otimes m)^{\mathcal{B}}\right)(Z_{\cdot},Z_{x^{1}}^{1})Y_{x^{1}}^{1}dm(x^{1})\right),\nonumber
\end{align}
where for $x,x^{1}$ given $Z_{x},Y_{x}$ and $Z_{x^{1}}^{1},Y_{x^{1}}^{1}$
are conditionally to $\mathcal{B}$ independent copies. 
\end{proposition}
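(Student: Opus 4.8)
The plan is to differentiate the first-order formula of Proposition \ref{prop2-1}. Abbreviate $\mu_\epsilon := ((Z_\cdot+\epsilon Y_\cdot)\otimes m)^{\mathcal{B}}$ and $\mu_0 := (Z_\cdot\otimes m)^{\mathcal{B}}$, so that Proposition \ref{prop2-1} gives $D_X\mathbb{E}(F(\mu_\epsilon)) = D\dfrac{dF}{d\nu}(\mu_\epsilon)(Z_\cdot+\epsilon Y_\cdot)$ and $D_X\mathbb{E}(F(\mu_0)) = D\dfrac{dF}{d\nu}(\mu_0)(Z_\cdot)$. I would show that the difference quotient vector
\[
\dfrac{1}{\epsilon}\left[D\dfrac{dF}{d\nu}(\mu_\epsilon)(Z_\cdot+\epsilon Y_\cdot)-D\dfrac{dF}{d\nu}(\mu_0)(Z_\cdot)\right]
\]
converges in $\mathcal{H}_m$ to the right-hand side of \eqref{eq:2-100}; pairing with $Y_\cdot$ then produces both the defining limit \eqref{eq:2-227} and the operator formula, and the argument runs parallel to the unconditional computation behind \eqref{eq:2-3002} with $\mathbb{E}^{\mathcal{B}}$ in place of $\mathbb{E}$. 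The first step is an add-and-subtract splitting into a term in which only the base point of the gradient moves, namely $D\frac{dF}{d\nu}(\mu_\epsilon)(Z_\cdot+\epsilon Y_\cdot)-D\frac{dF}{d\nu}(\mu_\epsilon)(Z_\cdot)$ (term A), and a term in which only the measure argument moves, namely $D\frac{dF}{d\nu}(\mu_\epsilon)(Z_\cdot)-D\frac{dF}{d\nu}(\mu_0)(Z_\cdot)$ (term B).

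For term A a first-order Taylor expansion in the spatial variable gives $\int_0^1 D^2\frac{dF}{d\nu}(\mu_\epsilon)(Z_\cdot+\theta\epsilon Y_\cdot)\,\epsilon Y_\cdot\,d\theta$; dividing by $\epsilon$ and letting $\epsilon\to0$, the joint continuity and uniform bound of $D^2\frac{dF}{d\nu}$ from \eqref{eq:2-221}, together with the convergence $\mu_\epsilon\to\mu_0$, yield $D^2\frac{dF}{d\nu}(\mu_0)(Z_\cdot)Y_\cdot$, the first term of \eqref{eq:2-100}. The convergence $\mu_\epsilon\to\mu_0$ is immediate from the pathwise coupling bound $W_2^2(\mu_\epsilon,\mu_0)\le\epsilon^2\,\mathbb{E}^{\mathcal{B}}(\int_{\mathbb{R}^n}|Y_x|^2dm(x))$, which tends to $0$ almost surely.

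For term B, the main obstacle, I would apply the fundamental theorem of calculus along the linear interpolation $\mu_\theta := \mu_0+\theta(\mu_\epsilon-\mu_0)$ of the (random, $\mathcal{B}$-measurable) measures, which by the measure-differentiation rule behind \eqref{eq:2-2100}, applied to the gradient $D\frac{dF}{d\nu}$, gives $\int_0^1\int_{\mathbb{R}^n}D\frac{d^2F}{d\nu^2}(\mu_\theta)(Z_\cdot,x^1)(d\mu_\epsilon-d\mu_0)(x^1)\,d\theta$. The measure difference is then unfolded through the conditional push-forward \eqref{eq:2-2160}: since the integrand already carries the original base variable $Z_\cdot$ and the $\mathcal{B}$-measurable measure $\mu_\theta$, the integration against $\mu_\epsilon$ and $\mu_0$ must be realized on a conditionally independent copy, which is precisely the role of the copies $Z_{x^1}^1,Y_{x^1}^1$ and of the operator $\mathbb{E}^{1\mathcal{B}}$, under which $\mu_\theta$ is frozen. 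A second Taylor expansion, now in the second spatial slot, extracts a factor $\epsilon Y_{x^1}^1$ against $DD_1\frac{d^2F}{d\nu^2}$; dividing by $\epsilon$ and passing to the limit produces the second term of \eqref{eq:2-100}.

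The delicate point is the passage to the limit inside term B, where one must exchange $\lim_{\epsilon\to0}$ with the two integrals over $[0,1]$, the integral against $m$, and the conditional expectation $\mathbb{E}^{1\mathcal{B}}$. This is where the uniform bounds \eqref{eq:2-221}--\eqref{eq:2-222} and the joint continuity of $D^2\frac{dF}{d\nu}$ and $DD_1\frac{d^2F}{d\nu^2}$ in $(m,x,x^1)$ are essential, providing a dominated-convergence argument together with $\mu_\theta\to\mu_0$ almost surely, uniformly in $\theta$. A secondary bookkeeping obstacle is to keep the conditionally independent copies consistent, with the first argument of $DD_1\frac{d^2F}{d\nu^2}$ remaining $Z_\cdot$ from the original sample while the second ranges over the copy $Z_{x^1}^1$. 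Once the $\mathcal{H}_m$-convergence of the difference quotient is established, the right-hand side of \eqref{eq:2-100} is visibly linear and bounded in $Y_\cdot$, hence an element of $\mathcal{L}(\mathcal{H}_m;\mathcal{H}_m)$; continuity of $Z_\cdot\mapsto D_X^2\mathbb{E}(F(\mu_0))$ follows from the same continuity and domination estimates, and self-adjointness is automatic from the symmetric role of $Y_\cdot$ and the symmetry of $DD_1\frac{d^2F}{d\nu^2}$.
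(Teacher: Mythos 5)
Your proposal follows essentially the same route as the paper's proof: the identical add-and-subtract splitting (spatial shift at the frozen measure $\mu_\epsilon$, then measure shift at the frozen argument $Z_\cdot$), Taylor expansions in the spatial slots, realization of the measure difference through conditionally independent copies under $\mathbb{E}^{1\mathcal{B}}$, dominated convergence via the bounds \eqref{eq:2-221}--\eqref{eq:2-222}, and self-adjointness from the symmetry of $DD_{1}\frac{d^{2}F}{d\nu^{2}}$. You in fact spell out details the paper compresses into ``thanks to the assumptions, we can obtain the following limit,'' including the Wasserstein coupling bound giving $\mu_\epsilon\to\mu_0$ a.s., so the argument is correct and matches the paper's.
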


\begin{proof}
We consider 
\begin{align}
    & \ \ \ \ \dfrac{1}{\epsilon}\mathbb{E}\left(\int_{\mathbb{R}^{n}} \left[D\dfrac{dF}{d\nu}\left(((Z_{\cdot}+\epsilon Y_{\cdot})\otimes m)^{\mathcal{B}}\right)(Z_{x}+\epsilon Y_{x})-D\dfrac{dF}{d\nu}((Z_{\cdot}\otimes m)^{\mathcal{B}})(Z_{x}) \right]\cdot Y_{x}dm(x)\right)\\\nonumber
&=\dfrac{1}{\epsilon}\mathbb{E}\left(\int_{\mathbb{R}^{n}} \left[D\dfrac{dF}{d\nu}\left(((Z_{\cdot}+\epsilon Y_{\cdot})\otimes m)^{\mathcal{B}}\right)(Z_{x}+\epsilon Y_{x})-D\dfrac{dF}{d\nu}\left(((Z_{\cdot}+\epsilon Y_{\cdot})\otimes m)^{\mathcal{B}}\right)(Z_{x}) \right]\cdot Y_{x}dm(x)\right)\\\nonumber
&\qquad \quad +\dfrac{1}{\epsilon}\mathbb{E} \left(\ \mathbb{E}^{\mathcal{B}}\left(\int_{\mathbb{R}^{n}} \left[D\dfrac{dF}{d\nu} \left(((Z_{\cdot}+\epsilon Y_{\cdot})\otimes m)^{\mathcal{B}}\right)(Z_{x})-D\dfrac{dF}{d\nu}((Z_{\cdot}\otimes m)^{\mathcal{B}})(Z_{x}) \right]\cdot Y_{x}dm(x)\right)\right).\nonumber
\end{align}
 Thanks to the assumptions, we can obtain the following limit as $\epsilon\rightarrow0$: 
\begin{align}
&\mathbb{E}\left(\int_{\mathbb{R}^{n}} D^{2}\dfrac{dF}{d\nu}((Z_{\cdot}\otimes m)^{\mathcal{B}})(Z_{x})Y_{x} \cdot Y_{x}dm(x)\right)\nonumber\\
&+\mathbb{E} \left(\ \mathbb{E}^{\mathcal{B}} \left(\int_{\mathbb{R}^{n}}\mathbb{E}^{1\mathcal{B}} \left(\int_{\mathbb{R}^{n}}DD_{1}\dfrac{d^{2}}{d\nu^{2}}F((Z_{\cdot}\otimes m)^{\mathcal{B}})(Z_{x},Z_{x^{1}}^{1})Y_{x^{1}}^{1} \cdot Y_{x}dm(x^{1})\right)dm(x)\right)\right),\nonumber
\end{align}
and from the definition (\ref{eq:2-100}), this expression is equal
to $\left\langle D_{X}^{2}\mathbb{E}\left(F((Z_{\cdot}\otimes m)^{\mathcal{B}})\right)(Y_{\cdot}),Y_{\cdot} \right\rangle.$
Using the symmetry 
\begin{equation}
DD_{1}\dfrac{d^{2}}{d\nu^{2}}F(m)(x,x^{1})=D_{1}D\dfrac{d^{2}}{d\nu^{2}}F(m)(x,x^{1})=D_{1}D\dfrac{d^{2}}{d\nu^{2}}F(m)(x^{1},x),\label{eq:2-101}
\end{equation}
the operator $D_{X}^{2}\mathbb{E}\left(F((Z_{\cdot}\otimes m)^{\mathcal{B}})\right)(Y_{\cdot})$
is self-adjoint. The matrix $D^{2}\dfrac{dF}{d\nu}((Z_{\cdot}\otimes m)^{\mathcal{B}})(Z_{\cdot})$
and the second term $\mathbb{E}^{1\mathcal{B}}\left(\int_{\mathbb{R}^{n}}DD_{1}\dfrac{d^{2}}{d\nu^{2}}F((Z_{\cdot}\otimes m)^{\mathcal{B}})(Z_{\cdot},Z_{x^{1}}^{1})Y_{x^{1}}^{1}dm(x^{1})\right)$
are measurable with respect to $\mathcal{B\ \cup\,\sigma}(Z_{\cdot})$.
\end{proof}
\begin{remark}
\label{rem3-10} If $Y_{\cdot}$ and $Z_{\cdot}$ are independent, conditionally
to $\mathcal{B}$ and $\mathbb{E}(Y_{x})=0,$ then we have 
\begin{equation}
D_{X}^{2}\mathbb{E}\left(F((Z_{\cdot}\otimes m)^{\mathcal{B}})\right)(Y_{\cdot})=D^{2}\dfrac{dF}{d\nu}((Z_{\cdot}\otimes m)^{\mathcal{B}})(Z_{\cdot})Y_{\cdot}.\label{eq:3-2000}
\end{equation}
\end{remark}

\begin{proposition}
\label{prop2-3} With the assumptions of Proposition \ref{prop2-2}
we have the estimate 
\begin{equation}
\left\lVert D_{X}^{2}\mathbb{E}\left(F((Z_{\cdot}\otimes m)^{\mathcal{B}})\right)(Y_{\cdot})\right\rVert\leq C||Y_{\cdot}||.\label{eq:2-102}
\end{equation}
The function $Z_{\cdot}\mapsto D_{X}^{2}\mathbb{E}\left(F((Z_{\cdot}\otimes m)^{\mathcal{B}})\right)(Y)$
is continuous from $\mathcal{H}_{m}$ to $\mathcal{H}_{m}$, for fixed
$Y_{\cdot}.$ We have also the continuity property:
\begin{equation}
\left\langle D_{X}^{2}\mathbb{E}\left(F((Z_{\cdot}^{k}\otimes m)^{\mathcal{B}})\right)(Y^{k}),Y^{k} \right\rangle - \left\langle D_{X}^{2}\mathbb{E}\left(F((Z_{\cdot}\otimes m)^{\mathcal{B}})\right)(Y_{\cdot}^{k}),Y^{k} \right\rangle \rightarrow0,\label{eq:2-103}
\end{equation}
if $Z_{\cdot}^{k}\rightarrow Z_{\cdot}$ in $\mathcal{H}_{m}$ and $||Y_{\cdot}^{k}\mathbbm{1}_{|Y^{k}|\geq M}||\leq c(M)$,
with $c(M)\rightarrow0,$ as $M\rightarrow+\infty$. 
\end{proposition}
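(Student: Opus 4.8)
The plan is to read everything off the explicit formula \eqref{eq:2-100} for $D_{X}^{2}\mathbb{E}(F((Z_\cdot\otimes m)^{\mathcal{B}}))$ and to exploit the two uniform bounds \eqref{eq:2-221}, \eqref{eq:2-222}. For the norm estimate \eqref{eq:2-102} I would bound the two terms of \eqref{eq:2-100} separately. The first term $D^{2}\frac{dF}{d\nu}((Z_\cdot\otimes m)^{\mathcal{B}})(Z_\cdot)Y_\cdot$ is dominated pointwise by $C|Y_{x}|$, so its $\mathcal{H}_m$-norm is at most $C\|Y_\cdot\|$. For the second term I would use $|DD_{1}\frac{d^{2}}{d\nu^{2}}F|\le C$ together with the Cauchy--Schwarz inequality in $dm(x^{1})$ and conditional Jensen for $\mathbb{E}^{1\mathcal{B}}$, giving the pointwise bound $C\,(\mathbb{E}^{1\mathcal{B}}(\int|Y^{1}_{x^{1}}|^{2}dm(x^{1})))^{1/2}$; squaring, integrating $dm(x)$ (which contributes a factor $1$) and applying $\mathbb{E}$, the tower property collapses the conditional expectation of the independent copy to $\|Y_\cdot\|^{2}$. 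Summing the two contributions yields \eqref{eq:2-102}, with operator norm $\le 2C$.

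For the continuity in $Z_\cdot$ with $Y_\cdot$ fixed, the essential observation is that $Z^{k}_\cdot\to Z_\cdot$ in $\mathcal{H}_m$ controls the conditional measures: using the diagonal coupling $(\omega,x)\mapsto(Z^{k}_{x},Z_{x})$ one has $\mathbb{E}\,W_{2}^{2}((Z^{k}_\cdot\otimes m)^{\mathcal{B}},(Z_\cdot\otimes m)^{\mathcal{B}})\le\|Z^{k}_\cdot-Z_\cdot\|^{2}\to0$. Passing to a subsequence I may assume in addition that $Z^{k}_{x}\to Z_{x}$ for $dm\otimes\mathbb{P}$-a.e.\ $(x,\omega)$ (and likewise for the independent copy) and that $W_{2}((Z^{k}_\cdot\otimes m)^{\mathcal{B}},(Z_\cdot\otimes m)^{\mathcal{B}})\to0$ a.s. The joint continuity of $D^{2}\frac{dF}{d\nu}(m)(x)$ and $DD_{1}\frac{d^{2}}{d\nu^{2}}F(m)(x,x^{1})$ in all arguments then forces the integrands in \eqref{eq:2-100} to converge a.e.; since they are dominated by $C|Y_{x}|$ and $C|Y^{1}_{x^{1}}|$ respectively, dominated convergence gives convergence of both terms in $\mathcal{H}_m$ along the subsequence, and the usual subsequence argument upgrades this to convergence of the full sequence.

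The real work is \eqref{eq:2-103}, where $Y^{k}_\cdot$ is allowed to vary and only the uniform-integrability bound $\|Y^{k}_\cdot\mathbbm{1}_{|Y^{k}|\ge M}\|\le c(M)$ is assumed. I would first note that the choice $M=1$ already yields $\Lambda:=\sup_{k}\|Y^{k}_\cdot\|<\infty$. Writing the difference in \eqref{eq:2-103} via \eqref{eq:2-100} as $\Delta_{1}+\Delta_{2}$, where $\Delta_{1}$ collects the $D^{2}\frac{dF}{d\nu}$-terms and $\Delta_{2}$ the kernel terms, I set $A^{k}_{x}$ and $B^{k}_{x,x^{1}}$ to be the corresponding differences of derivatives evaluated along $Z^{k}_\cdot$ versus $Z_\cdot$; both are bounded by $2C$ and, by the subsequence reasoning of the previous paragraph, tend to $0$ a.e. The decisive device is truncation: split $Y^{k}_\cdot=Y^{k,M}_\cdot+R^{k,M}_\cdot$ with $Y^{k,M}_{x}=Y^{k}_{x}\mathbbm{1}_{|Y^{k}_{x}|<M}$ and $\|R^{k,M}_\cdot\|\le c(M)$. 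On the truncated part the pointwise bound $|Y^{k,M}_{x}|\le M$ dominates $|\Delta_{1}|$ by $M^{2}\,\mathbb{E}\int|A^{k}_{x}|\,dm(x)$ and $|\Delta_{2}|$ by $M^{2}\,\mathbb{E}\int\mathbb{E}^{1\mathcal{B}}(\int|B^{k}_{x,x^{1}}|\,dm(x^{1}))\,dm(x)$, both of which tend to $0$ for each fixed $M$ by dominated convergence; on the remainder I use the operator bound $\le 2C$ from the first step together with $\|R^{k,M}_\cdot\|\le c(M)$ and $\sup_{k}\|Y^{k}_\cdot\|\le\Lambda$ to bound the three cross/remainder pieces by $4C\Lambda c(M)+2Cc(M)^{2}$, uniformly in $k$. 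Sending $k\to\infty$ first and then $M\to\infty$ gives $\Delta_{1},\Delta_{2}\to0$, and once more the subsequence principle removes the passage to subsequences.

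The main obstacle is precisely this last step: because $Y^{k}_\cdot$ does not converge, dominated convergence cannot be applied directly, and the two limits must be disentangled. The truncation renders the bounded part amenable to dominated convergence (at the cost of the harmless factor $M^{2}$, absorbed because $A^{k}_{x}$ and $B^{k}_{x,x^{1}}$ converge in $L^{1}$), while the uniform-integrability hypothesis is exactly what controls the tail uniformly in $k$. Secondary care is needed in justifying the a.e.\ convergence of $B^{k}_{x,x^{1}}$ on the joint space of the variable and its independent copy, which rests on the $W_{2}$-convergence of the conditional measures $(Z^{k}_\cdot\otimes m)^{\mathcal{B}}$ together with the joint continuity assumption \eqref{eq:2-222}.
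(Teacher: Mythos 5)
Your proposal is correct and follows essentially the same route as the paper: the norm bound \eqref{eq:2-102} read off from the explicit formula \eqref{eq:2-100} together with the uniform bounds \eqref{eq:2-221}--\eqref{eq:2-222}, continuity in $Z_\cdot$ via dominated convergence, and \eqref{eq:2-103} via the same truncation $Y^{k}_\cdot\mathbbm{1}_{|Y^{k}|<M}$ plus $L^{1}/L^{2}$ convergence of the coefficient differences that the paper uses. You merely supply details the paper leaves implicit (the diagonal coupling giving $\mathbb{E}\,W_{2}^{2}((Z^{k}_\cdot\otimes m)^{\mathcal{B}},(Z_\cdot\otimes m)^{\mathcal{B}})\le\|Z^{k}_\cdot-Z_\cdot\|^{2}$, the a.e.-subsequence device, the bound $\sup_k\|Y^{k}_\cdot\|<\infty$ from the hypothesis, and the explicit cross-term bookkeeping), all of which are sound.
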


\begin{proof}
We use 
\begin{align}
    &\ \left\lVert D_{X}^{2}\mathbb{E}\left(F((Z_{\cdot}^{k}\otimes m)^{\mathcal{B}})\right)(Y_{\cdot})-D_{X}^{2}\mathbb{E}\left(F((Z_{\cdot}\otimes m)^{\mathcal{B}})\right)(Y_{\cdot})\right\rVert \nonumber\\
    \leq &\ \left\lVert \left(D^{2}\dfrac{dF}{d\nu}((Z_{\cdot}^{k}\otimes m)^{\mathcal{B}})(Z_{\cdot}^{k})-D^{2}\dfrac{dF}{d\nu}((Z_{\cdot}\otimes m)^{\mathcal{B}})(Z_{\cdot}) \right)Y_{\cdot} \right\rVert\label{eq:2-1040}\\
    &\quad + \left\lVert\mathbb{E}^{1\mathcal{B}}\left(\int_{\mathbb{R}^{n}} \left(DD_{1}\dfrac{d^{2}}{d\nu^{2}}F((Z_{\cdot}^{k}\otimes m)^{\mathcal{B}})(Z_{\cdot}^{k},Z_{x^{1}}^{1k})-DD_{1}\dfrac{d^{2}}{d\nu^{2}}F((Z_{\cdot}\otimes m)^{\mathcal{B}})(Z_{\cdot},Z_{x^{1}}^{1}) \right)Y_{x^{1}}^{1}dm(x^{1})\right) \right\rVert.\nonumber
\end{align}
Then 
\begin{align}
    &\quad \left\lVert \left(D^{2}\dfrac{dF}{d\nu}((Z_{\cdot}^{k}\otimes m)^{\mathcal{B}})(Z_{\cdot}^{k})-D^{2}\dfrac{dF}{d\nu}((Z_{\cdot}\otimes m)^{\mathcal{B}})(Z_{\cdot}) \right)Y_{\cdot}\right\rVert^{2}\nonumber\\
    &=\mathbb{E}\left(\int_{\mathbb{R}^{n}} \left|\left(D^{2}\dfrac{dF}{d\nu}((Z_{\cdot}^{k}\otimes m)^{\mathcal{B}})(Z_{x}^{k})-D^{2}\dfrac{dF}{d\nu}((Z_{\cdot}\otimes m)^{\mathcal{B}})(Z_{x}) \right)Y_{x} \right|^{2}dm(x)\right)\ \rightarrow0,\nonumber
\end{align}
from the assumptions and standard Lebesgue integration theory. 
Similarly, 
\begin{equation*} \hspace{-1cm}
\begin{aligned} 
    &\quad \left\lVert\mathbb{E}^{1\mathcal{B}}\left(\int_{\mathbb{R}^{n}} \left(DD_{1}\dfrac{d^{2}}{d\nu^{2}}F((Z_{\cdot}^{k}\otimes m)^{\mathcal{B}})(Z_{\cdot}^{k},Z_{x^{1}}^{1k})-DD_{1}\dfrac{d^{2}}{d\nu^{2}}F((Z_{\cdot}\otimes m)^{\mathcal{B}})(Z_{\cdot},Z_{x^{1}}^{1}) \right)Y_{x^{1}}^{1}dm(x^{1})\right)\right\rVert^{2}\nonumber\\
    &=\mathbb{E}\left(\int_{\mathbb{R}^{n}} \left|\mathbb{E}^{1\mathcal{B}} \left(\int_{\mathbb{R}^{n}} \left(DD_{1}\dfrac{d^{2}}{d\nu^{2}}F((Z_{\cdot}^{k}\otimes m)^{\mathcal{B}})(Z_{\cdot}^{k},Z_{x^{1}}^{1k})-DD_{1}\dfrac{d^{2}}{d\nu^{2}}F((Z_{\cdot}\otimes m)^{\mathcal{B}})(Z_{\cdot},Z_{x^{1}}^{1}) \right)Y_{x^{1}}^{1}dm(x^{1})\right) \right|^{2}dm(x)\right)\nonumber\\
    &\leq \mathbb{E} \left(\ \mathbb{E}^{\mathcal{B}} \left(\int_{\mathbb{R}^{n}}\mathbb{E}^{1\mathcal{B}} \left(\int_{\mathbb{R}^{n}} \left|DD_{1}\dfrac{d^{2}}{d\nu^{2}}F((Z_{\cdot}^{k}\otimes m)^{\mathcal{B}})(Z_{\cdot}^{k},Z_{x^{1}}^{1k})-DD_{1}\dfrac{d^{2}}{d\nu^{2}}F((Z_{\cdot}\otimes m)^{\mathcal{B}})(Z_{\cdot},Z_{x^{1}}^{1}) \right|^{2}|Y_{x^{1}}^{1}|^{2}dm(x^{1})\right)dm(x)\right)\right)\nonumber\\&
    \rightarrow0,\nonumber
\end{aligned}
\end{equation*}
from the assumptions and Lebesgue integration theory. Also, (\ref{eq:2-103})
will follow from 
\begin{equation*} \hspace{-1cm}
\begin{aligned} 
    &\mathbb{E}\left(\int_{\mathbb{R}^{n}} \left|D^{2}\dfrac{dF}{d\nu}((Z_{\cdot}^{k}\otimes m)^{\mathcal{B}})(Z_{x}^{k})-D^{2}\dfrac{dF}{d\nu}((Z_{\cdot}\otimes m)^{\mathcal{B}})(Z_{x}) \right|^{2}dm(x)\right)\rightarrow0,\nonumber\\
&\mathbb{E}\left(\ \mathbb{E}^{\mathcal{B}}\left(\int_{\mathbb{R}^{n}}\mathbb{E}^{1\mathcal{B}} \left(\int_{\mathbb{R}^{n}} \left|DD_{1}\dfrac{d^{2}}{d\nu^{2}}F((Z_{\cdot}^{k}\otimes m)^{\mathcal{B}})(Z_{x}^{k},Z_{x^{1}}^{1k})-DD_{1}\dfrac{d^{2}}{d\nu^{2}}F((Z_{\cdot}\otimes m)^{\mathcal{B}})(Z_{x},Z_{x^{1}}^{1}) \right|^{2}dm(x^{1})\right)dm(x)\right)\right)\rightarrow0,\nonumber
\end{aligned}
\end{equation*}
as well as the fact that we can replace $Y^{k}$ with $Y_{\cdot}^{k}\mathbbm{1}_{|Y^{k}|<M}$,
which is uniformly bounded. Finally, (\ref{eq:2-102}) is an easy consequence
of the second parts of (\ref{eq:2-221}), (\ref{eq:2-222}). This
concludes the proof. 
\end{proof}
By collecting results, we can write the second-order Taylor formula:
\begin{align}
    \mathbb{E} \left(F(((Z_{\cdot}+Y_{\cdot})\otimes m)^{\mathcal{B}})\right)-\mathbb{E} \left(F((Z_{\cdot}\otimes m)^{\mathcal{B}})\right) &= \left\langle D_{X}\mathbb{E}\left(F((Z_{\cdot}\otimes m)^{\mathcal{B}})\right),Y_{\cdot} \right\rangle \label{eq:2-105}\\
    &\quad \, +\int_{0}^{1}\int_{0}^{1}\lambda \left\langle D_{X}^{2}\mathbb{E}\left( F(((Z_{\cdot}+\lambda\mu Y_{\cdot})\otimes m)^{\mathcal{B}})\right)(Y_{\cdot}),Y_{\cdot} \right\rangle d\lambda d\mu.\nonumber
\end{align}

\section{STOCHASTIC CALCULUS }

\subsection{PRELIMINARIES}

As said before the probability space $(\Omega,\mathcal{A},\mathbb{P})$ will
be sufficiently large to contain two independent standard Wiener processes
in $\mathbb{R}^{n}$, denoted by $w(t)$ and $b(t)$, respectively, and for
any $t$, additional random variables independent of the filtration
$\mathcal{F}_{t},$ which is the family of $\sigma$-algebras $\mathcal{F}_{t}^{s}=\sigma(w(\tau)-w(t),b(\tau)-b(t)\ ,t\leq\tau\leq s).$
We also set $\mathcal{B}_{t}^{s}=\sigma(b(\tau)-b(t),\ t\leq\tau\leq s).$ Let $X_{x}$ be in $\mathcal{H}_{m}$, independent of $\mathcal{F}_{t}$.
We define $\mathcal{F}_{X_{\cdot}t}^{s} :=\sigma(X_{\cdot})\cup\mathcal{F}_{t}^{s}$
and $\mathcal{F}_{X_{\cdot}t}$ the filtration generated by the $\sigma$-algebras
$\mathcal{F}_{X_{\cdot}t}^{s}$. We denote by $L_{\mathcal{F}_{X_{\cdot}t}}^{2}(t,T;\mathcal{H}_{m})$
the subspace of $L^{2}(t,T;\mathcal{H}_{m})$ adapted to the filtration
$\mathcal{F}_{X_{\cdot}t}$. An element of $L_{\mathcal{F}_{X_{\cdot}t}}^{2}(t,T;\mathcal{H}_{m})$
can be written as $X_{\xi t}(s)\Big|_{\xi=X_{x}}$, where for any $\xi$
in $\mathbb{R}^{n},$ the process $X_{\xi t}(\cdot)$ is adapted to $\mathcal{F}_{t}$,
and 
\[
\int_{t}^{T}\mathbb{E}\left(\int_{\mathbb{R}^{n}}\left(\mathbb{E}\left(|X_{\xi t}(s)|^{2}\right)\bigg|_{\xi=X_{x}}\right)dm(x)\right)ds=\int_{t}^{T}\mathbb{E}\left(\int_{\mathbb{R}^{n}}|X_{X_{x}t}(s)|^{2}dm(x)\right)ds,
\]
because of the independence of $X_{x}$ and $\mathcal{F}_{t}$. Similarly,
we have: 
\begin{equation}
(X_{X_{\cdot}t}(s)\otimes m)^{\mathcal{B}_{t}^{s}}=(X_{\cdot t}(s)\otimes(X_{\cdot}\otimes m))^{\mathcal{B}_{t}^{s}}.\label{eq:3-200}
\end{equation}
This follows from the relation 
\begin{equation}
\mathbb{E}^{\mathcal{B}_{t}^{s}} \left(\int_{\mathbb{R}^{n}}\varphi(X_{X_{x}t}(s))dm(x)\right)=\mathbb{E}^{\mathcal{B}_{t}^{s}}\left(\int_{\mathbb{R}^{n}}\varphi(X_{\eta t}(s))d(X_{\cdot}\otimes m)(\eta)\right),\label{eq:3-201}
\end{equation}
 for any test function $\varphi$ continuous and bounded on $\mathbb{R}^{n}.$ The
relation (\ref{eq:3-201}) is a consequence of the independence of
$X_{x}$ and $\mathcal{B}_{t}^{s}.$ 
\begin{remark}
\label{rem3-1}The random variable $X_{X_{x}t}(s)$ is independent
of $\mathcal{B}_{s}^{T},$ if $s<T.$ Therefore,
\begin{equation}
(X_{X_{\cdot}t}(s)\otimes m)^{\mathcal{B}_{t}^{s}}=(X_{X_{\cdot}t}(s)\otimes m)^{\mathcal{B}_{t}^{T}}.\label{eq:3-202}
\end{equation}
\end{remark}

\subsection{IT\^O PROCESS}

Let $u_{X_{\cdot}t}(\cdot)\in L_{\mathcal{F}_{X_{\cdot}t}}^{2}(t,T;\mathcal{H}_{m}).$
To simplify, we assume that 
\begin{equation}
u_{X_{\cdot}t}(\cdot)\in C_{\mathcal{F}_{X_{\cdot}t}}^{0}(t,T;\mathcal{H}_{m}),\ ||u_{X_{\cdot}t}(s)||\leq C(1+||X||).\label{eq:3-1000}
\end{equation}
We then focus on processes $X_{X_{\cdot}t}(s)$ of the form 
\begin{equation}
X_{X_{x}t}(s)=X_{x}+\int_{t}^{s}u_{X_{x}t}(\tau)d\tau+\sigma(w(s)-w(t))+\beta(b(s)-b(t)),\label{eq:3-101}
\end{equation}
where $\sigma$ is a matrix $n\times n$ and $\beta$ is simply a
constant. We call $X_{X_{x}t}(s)$ an It\^o process. For the differential
calculus we are going to use, we could consider much more general It\^o processes, but the form (\ref{eq:3-101}) will be sufficient for
the control problem we are interested in. We consider a functional
$F(m,s)$ on $\mathcal{P}_{2}(\mathbb{R}^{n})\times(0,T).$ As a function of
$m$, it will satisfy all the assumptions (\ref{eq:2-2162}), (\ref{eq:2-2163}),
(\ref{eq:2-221}), (\ref{eq:2-222}), but because of the argument
$s$, we have to restate them: 
\begin{numcases}{}
    (m,s)\mapsto  F(m,s),\ \text{continuous},\ |F(m,s)|\leq C\left(1+\int_{\mathbb{R}^{n}}|x|^{2}dm(x)\right);\label{eq:3-150}\\
    s\mapsto F(m,s)\ \text{is differentiable a.e., }\left|\tfrac{\partial F}{\partial s}(m,s)\right|\leq C\left(1+\int_{\mathbb{R}^{n}}|x|^{2}dm(x)\right);\label{eq:3-151}\\
    (x,m,s)\mapsto\dfrac{dF}{d\nu}(m,s)(x),D\dfrac{dF}{d\nu}(m,s)(x)\ \text{continuous;}\ \label{eq:3-152}\\
    \left|\dfrac{dF}{d\nu}(m,s)(x)\right|\leq C(1+|x|^{2}),\ \left|D\dfrac{dF}{d\nu}(m,s)(x)\right|\leq C(1+|x|^{2})^{\frac{1}{2}};\nonumber\\
    \left|D^{2}\dfrac{dF}{d\nu}(m,s)(x)\right|\leq C;\label{eq:3-153}\\
    \left|DD_{1}\dfrac{d^{2}}{d\nu^{2}}F(m,s)(x,x^{1})\right|\leq C.\label{eq:3-154}
\end{numcases}
With the assumptions (\ref{eq:3-150}) to (\ref{eq:3-154}), we can assert from Proposition \ref{prop2-1} and Proposition \ref{prop2-2}
that the function $(Z_{\cdot},s)\mapsto \mathbb{E}\left(F((Z_{\cdot}\otimes m)^{\mathcal{B}},s)\right)$
is twice G\^{a}teaux differentiable and a.e. differentiable in $s$ with
the properties 
\begin{numcases}{}
    \left\lVert\mathbb{E}\left(F((Z_{\cdot}\otimes m)^{\mathcal{B}},s)\right)\right\rVert\leq C(1+||Z||^{2});\label{eq:3-155}\\
    \left\lVert \dfrac{\partial}{\partial s}\mathbb{E}\left(F((Z_{\cdot}\otimes m)^{\mathcal{B}},s)\right)\right\rVert \leq C(1+||Z||^{2})\ \text{a.e.}\ s;\nonumber\\
    \left\lVert D_{X}\mathbb{E}\left(F((Z_{\cdot}\otimes m)^{\mathcal{B}},s)\right)\right\rVert \leq C(1+||Z||);\nonumber\\
    (Z_{\cdot},s)\mapsto \mathbb{E}\left(F((Z_{\cdot}\otimes m)^{\mathcal{B}},s)\right)\ \text{continuous};\label{eq:3-156}\\
    (Z_{\cdot},s)\mapsto D_{X}\mathbb{E}\left(F((Z_{\cdot}\otimes m)^{\mathcal{B}},s)\right),\ D_{X}^{2}\mathbb{E}\left(F((Z_{\cdot}\otimes m)^{\mathcal{B}},s)\right)(Y_{\cdot}),\ \text{continuous, for fixed }Y_{\cdot}.\ \label{eq:3-157}      
\end{numcases}
 We then consider the function $s\mapsto \mathbb{E}\left(F((X_{X_{\cdot}t}(s)\otimes m)^{\mathcal{B}_{t}^{s}},s)\right)$
and study its differentiability.

\subsection{DIFFERENTIABILITY }

We state the following result: 
\begin{theorem}
\label{theo3-10} We assume (\ref{eq:3-1000}), (\ref{eq:3-155}), (\ref{eq:3-156}), (\ref{eq:3-157}). Then the function $s\mapsto \mathbb{E}\left(F((X_{X_{\cdot}t}(s)\otimes m)^{\mathcal{B}_{t}^{s}},s)\right)$
is a.e. differentiable on $(t,T)$ and we have the formula (It\^o's formula):
\begin{align}
\dfrac{d}{ds}\mathbb{E}\left(F((X_{X_{\cdot}t}(s)\otimes m)^{\mathcal{B}_{t}^{s}},s)\right)=&\,\dfrac{\partial}{\partial s}\mathbb{E}\left(F((X_{X_{\cdot}t}(s)\otimes m)^{\mathcal{B}_{t}^{s}},s)\right)+\left\langle D_{X}\mathbb{E}\left(F((X_{X_{\cdot}t}(s)\otimes m)^{\mathcal{B}_{t}^{s}},s)\right),u_{X_{\cdot}t}(s) \right\rangle\nonumber\\
&+\dfrac{1}{2} \left\langle D_{X}^{2}\mathbb{E}\left(F((X_{X_{\cdot}t}(s)\otimes m)^{\mathcal{B}_{t}^{s}},s)\right)(\sigma N_{s}),\sigma N_{s} \right\rangle \label{eq:3-113} \\
&+\dfrac{\beta^{2}}{2}\sum_{j=1}^{n} \left\langle D_{X}^{2}\mathbb{E}\left(F((X_{X_{\cdot}t}(s)\otimes m)^{\mathcal{B}_{t}^{s}},s)\right)(e^{j}),e^{j} \right\rangle,\text{a.e.}\ s\in(0,T),\nonumber 
\end{align}
where $N_{s}$ is an independent Gaussian with values in
$\mathbb{R}^{n}$ with  mean 0 and a unit variance; $N_s$ is independent
of the $\sigma$-algebra $\mathcal{F}_{X_{\cdot}t}^{s}$, and $e^{j}$'s are
the coordinate vectors of $\mathbb{R}^{n}.$ 
\end{theorem}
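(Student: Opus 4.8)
The plan is to freeze the conditioning $\sigma$-algebra and reduce everything to the Hilbert-space Taylor formula \eqref{eq:2-105}. First I would invoke Remark \ref{rem3-1}: since $X_{X_\cdot t}(\tau)$ is independent of the future increments of $b$, we have $(X_{X_\cdot t}(\tau)\otimes m)^{\mathcal{B}_t^\tau}=(X_{X_\cdot t}(\tau)\otimes m)^{\mathcal{B}_t^T}$ for every $\tau\in(t,T)$. Thus, with the \emph{fixed} $\sigma$-algebra $\mathcal{B}:=\mathcal{B}_t^T$ and the notation $\Phi_\tau(Z_\cdot):=\mathbb{E}(F((Z_\cdot\otimes m)^{\mathcal{B}},\tau))$, the function to differentiate is simply $g(s)=\Phi_s(X_{X_\cdot t}(s))$. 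Writing $\Delta:=X_{X_\cdot t}(s+h)-X_{X_\cdot t}(s)=\int_s^{s+h}u_{X_\cdot t}(\tau)d\tau+\sigma(w(s+h)-w(s))+\beta(b(s+h)-b(s))$, I split $g(s+h)-g(s)=[\Phi_{s+h}(X_{X_\cdot t}(s))-\Phi_s(X_{X_\cdot t}(s))]+[\Phi_{s+h}(X_{X_\cdot t}(s)+\Delta)-\Phi_{s+h}(X_{X_\cdot t}(s))]$ and apply \eqref{eq:2-105} (each inner product there already carries the expectation $\mathbb{E}$, so the whole identity is between deterministic numbers) to the second bracket. The first bracket equals $\int_s^{s+h}\frac{\partial}{\partial\tau}\Phi_\tau(X_{X_\cdot t}(s))\,d\tau$, which after division by $h$ produces the term $\frac{\partial}{\partial s}\mathbb{E}(F(\cdots))$ for a.e.\ $s$.

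For the first-order term $\langle D_X\Phi_{s+h}(X_{X_\cdot t}(s)),\Delta\rangle$ I use Proposition \ref{prop2-1}, giving $D_X\Phi_{s+h}(X_{X_\cdot t}(s))=D\dfrac{dF}{d\nu}((X_{X_\cdot t}(s)\otimes m)^{\mathcal{B}},s+h)(X_{X_\cdot t}(s))$. The drift part of $\Delta$ contributes $\langle D_X\mathbb{E}(F(\cdots)),u_{X_\cdot t}(s)\rangle$ in the limit, by continuity. Both noise parts vanish: the vector $\int_{\mathbb{R}^n}D\dfrac{dF}{d\nu}((X_{X_\cdot t}(s)\otimes m)^{\mathcal{B}},s+h)(X_{X_x t}(s))\,dm(x)$ is $\mathcal{F}_{X_\cdot t}^s$-measurable, since Remark \ref{rem3-1} makes the conditional law $\mathcal{B}_t^s$-measurable and hence a function of the noises only up to time $s$, while $w(s+h)-w(s)$ and $b(s+h)-b(s)$ are future increments, independent of $\mathcal{F}_{X_\cdot t}^s$ with mean zero.

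The second-order term is $\int_0^1\int_0^1\lambda\langle D_X^2\Phi_{s+h}(X_{X_\cdot t}(s)+\lambda\mu\Delta)(\Delta),\Delta\rangle\,d\lambda\,d\mu$. Because $\|\Delta\|^2=O(h)$, the drift$\times$drift and drift$\times$noise contributions are $O(h^2)$ and $O(h^{3/2})$ and drop out, and the $\sigma(w(s+h)-w(s))\times\beta(b(s+h)-b(s))$ cross term vanishes since $w$ and $b$ are independent. For the local-noise square, $\sigma(w(s+h)-w(s))$ is independent of both $X_{X_\cdot t}(s)$ and $\mathcal{B}$ with mean zero, so Remark \ref{rem3-10} annihilates the nonlocal part of the Hessian; averaging with $\mathbb{E}((w(s+h)-w(s))(w(s+h)-w(s))^{\top})=hI$ and re-expressing the resulting trace as a Gaussian average over an independent $N_s$ gives $\frac{h}{2}\langle D_X^2\mathbb{E}(F(\cdots))(\sigma N_s),\sigma N_s\rangle$. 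For the common-noise square the situation differs: $b(s+h)-b(s)$ is $\mathcal{B}$-measurable, so its conditionally-$\mathcal{B}$-independent copy equals itself and \emph{both} terms of the Proposition \ref{prop2-2} Hessian survive; the kernel is $\mathcal{F}_{X_\cdot t}^s$-measurable and independent of the increment, so averaging $hI$ against it and identifying the outcome through Proposition \ref{prop2-2} with the deterministic directions $e^j$ yields $\frac{\beta^2 h}{2}\sum_{j}\langle D_X^2\mathbb{E}(F(\cdots))(e^j),e^j\rangle$. This asymmetry between the Gaussian $N_s$ and the $e^j$-sum is exactly the distinction between a direction independent of $\mathcal{B}$ (local noise) and a $\mathcal{B}$-measurable direction (common noise).

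I expect the main obstacle to be the rigorous passage to the limit in this second-order term: one must replace the moving evaluation point $X_{X_\cdot t}(s)+\lambda\mu\Delta$ and the time $s+h$ by $X_{X_\cdot t}(s)$ and $s$, and only then compute the Gaussian averages, dividing throughout by $h$. This is precisely where the continuity statement \eqref{eq:2-103} of Proposition \ref{prop2-3} is needed, together with the uniform integrability of $|\Delta|^2/h$ — whose truncation is governed by the hypothesis $\|Y_\cdot^{k}\mathbbm{1}_{|Y^k|\geq M}\|\le c(M)\to0$ appearing there — the Gaussian tails of the noise increments making this step delicate. Collecting the four surviving contributions and letting $h\to0$ then gives \eqref{eq:3-113} for a.e.\ $s$.
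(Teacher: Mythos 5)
Your proposal is correct and follows essentially the same route as the paper's proof: the Taylor formula \eqref{eq:2-105} applied to the increment, the first-order noise contributions killed by $\mathcal{F}_{X_{\cdot}t}^{s}$-measurability of the gradient, the second-order term reduced to the Gaussian quadratic forms with exactly the local/common-noise asymmetry via Remark \ref{rem3-10} and formula \eqref{eq:2-100}, and the limit passage handled through \eqref{eq:2-103} and \eqref{eq:3-157}. Your one genuine refinement—freezing the conditioning at $\mathcal{B}=\mathcal{B}_{t}^{T}$ once and for all via Remark \ref{rem3-1}—is a welcome tidying of the paper's implicit (and notationally looser) use of that remark at the intermediate points $X_{X_{\cdot}t}(s)+\lambda\mu\widetilde{X}_{X_{\cdot}t}(s)$, but it does not change the substance of the argument.
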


\begin{proof}
We can write 
\begin{align}
    X_{X_{x}t}(s+\epsilon)&=X_{X_{x}t}(s)+\int_{s}^{s+\epsilon}u_{X_{x}t}(\tau)d\tau+\sigma(w(s+\epsilon)-w(s))+\beta(b(s+\epsilon)-b(s)) =X_{X_{x}t}(s)+\widetilde{X}_{X_{x}t}(s).\label{eq:3-114}
\end{align}
We can use the second-order Taylor formula (\ref{eq:2-105}) to write 
\begin{align}
&\quad \, \cfrac{1}{\epsilon} \Bigg[\mathbb{E} \left(F \left((X_{X_{\cdot}t}(s+\epsilon)\otimes m)^{\mathcal{B}_{t}^{s+\epsilon}},s+\epsilon \right)\right)-\mathbb{E} \left(F \left((X_{X_{\cdot}t}(s)\otimes m)^{\mathcal{B}_{t}^{s}},s+\epsilon \right)\right) \Bigg]\nonumber
\\
&=\left\langle D_{X}\mathbb{E}\left(F((X_{X_{\cdot}t}(s)\otimes m)^{\mathcal{B}_{t}^{s}},s+\epsilon)\right),\dfrac{\widetilde{X}_{X_{x}t}(s)}{\epsilon} \right\rangle\nonumber
\\
&\quad \, +\int_{0}^{1}\int_{0}^{1}\lambda \left\langle D_{X}^{2}\mathbb{E}\left(F(((X_{X_{\cdot}t}(s)+\lambda\mu\widetilde{X}_{X_{x}t}(s))\otimes m)^{\mathcal{B}_{t}^{s}},s+\epsilon)\right) \left(\dfrac{\widetilde{X}_{X_{x}t}(s)}{\sqrt{\epsilon}} \right),\dfrac{\widetilde{X}_{X_{x}t}(s)}{\sqrt{\epsilon}} \right\rangle d\lambda d\mu.\label{eq:3-115}
\end{align}
Since $D_{X}\mathbb{E}\left(F((X_{X_{\cdot}t}(s)\otimes m)^{\mathcal{B}_{t}^{s}},s+\epsilon)\right)$
is $\mathcal{F}_{Xt}^{s}$-measurable,
\[
\left\langle D_{X}\mathbb{E}\left(F((X_{X_{\cdot}t}(s)\otimes m)^{\mathcal{B}_{t}^{s}},s+\epsilon)\right),\dfrac{\widetilde{X}_{X_{x}t}(s)}{\epsilon}\right\rangle= \left\langle D_{X}\mathbb{E}\left(F((X_{X_{\cdot}t}(s)\otimes m)^{\mathcal{B}_{t}^{s}},s+\epsilon)\right),\dfrac{\int_{s}^{s+\epsilon}u_{X_{x}t}(\tau)d\tau}{\epsilon}\right\rangle.
\]
Therefore,
\begin{equation}
\bigg\langle D_{X}\mathbb{E}\left(F((X_{X_{\cdot}t}(s)\otimes m)^{\mathcal{B}_{t}^{s}},s+\epsilon)\right),\dfrac{\widetilde{X}_{X_{x}t}(s)}{\epsilon}\bigg\rangle - \bigg\langle D_{X}\mathbb{E}\left(F((X_{X_{\cdot}t}(s)\otimes m)^{\mathcal{B}_{t}^{s}},s+\epsilon)\right),u_{X_{x}t}(s) \bigg\rangle\rightarrow0.\label{eq:3-116}
\end{equation}
Next, since 
\[
\left\lVert\dfrac{\widetilde{X}_{X_{x}t}(s)}{\sqrt{\epsilon}}-\dfrac{\sigma(w(s+\epsilon)-w(s))}{\sqrt{\epsilon}}-\dfrac{\beta(b(s+\epsilon)-b(s))}{\sqrt{\epsilon}} \right\rVert\leq C\sqrt{\epsilon},
\]
we can write 
\begin{align}
&\int_{0}^{1}\int_{0}^{1}\lambda \left\langle D_{X}^{2}\mathbb{E}\left(F(((X_{X_{\cdot}t}(s)+\lambda\mu\widetilde{X}_{X_{x}t}(s))\otimes m)^{\mathcal{B}_{t}^{s}},s+\epsilon)\right)\left(\dfrac{\widetilde{X}_{X_{x}t}(s)}{\sqrt{\epsilon}}\right),\dfrac{\widetilde{X}_{X_{x}t}(s)}{\sqrt{\epsilon}} \right\rangle d\lambda d\mu \label{eq:3-117}\\
    &-\int_{0}^{1}\int_{0}^{1}\lambda\bigg\langle D_{X}^{2}\mathbb{E} \left(F \left(((X_{X_{\cdot}t}(s)+\lambda\mu\widetilde{X}_{X_{\cdot}t}(s))\otimes m)^{\mathcal{B}_{t}^{s}},s+\epsilon \right)\right) (B(\epsilon)), B(\epsilon)\bigg\rangle d\lambda d\mu\rightarrow0,\nonumber
\end{align}
where
\begin{equation*}
    B(\epsilon) := \dfrac{\sigma(w(s+\epsilon)-w(s))+\beta(b(s+\epsilon)-b(s))}{\sqrt{\epsilon}}.
\end{equation*}
The next step is to observe that 
\begin{align}
  &\int_{0}^{1}\int_{0}^{1}\lambda \bigg\langle D_{X}^{2}\mathbb{E} \left(F\left(((X_{X_{\cdot}t}(s)+\lambda\mu\widetilde{X}_{X_{\cdot}t}(s))\otimes m)^{\mathcal{B}_{t}^{s}},s+\epsilon\right)\right)(B(\epsilon)),B(\epsilon) \bigg\rangle d\lambda d\mu\label{eq:3-118}  \\
  &-\dfrac{1}{2}\bigg\langle D_{X}^{2}\mathbb{E}\left(F((X_{X_{\cdot}t}(s)\otimes m)^{\mathcal{B}_{t}^{s}},s+\epsilon)\right) (B(\epsilon)),B(\epsilon) \bigg\rangle \rightarrow 0.\nonumber
\end{align}

Consider a term like 
\begin{align}
I_{\epsilon}&=\mathbb{E}\Bigg[\int_{\mathbb{R}^{n}}\Bigg[\bigg(\int_{0}^{1}\int_{0}^{1}\lambda D_{X}^{2}\mathbb{E}\left(F\left(((X_{X_{\cdot}t}(s)+\lambda\mu\widetilde{X}_{X_{\cdot}t}(s))\otimes m)^{\mathcal{B}_{t}^{s}},s+\epsilon\right) \right)d\lambda d\mu\nonumber\\
&\quad-D_{X}^{2}\mathbb{E}F\left(\left((X_{X_{\cdot}t}(s)\otimes m)^{\mathcal{B}_{t}^{s}},s+\epsilon\right)\right)\bigg)(\sigma e^{j})\cdot\sigma e^{j}\dfrac{(w_{j}(s+\epsilon)-w_{j}(s))^{2}}{\epsilon}\Bigg]dm(x)\Bigg].\nonumber
\end{align}
% \begin{align*}
%     I_{\epsilon} &= \mathbb{E}\Bigg(\int_{\mathbb{R}^{n}}\Bigg[\int_{0}^{1}\int_{0}^{1}\lambda \bigg(D_{X}^{2}\mathbb{E} \left(F \left(((X_{X_{\cdot}t}(s)+\lambda\mu\widetilde{X}_{X_{\cdot}t}(s))\otimes m)^{\mathcal{B}_{t}^{s}},s+\epsilon\right)\right) \\
%     &\qquad \qquad -D_{X}^{2}\mathbb{E} \left(F((X_{X_{\cdot}t}(s)\otimes m)^{\mathcal{B}_{t}^{s}},s+\epsilon)\right)\bigg)(\sigma e^{j}) \cdot \sigma e^{j}\dfrac{(w_{j}(s+\epsilon)-w_{j}(s))^{2}}{\epsilon} d \lambda d\mu \Bigg] dm(x)\Bigg).
% \end{align*}
Using (\ref{eq:3-157}) and the fact that $\dfrac{(w_{j}(s+\epsilon)-w_{j}(s))^{2}}{\epsilon}$ has a bounded $L^2$ norm, we deduce that $I_\epsilon \to 0$; indeed, by \eqref{eq:3-157},
\begin{align}
|I_{\epsilon}|\leq&\  C\mathbb{E}\int_{\mathbb{R}^{n}}\bigg|\bigg(\int_{0}^{1}\int_{0}^{1}\lambda D_{X}^{2}\mathbb{E}F(((X_{X_{\cdot}t}(s)+\lambda\mu\widetilde{X}_{X_{x}t}(s))\otimes m)^{\mathcal{B}_{t}^{s}},s+\epsilon)d\lambda d\mu \nonumber \\
&\qquad \qquad -D_{X}^{2}\mathbb{E}F((X_{X_{\cdot}t}(s)\otimes m)^{\mathcal{B}_{t}^{s}},s+\epsilon)\bigg)(\sigma e^{j})\bigg| \dfrac{(w_{j}(s+\epsilon)-w_{j}(s))^{2}}{\epsilon}dm(x)\nonumber\\
\leq&\  C\sqrt{\mathbb{E}\int_{\mathbb{R}^{n}}\Big|\Big(\int_{0}^{1}\int_{0}^{1}\lambda D_{X}^{2}\mathbb{E}F\left(((X_{X_{\cdot}t}(s)+\lambda\mu\widetilde{X}_{X_{x}t}(s))\otimes m)^{\mathcal{B}_{t}^{s}},s+\epsilon\right)d\lambda d\mu}\nonumber\\
&\overline{-D_{X}^{2}\mathbb{E}F\left((X_{X_{\cdot}t}(s)\otimes m)^{\mathcal{B}_{t}^{s}},s+\epsilon\right)\Big)(\sigma e^{j})\Big|^{2}dm(x)}\rightarrow0,\nonumber
\end{align}
It remains to check that 
\begin{align}
&\quad \, \left\langle D_{X}^{2}\mathbb{E}\left(F((X_{X_{\cdot}t}(s)\otimes m)^{\mathcal{B}_{t}^{s}},s+\epsilon)\right)(B(\epsilon)), B(\epsilon) \right\rangle\label{eq:3-119}\\
&=\left\langle D_{X}^{2}\mathbb{E}\left(F\left((X_{X_{\cdot}t}(s)\otimes m)^{\mathcal{B}_{t}^{s}},s+\epsilon\right)\right)(\sigma N_{s}),\sigma N_{s} \right\rangle +\dfrac{\beta^{2}}{2}\sum_{j=1}^{n} \left\langle D_{X}^{2}\mathbb{E}\left(F\left((X_{X_{\cdot}t}(s)\otimes m)^{\mathcal{B}_{t}^{s}},s+\epsilon\right)\right)(e^{j}),e^{j}\right\rangle,\nonumber
\end{align}
using Remark \ref{rem3-10} and the explicit formula (\ref{eq:2-100}). 

Collecting results, we can assert that 
\begin{align}
&\quad \int_{0}^{1}\int_{0}^{1}\lambda \left\langle D_{X}^{2}\mathbb{E}\left(F\left(((X_{X_{\cdot}t}(s)+\lambda\mu\widetilde{X}_{X_{x}t}(s))\otimes m)^{\mathcal{B}_{t}^{s}},s+\epsilon\right)\right) \left(\dfrac{\widetilde{X}_{X_{x}t}(s)}{\sqrt{\epsilon}} \right),\dfrac{\widetilde{X}_{X_{x}t}(s)}{\sqrt{\epsilon}} \right\rangle d\lambda d\mu\label{eq:3-120}\\
&-\dfrac{1}{2} \left\langle D_{X}^{2}\mathbb{E}\left(F\left((X_{X_{\cdot}t}(s)\otimes m)^{\mathcal{B}_{t}^{s}},s+\epsilon\right)\right)(\sigma N_{s}),\sigma N_{s}\right\rangle +\dfrac{\beta^{2}}{2}\sum_{j=1}^{n}\left\langle D_{X}^{2}\mathbb{E}\left(F\left((X_{X_{\cdot}t}(s)\otimes m)^{\mathcal{B}_{t}^{s}},s+\epsilon\right)\right)(e^{j}),e^{j}\right \rangle \rightarrow0.\nonumber
\end{align}
So 
\begin{align}
&\cfrac{1}{\epsilon}\Bigg[\mathbb{E}\left(F\left((X_{X_{\cdot}t}(s+\epsilon)\otimes m)^{\mathcal{B}_{t}^{s+\epsilon}},s+\epsilon\right)\right)-\mathbb{E}\left(F\left((X_{X_{\cdot}t}(s)\otimes m)^{\mathcal{B}_{t}^{s}},s+\epsilon\right)\right)\Bigg]\label{eq:3-121}\\
&-\bigg\langle D_{X}\mathbb{E}\left(F\left((X_{X_{\cdot}t}(s)\otimes m)^{\mathcal{B}_{t}^{s}},s+\epsilon\right)\right),u_{X_{x}t}(s)\bigg\rangle -\dfrac{1}{2}\bigg\langle D_{X}^{2}\mathbb{E}\left(F\left((X_{X_{\cdot}t}(s)\otimes m)^{\mathcal{B}_{t}^{s}},s+\epsilon\right)\right)(\sigma N_{s}),\sigma N_{s}\bigg\rangle \nonumber\\
&-\dfrac{\beta^{2}}{2}\sum_{j=1}^{n}\bigg\langle D_{X}^{2}\mathbb{E}\left(F\left((X_{X_{\cdot}t}(s)\otimes m)^{\mathcal{B}_{t}^{s}},s+\epsilon\right)\right)(e^{j}),e^{j}\bigg\rangle \rightarrow0.\nonumber  
\end{align}
From the continuity in $s,$ (\ref{eq:3-157}), we then get:
\begin{align}
&\quad \,\,\, \cfrac{1}{\epsilon} \left[\mathbb{E}\left(F\left((X_{X_{\cdot}t}(s+\epsilon)\otimes m)^{\mathcal{B}_{t}^{s+\epsilon}},s+\epsilon\right)\right)-\mathbb{E}\left(F((X_{X_{\cdot}t}(s)\otimes m)^{\mathcal{B}_{t}^{s}},s+\epsilon)\right) \right]\label{eq:3-122}\\
&\rightarrow \left\langle D_{X}\mathbb{E}\left(F\left((X_{X_{\cdot}t}(s)\otimes m)^{\mathcal{B}_{t}^{s}},s\right)\right),u_{X_{x}t}(s) \right\rangle
+\dfrac{1}{2} \left\langle D_{X}^{2}\mathbb{E} \left(F\left((X_{X_{\cdot}t}(s)\otimes m)^{\mathcal{B}_{t}^{s}},s\right)\right)(\sigma N_{s}),\sigma N_{s}\right\rangle \nonumber\\
&\quad \,\, +\dfrac{\beta^{2}}{2}\sum_{j=1}^{n} \left\langle D_{X}^{2}\mathbb{E}\left(F\left((X_{X_{\cdot}t}(s)\otimes m)^{\mathcal{B}_{t}^{s}},s\right)\right)(e^{j}),e^{j} \right \rangle \nonumber.
\end{align}
From the partial differentiability in $s,$ see the second part of
(\ref{eq:3-152}), we finally obtain:
\begin{align}
&\quad \,\,\,\, \cfrac{1}{\epsilon} \Bigg[\mathbb{E}\left(F\left((X_{X_{\cdot}t}(s+\epsilon)\otimes m)^{\mathcal{B}_{t}^{s+\epsilon}},s+\epsilon\right)\right)-\mathbb{E}\left(F\left((X_{X_{\cdot}t}(s)\otimes m)^{\mathcal{B}_{t}^{s}},s\right)\right)\Bigg]\label{eq:3-123}\\
&\rightarrow\dfrac{\partial}{\partial s}\mathbb{E}\left(F\left((X_{X_{\cdot}t}(s)\otimes m)^{\mathcal{B}_{t}^{s}},s\right)\right)+\left\langle D_{X}\mathbb{E}\left(F\left((X_{X_{\cdot}t}(s)\otimes m)^{\mathcal{B}_{t}^{s}},s\right)\right),u_{X_{x}t}(s) \right\rangle \nonumber\\
&\quad \,\, +\dfrac{1}{2}\left\langle D_{X}^{2}\mathbb{E}\left(F\left((X_{X_{\cdot}t}(s)\otimes m)^{\mathcal{B}_{t}^{s}},s\right)\right)(\sigma N_{s}),\sigma N_{s}\right\rangle +\dfrac{\beta^{2}}{2}\sum_{j=1}^{n}\left\langle D_{X}^{2}\mathbb{E}\left(F\left((X_{X_{\cdot}t}(s)\otimes m)^{\mathcal{B}_{t}^{s}},s\right)\right)(e^{j}),e^{j}\right\rangle ,\ \text{a.e. }s,\nonumber
\end{align}
which completes the proof of (\ref{eq:3-113}).
\end{proof}

\section{CONTROL PROBLEM} \label{sec:control problem}

\subsection{SETTING OF THE PROBLEM}

The space of controls is the Hilbert space $L_{\mathcal{F}_{Xt}}^{2}(t,T;\mathcal{H}_{m}).$
A control is denoted by $v_{X_{\cdot}t}(s),$ where $X_{\cdot}=X_{x}\in\mathcal{H}_{m}$,
independent of $\mathcal{F}_{t}^{s}=\sigma(w(\tau)-w(t),b(\tau)-b(t),t\leq\tau\leq s),$ $\forall s.$
The state, denoted $X_{X_{\cdot}t}(s)$, associated with a control $v_{X_{\cdot}t}(\cdot)$,
is defined by: 
\begin{equation}
X_{X_{x}t}(s):=X_{x}+\int_{t}^{s}v_{X_{x}t}(\tau)d\tau+\sigma(w(s)-w(t))+\beta(b(s)-b(t)),\ s>t.\label{eq:3-500}
\end{equation}
We want to minimize the functional 
\begin{align}
J_{X_{\cdot}t}(v_{X_{\cdot}t}(\cdot))&=\mathbb{E}\left[\int_{t}^{T} \left[\int_{\mathbb{R}^{n}}l(X_{X_{x}t}(s),v_{X_{x}t}(s))dm(x)+F((X_{X_{\cdot}t}(s)\otimes m)^{\mathcal{B}_{t}^{s}}) \right]ds\right]\label{eq:3-503}\\
&\quad \, +\mathbb{E} \left[\int_{\mathbb{R}^{n}}h(X_{X_{x}t}(T))dm(x)+F_{T}((X_{X_{\cdot}t}(T)\otimes m)^{\mathcal{B}_{t}^{T}}) \right].\nonumber   
\end{align}
This problem is completely equivalent to the following: for a family
of processes $v_{\eta t}(s)$ in $L_{\mathcal{F}_{t}}^{2}(t,T;\mathcal{H}_{m})$,
for any fixed $\eta\in \mathbb{R}^{n},$ define the state $X_{\eta}(s)$ by 
\begin{equation}
X_{\eta t}(s):=\eta+\int_{t}^{s}v_{\eta t}(\tau)d\tau+\sigma(w(s)-w(t))+\beta(b(s)-b(t)),\ s>t.\label{eq:3-504}
\end{equation}
We want to minimize the functional 
\begin{align}
J_{X_{\cdot}\otimes m,t}(v_{\cdot,t}(\cdot))&=\mathbb{E}\left[\int_{t}^{T} \left[\int_{\mathbb{R}^{n}}l(X_{\eta\,t}(s),v_{\eta\,t}(s))d(X_{\cdot}\otimes m)(\eta)+F((X_{\cdot,t}(s)\otimes(X_{\cdot}\otimes m))^{\mathcal{B}_{t}^{s}})\right]ds\right]\label{eq:3-505}\\
&\quad \, +\mathbb{E}\left[\int_{\mathbb{R}^{n}}h(X_{\eta\,t}(T))d(X_{\cdot}\otimes m)(\eta)+F_{T}((X_{\cdot,t}(T)\otimes(X_{\cdot}\otimes m))^{\mathcal{B}_{t}^{T}})\right].\nonumber
\end{align}

For fixed $X_{\cdot},$ the functional $v_{X_{\cdot}t}(\cdot)\rightarrow J_{X_{\cdot}t}(v_{X_{\cdot}t}(\cdot))$
is defined on the Hilbert space $L_{\mathcal{F}_{X_{\cdot}t}}^{2}(t,T;\mathcal{H}_{m})$, a
sub Hilbert space of $L^{2}(t,T;\mathcal{H}_{m})$. The functional
$J_{X_{\cdot}\otimes m,t}(v_{\cdot,t}(\cdot))$ is defined on $L_{\mathcal{F}_{t}}^{2}(t,T;\mathcal{H}_{X_{\cdot}\otimes m}$).
We shall make precise the assumptions in the next section.  

\subsection{\label{subsec:ASSUMPTIONS}ASSUMPTIONS}

We assume that 
\begin{align}
&|l(x,v)|\leq c_{l}(1+|x|^{2}+|v|^{2}), \, |l_{x}(x,v)|, \, |l_{v}(x,v)|\leq c_{l}(1+|x|^{2}+|v|^{2})^{\frac{1}{2}},\nonumber\\
&|l_{xx}(x,v)|,\ |l_{xv}(x,v)|,\ |l_{vv}(x,v)|\leq c_{l},\label{eq:4-1000}\\
&|h(x)|\leq c_{h}(1+|x|^{2}),\ |h_{x}(x)|\leq c_{h}(1+|x|^{2})^{\frac{1}{2}},\ |h_{xx}(x)|\leq c_{h},\label{eq:4-1001}\\
&l_{xx}(x,v),l_{xv}(x,v),l_{vv}(x,v),h_{xx}(x)\ \text{continuous},\label{eq:4-1002}\\
&l_{xx}(x,v)\xi\cdot\xi+2l_{xv}(x,v)\eta\cdot\xi+l_{vv}(x,v)\eta\cdot\eta\geq\lambda|\eta|^{2}-c'_{l}|\xi|^{2},\,\forall\xi,\eta\in \mathbb{R}^{n},\label{eq:4-1003}\\
&h_{xx}\xi\cdot\xi\geq-c'_{h}|\xi|^{2},\label{eq:4-1004}\\
&m\mapsto F(m),F_{T}(m)\ \text{continuous},\label{eq:3-1}\\
&|F(m)|\leq c\left(1+\int_{\mathbb{R}^{n}}|x|^{2}dm(x)\right),\,\quad |F_{T}(m)|\leq c_{T}\left(1+\int_{\mathbb{R}^{n}}|x|^{2}dm(x)\right),\nonumber\\
&(x,m)\mapsto\dfrac{d}{d\nu}F(m)(x),\quad \dfrac{d}{d\nu}F_{T}(m)(x)\ \text{continuous},\label{eq:3-2}\\
&\left|\dfrac{d}{d\nu}F(m)(x)\right|\leq c(1+|x|^{2}),\,\quad \left|\dfrac{d}{d\nu}F_{T}(m)(x)\right|\leq c_{T}(1+|x|^{2}),\nonumber\\
&(x,m)\mapsto D\dfrac{d}{d\nu}F(m)(x),\ \quad D\dfrac{d}{d\nu}F_{T}(m)(x)\ \text{continuous},\label{eq:3-3}\\
&\left|D\dfrac{d}{d\nu}F(m)(x)\right|\leq c(1+|x|^{2})^{\frac{1}{2}},\,\quad \left|D\dfrac{d}{d\nu}F_{T}(m)(x)\right|\leq c_{T}(1+|x|^{2})^{\frac{1}{2}},\nonumber\\
&(x,m)\mapsto D^{2}\dfrac{d}{d\nu}F(m)(x),\quad D^{2}\dfrac{d}{\,d\nu}F_{T}(m)(x)\ \text{continuous},\label{eq:3-4}\\
&\left|D^{2}\dfrac{d}{d\nu}F(m)(x)\right|\leq c,\quad \left|D^{2}\dfrac{d}{d\nu}F_{T}(m)(x)\right|\leq c_{T},\nonumber\\
&(x,x^{1},m)\mapsto DD_{1}\frac{d^{2}F}{d\nu^{2}}(m)(x,x^{1}), \ \quad DD_{1}\frac{d^{2}F_{T}}{d\nu^{2}}(m)(x,x^{1})\ \text{continuous},\label{eq:3-50}\\
&\left|D_{1}\frac{d^{2}F}{d\nu^{2}}(m)(x,x^{1})\right|\leq c(1+|x|),\ \quad \left|D\frac{d^{2}F}{d\nu^{2}}(m)(x,x^{1})\right|\leq c(1+|x^{1}|),\nonumber\\
&\left|DD_{1}\frac{d^{2}F}{d\nu^{2}}(m)(x,x^{1})\right|\leq c,\ \quad \left|DD_{1}\frac{d^{2}F_{T}}{d\nu^{2}}(m)(x,x^{1})\right|\leq c_{T},\nonumber\\
&D^{2}\dfrac{d}{d\nu}F(m)(x)\geq-c'I,\ \quad D^{2}\dfrac{d}{d\nu}F_{T}(m)(x)\geq-c'_{T}I,\label{eq:3-51}\\
&DD_{1}\frac{d^{2}F}{d\nu^{2}}(m)(x,x^{1})\geq-c'I,\ \quad DD_{1}\frac{d^{2}F_{T}}{d\nu^{2}}(m)(x,x^{1})\geq-c'_{T}I.\label{eq:3-52}
\end{align}
Note that, with the above assumptions on $F(m)$ and $F_{T}(m)$
the functions $Z_{\cdot}\mapsto \mathbb{E}\left(F((Z_{\cdot}\otimes m)^{\mathcal{B}})\right),\mathbb{E}\left(F_{T}((Z_{\cdot}\otimes m)^{\mathcal{B}})\right)$
are G\textroundcap{a}teaux differentiable and Propositions \ref{prop2-1},
\ref{prop2-2}, \ref{prop2-3} apply.

{Conditions \eqref{eq:3-51} and \eqref{eq:3-52} are the crucial \emph{monotonicity assumptions}. When the constant $c'$ (resp.~$c_T'$) is zero, they imply that $F$ (resp.~$F_T$) satisfies the displacement monotonicity assumption of Gangbo, et al.~\cite{GMMZ}. This is in contrast to the more widely used Lasry-Lions monotonicity condition, as in \cite{CDLL}. See also \cite{GrM} for a comparison of monotonicity conditions. 
We also refer to our recent work \cite{BHTY} for a discussion on $\beta$-monotonicity for the forward-backward system associated with MFGs, which can include the displacement monotonicity condition \cite{ARY,HT} and the small mean field effect condition \cite{BTWY} for MFGs.
Most of the remaining assumptions are essentially regularity requirements on the data, although the convexity assumption \eqref{eq:4-1003} also plays a crucial role in the well-posedness of the control problem and the resulting regularity of the value function.}

\subsection{DIFFERENTIABILITY OF $v_{Xt}(\cdot)\mapsto J_{Xt}(v_{Xt}(\cdot))$}

Considering the map $v_{X_{\cdot}t}(\cdot)\mapsto J_{X_{\cdot}t}(v_{X_{\cdot}t}(\cdot))$
as a functional on the Hilbert space $L_{\mathcal{F}_{X_{\cdot}t}}^{2}(t,T;\mathcal{H}_{m})$, we get the following lemma.
\begin{lemma}
\label{lem3-1}Under the assumptions (\ref{eq:4-1000}), (\ref{eq:4-1001}),
(\ref{eq:3-1}), (\ref{eq:3-2}), (\ref{eq:3-3}), the functional $J_{X_{\cdot}t}(v_{X_{\cdot}t}(\cdot))$
has a G\^ateaux derivative, given by 
\begin{equation}
D_{v}J_{X_{\cdot}t}(v_{X_{\cdot}t}(\cdot))(s)=l_{v}(X_{X_{\cdot}t}(s),v_{X_{\cdot}t}(s))+Z_{X_{\cdot}t}(s),\label{eq:3-5}
\end{equation}
with $Z_{X_{\cdot}t}(\cdot)\in L_{\mathcal{F}_{X_{\cdot}t}}^{2}(t,T;\mathcal{H}_{m})$, solution
of the BSDE (Backward Stochastic Differential Equation)
\begin{equation}
\left\{
\begin{aligned}
-dZ_{X_{\cdot}t}(s)&= \left(l_{x}(X_{X_{\cdot}t}(s),v_{X_{\cdot}t}(s))+D_{X}\mathbb{E}\left(F((X_{X_{\cdot}t}(s)\otimes m)^{\mathcal{B}_{t}^{s}})\right)\right)ds-\sum_{j=1}^{n}r_{X_{\cdot}t}^{j}(s)dw_{j}(s)-\sum_{j=1}^{n}\rho_{X_{\cdot} t}^{j}(s)db_{j}(s),\\
Z_{X_{\cdot}t}(T)&=h_{x}(X_{X_{\cdot}t}(T))+D_{X}\mathbb{E}\left(F_{T}((X_{X_{\cdot}t}(T)\otimes m)^{\mathcal{B}_{t}^{T}})\right),
\end{aligned}\right.\label{eq:3-600}
\end{equation}
where \textup{$r_{X_{\cdot}t}^{j}(s), \rho_{X_{\cdot}t}^{j}(s)\in L_{\mathcal{F}_{X_{\cdot}t}}^{2}(t,T;\mathcal{H}_{m})$, $j=1,\cdots,n$.}
\end{lemma}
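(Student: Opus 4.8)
The plan is to use the classical adjoint-variable (Pontryagin) argument in the Hilbert space $L_{\mathcal{F}_{X_\cdot t}}^{2}(t,T;\mathcal{H}_m)$. First I would fix a control $v_{X_\cdot t}(\cdot)$ and a direction $\tilde v_{X_\cdot t}(\cdot)$ in this space and compute the first variation of the state. Since the dynamics \eqref{eq:3-500} are affine in the control, perturbing $v$ to $v+\theta\tilde v$ changes the state to $X_{X_\cdot t}(s)+\theta Y_{X_\cdot t}(s)$, where the variation $Y_{X_\cdot t}(s)=\int_t^s\tilde v_{X_\cdot t}(\tau)\,d\tau$ is independent of $\theta$, satisfies $dY=\tilde v\,ds$ with $Y_{X_\cdot t}(t)=0$, and has no martingale part.

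I would then differentiate $J_{X_\cdot t}$ under the expectation at $\theta=0$. The terms involving $l$ and $h$ contribute, by the ordinary chain rule, $l_x(X,v)\cdot Y+l_v(X,v)\cdot\tilde v$ and $h_x(X_{X_\cdot t}(T))\cdot Y_{X_\cdot t}(T)$. The two mean-field terms are differentiated by Proposition~\ref{prop2-1}: the G\^ateaux derivative of $Z_\cdot\mapsto\mathbb{E}(F((Z_\cdot\otimes m)^{\mathcal{B}}))$ in the direction $Y_{X_\cdot t}(s)$ equals $\langle D_X\mathbb{E}(F((X_{X_\cdot t}(s)\otimes m)^{\mathcal{B}_t^s})),Y_{X_\cdot t}(s)\rangle$, and likewise for $F_T$ at the terminal time. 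Interchanging the limit in $\theta$ with the expectation is justified by dominated convergence, using the quadratic growth and Lipschitz bounds \eqref{eq:4-1000}, \eqref{eq:4-1001}, \eqref{eq:3-2}, \eqref{eq:3-3}; this step is routine.

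The essential step is to remove the implicit dependence on $Y$, which is tied to $\tilde v$ only through a time integral. For this I introduce the adjoint process $Z_{X_\cdot t}(\cdot)$ as the solution of the linear BSDE \eqref{eq:3-600}. Crucially its generator $l_x(X,v)+D_X\mathbb{E}(F(\cdots))$ does not depend on the unknowns $(Z,r^j,\rho^j)$, so existence and uniqueness reduce to the martingale representation theorem applied to $\mathbb{E}[\,Z_{X_\cdot t}(T)+\int_s^T(\cdots)\,d\tau\mid\mathcal{F}_{X_\cdot t}^s\,]$, once one checks that the terminal datum $h_x(X_{X_\cdot t}(T))+D_X\mathbb{E}(F_T(\cdots))$ belongs to $\mathcal{H}_m$ and the generator to $L^2(t,T;\mathcal{H}_m)$. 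Both follow from the growth bounds together with the estimate $\|D_X\mathbb{E}(F(\cdots))\|\le C(1+\|X\|)$ of Proposition~\ref{prop2-1}.

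Finally I would apply It\^o's product rule to $s\mapsto\langle Z_{X_\cdot t}(s),Y_{X_\cdot t}(s)\rangle$, that is, to $\mathbb{E}\int_{\mathbb{R}^n}Z_{X_x t}(s)\cdot Y_{X_x t}(s)\,dm(x)$. Because $Y$ has no martingale part the cross-variation vanishes, and after integrating over $[t,T]$ the stochastic integrals have zero expectation (using $r^j,\rho^j\in L^2$). Invoking $Y_{X_\cdot t}(t)=0$ and the terminal condition of \eqref{eq:3-600}, this identity shows that the two $Y$-dependent contributions to $\frac{d}{d\theta}J_{X_\cdot t}$ together equal $\int_t^T\langle Z_{X_\cdot t}(s),\tilde v_{X_\cdot t}(s)\rangle\,ds$. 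Combining with the remaining $l_v$-term gives $\frac{d}{d\theta}J_{X_\cdot t}|_{\theta=0}=\int_t^T\langle l_v(X_{X_\cdot t}(s),v_{X_\cdot t}(s))+Z_{X_\cdot t}(s),\tilde v_{X_\cdot t}(s)\rangle\,ds$, which by the definition of the inner product on $L^2(t,T;\mathcal{H}_m)$ is exactly \eqref{eq:3-5}. The main obstacle is keeping track of the conditional-expectation structure of the mean-field terms while securing enough integrability to discard the martingale parts; the monotonicity hypotheses \eqref{eq:3-51}--\eqref{eq:3-52} play no role here, only the regularity and growth assumptions are needed.
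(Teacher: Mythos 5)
Your proposal is correct and follows essentially the same route as the paper: compute the first variation of $J_{X_\cdot t}$ using Proposition \ref{prop2-1} for the mean-field terms, construct $Z_{X_\cdot t}$ by conditional expectation and martingale representation (valid precisely because the linear BSDE's generator does not involve the unknowns $(Z,r^{j},\rho^{j})$), and then convert the two $Y$-dependent contributions into $\int_t^T\langle Z_{X_\cdot t}(s),\tilde v_{X_\cdot t}(s)\rangle\,ds$. The only cosmetic difference is in the last step, where the paper obtains the duality identity \eqref{eq:8-1003} by Fubini and the tower property against the adapted direction $\tilde v$, while you use the It\^o product rule on $\langle Z_{X_\cdot t}(s),Y_{X_\cdot t}(s)\rangle$ --- an equivalent computation, and one the paper itself employs in the proof of Proposition \ref{prop4-1}.
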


The proof can be found in Appendix A.

\subsection{CONVEXITY}

We next state the following proposition:
\begin{proposition}
\label{prop3-1}We assume (\ref{eq:4-1000}), (\ref{eq:4-1001}), (\ref{eq:4-1002}), (\ref{eq:4-1003}), (\ref{eq:4-1004}), (\ref{eq:3-1}), (\ref{eq:3-2}), (\ref{eq:3-3}), (\ref{eq:3-4}), (\ref{eq:3-50}), (\ref{eq:3-51}), (\ref{eq:3-52})
and 
\begin{equation}
\lambda-T(c'_{T}+c'_{h})-(c'+c'_{l})\dfrac{T^{2}}{2}>0,\label{eq:3-6}
\end{equation}
then the functional $J_{X_{\cdot}t}(v_{X_{\cdot}t}(\cdot))$ is strictly convex.
It is coercive, i.e. $J_{X_{\cdot}t}(v_{X_{\cdot}t}(\cdot))\rightarrow+\infty,$
as $\int_{t}^{T}||v_{X_{\cdot}t}(s)||^{2}ds\rightarrow+\infty.$ Consequently,
there exists one and only one minimum of $J_{X_{\cdot}t}(v_{X_{\cdot}t}(\cdot)).$
\end{proposition}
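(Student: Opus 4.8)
The plan is to prove uniform (hence strict) convexity directly, and to read off coercivity and the existence/uniqueness of a minimizer from the uniform convexity estimate. The crucial structural fact is that the state is \emph{affine} in the control: writing $v^{\theta}=v^{0}+\theta\,\delta v$ with $\delta v=v^{1}-v^{0}$, the dynamics \eqref{eq:3-500} give $X^{\theta}=X^{0}+\theta\,Y$, where $Y_{X_{x}t}(s)=\int_{t}^{s}\delta v_{X_{x}t}(\tau)\,d\tau$, since the two Wiener terms are independent of the control and cancel in the difference. Thus $\theta\mapsto J_{X_{\cdot}t}(v^{\theta})$ is a scalar function, and I would control its convexity through its second derivative. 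Using Lemma \ref{lem3-1} together with Propositions \ref{prop2-1}--\ref{prop2-3} and the regularity/growth bounds \eqref{eq:4-1000}--\eqref{eq:3-50}, I would justify that this map is twice differentiable with
\begin{align*}
\frac{d^{2}}{d\theta^{2}}J_{X_{\cdot}t}(v^{\theta})&=\mathbb{E}\int_{t}^{T}\int_{\mathbb{R}^{n}}\big(l_{xx}Y\cdot Y+2l_{xv}\,\delta v\cdot Y+l_{vv}\,\delta v\cdot\delta v\big)\,dm\,ds\\
&\quad+\int_{t}^{T}\big\langle D_{X}^{2}\mathbb{E}\big(F((X^{\theta}\otimes m)^{\mathcal{B}_{t}^{s}})\big)(Y),Y\big\rangle\,ds\\
&\quad+\mathbb{E}\int_{\mathbb{R}^{n}}h_{xx}\,Y(T)\cdot Y(T)\,dm+\big\langle D_{X}^{2}\mathbb{E}\big(F_{T}((X^{\theta}\otimes m)^{\mathcal{B}_{t}^{T}})\big)(Y(T)),Y(T)\big\rangle,
\end{align*}
all derivatives of $l,h$ being evaluated along $(X^{\theta},v^{\theta})$.

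Next I would bound each block from below. The convexity hypothesis \eqref{eq:4-1003} yields, for a.e.\ $s$, the pointwise estimate $\mathbb{E}\int_{\mathbb{R}^{n}}(l_{xx}Y\cdot Y+2l_{xv}\delta v\cdot Y+l_{vv}\delta v\cdot\delta v)\,dm\geq\lambda\|\delta v(s)\|^{2}-c'_{l}\|Y(s)\|^{2}$. For the mean-field blocks I would invoke the monotonicity assumptions \eqref{eq:3-51}, \eqref{eq:3-52}: inserting the explicit formula \eqref{eq:2-100} for $D_{X}^{2}\mathbb{E}(F(\cdots))$ and applying $D^{2}\frac{dF}{d\nu}\geq-c'I$ to the diagonal term and $DD_{1}\frac{d^{2}F}{d\nu^{2}}\geq-c'I$ to the cross term gives $\big\langle D_{X}^{2}\mathbb{E}(F((X^{\theta}\otimes m)^{\mathcal{B}_{t}^{s}}))(Y),Y\big\rangle\geq-c'\|Y(s)\|^{2}$, and likewise $\geq-c'_{T}\|Y(T)\|^{2}$ for $F_{T}$; the terminal running-cost block is controlled by \eqref{eq:4-1004}, contributing $-c'_{h}\|Y(T)\|^{2}$. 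Collecting these,
\[
\frac{d^{2}}{d\theta^{2}}J_{X_{\cdot}t}(v^{\theta})\geq\lambda\int_{t}^{T}\|\delta v(s)\|^{2}\,ds-(c'+c'_{l})\int_{t}^{T}\|Y(s)\|^{2}\,ds-(c'_{h}+c'_{T})\|Y(T)\|^{2}.
\]

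The final step is the time estimate. Since $Y(s)=\int_{t}^{s}\delta v(\tau)\,d\tau$, Cauchy--Schwarz gives $\|Y(s)\|^{2}\leq(s-t)\int_{t}^{s}\|\delta v\|^{2}\,d\tau$, whence $\int_{t}^{T}\|Y(s)\|^{2}\,ds\leq\frac{(T-t)^{2}}{2}\int_{t}^{T}\|\delta v\|^{2}\,ds$ and $\|Y(T)\|^{2}\leq(T-t)\int_{t}^{T}\|\delta v\|^{2}\,ds$. Substituting and using $T-t\leq T$ produces
\[
\frac{d^{2}}{d\theta^{2}}J_{X_{\cdot}t}(v^{\theta})\geq\Big(\lambda-(c'_{h}+c'_{T})T-(c'+c'_{l})\tfrac{T^{2}}{2}\Big)\int_{t}^{T}\|\delta v(s)\|^{2}\,ds=:\kappa\int_{t}^{T}\|\delta v(s)\|^{2}\,ds,
\]
and \eqref{eq:3-6} is precisely the assertion $\kappa>0$; note the worst case $t=0$ reproduces exactly the constant in \eqref{eq:3-6}. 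This uniform convexity gives strict convexity immediately. Coercivity comes for free: taking $v^{0}=0$ and integrating the lower bound twice in $\theta$ gives $J_{X_{\cdot}t}(v)\geq J_{X_{\cdot}t}(0)+\langle D_{v}J_{X_{\cdot}t}(0),v\rangle+\frac{\kappa}{2}\int_{t}^{T}\|v(s)\|^{2}\,ds\to+\infty$ as $\int_{t}^{T}\|v\|^{2}\to\infty$. Existence and uniqueness then follow by the direct method: a minimizing sequence is bounded by coercivity, its weak limit is a minimizer by weak lower semicontinuity of the continuous convex functional $J_{X_{\cdot}t}$, and strict convexity forces uniqueness.

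The routine-but-delicate part is the rigorous derivation of the second-variation formula, where the differentiability results of Section \ref{sec:FORMALISM} must be applied along the segment $X^{\theta}$ while tracking the conditional laws $(\cdot)^{\mathcal{B}_{t}^{s}}$ and the quadratic growth. The genuine obstacle, however, is the lower bound $\big\langle D_{X}^{2}\mathbb{E}(F(\cdots))(Y),Y\big\rangle\geq-c'\|Y\|^{2}$: the cross term in \eqref{eq:2-100} pairs $Y$ with a conditionally independent copy through the kernel $DD_{1}\frac{d^{2}F}{d\nu^{2}}$, so the pointwise matrix bound \eqref{eq:3-52} must be combined with the conditional-independence structure and the symmetry \eqref{eq:2-101} to yield a bound on the full (self-adjoint) quadratic form rather than merely on a diagonal contribution. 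This is exactly where the displacement-monotonicity character of \eqref{eq:3-51}--\eqref{eq:3-52} enters, and it is the step on which I would concentrate the technical effort.
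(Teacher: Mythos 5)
Your argument is correct and arrives at exactly the paper's key estimate, but by a slightly different route. The paper never computes a second variation: using the representation $D_{v}J_{X_{\cdot}t}(v)(s)=l_{v}+Z_{X_{\cdot}t}(s)$ from Lemma \ref{lem3-1}, it proves strong monotonicity of the first G\^ateaux derivative, inequality \eqref{eq:Ap1}, by integrating by parts between the BSDE \eqref{eq:3-600} for $Z^{1}-Z^{2}$ and the forward equation for $X^{1}-X^{2}$; strict convexity, coercivity (via \eqref{eq:Ap2}) and existence/uniqueness then follow exactly as in your last step. You instead exploit the affine control-to-state map to differentiate $\theta\mapsto J_{X_{\cdot}t}(v^{\theta})$ twice directly, which bypasses the adjoint process and the martingale representation entirely for this proposition (Lemma \ref{lem3-1} is needed later anyway), at the cost of justifying second-order differentiability of $Z_{\cdot}\mapsto\mathbb{E}\left(F((Z_{\cdot}\otimes m)^{\mathcal{B}_{t}^{s}})\right)$ along the segment, which Propositions \ref{prop2-2}--\ref{prop2-3} and the Taylor formula \eqref{eq:2-105} supply under the standing assumptions. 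The two routes are equivalent --- \eqref{eq:Ap1} is precisely the $\theta$-integral of your second-variation bound --- and the time estimates and the constant in \eqref{eq:3-6} coincide.

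One point in your write-up needs correcting, though it cuts against the paper's own terseness as much as against you. As literally stated, bounding the diagonal term by $-c'\|Y\|^{2}$ via \eqref{eq:3-51} and the cross term by $-c'\|Y\|^{2}$ via \eqref{eq:3-52} \emph{separately} yields $-2c'\|Y\|^{2}$, not $-c'\|Y\|^{2}$: for $F(m)=-\tfrac{c'}{2}\int|x|^{2}dm(x)-\tfrac{c'}{2}\big|\int x\,dm(x)\big|^{2}$ both kernels are identically $-c'I$, and the form \eqref{eq:2-100} evaluated at $Z_{\cdot}=J_{\cdot}$ and a deterministic constant $Y\equiv y$ equals $-2c'|y|^{2}$. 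Moreover, your instinct about the cross term is well founded: a pointwise matrix bound $DD_{1}\frac{d^{2}F}{d\nu^{2}}\geq-c'I$ does not by itself control the nonlocal form (pointwise nonnegative matrix kernels need not define nonnegative integral operators). What does work is to split off $-c'I$, whose contribution is $-c'\,\mathbb{E}\big|\mathbb{E}^{\mathcal{B}_{t}^{s}}\int Y\,dm\big|^{2}\geq-c'\|Y\|^{2}$ by the conditional-independence structure and Jensen, and to read the remainder as a nonnegative kernel-operator on $L^{2}$ of the conditional law --- in other words, to interpret \eqref{eq:3-51}--\eqref{eq:3-52} jointly as the semi-displacement-monotonicity statement $\langle D_{X}^{2}\mathbb{E}(F((Z_{\cdot}\otimes m)^{\mathcal{B}}))(Y_{\cdot}),Y_{\cdot}\rangle\geq-c'\|Y\|^{2}$, which is what the paper's proof implicitly invokes in the one-line passage from the assumptions to \eqref{eq:Ap1}. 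Under the purely pointwise, term-by-term reading one only obtains the constants $2c'$, $2c'_{T}$ in \eqref{eq:3-6} --- a relabeling that changes nothing structural --- so your proof is sound once this step is phrased as a combined bound on the full self-adjoint form rather than as two independent estimates.
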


The proof can be found in Appendix A.

\subsection{NECESSARY AND SUFFICIENT CONDITION OF OPTIMALITY}

According to Proposition \ref{prop3-1}, there exists one and only
one optimal control $u_{X_{\cdot}t}(s)$. It must satisfy the necessary
and sufficient condition $D_{v}J_{X_{\cdot}t}(u_{X_{\cdot}t}(\cdot))(s)=0.$ Calling
$X_{X_{\cdot}t}$$(s)$ the optimal state and $Z_{X_{\cdot}t}(s)$, $r_{X_{\cdot}t}^{j}(s)$, $\rho_{X_{\cdot}t}^{j}(s)$
the solution of the BSDE (\ref{eq:3-600}), then the set $X_{X_{\cdot}t}(s)$,
$Z_{X_{\cdot}t}(s)$, $u_{X_{\cdot}t}(s)$$, r_{X_{\cdot}t}^{j}(s)$, $\rho_{X_{\cdot}t}^{j}(s)$
is the unique solution of the system 
\begin{equation}
\label{eq:3-7}
\left\{ \begin{aligned}   
&X_{X_{x}t}(s)=X_{x}+\int_{t}^{s}u_{X_{x}t}(\tau)d\tau+\sigma(w(s)-w(t))+\beta(b(s)-b(t));\\
&-dZ_{X_{\cdot}t}(s)=\left(l_{x}(X_{X_{\cdot}t}(s),u_{X_{\cdot}t}(s))+D_{X}\mathbb{E}\left(F((X_{X_{\cdot}t}(s)\otimes m)^{\mathcal{B}_{t}^{s}})\right)\right)ds-\sum_{j=1}^{n}r_{X_{\cdot}t}^{j}(s)dw_{j}(s)-\sum_{j=1}^{n}\rho_{X_{\cdot}t}^{j}(s)db_{j}(s);\\
&Z_{X_{\cdot}t}(T)=h_{x}(X_{X_{\cdot}t}(T))+D_{X}\mathbb{E}\left(F_{T}((X_{X_{\cdot}t}(T)\otimes m)^{\mathcal{B}_{t}^{T}})\right); \\
&l_{v}(X_{X_{\cdot}t}(s),u_{X_{\cdot}t}(s))+Z_{X_{\cdot}t}(s)=0. 
\end{aligned} \right.
\end{equation}
We can equivalently express the necessary and sufficient optimality
conditions as follows: there exists a unique $u_{\eta t}(\cdot)\in L_{\mathcal{F}_{t}}^{2}(t,T;\mathcal{H}_{m})$, which satisfies the condition
\begin{align}
\int_{t}^{T}\mathbb{E}\Bigg(\int_{\mathbb{R}^{n}}\bigg[&l_{v}(X_{\eta t}(s),u_{\eta t}(s))+\int_{s}^{T}\bigg(l_{x}(X_{\eta t}(\tau),u_{\eta t}(\tau))\label{eq:3-603}\\
&+D\dfrac{dF}{d\nu}((X_{\cdot, t}(\tau)\otimes(X_{\cdot}\otimes m))^{\mathcal{B}_{t}^{\tau}})(X_{\eta t}(\tau))\bigg)d\tau+h_{x}(X_{\eta t}(T))\nonumber\\
&+D\dfrac{dF_{T}}{d\nu}((X_{\cdot, t}(T)\otimes(X_{\cdot}\otimes m))^{\mathcal{B}_{t}^{T}})(X_{\eta t}(T))\bigg]\cdot\widetilde{v}_{\eta t}(s)d(X_{\cdot}\otimes m)(\eta)\Bigg)ds=0\nonumber
\end{align}
with 
\begin{equation}
X_{\eta t}(s)=\eta+\int_{t}^{s}u_{\eta t}(\tau)d\tau+\sigma(w(s)-w(t))+\beta(b(s)-b(t)),\label{eq:3-604}
\end{equation}
for any $\widetilde{v}_{\eta t}(\cdot)\in L_{\mathcal{F}_{t}}^{2}(t,T;\mathcal{H}_{m}).$
The advantage of this statement is that the process $Z_{\eta t}(s)$
does not appear explicitly in the condition. We have the important
property:
\begin{proposition}
\label{prop3-2} We have 
\begin{align}
&u_{X_{\cdot}t}(s)=u_{X_{X_{\cdot}t}(t+\epsilon),t+\epsilon}(s), \quad  \ X_{X_{\cdot}t}(s)=X_{X_{X_{\cdot}t}(t+\epsilon),t+\epsilon}(s), \label{eq:3-605}\\
&r_{X_{\cdot}t}^{j}(s)=r_{X_{X_{\cdot}t}(t+\epsilon),t+\epsilon}^{j}(s), \quad \rho_{X_{\cdot}t}^{j}(s)=\rho_{X_{X_{\cdot}t}(t+\epsilon),t+\epsilon}^{j}(s),\ \forall s>t+\epsilon.\nonumber   
\end{align}
\end{proposition}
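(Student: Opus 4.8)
The plan is to characterize both optima through the coupled optimality system \eqref{eq:3-7} and to exploit its unique solvability, which is guaranteed by the strict convexity and coercivity of Proposition~\ref{prop3-1}. Concretely, I would show that the optimal data $X_{X_{\cdot}t},Z_{X_{\cdot}t},u_{X_{\cdot}t},r^{j}_{X_{\cdot}t},\rho^{j}_{X_{\cdot}t}$ for the problem started at $(t,X_{\cdot})$, once restricted to $(t+\epsilon,T]$, solve the same system for the problem reinitialized at time $t+\epsilon$ with initial datum $\xi:=X_{X_{\cdot}t}(t+\epsilon)$. Uniqueness for the reinitialized problem then forces them to coincide with $X_{\xi,t+\epsilon},\dots,\rho^{j}_{\xi,t+\epsilon}$, which is exactly the assertion.

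The forward flow is immediate: subtracting the expression \eqref{eq:3-500} for $X_{X_{x}t}(t+\epsilon)$ from that for $X_{X_{x}t}(s)$ gives, for $s>t+\epsilon$,
\[
X_{X_{x}t}(s)=\xi_{x}+\int_{t+\epsilon}^{s}u_{X_{x}t}(\tau)\,d\tau+\sigma(w(s)-w(t+\epsilon))+\beta(b(s)-b(t+\epsilon)),
\]
which is precisely the state equation \eqref{eq:3-504} of the reinitialized problem driven by the restriction of $u_{X_{\cdot}t}$. Since the terminal condition, the stationarity relation $l_{v}(X,u)+Z=0$, and the form of the BSDE driver are structurally identical in the two systems, the only thing to verify is that the nonlocal term agrees. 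By \eqref{eq:2-2165} this reduces to the coincidence, for $s>t+\epsilon$, of the conditional measure $(X_{X_{\cdot}t}(s)\otimes m)^{\mathcal{B}_{t}^{s}}$ with the one appearing in the reinitialized driver, both evaluated along the common trajectory $X_{X_{\cdot}t}(s)=X_{\xi,t+\epsilon}(s)$.

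I expect this reconciliation of the two common-noise conditionings to be the main obstacle, and it is the point where the common noise genuinely enters. The increment $\beta(b(t+\epsilon)-b(t))$ is a shift shared by the whole population and is already frozen into the new datum $\xi$; conditioning on $\mathcal{B}_{t+\epsilon}^{s}$ alone would average it out, so the naive identity $(\cdot)^{\mathcal{B}_{t}^{s}}=(\cdot)^{\mathcal{B}_{t+\epsilon}^{s}}$ is false. The correct statement is obtained by retaining the past common noise: working under $\mathbb{P}(\cdot\mid\mathcal{B}_{t}^{t+\epsilon})$ the increments $w(\cdot)-w(t+\epsilon)$ and $b(\cdot)-b(t+\epsilon)$ are still independent Wiener processes, and $\mathcal{B}_{t}^{t+\epsilon}$ together with $\mathcal{B}_{t+\epsilon}^{s}$ recombine into $\mathcal{B}_{t}^{s}$; combined with the flow identity above this matches the two conditional laws, and Remark~\ref{rem3-1} together with \eqref{eq:3-202} is used to discard the irrelevant future common noise. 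To keep the adaptedness bookkeeping clean I would carry this step out in the parametrized formulation \eqref{eq:3-504}--\eqref{eq:3-603}, where the control fields are adapted to the noise filtration $\mathcal{F}_{t}$ and the dependence on the initial random variable enters only through evaluation at $\eta=X_{\cdot}$, which helps manage the a priori mismatch between $\mathcal{F}_{X_{\cdot}t}$ and $\mathcal{F}_{\xi,t+\epsilon}$.

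With the drivers matched, the restricted data solve the reinitialized optimality system, and Proposition~\ref{prop3-1} applied at $(t+\epsilon,\xi)$ --- again strictly convex and coercive, hence possessing a unique critical point --- yields $u_{X_{\cdot}t}(s)=u_{\xi,t+\epsilon}(s)$ and $X_{X_{\cdot}t}(s)=X_{\xi,t+\epsilon}(s)$ for $s>t+\epsilon$. Finally, once the forward state and the optimal control are identified on $(t+\epsilon,T]$, the equation for $Z$ in \eqref{eq:3-7} becomes a linear BSDE with prescribed coefficients and terminal datum, whose unique adapted solution, together with the martingale representation furnishing the integrands, delivers the remaining equalities $Z_{X_{\cdot}t}=Z_{\xi,t+\epsilon}$, $r^{j}_{X_{\cdot}t}=r^{j}_{\xi,t+\epsilon}$ and $\rho^{j}_{X_{\cdot}t}=\rho^{j}_{\xi,t+\epsilon}$, completing the proof.
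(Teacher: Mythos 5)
Your proposal is correct and follows essentially the same route as the paper's proof in Appendix A: both restrict the optimal data to $(t+\epsilon,T]$, reinitialize at $t+\epsilon$ with the conditional measure $(X_{\cdot, t}(t+\epsilon)\otimes(X_{\cdot}\otimes m))^{\mathcal{B}_{t}^{t+\epsilon}}$ so that the past common noise is retained rather than averaged out (your $\sigma$-algebra recombination argument is exactly the content of the paper's key identity \eqref{eq:Ap5}, proved there by a test-function and independent-copy computation), and conclude by uniqueness from strict convexity. The only cosmetic difference is that the paper verifies optimality through the $Z$-free variational condition \eqref{eq:3-603} with test directions chosen to vanish on $(t,t+\epsilon)$, whereas you verify the full system \eqref{eq:3-7} directly and recover $Z_{X_{\cdot}t}$, $r^{j}_{X_{\cdot}t}$, $\rho^{j}_{X_{\cdot}t}$ afterwards by linear BSDE uniqueness and martingale representation --- equivalent bookkeeping, with the filtration mismatch you flag handled in the paper by the parametrized formulation together with Remark \ref{rem4-1}.
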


The proof can be found in Appendix A. 
\begin{remark}
\label{rem4-1}We may enlarge the subspace of $L_{\mathcal{F}_{X_{\cdot}t}}^{2}(t,T;\mathcal{H}_{m})$
of controls against which $u_{Xt}(\cdot)$ is optimal. Consider a $\sigma$-algebra 
\[
\mathcal{X}_{x}=\sigma(X_{x},X_{x}^{1},\cdots,X_{x}^{j},\cdots),X_{x}^{j}\ \text{independent of }\mathcal{F}_{t}.
\]
 If we change the space of controls from $L_{\mathcal{F}_{X_{\cdot}t}}^{2}(t,T;\mathcal{H}_{m})$
to $L_{\mathcal{F}_{\mathcal{X}_{\cdot}t}}^{2}(t,T;\mathcal{H}_{m}),$ it is straightforward to verify that the system of equations \eqref{eq:3-7} satisfies the necessary conditions for optimality of the control problem with augmented $\sigma$-algebras. By uniqueness, these equations provide the solution to the necessary conditions. Consequently, the optimal control remains unchanged. 
\end{remark}

\subsection{VALUE FUNCTION}

We can express the value function 
\begin{align}
V(X\otimes m,t)=&\int_{t}^{T}\mathbb{E} \left[\int_{\mathbb{R}^{n}}l(X_{X_{\cdot}t}(s),u_{X_{\cdot}t}(s))dm(x)ds+F((X_{X_{\cdot}t}(s)\otimes m)^{\mathcal{B}_{t}^{s}}) \right]ds\label{eq:3-11}\\
&+\mathbb{E} \left[\int_{\mathbb{R}^{n}}h(X_{X_{\cdot}t}(T))dm(x)+F_{T}((X_{X_{\cdot}t}(T)\otimes m)^{\mathcal{B}_{t}^{T}}) \right]. \nonumber 
\end{align}
This quantity depends only on the probability measure $X\otimes m$
and $t.$ It can be written as follows:
\begin{align}
V(X\otimes m,t)=&\int_{t}^{T}\mathbb{E}\left[\int_{\mathbb{R}^{n}}l(X_{\xi t}(s),u_{\xi t}(s))d(X\otimes m)(\xi)+F((X_{\cdot ,t}(s)\otimes(X_{_{'}}\otimes m))^{\mathcal{B}_{t}^{s}})\right]ds\label{eq:3-12}\\
&+\mathbb{E}\left[\int_{\mathbb{R}^{n}}h(X_{\xi t}(T))d(X_{\cdot}\otimes m)(\xi)+F_{T}((X_{\cdot ,t}(T)\otimes(X_{_{'}}\otimes m))^{\mathcal{B}_{t}^{T}})\right].\nonumber
\end{align}

\subsection{OPTIMALITY PRINCIPLE}
The optimality principle is key to writing the Bellman equation. It is expressed as follows: 
\begin{align}
V(X\otimes m,t)=&\int_{t}^{t+\epsilon}\mathbb{E}\left[\int_{\mathbb{R}^{n}}l(X_{X_{x}t}(s),u_{X_{x}t}(s))dm(x)+F((X_{X_{\cdot}t}(s)\otimes m)^{\mathcal{B}_{t}^{s}})\right]ds\nonumber\\
&+\mathbb{E}\left[V((X_{X_{\cdot}t}(t+\epsilon)\otimes m)^{\mathcal{B}_{t}^{t+\epsilon}},t+\epsilon)\right]\nonumber\\
=&\int_{t}^{t+\epsilon}\mathbb{E} \left[\int_{\mathbb{R}^{n}}l(X_{\eta t}(s),u_{\eta t}(s))d(X_{_{'}}\otimes m)(\eta)+F((X_{\cdot, t}(s)\otimes(X_{_{'}}\otimes m))^{\mathcal{B}_{t}^{s}})\right]ds\nonumber\\
&+\mathbb{E}\left[V((X_{\cdot, t}(t+\epsilon)\otimes(X_{_{'}}\otimes m))^{\mathcal{B}_{t}^{t+\epsilon}},t+\epsilon)\right]. \label{eq:3-16}
\end{align}

This is an immediate consequence of Proposition \ref{prop3-2}. We
have indeed
\begin{align}
&\quad \, \int_{t+\epsilon}^{T}\mathbb{E} \left[\int_{\mathbb{R}^{n}}l(X_{\eta t}(s),u_{\eta t}(s))d(X_{_{'}}\otimes m)(\eta)+F((X_{\cdot, t}(s)\otimes(X_{_{'}}\otimes m))^{\mathcal{B}_{t}^{s}})\right]ds\label{eq:3-17}\\
&\quad \, +\mathbb{E} \left[\int_{\mathbb{R}^{n}}h(X_{\eta t}(T))d(X_{_{'}}\otimes m)(\eta)+F_{T}((X_{\cdot, t}(T)\otimes(X_{_{'}}\otimes m))^{\mathcal{B}_{t}^{T}})\right]\nonumber\\
&=\int_{t+\epsilon}^{T}\mathbb{E}\bigg[\int_{\mathbb{R}^{n}}l(X_{\eta,t+\epsilon}(s),u_{\eta,t+\epsilon}(s))d((X_{\cdot, t}(t+\epsilon)\otimes(X_{_{'}}\otimes m))^{\mathcal{B}_{t}^{t+\epsilon}})(\eta)\nonumber\\
&\qquad\qquad +F((X_{\cdot, t+\epsilon}(s)\otimes(X_{\cdot, t}(t+\epsilon)\otimes(X_{_{'}}\otimes m))^{\mathcal{B}_{t}^{t+\epsilon}})^{\mathcal{B}_{t+\epsilon}^{s}})\bigg]ds\nonumber\\
&\quad \, +\mathbb{E}\left[\int_{\mathbb{R}^{n}}h(X_{\eta,t+\epsilon}(T))d((X_{\cdot, t}(t+\epsilon)\otimes(X_{_{'}}\otimes m))^{\mathcal{B}_{t}^{t+\epsilon}})(\eta)+F_{T}((X_{\cdot, t+\epsilon}(T)\otimes(X_{\cdot, t}(t+\epsilon)\otimes(X_{_{'}}\otimes m))^{\mathcal{B}_{t}^{t+\epsilon}})^{\mathcal{B}_{t+\epsilon}^{T}})\right]\nonumber\\
&=\mathbb{E}\left[V((X_{\cdot, t}(t+\epsilon)\otimes(X_{_{'}}\otimes m))^{\mathcal{B}_{t}^{t+\epsilon}},t+\epsilon)\right].\nonumber
\end{align}

\section{PROPERTIES OF THE VALUE FUNCTION} \label{sec:properties of value}

\subsection{BOUNDS}

We begin with the following proposition:
\begin{proposition}
\label{prop4-1} We make the assumptions of Proposition \ref{prop3-1}.
We have 
% \begin{equation}
% ||X_{X_{\cdot}t}(s)||,\ ||Z_{X_{\cdot}t}(s)||,\ ||u_{X_{\cdot}t}(s)||\leq C_{T}(1+||X||),\forall s\in(t,T)\label{eq:4-1}
% \end{equation}
% \begin{equation}
% \int_{t}^{T}||r_{X_{\cdot}t}^{j}(s)||^{2}ds,\ \int_{t}^{T}||\rho_{X_{\cdot}t}^{j}(s)||^{2}ds\leq C_{T}(1+||X||^{2})\label{eq:4-200}
% \end{equation}
% \begin{equation}
% |V(X_{\cdot}\otimes m,t)|\leq C_{T}(1+||X||^{2})\label{eq:4-2}
% \end{equation}
\begin{align}
&||X_{X_{\cdot}t}(s)||,\ ||Z_{X_{\cdot}t}(s)||,\ ||u_{X_{\cdot}t}(s)||\leq C_{T}(1+||X||),\forall s\in(t,T),\label{eq:4-1}\\
&\int_{t}^{T} \left\lVert r_{X_{\cdot}t}^{j}(s)\right\rVert^{2}ds,\ \quad \int_{t}^{T}\left\lVert \rho_{X_{\cdot}t}^{j}(s)\right\rVert^{2}ds\leq C_{T}(1+||X||^{2}),\label{eq:4-200}\\
&|V(X_{\cdot}\otimes m,t)|\leq C_{T}(1+||X||^{2}),\label{eq:4-2}
\end{align}
where $C_{T}$ is a constant depending only on the constants of the
problem and $T.$ It is independent of $X_{\cdot}, m$, and $t<s<T.$
\end{proposition}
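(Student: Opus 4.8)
The plan is to exploit the optimality system \eqref{eq:3-7} and to bootstrap the estimates in a carefully chosen order, using the strict convexity and coercivity from Proposition \ref{prop3-1} as the entry point. The first and crucial step is to obtain an $L^{2}$-in-time bound on the optimal control, \emph{not} yet pointwise in $s$, by comparing the optimal cost with the cost of the zero control. The growth assumptions \eqref{eq:4-1000}, \eqref{eq:4-1001}, \eqref{eq:3-1} give $J_{X_{\cdot}t}(0)\le C_{T}(1+\|X\|^{2})$, since the zero-control state is $X_{x}+\sigma(w(s)-w(t))+\beta(b(s)-b(t))$, whose $\mathcal{H}_{m}$-norm is controlled by $\|X\|$ plus a deterministic constant. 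On the other hand, the convexity inequality underlying Proposition \ref{prop3-1}---whose positive modulus is precisely what condition \eqref{eq:3-6} guarantees---yields a quadratic lower bound $J_{X_{\cdot}t}(v)\ge c\int_{t}^{T}\|v\|^{2}\,ds-C_{T}(1+\|X\|^{2})$ with $c>0$. Evaluating at the minimizer $u_{X_{\cdot}t}$ and combining with $J_{X_{\cdot}t}(u_{X_{\cdot}t})\le J_{X_{\cdot}t}(0)$ gives $\int_{t}^{T}\|u_{X_{\cdot}t}(s)\|^{2}\,ds\le C_{T}(1+\|X\|^{2})$.

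With the $L^{2}$ control bound in hand, the remaining estimates follow by bootstrapping. From the state equation in \eqref{eq:3-7}, the triangle inequality in $\mathcal{H}_{m}$ and Cauchy--Schwarz give $\|X_{X_{\cdot}t}(s)\|\le\|X\|+\sqrt{T}\big(\int_{t}^{T}\|u_{X_{\cdot}t}\|^{2}\big)^{1/2}+C\le C_{T}(1+\|X\|)$ uniformly in $s$, since the Wiener increments contribute only a deterministic $O(\sqrt{T})$ term. Next I would bound $Z_{X_{\cdot}t}(s)$ pointwise in $s$ using the backward representation: writing $Z_{X_{\cdot}t}(s)$ as the $\mathcal{F}_{X_{\cdot}t}^{s}$-conditional expectation of the terminal datum $h_{x}(X_{X_{\cdot}t}(T))+D_{X}\mathbb{E}(F_{T}(\cdots))$ plus the integral $\int_{s}^{T}(l_{x}+D_{X}\mathbb{E}(F(\cdots)))\,d\tau$ (the stochastic integrals being martingales), and using that conditional expectation is an $L^{2}$-contraction, one gets $\|Z_{X_{\cdot}t}(s)\|\le\|h_{x}(X(T))\|+\|D_{X}\mathbb{E}(F_{T})\|+\int_{s}^{T}(\|l_{x}\|+\|D_{X}\mathbb{E}(F)\|)\,d\tau$. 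The terminal terms are controlled by the uniform state bound and Proposition \ref{prop2-1}; the integral of $\|l_{x}(X,u)\|\le c_{l}(1+\|X\|^{2}+\|u\|^{2})^{1/2}$ is handled by Cauchy--Schwarz together with the already-established $L^{2}$ control bound and the uniform state bound, giving $\|Z_{X_{\cdot}t}(s)\|\le C_{T}(1+\|X\|)$ uniformly. Finally, the uniform \emph{pointwise} control bound comes from the optimality relation $l_{v}(X_{X_{\cdot}t}(s),u_{X_{\cdot}t}(s))+Z_{X_{\cdot}t}(s)=0$: since \eqref{eq:4-1003} forces $l_{vv}\ge\lambda I$, the map $v\mapsto l_{v}(x,v)$ is uniformly strongly monotone, so $\lambda|u|\le|Z|+|l_{v}(x,0)|$, whence $\|u_{X_{\cdot}t}(s)\|\le C(\|Z_{X_{\cdot}t}(s)\|+1+\|X_{X_{\cdot}t}(s)\|)\le C_{T}(1+\|X\|)$.

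For the diffusion coefficients I would apply the It\^o/energy identity to $\|Z_{X_{\cdot}t}(\cdot)\|^{2}$ along the BSDE in \eqref{eq:3-7}: integrating over $(t,T)$ produces $\int_{t}^{T}\sum_{j}(\|r_{X_{\cdot}t}^{j}\|^{2}+\|\rho_{X_{\cdot}t}^{j}\|^{2})\,ds=\|Z_{X_{\cdot}t}(T)\|^{2}+2\int_{t}^{T}\langle Z_{X_{\cdot}t},\,l_{x}+D_{X}\mathbb{E}(F)\rangle\,ds$, and the right-hand side is $\le C_{T}(1+\|X\|^{2})$ by the uniform bounds on $Z$ and $X$ and the $L^{2}$ control bound. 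The value-function bound \eqref{eq:4-2} is then immediate by substituting the state and control into the growth bounds \eqref{eq:4-1000}, \eqref{eq:4-1001}, \eqref{eq:3-1}, using for the mean-field terms the identity $\mathbb{E}\int_{\mathbb{R}^{n}}|x|^{2}\,d(X_{X_{\cdot}t}(s)\otimes m)^{\mathcal{B}_{t}^{s}}(x)=\|X_{X_{\cdot}t}(s)\|^{2}$, and integrating in $s$.

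The main obstacle, and the point requiring care, is the very first step: extracting a \emph{quantitative} coercivity constant $c>0$ rather than the qualitative statement $J\to+\infty$. This is exactly where \eqref{eq:3-6} is used, and one must check that the interplay of the convexity modulus $\lambda$ with the (possibly negative) lower bounds $-c'$, $-c'_{l}$, $-c'_{h}$, $-c'_{T}$ on the various Hessians leaves a strictly positive leading coefficient; the only error-prone dependence on $T$ is confined here. The secondary subtlety is bookkeeping: one must be disciplined to use \emph{only} the $L^{2}$-in-time control bound throughout the state and $Z$ estimates, and to extract the pointwise control bound \emph{last}, from the pointwise bound on $Z$, so that no uncontrolled occurrence of $\|u_{X_{\cdot}t}(s)\|$ (pointwise in $s$) enters prematurely.
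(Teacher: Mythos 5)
Your proof is correct, but its entry point is genuinely different from the paper's. The paper never compares costs: it works directly on the optimality system \eqref{eq:3-7}, differentiating $\langle Z(s),X(s)-\sigma(w(s)-w(t))-\beta(b(s)-b(t))\rangle$, integrating over $(t,T)$, and subtracting the expression for $\langle X,Z(t)\rangle$ obtained from the backward equation; this yields the identity \eqref{eq:ApB4}, and the monotonicity assumptions \eqref{eq:4-1003}, \eqref{eq:4-1004}, \eqref{eq:3-51}, \eqref{eq:3-52} then give $0\geq\big(\lambda-(c'_{l}+c')\tfrac{T^{2}}{2}-(c'_{h}+c'_{T})T\big)\int_{t}^{T}\|u(s)\|^{2}ds-C_{T}(1+\|X\|)\big(\int_{t}^{T}\|u(s)\|^{2}ds\big)^{1/2}$, i.e.\ the $L^{2}$-in-time control bound directly from \eqref{eq:3-6}. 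You instead obtain that same bound variationally, from $J_{X_{\cdot}t}(u_{X_{\cdot}t}(\cdot))\leq J_{X_{\cdot}t}(0)$ together with the quantitative strong-convexity inequality already established in the proof of Proposition \ref{prop3-1} (inequality \eqref{eq:Ap2} with modulus $c_{0}$). Both routes rest on the same condition \eqref{eq:3-6}; yours avoids the forward--backward duality computation but pays for it in the one place you left implicit: to make the lower bound $J(v)\geq c\int\|v\|^{2}ds-C_{T}(1+\|X\|^{2})$ quantitative you must control the linear term $\int_{t}^{T}\langle D_{v}J_{X_{\cdot}t}(0)(s),v(s)\rangle ds$, i.e.\ show $\|D_{v}J_{X_{\cdot}t}(0)(s)\|=\|l_{v}(X^{0}(s),0)+Z^{0}(s)\|\leq C_{T}(1+\|X\|)$ where $X^{0}(s)=X_{\cdot}+\sigma(w(s)-w(t))+\beta(b(s)-b(t))$ and $Z^{0}$ is the zero-control adjoint from Lemma \ref{lem3-1}; this is a conditional-expectation estimate of exactly the kind you later use for $Z_{X_{\cdot}t}$, and it follows from the growth assumptions, but it should be stated, since without it the coercivity constant is only qualitative. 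After this first step the two arguments essentially coincide: the state bound, the pointwise control bound from $l_{v}(X,u)+Z=0$ and $l_{vv}\geq\lambda I$, and the treatment of $r^{j},\rho^{j}$ are the same; the only residual variation is that the paper extracts $\sup_{s}\|Z(s)\|$ and the $r^{j},\rho^{j}$ bounds simultaneously from the single squared-norm identity \eqref{eq:ApB7}, whereas you bound $Z$ first via the martingale representation and $L^{2}$-contractivity of conditional expectation and invoke the energy identity only afterwards --- equivalent standard BSDE estimates. Two minor corrections: your energy identity should read $\int_{t}^{T}\sum_{j}(\|r^{j}\|^{2}+\|\rho^{j}\|^{2})ds=\|Z(T)\|^{2}-\|Z(t)\|^{2}+2\int_{t}^{T}\langle Z,\,l_{x}+D_{X}\mathbb{E}(F)\rangle ds$ (the term you dropped has the favorable sign, so your bound survives); and your concluding step for \eqref{eq:4-2}, substituting the bounds into the cost with $\mathbb{E}\int_{\mathbb{R}^{n}}|\xi|^{2}\,d(X_{X_{\cdot}t}(s)\otimes m)^{\mathcal{B}_{t}^{s}}(\xi)=\|X_{X_{\cdot}t}(s)\|^{2}$, is exactly what the paper leaves implicit, so spelling it out as you do is an improvement rather than a deviation.
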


The proof can be found in Appendix B.

\subsection{REGULARITY IN $X$ }

We study now the regularity of $V(X\otimes m,t)$ with respect to
$X.$ This functional is defined of the closed subspace of $\mathcal{H}_{m}$
of random fields $X_{x}$ independent of $\mathcal{F}_{t}$. We have: 
\begin{proposition}
\label{prop4-2} We make the assumptions of Proposition \ref{prop3-1}.
Then the functional $X\rightarrow V(X\otimes m,t)$ is G\^ateaux differentiable
and 
\begin{equation}
D_{X}V(X_{_{\cdot}}\otimes m,t)=Z_{X_{\cdot}t}(t).\label{eq:4-3}
\end{equation}
Also, we have the Lipschitz property: 
\begin{equation}
||D_{X}V(X^{1}\otimes m,t)-D_{X}V(X^{2}\otimes m,t)||\leq C_{T}||X^{1}-X^{2}||,\label{eq:4-4}
\end{equation}
for all $X^{1},X^{2}\in\mathcal{H}_{m}$, independent of $\mathcal{F}_{t}$.
\end{proposition}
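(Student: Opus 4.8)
The plan is to prove this by an envelope (Danskin-type) argument that exploits the strict convexity established in Proposition \ref{prop3-1}. Since $V(X\otimes m,t)=J_{X_{\cdot}t}(u_{X_{\cdot}t}(\cdot))$ with $u_{X_{\cdot}t}$ the \emph{unique} optimal control, and since the optimizer satisfies the first-order condition $D_{v}J=0$, the directional derivative of $V$ in a direction $Y_{\cdot}$ should feel only the explicit dependence of $J$ on the initial datum $X_{\cdot}$; through the adjoint equation \eqref{eq:3-600} this explicit dependence is precisely $\langle Z_{X_{\cdot}t}(t),Y_{\cdot}\rangle$. First I would fix $Y_{\cdot}\in\mathcal{H}_{m}$ independent of $\mathcal{F}_{t}$ and $\theta>0$, and invoke Remark \ref{rem4-1} to enlarge the control filtration to one generated by $\sigma(X_{\cdot},Y_{\cdot})\cup\mathcal{F}_{t}$, so that the optimal control and adjoint processes are unchanged while controls depending on $Y_{\cdot}$ become admissible.

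Writing $u^{\theta}$ for the optimal control with initial datum $X_{\cdot}+\theta Y_{\cdot}$ and $u^{0}=u_{X_{\cdot}t}$, optimality gives the two-sided bound
\begin{equation*}
J_{X_{\cdot}+\theta Y_{\cdot}}(u^{\theta})-J_{X_{\cdot}}(u^{\theta})\le V((X_{\cdot}+\theta Y_{\cdot})\otimes m,t)-V(X_{\cdot}\otimes m,t)\le J_{X_{\cdot}+\theta Y_{\cdot}}(u^{0})-J_{X_{\cdot}}(u^{0}).
\end{equation*}
For the right-hand side I would hold the control $u^{0}$ fixed; then shifting the initial datum by $\theta Y_{\cdot}$ merely translates the whole trajectory, $X_{X_{x}+\theta Y_{x},t}(s)=X_{X_{x}t}(s)+\theta Y_{x}$, so differentiating at $\theta=0$ (using Proposition \ref{prop2-1} for the mean-field terms) yields
\begin{equation*}
\int_{t}^{T}\Big\langle l_{x}(X_{X_{\cdot}t}(s),u^{0}(s))+D_{X}\mathbb{E}\big(F((X_{X_{\cdot}t}(s)\otimes m)^{\mathcal{B}_{t}^{s}})\big),Y_{\cdot}\Big\rangle ds+\Big\langle h_{x}(X_{X_{\cdot}t}(T))+D_{X}\mathbb{E}\big(F_{T}(\cdots)\big),Y_{\cdot}\Big\rangle.
\end{equation*}
Integrating the BSDE \eqref{eq:3-600} from $t$ to $T$, pairing with $Y_{\cdot}$, and using that the $dw_{j}$- and $db_{j}$-integrals are martingales whose conditional expectation given $\sigma(X_{\cdot},Y_{\cdot})$ vanishes (so they drop out when paired with the $\mathcal{F}_{t}$-independent $Y_{\cdot}$), this limit equals exactly $\langle Z_{X_{\cdot}t}(t),Y_{\cdot}\rangle$.

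The left-hand bound is the same computation carried out along $u^{\theta}$, giving a difference quotient whose limit is $\langle Z^{\theta}(t),Y_{\cdot}\rangle$, where $Z^{\theta}$ is the adjoint for initial datum $X_{\cdot}+\theta Y_{\cdot}$; as $\theta\to0$, continuity of $(u^{\theta},X^{\theta},Z^{\theta})$ in $\theta$ forces this to the same value $\langle Z_{X_{\cdot}t}(t),Y_{\cdot}\rangle$, and the squeeze delivers G\^ateaux differentiability together with \eqref{eq:4-3}. For the Lipschitz property \eqref{eq:4-4} I would subtract the optimality systems \eqref{eq:3-7} for $X^{1}$ and $X^{2}$, obtaining a coupled forward-backward system for the differences of states, controls, and adjoints, and then close an energy estimate: testing the forward equation against the adjoint difference and using the convexity/coercivity condition \eqref{eq:3-6} together with the Lipschitz and semiconvexity bounds \eqref{eq:4-1003}, \eqref{eq:4-1004}, \eqref{eq:3-51}, \eqref{eq:3-52} yields $\|Z_{X^{1}t}(t)-Z_{X^{2}t}(t)\|\le C_{T}\|X^{1}-X^{2}\|$.

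The hard part will be the stability of the optimal control under perturbation of the initial condition, since it is this continuity (of $u^{\theta}$, $X^{\theta}$, $Z^{\theta}$ in $\theta$) that makes the lower-bound limit agree with the upper-bound limit, and it is essentially the same quantitative estimate demanded by \eqref{eq:4-4}. The obstacle is genuinely the backward component: one must control the coupled system without a contraction on arbitrary time horizons, and condition \eqref{eq:3-6} is precisely the monotonicity/convexity margin that prevents the adjoint from destabilizing the energy estimate and thereby guarantees both the matching of the one-sided derivatives and the Lipschitz bound.
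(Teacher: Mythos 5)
Your proposal is correct, and its ingredients coincide with the paper's: comparison with a suboptimal control kept fixed so that the trajectory merely translates by the perturbation, identification of the first-order term through the integrated BSDE \eqref{eq:3-600} (the stochastic integrals do vanish when paired with $Y_{\cdot}$, since $(X_{\cdot},Y_{\cdot})$ is independent of $\mathcal{F}_{t}$ and so $w,b$ remain Brownian motions in the enlarged filtration of Remark \ref{rem4-1}), and a forward-backward energy estimate closed by the margin \eqref{eq:3-6}. Where you differ is in the logical order. The paper never takes a $\theta\to0$ limit: it carries out the suboptimal comparison with a \emph{second-order} Taylor expansion to get the upper bound \eqref{eq:ApB15}, swaps the roles of $X^{1}$ and $X^{2}$ to get the lower bound \eqref{eq:ApB16}, and then disposes of the extra term $\langle Z^{1}(t)-Z^{2}(t),X^{1}-X^{2}\rangle$ using only the one-sided monotonicity inequality \eqref{eq:ApB17}--\eqref{eq:ApB18}, namely $\langle Z^{1}(t)-Z^{2}(t),X^{1}-X^{2}\rangle\geq-C_{T}\|X^{1}-X^{2}\|^{2}$, a direct consequence of duality and \eqref{eq:3-6}. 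This yields the two-sided quadratic estimate \eqref{eq:ApB19}, hence \eqref{eq:4-3} with a quadratic remainder uniform in the direction, \emph{before} any stability of the adjoint is known; the Lipschitz property \eqref{eq:4-4} is then proved separately via \eqref{eq:ApB200}--\eqref{eq:ApB202}, exactly as you sketch, with one extra step you compress: the coercivity/duality argument first bounds $\int_{t}^{T}\|u^{1}(s)-u^{2}(s)\|^{2}ds$ by $C_{T}\|X^{1}-X^{2}\|^{2}$, and only then does the conditional-expectation representation of the BSDE convert this into the bound on $\|Z^{1}(t)-Z^{2}(t)\|$. Your Danskin squeeze instead makes the full Lipschitz stability of $(u^{\theta},X^{\theta},Z^{\theta})$ a prerequisite for differentiability; since that estimate must be (and can be) proved anyway for \eqref{eq:4-4}, nothing fails, but you are invoking a strictly stronger intermediate fact than necessary --- the paper's swap trick buys differentiability from the cheap one-sided monotonicity bound alone, and in a slightly stronger form than the one-directional limits your squeeze produces.
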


The proof can be found in Appendix B.

\subsection{REGULARITY IN TIME}

We state the following proposition:
\begin{proposition}
\label{prop4-5}We make the assumptions of Proposition \ref{prop3-1}.
We have the inequalities: for $X=X_{xt}$ independent of $\mathcal{W}_{t}$,
\begin{equation}
|V(X\otimes m,t+\epsilon)-V(X\otimes m,t)|\leq C_{T}\epsilon(1+||X||^{2}),\label{eq:4-26}
\end{equation}
\begin{equation}
||D_{X}V(X\otimes m,t+\epsilon)-D_{X}V(X\otimes m,t)||\leq C_{T}\epsilon||X||+C_{T}\epsilon^{\frac{1}{2}}.\label{eq:4-27}
\end{equation}
\end{proposition}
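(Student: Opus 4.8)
The plan is to treat the two estimates separately but with a common mechanism: compare the control problem started at $t$ with the one started at $t+\epsilon$, extracting a drift contribution of size $\epsilon$ and a Brownian contribution of size $\sqrt{\epsilon}$. The $O(\epsilon)$ bound \eqref{eq:4-26} for the value itself survives because the first order effect of the mean zero increments $w(t+\epsilon)-w(t)$ and $b(t+\epsilon)-b(t)$ cancels in expectation, whereas the gradient estimate \eqref{eq:4-27}, for which only a Lipschitz (not a $C^{1,1}$) modulus is available, retains the genuine $\sqrt{\epsilon}$ fluctuation.

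For \eqref{eq:4-26} I would start from the optimality principle \eqref{eq:3-16} applied on $[t,t+\epsilon]$, writing $V(X\otimes m,t)=I_\epsilon+A$, where $I_\epsilon$ is the running cost over $[t,t+\epsilon]$ and $A=\mathbb{E}(V((X_{X_\cdot t}(t+\epsilon)\otimes m)^{\mathcal{B}_t^{t+\epsilon}},t+\epsilon))$. The growth bounds on $l$ and $F$ in \eqref{eq:4-1000}, \eqref{eq:3-1} together with the uniform estimates from Proposition \ref{prop4-1} give $|I_\epsilon|\le C_T\epsilon(1+\|X\|^2)$ at once. It then remains to compare $A$ with $V(X\otimes m,t+\epsilon)$. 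I would introduce the lifted functional $\Psi(\zeta):=\mathbb{E}(V((\zeta\otimes m)^{\mathcal{B}_t^{t+\epsilon}},t+\epsilon))$ on $\mathcal{H}_m$; since $X$ is independent of $\mathcal{F}_t$ the conditioning is trivial at $\zeta=X$, so $\Psi(X)=V(X\otimes m,t+\epsilon)$ while $\Psi(X_{X_\cdot t}(t+\epsilon))=A$. Setting $\Delta:=X_{X_\cdot t}(t+\epsilon)-X=\int_t^{t+\epsilon}u_{X_\cdot t}\,d\tau+\sigma(w(t+\epsilon)-w(t))+\beta(b(t+\epsilon)-b(t))$, I would write $A-V(X\otimes m,t+\epsilon)=\int_0^1\langle D\Psi(X+\lambda\Delta),\Delta\rangle\,d\lambda$. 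The drift part of $\Delta$ contributes $O(\epsilon(1+\|X\|^2))$ because $\|\int_t^{t+\epsilon}u_{X_\cdot t}\,d\tau\|\le C_T\epsilon(1+\|X\|)$; for the two Brownian parts the value at $\lambda=0$ has zero expectation, since $D\Psi(X)$ is $\sigma(X)$ measurable and hence independent of the mean zero increments (cf.\ \eqref{eq:2-3000}), while the remainder is controlled by the Lipschitz continuity of $D\Psi$, giving $C_T\,\mathbb{E}(\|\Delta\|\,\|\sigma(w(t+\epsilon)-w(t))+\beta(b(t+\epsilon)-b(t))\|)\le C_T\epsilon(1+\|X\|^2)$ by Cauchy-Schwarz and $\mathbb{E}\|\Delta\|^2\le C_T\epsilon(1+\|X\|^2)$.

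For \eqref{eq:4-27} I would use $D_XV(X\otimes m,t)=Z_{X_\cdot t}(t)$ and $D_XV(X\otimes m,t+\epsilon)=Z_{X_\cdot,t+\epsilon}(t+\epsilon)$ from \eqref{eq:4-3}, and insert the intermediate term $Z_{X_\cdot t}(t+\epsilon)$. The flow property of Proposition \ref{prop3-2} together with \eqref{eq:4-3} identifies $Z_{X_\cdot t}(t+\epsilon)=D_XV(X_{X_\cdot t}(t+\epsilon)\otimes m,t+\epsilon)$ (note that $X_{X_\cdot t}(t+\epsilon)$ is independent of $\mathcal{F}_{t+\epsilon}$), so the Lipschitz bound \eqref{eq:4-4} gives $\|Z_{X_\cdot t}(t+\epsilon)-Z_{X_\cdot,t+\epsilon}(t+\epsilon)\|\le C_T\|\Delta\|\le C_T\epsilon\|X\|+C_T\epsilon^{1/2}$. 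For the remaining piece $\|Z_{X_\cdot t}(t)-Z_{X_\cdot t}(t+\epsilon)\|$ I would integrate the BSDE \eqref{eq:3-600} on $[t,t+\epsilon]$: the Lebesgue drift is $O(\epsilon(1+\|X\|))$ by \eqref{eq:4-1}, and the martingale increment $\int_t^{t+\epsilon}\sum_j r_{X_\cdot t}^j\,dw_j+\sum_j\rho_{X_\cdot t}^j\,db_j$ has squared norm $\int_t^{t+\epsilon}\sum_j(\|r_{X_\cdot t}^j\|^2+\|\rho_{X_\cdot t}^j\|^2)\,ds$ by the It\^o isometry, which is $O(\epsilon)$ provided the integrands are bounded in $s$. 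Summing the two pieces yields \eqref{eq:4-27}.

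The main obstacle lies in this last point, and in the corresponding regularity used for \eqref{eq:4-26}. Estimate \eqref{eq:4-200} only bounds $\int_t^T(\|r_{X_\cdot t}^j\|^2+\|\rho_{X_\cdot t}^j\|^2)\,ds$, not the short interval integral, so to obtain the $\sqrt{\epsilon}$ rate I must upgrade it to a uniform in $s$ bound $\|r_{X_\cdot t}^j(s)\|,\|\rho_{X_\cdot t}^j(s)\|\le C_T(1+\|X\|)$. This should follow from the fact that along the optimal flow $r^j$ and $\rho^j$ are the martingale integrands of $Z=D_XV$, whose second differential $D_X^2V$ has operator norm controlled by the Lipschitz constant in \eqref{eq:4-4}, so that heuristically $r^j\sim D_X^2V(\sigma e^j)$ and $\rho^j\sim D_X^2V(\beta e^j)$ are bounded. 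Making this rigorous, and likewise justifying that the conditional measure functional $\Psi$ is continuously differentiable with a Lipschitz gradient in the presence of the common noise conditioning $\mathcal{B}_t^{t+\epsilon}$ (an argument parallel to Propositions \ref{prop2-1}--\ref{prop2-3}, now applied to $V$ rather than to $F$, and relying only on the $\mathcal{H}_m$ regularity of $V$ established in Proposition \ref{prop4-2}), is where the real work is concentrated.
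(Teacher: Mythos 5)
Your skeleton is right in several places — the optimality principle for \eqref{eq:4-26}, the insertion of the intermediate term $Z_{X_\cdot t}(t+\epsilon)$ together with the flow property (Proposition \ref{prop3-2}) and the Lipschitz bound \eqref{eq:4-4} for \eqref{eq:4-27} — and that second half coincides with the paper's argument. But your treatment of $\|Z_{X_\cdot t}(t)-Z_{X_\cdot t}(t+\epsilon)\|$ has a genuine gap. Integrating the BSDE \eqref{eq:3-600} and invoking the It\^o isometry requires a pointwise-in-$s$ bound $\|r_{X_\cdot t}^{j}(s)\|,\|\rho_{X_\cdot t}^{j}(s)\|\leq C_{T}(1+\|X\|)$, which is nowhere available: the paper only proves the time-integrated estimate \eqref{eq:4-200}, and your proposed upgrade via the heuristic $r^{j}\sim D_{X}^{2}V(\sigma e^{j})$ is never made rigorous in the paper and would rest on \eqref{eq:5-2000}, whose proof in Appendix C itself borrows estimates from the proof of the present proposition — so that route is circular as the paper is organized. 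The missing idea is much simpler: both $Z_{X_\cdot,t+\epsilon}(t+\epsilon)$ and $Z_{X_\cdot t}(t)$ are $\sigma(X)$-measurable, so one integrates the BSDE over $[t,t+\epsilon]$ and takes $\mathbb{E}[\,\cdot\,|\sigma(X)]$; the martingale increments vanish identically under this conditioning, leaving $Z_{X_\cdot t}(t)=\mathbb{E}[Z_{X_\cdot t}(t+\epsilon)|\sigma(X)]+O(\epsilon(1+\|X\|))$, and conditional Jensen reduces everything to $\|Z_{X_\cdot,t+\epsilon}(t+\epsilon)-Z_{X_\cdot t}(t+\epsilon)\|$, which your flow-property step already controls. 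No bound on $r,\rho$ beyond their existence is needed, and the $\sqrt{\epsilon}$ in \eqref{eq:4-27} comes solely from $\|X_{X_\cdot t}(t+\epsilon)-X_\cdot\|$.

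For \eqref{eq:4-26} there is a parallel gap: the mean-value identity $A-V(X\otimes m,t+\epsilon)=\int_{0}^{1}\langle D\Psi(X+\lambda\Delta),\Delta\rangle \,d\lambda$ presupposes that $\zeta\mapsto\mathbb{E}\left(V((\zeta\otimes m)^{\mathcal{B}_{t}^{t+\epsilon}},t+\epsilon)\right)$ is G\^{a}teaux differentiable with a uniformly Lipschitz gradient. That is exactly the machinery of Propositions \ref{prop2-1}--\ref{prop2-3} applied to $V$ in place of $F$, and it requires the measure-level regularity of $V$ — the functional derivative $\frac{d}{d\nu}V$ with $D\frac{d}{d\nu}V$ Lipschitz in $x$, plus second-order bounds — none of which is available at this stage: Proposition \ref{prop4-3} and the bounds of Section \ref{sec:2nd order diff} come later and partly rest on the present proposition. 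Note also that \eqref{eq:4-4} does not transfer to $\Psi$, since $\Psi$ evaluates $V$ at a random conditional measure rather than at $\zeta\otimes m$. The paper avoids $\Psi$ entirely: using the flow property it rewrites $\mathbb{E}\left(V((X_{X_\cdot t}(t+\epsilon)\otimes m)^{\mathcal{B}_{t}^{t+\epsilon}},t+\epsilon)\right)-V(X\otimes m,t+\epsilon)$ as a difference of two control costs, Taylor-expands $l,h,F,F_{T}$ along the two optimal flows with quadratic remainder controlled by the stability estimate \eqref{eq:ApB202}, and cancels the first-order terms using the optimality condition \eqref{eq:3-603} together with the independence/mean-zero identity \eqref{eq:ApB227} — the same cancellation mechanism you correctly identified, implemented without any differentiability of the conditional-measure functional. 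In short, you isolated the right phenomena (drift $O(\epsilon)$, cancellation of the Brownian first-order term, $\sqrt{\epsilon}$ only at the gradient level), but at precisely the two points you flag as ``the real work'' the proof does not close, and in both cases the paper's own devices — conditioning on $\sigma(X)$, and direct comparison of control costs — are what is missing.
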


The proof can be found in Appendix B.

\section{FUNCTIONAL DERIVATIVE OF $V(m,t)$}

\subsection{THE CASE $X=J$}
When $X=J,$ meaning $J_{x}=x,$ we have $J\otimes m=m.$ The
processes $u_{Jt}(s)$, $X_{Jt}(s)$, $Z_{Jt}(s)$, $r_{Jt}^{j}(s)$, and $\rho_{Jt}^{j}(s)$
will be denoted by $u_{xmt}(s)$, $X_{xmt}(s)$, $Z_{xmt}(s)$, $r_{xmt}^{j}(s)$, $\rho_{xmt}^{j}(s)$ to emphasize the dependence in $m.$ The system (\ref{eq:3-7}) becomes:
\begin{equation}
\left\{\begin{aligned}
&X_{xmt}(s)=x+\int_{t}^{s}u_{xmt}(\tau)d\tau+\sigma(w(s)-w(t))+\beta(b(s)-b(t)),\label{eq:6-500}\\
&-dZ_{xmt}(s)=\left(l_{x}(X_{xmt}(s),u_{xmt}(s))+D\dfrac{d}{d\nu}F((X_{\cdot mt}(s)\otimes m)^{\mathcal{B}_{t}^{s}})(X_{xmt}(s))\right)ds\\
&\qquad \qquad\qquad-\sum_{j=1}^{n}r_{xmt}^{j}(s)dw_{j}(s)-\sum_{j=1}^{n}\rho_{xmt}^{j}(s)db_{j}(s), \\
&Z_{xmt}(T)=h_{x}(X_{xmt}(T))+D\dfrac{d}{d\nu}F_{T}((X_{\cdot mt}(T)\otimes m)^{\mathcal{B}_{t}^{T}})(X_{xmt}(T)),\\
&l_{v}(X_{xmt}(s),u_{xmt}(s))+Z_{xmt}(s)=0.
\end{aligned}\right.
\end{equation}

This is the system of necessary and sufficient conditions of optimality
of the control problem
\begin{numcases}{}
X_{xt}(s) =x+\int_{t}^{s}v_{xt}(\tau)d\tau+\sigma(w(s)-w(t))+\beta(b(s)-b(t)),\label{eq:4-7}\\
J_{mt}(v_{\cdot, t}(\cdot))=\int_{t}^{T}\mathbb{E}\left(\int_{\mathbb{R}^{n}}l(X_{xt}(s),v_{xt}(s))dm(x)\right)\,ds+\int_{t}^{T}\mathbb{E}\left(F((X_{\cdot, t}(s)\otimes m)^{\mathcal{B}_{t}^{s}})\right)ds\nonumber\\
\qquad\qquad\qquad \, + \, \mathbb{E}\left(\int_{\mathbb{R}^{n}}h(X_{xt}(T))dm(x)\right)+\mathbb{E}\left(F_{T}((X_{\cdot, t}(T)\otimes m)^{\mathcal{B}_{t}^{T}})\right), \label{eq:4-8}
\end{numcases}

with $v_{\cdot, t}(\cdot)$ $\in L_{\mathcal{F}_{t}}^{2}(t,T;\mathcal{H}_{m}).$
The optimal control is $u_{xmt}(s)$ and the optimal trajectory is
$X_{xmt}(s).$

We can then check the estimates (under the same conditions as the
general problem, see Proposition \ref{prop3-1}):
\begin{align}
&\mathbb{E}\left(|u_{xmt}(s)|^{2}\right),\ \mathbb{E}\left(|X_{xmt}(s)|^{2}\right),\ \mathbb{E}\left(|Z_{xmt}(s)|^{2}\right)\leq C_{T}(1+|x|^{2}),\label{eq:4-90}\\
&\sum_{j=1}^{n}\int_{t}^{T}|r_{xmt}^{j}(s)|^{2}ds,\ \sum_{j=1}^{n}\int_{t}^{T}|\rho_{xmt}^{j}(s)|^{2}ds\leq C_{T}(1+|x|^{2}).\nonumber
\end{align}
This means that the optimal control $u_{xmt}(s)$ belongs to the space of processes $v_{xt}(\cdot)$ such that $x\mapsto\frac{v_{xt}(\cdot)}{\left(1+|x|^{2}\right)^{\frac{1}{2}}}\in L^{\infty}(\mathbb{R}^{n};L_{\mathcal{F}_{t}}^{\infty}(t,T;L^{2}(\Omega,\mathcal{A},\mathbb{P};\mathbb{R}^{n})))$, which is a subspace of $L_{\mathcal{F}_{t}}^{2}(t,T;\mathcal{H}_{m})$
for any $m.$ We denote $V(m,t)=V(J\otimes m,t)$, given by the formula 
\begin{align}
V(m,t)=&\int_{t}^{T}\mathbb{E}\left(\int_{\mathbb{R}^{n}}l(X_{xmt}(s),u_{xmt}(s))dm(x)\right)\,ds+\int_{t}^{T}\mathbb{E}\left(F((X_{\cdot mt}(s)\otimes m)^{\mathcal{B}_{t}^{s}})\right)ds\label{eq:4-91}\\
&+\mathbb{E}\left(\int_{\mathbb{R}^{n}}h(X_{xmt}(T))dm(x)\right)+\mathbb{E}\left(F_{T}((X_{\cdot mt}(T)\otimes m)^{\mathcal{B}_{t}^{T}})\right), \nonumber
\end{align}
and 
\begin{equation}
D_{X}V(m,t)=Z_{xmt}(t).\label{eq:4-92}
\end{equation}
There is another way to see the system \eqref{eq:6-500}.
We use the lifting technique and the formulation of the distance
of Wasserstein metric given by (\ref{eq:1-2}). To the measures $m$ and $m'$, we
associate random variables $\widehat{X}_{m}$
and $\widehat{X}_{m'}$, independent of $\mathcal{F}_{t}$, such that $\mathcal{L}_{\widehat{X}_{m}}=m$, $\mathcal{L}_{\widehat{X}_{m'}}=m'$, and 
\begin{equation}
W_{2}^{2}(m,m')=\mathbb{E}\left(|\widehat{X}_{m}-\widehat{X}_{m'}|^{2}\right).\label{eq:4-93}
\end{equation}
We then consider the analog of \eqref{eq:6-500} as follows:
\begin{equation}
\label{eq:6-503}
    \left\{\begin{aligned}
    &X_{\widehat{X}_{m}t}(s)=\widehat{X}_{m}+\int_{t}^{s}u_{\widehat{X}_{m}t}(\tau)d\tau+\sigma(w(s)-w(t))+\beta(b(s)-b(t)),\\
    &-dZ_{\widehat{X}_{m}t}(s)=\left(l_{x}(X_{\widehat{X}_{m}t}(s),u_{\widehat{X}_{m}t}(s))+D\dfrac{d}{d\nu}F((\mathcal{L}_{X_{\widehat{X}_{m}t}(s)})^{\mathcal{B}_{t}^{s}})(X_{\widehat{X}_{m}t}(s))\right)ds\\
    &\qquad\qquad\qquad\quad -\sum_{j=1}^{n}r_{\widehat{X}_{m}t}^{j}(s)dw_{j}(s)-\sum_{j=1}^{n}\rho_{\widehat{X}_{m}t}^{j}(s)db_{j}(s),\\
    &Z_{\widehat{X}_{m}t}(T)=h_{x}(X_{\widehat{X}_{m}t}(T))+D\dfrac{d}{d\nu}F_{T}((\mathcal{L}_{X_{\widehat{X}_{m}t}(T)})^{\mathcal{B}_{t}^{T}})(X_{\widehat{X}_{m}t}(T)),\\
    &l_{v}(X_{\widehat{X}_{m}t}(s),u_{\widehat{X}_{m}t}(s))+Z_{\widehat{X}_{m}t}(s)=0.
\end{aligned}\right.
\end{equation}

This system is identical to \eqref{eq:3-7},
in which we have replaced $X_{x}$ with $\widehat{X}_{m}$. The interest
of this formulation is that 
\begin{equation}
\mathcal{L}_{X_{\widehat{X}_{m}t}(s)}=X_{\cdot mt}(s)\otimes m.\label{eq:6-5050}
\end{equation}
Therefore we can write 
\begin{equation}
V(m,t)=V(\widehat{X}_{m},t),\label{eq:6-506}
\end{equation}
with 
\begin{align}
V(\widehat{X}_{m},t)=&\int_{t}^{T}\mathbb{E}\left(l(X_{\widehat{X}_{m}t}(s),u_{\widehat{X}_{m}t}(s))\right)\,ds+\int_{t}^{T}\mathbb{E}\left(F((\mathcal{L}_{X_{\widehat{X}_{m}t}(s)})^{\mathcal{B}_{t}^{s}})\right)ds\label{eq:6-5060}\\
&+\mathbb{E}\left(h(X_{\widehat{X}_{m}t}(T))\right)+\mathbb{E}\left(F_{T}((\mathcal{L}_{X_{\widehat{X}_{m}t}(T)})^{\mathcal{B}_{t}^{T}})\right).\nonumber
\end{align}
As for (\ref{eq:4-3}) we have 
\begin{equation}
D_{X}V(\widehat{X}_{m},t)=Z_{\widehat{X}_{m}t}(t), \ \quad \mathbb{E}\left(|Z_{\widehat{X}_{m}t}(t)|^{2}\right) \leq C_{T}^{2}(1+\mathbb{E}(|\widehat{X}_{m}|^{2})).\label{eq:6-5061}
\end{equation}
Using 
\[
V(\widehat{X}_{m'},t)-V(\widehat{X}_{m},t)=\int_{0}^{1}\mathbb{E} \left(D_{X}V(\widehat{X}_{m}+\lambda(\widehat{X}_{m'}-\widehat{X}_{m}),t) \cdot (\widehat{X}_{m'}-\widehat{X}_{m})\right)d\lambda
\]
 and the estimate (\ref{eq:6-5061}), we can write:
\[
|V(\widehat{X}_{m'},t)-V(\widehat{X}_{m},t)|\leq C_{T}\left(1+\mathbb{E}\left(|\widehat{X}_{m}|^{2}\right)+\mathbb{E}\left(|\widehat{X}_{m'}-\widehat{X}_{m}|^{2}\right)\right)^{\frac{1}{2}}\left(\mathbb{E}\left(|\widehat{X}_{m}-\widehat{X}_{m'}|^{2}\right)\right)^{\frac{1}{2}},
\]
 which means 
\begin{equation}
|V(m',t)-V(m,t)|\leq C_{T}\left(1+\int_{\mathbb{R}^{n}}|x|^{2}dm(x)+W_{2}^{2}(m,m')\right)^{\frac{1}{2}}W_{2}(m,m').\label{eq:6-5062}
\end{equation}
We have also, by (\ref{eq:ApB202}), 
\begin{equation}\label{eq:6-5063}
\begin{aligned}
\mathbb{E}\left(|X_{\widehat{X}_{m'}\,t}(s)-X_{\widehat{X}_{m}t}(s)|^{2}\right) &\leq C_{T}\mathbb{E}\left(|\widehat{X}_{m}-\widehat{X}_{m'}|^{2}\right), \\
\mathbb{E}\left(|u_{\widehat{X}_{m'}\,t}(s)-u_{\widehat{X}_{m}t}(s)|^{2}\right) &\leq C_{T}\mathbb{E}\left(|\widehat{X}_{m}-\widehat{X}_{m'}|^{2}\right).
\end{aligned}
\end{equation}

\subsection{FUNCTIONAL DERIVATIVE}

We can then state the following:
\begin{proposition}
\label{prop4-3} We assume (\ref{eq:4-1000}), (\ref{eq:4-1001}), (\ref{eq:4-1002}), (\ref{eq:4-1003}), (\ref{eq:4-1004}), (\ref{eq:3-1}), (\ref{eq:3-2}), (\ref{eq:3-3}), (\ref{eq:3-4}), (\ref{eq:3-50}), (\ref{eq:3-51}), (\ref{eq:3-52}). We
also assume (\ref{eq:3-6}). Then the value function $V(m,t)$ has
a functional derivative $\dfrac{d}{d\nu}V(m,t)(x),$ given by: 
\begin{align}
\dfrac{d}{d\nu}V(m,t)(x)=&\int_{t}^{T}\mathbb{E}\bigg(l(X_{xmt}(s),u_{xmt}(s))\bigg)ds+\int_{t}^{T}\mathbb{E}\left(\dfrac{dF}{d\nu}((X_{\cdot mt}(s)\otimes m)^{\mathcal{B}_{t}^{s}})(X_{xmt}(s))\right)ds\label{eq:4-10}\\
&+\mathbb{E}\bigg(h(X_{xmt}(T))\bigg)+\mathbb{E}\left(\dfrac{dF_{T}}{d\nu}((X_{\cdot mt}(T)\otimes m)^{\mathcal{B}_{t}^{T}})(X_{xmt}(T))\right),
\end{align}
and 
\begin{equation}
D\dfrac{d}{d\nu}V(m,t)(x)=Z_{xmt}(t).\label{eq:4-11}
\end{equation}
We can also write 
\begin{equation}
\dfrac{d}{d\nu}V(m,t)(x)=\int_{0}^{1}Z_{\theta x,mt}(t) \cdot xd\theta+C.\label{eq:4-110}
\end{equation}
\end{proposition}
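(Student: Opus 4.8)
The plan is to treat \eqref{eq:4-10} as the substantive claim and to deduce \eqref{eq:4-11} and \eqref{eq:4-110} from it using the derivative machinery of Section \ref{sec:FORMALISM}. The heart of the argument is an envelope (first-variation) computation along the linear path $m_\theta := m + \theta(m'-m)$ in $\mathcal{P}_2(\mathbb{R}^n)$. Fixing $m'$, I would write $V(m_\theta,t) = J_{m_\theta t}(u^\theta)$, where $u^\theta := u_{\cdot m_\theta t}$ is the unique optimal control for the problem \eqref{eq:4-7}--\eqref{eq:4-8} with data $m_\theta$ (unique by Proposition \ref{prop3-1}), and study $\theta \mapsto V(m_\theta,t)$. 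The goal is to show that only the \emph{explicit} dependence of the cost on $m_\theta$ survives differentiation, the implicit dependence through $u^\theta$ being annihilated by optimality.

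Concretely, I would exploit the two-sided comparison obtained from suboptimality of $u^\theta$ for $m_{\theta+\delta}$ and of $u^{\theta+\delta}$ for $m_\theta$:
\[
\frac{J_{m_{\theta+\delta} t}(u^{\theta+\delta}) - J_{m_\theta t}(u^{\theta+\delta})}{\delta} \le \frac{V(m_{\theta+\delta},t) - V(m_\theta,t)}{\delta} \le \frac{J_{m_{\theta+\delta} t}(u^{\theta}) - J_{m_\theta t}(u^{\theta})}{\delta}.
\]
Both outer quotients differentiate only the explicit measure dependence of $J$ with the control (hence the state) frozen. The $l$- and $h$-terms are linear in the measure, producing $\int_{\mathbb{R}^n}\mathbb{E}\big(l(X_{x m_\theta t}(s),u_{x m_\theta t}(s))\big)(dm'-dm)$ and the analogous terminal term; the $F$- and $F_T$-terms, where the perturbed measure sits inside the pushforward $X_{\cdot m_\theta t}(s)\otimes m_\theta$, are handled by the partial functional-derivative identity \eqref{eq:2-220}, yielding $\int_{\mathbb{R}^n}\mathbb{E}\big(\tfrac{dF}{d\nu}((X_{\cdot m_\theta t}(s)\otimes m_\theta)^{\mathcal{B}_t^s})(X_{x m_\theta t}(s))\big)(dm'-dm)$ and its terminal analogue. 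Squeezing as $\delta\to0$ gives $\tfrac{d}{d\theta}V(m_\theta,t) = \int_{\mathbb{R}^n}\tfrac{d}{d\nu}V(m_\theta,t)(x)(dm'(x)-dm(x))$ with $\tfrac{d}{d\nu}V$ equal to the right-hand side of \eqref{eq:4-10}; integrating over $\theta\in(0,1)$ and comparing with \eqref{eq:2-211} identifies this as the functional derivative.

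For \eqref{eq:4-11} I would invoke consistency of the two notions of derivative rather than differentiate \eqref{eq:4-10} by hand. By Proposition \ref{prop4-2} specialized to $X=J$, see \eqref{eq:4-92}, the Hilbert-space G\^ateaux derivative is $D_X V(m,t) = Z_{xmt}(t)$, a deterministic function of $x$ (it is measurable with respect to the trivial $\sigma$-algebra at time $t$) obeying the linear growth $|Z_{xmt}(t)|\le C_T(1+|x|)$ from \eqref{eq:4-90}. Since we have just shown $V(m,t)$ admits a continuous functional derivative, the general relations \eqref{eq:2-2000}--\eqref{eq:2-3500} force $(D_X V(m,t))_x = D\tfrac{d}{d\nu}V(m,t)(x)$, i.e. $D\tfrac{d}{d\nu}V(m,t)(x) = Z_{xmt}(t)$, which is \eqref{eq:4-11}; the same identity can be obtained directly by a Pontryagin computation, differentiating \eqref{eq:4-10} in $x$ along the first variation $p(s)=D_x X_{xmt}(s)$, cancelling the $D_x u$ contributions via the stationarity relation $l_v+Z=0$, and integrating by parts against the adjoint BSDE \eqref{eq:6-500}. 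Finally, \eqref{eq:4-110} is immediate from the first reconstruction formula in \eqref{eq:2-2101}: $\tfrac{d}{d\nu}V(m,t)(x) = \tfrac{d}{d\nu}V(m,t)(0) + \int_0^1 D\tfrac{d}{d\nu}V(m,t)(\theta x)\cdot x\,d\theta$, with $C := \tfrac{d}{d\nu}V(m,t)(0)$ and $D\tfrac{d}{d\nu}V(m,t)(\theta x)=Z_{\theta x,mt}(t)$.

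The main obstacle is the rigorous execution of the squeeze. It requires $L^2$-continuity of $\theta\mapsto u^\theta$ along the path (for which the stability estimates of the type \eqref{eq:6-5063}, combined with the strict convexity from Proposition \ref{prop3-1}, are the right tool), uniform-in-$\theta$ bounds such as \eqref{eq:4-90} to justify passing the limit inside the expectations and the $ds$-integral against the signed measure $m'-m$, and care with the quadratic growth of $l$ in $v$. The $F$-terms need extra attention because the perturbation acts inside the pushforward; isolating the explicit part is exactly what \eqref{eq:2-220} licenses, provided the frozen field $X_{\cdot m_\theta t}(s)$ satisfies the decoupling bound \eqref{eq:2-219}, which is guaranteed by \eqref{eq:4-90}. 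A secondary point is verifying that $\tfrac{d}{d\nu}V(m,t)(\cdot)$ inherits enough $x$-regularity (continuity and linear growth of its gradient) to legitimately apply \eqref{eq:2-2000}; this follows from the regularity of the optimal pair $(X_{xmt},u_{xmt})$ in $x$ together with the data assumptions.
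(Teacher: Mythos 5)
Your proposal is correct in outline, but it takes a genuinely different route from the paper's. You differentiate $\theta\mapsto V(m_\theta,t)$ head-on along segments $m_\theta=m+\theta(m'-m)$ via a two-sided suboptimality squeeze in the \emph{measure} variable, isolating the explicit dependence through linearity (for the $l,h$ terms) and the partial-derivative identity \eqref{eq:2-220} (for $F,F_T$), and then you recover \eqref{eq:4-11} by consistency with \eqref{eq:4-92} through \eqref{eq:2-2000}--\eqref{eq:2-3500}. The paper never differentiates $V$ in the measure at all: it introduces the auxiliary \emph{decoupled} control problem \eqref{eq:4-13}--\eqref{eq:4-14}, with the measure flow $(X_{\cdot mt}(s)\otimes m)^{\mathcal{B}_t^s}$ frozen, observes that its value $\Psi(x,m,t)=K_{xmt}(u_{xmt}(\cdot))$ is exactly the right-hand side of \eqref{eq:4-10}, proves $D_x\Psi(x,m,t)=Z_{xmt}(t)$ by a two-sided suboptimality argument in the \emph{initial point} $x$ combined with the monotonicity estimate \eqref{eq:4-21} (yielding the quadratic-error bound \eqref{eq:4-17}), proves $W_2$-Lipschitz continuity of $\Psi$ in $m$ via the lifted expansion \eqref{eq:4-22}--\eqref{eq:4-23} and the stability estimates \eqref{eq:4-24}--\eqref{eq:4-250}, and then concludes through the identification principle \eqref{eq:2-3500} and the reconstruction formula \eqref{eq:2-2101}. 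So the suboptimality-plus-convexity DNA is shared, but you freeze the control across measures while the paper freezes it across initial conditions. Your route buys \eqref{eq:4-10} directly as a derivative statement in the sense of \eqref{eq:2-200}, with \eqref{eq:4-11} as a corollary; its costs are the admissibility of frozen controls across the segment (fine, since the system \eqref{eq:6-500} defines $u_{xmt}$ for \emph{every} $x$ with the uniform growth \eqref{eq:4-90}, as you note) and the $\theta$-stability of $u^\theta$, which requires precisely the estimates the paper proves on the way to its continuity-in-$m$ step. Two cautions: first, your invocation of \eqref{eq:2-2000} for \eqref{eq:4-11} is not free --- it presupposes the $x$-regularity and linear gradient growth of $\tfrac{d}{d\nu}V$, and establishing that is exactly the content of the paper's estimates \eqref{eq:4-17}--\eqref{eq:4-21}; your fallback ``Pontryagin computation'' is essentially that argument in disguise, so the word ``immediate'' understates the work, although the Lipschitz property \eqref{eq:4-4} and \eqref{eq:ApB202} do supply the needed ingredients. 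Second, the definition of the functional derivative also demands joint continuity of the candidate in $(x,m)$ together with the bound \eqref{eq:2-104}; the paper verifies this explicitly via \eqref{eq:4-251}, whereas your plan leaves it implicit in the stability estimates --- you should state and prove it rather than let the squeeze carry it silently.
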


The proof can be found in Appendix B. 

More generally, we can write 
\begin{align}
Z_{xmt}(s)&=D\dfrac{dV}{d\nu}((X_{\cdot mt}(s)\otimes m)^{\mathcal{B}_{t}^{s}})(X_{xmt}(s)) =D_{X}\mathbb{E}\left(V((X_{\cdot mt}(s)\otimes m)^{\mathcal{B}_{t}^{s}})\right).\label{eq:4-111}
\end{align}
 Turning to the system (\ref{eq:3-7}), we can write 
\begin{equation}
Z_{X_{\cdot}t}(s)=D_{X}\mathbb{E}\left(V((X_{X_{\cdot}t}(s)\otimes m)^{\mathcal{B}_{t}^{s}})\right).\label{eq:4-112}
\end{equation}
We can then derive that the optimal control $u_{Xt}(s)$ is obtained
by a feedback. Indeed, introduce the standard notation in optimal
control:
\begin{equation}
    \text{Lagrangian} \quad \ L(x,v,p)=l(x,v)+v\cdot p \label{eq:4-41}
\end{equation}
defined on $(\mathbb{R}^{n})^{3}$, then from assumption (\ref{eq:4-1003}),
the function $v\mapsto L(x,v,p)$ is strictly convex and $\rightarrow+\infty,$
as $|v|\rightarrow+\infty.$ So it has a unique minimum $u(x,p).$
The function $(x,p) \mapsto u(x,p$) is $C^{1}$, with formulas
\begin{equation}
u_{x}(x,p)=-(l_{vv}(x,u))^{-1}l_{vx}(x,u),\ u_{p}(x,p)=-(l_{vv}(x,u))^{-1}\label{eq:4-42}
\end{equation}
and estimates
\[
|u_{x}(x,p)|\leq\dfrac{c_{l}}{\lambda},\ |u_{p}(x,p)|\leq\dfrac{1}{\lambda}.
\]
Next, we introduce the Hamiltonian:
\begin{equation}
H(x,p)=\inf_{v}L(x,v,p)=l(x,u(x,p))+p\cdot u(x,p).\label{eq:4-43}
\end{equation}
% \[
% l(x,u(x,p))+p.u(x,p)
% \]
 which is also $C^{1}$, with formulas 
\begin{equation}
H_{x}(x,p)=l_{x}(x,u(x,p)),\ H_{p}(x,p)=u(x,p).\label{eq:4-44}
\end{equation}
Therefore, we can write the feedback rule:
\begin{equation}
u_{Xt}(s)=H_{p}\left(X_{Xt}(s),D_{X}\mathbb{E}\left(V((X_{X_{\cdot}t}(s)\otimes m)^{\mathcal{B}_{t}^{s}})\right)\right).\label{eq:4-45}
\end{equation}

\section{SECOND-ORDER DIFFERENTIABILITY } \label{sec:2nd order diff}

\subsection{DERIVATIVE OF $D^{2}\dfrac{d}{d\nu}V(m,t)(x)$}
\begin{proposition}
\label{prop7-10} We make the assumptions of Proposition \ref{prop4-3}.
Then the processes $X_{xmt}(s)$, $u_{xmt}(s)$,$Z_{zmt}(s)$, $r_{xmt}^{j}(s)$, $\rho_{xmt}^{j}(s)$
are continuously differentiable in $x,$ in the sense 
\[
\int_{t}^{T}\mathbb{E} \left(\left|\dfrac{X_{x+\epsilon y,mt}(s)-X_{xmt}(s)}{\epsilon}-\mathcal{X}_{xmt}(s)y \right|^{2}\right)ds\rightarrow0
\]
 and similar definition for the other processes. The gradients denoted
$\mathcal{X}_{xmt}(s)$, $\mathcal{U}_{xmt}(s)$, $\mathcal{Z}_{zmt}(s)$, $\mathcal{R}_{xmt}^{j}(s)$, $\Theta{}_{xmt}^{j}(s)$
are the unique solution of the system:
\begin{numcases}{}
\mathcal{X}_{xmt}(s)=I+\int_{t}^{s}\mathcal{U}_{xmt}(\tau)d\tau,\label{eq:4-175}\\
l_{vx}(X_{xmt}(s),u_{xmt}(s))\mathcal{X}_{xmt}(s)+l_{vv}(X_{xmt}(s),u_{xmt}(s))\mathcal{U}_{xmt}(s)+\mathcal{Z}_{xmt}(s)=0,\label{eq:4-176}\\
-d\mathcal{Z}_{xmt}(s)=\Big[\Big(l_{xx}(X_{xmt}(s),u_{xmt}(s))+D^{2}\dfrac{dF}{d\nu}((X_{\cdot mt}(s)\otimes m)^{\mathcal{B}_{t}^{s}})(X_{xmt}(s))\Big)\mathcal{X}_{xmt}(s)\nonumber\\
\qquad\qquad\qquad+l_{xv}(X_{xmt}(s),u_{xmt}(s))\mathcal{U}_{xmt}(s)\Big]ds-\sum_{j=1}^{n}\mathcal{R}_{xmt}^{j}(s)dw_{j}(s)-\sum_{j=1}^{n}\Theta{}_{xmt}^{j}(s)db_{j}(s),\label{eq:4-177}\\
\mathcal{Z}_{xmt}(T)= \left(h_{xx}(X_{xmt}(T))+D^{2}\dfrac{dF_{T}}{d\nu}((X_{\cdot mt}(T)\otimes m)^{\mathcal{B}_{t}^{T}})(X_{xmt}(T))\right)\mathcal{X}_{xmt}(T),\nonumber\\
D^{2}\dfrac{d}{d\nu}V(m,t)(x)=\mathcal{Z}_{xmt}(t).\label{eq:4-178} 
\end{numcases}
\end{proposition}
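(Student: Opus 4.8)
The plan is to differentiate the individual optimality system \eqref{eq:6-500} with respect to the base point $x$ and to identify the limit with the linear forward--backward system \eqref{eq:4-175}--\eqref{eq:4-177}. The structural fact that makes this work is that the conditional measure flow $(X_{\cdot mt}(s)\otimes m)^{\mathcal{B}_t^s}$ is fixed by $m$ and does \emph{not} vary when we perturb the single base point $x$; hence in the coefficients $D\frac{d}{d\nu}F((X_{\cdot mt}(s)\otimes m)^{\mathcal{B}_t^s})(X_{xmt}(s))$ and in the corresponding terminal term, only the point argument $X_{xmt}(s)$ carries $x$-dependence. Differentiating these produces precisely $D^2\frac{dF}{d\nu}(\cdots)(X_{xmt}(s))\,\mathcal{X}_{xmt}(s)$ and $D^2\frac{dF_T}{d\nu}(\cdots)(X_{xmt}(T))\,\mathcal{X}_{xmt}(T)$, the nonlocal coefficients in \eqref{eq:4-177}; differentiating the forward equation gives \eqref{eq:4-175} with $\mathcal{X}_{xmt}(t)=I$, and differentiating the pointwise relation $l_v(X_{xmt}(s),u_{xmt}(s))+Z_{xmt}(s)=0$ gives the algebraic constraint \eqref{eq:4-176}.

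Before proving convergence I would establish that \eqref{eq:4-175}--\eqref{eq:4-177} is uniquely solvable. The observation is that, column by column (one coordinate direction $y\in\mathbb{R}^n$ at a time, with $\mathcal{X}_{xmt}(t)y=y$), this system is exactly the necessary and sufficient optimality system of the \emph{second-variation} problem: minimize a quadratic functional whose control cost is coercive with constant $\lambda$ by \eqref{eq:4-1003} and whose state-dependent Hessians are bounded below by $-c'_l$, $-c'$, $-c'_h$, $-c'_T$ via \eqref{eq:4-1003}, \eqref{eq:4-1004}, \eqref{eq:3-51}, \eqref{eq:3-52}. The very same condition \eqref{eq:3-6} that yielded strict convexity in Proposition \ref{prop3-1} therefore makes this quadratic functional strictly convex and coercive, giving a unique solution with bounds $\mathbb{E}(|\mathcal{X}_{xmt}(s)|^2)$, $\mathbb{E}(|\mathcal{Z}_{xmt}(s)|^2)\leq C_T$ uniform in $x$ and $s$. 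Solving \eqref{eq:4-176} for $\mathcal{U}_{xmt}=-(l_{vv})^{-1}(l_{vx}\mathcal{X}_{xmt}+\mathcal{Z}_{xmt})$ and substituting recasts the system as a standard coupled linear FBSDE.

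The main work---and the step I expect to be the chief obstacle---is the convergence of the difference quotients. Writing $\delta^\epsilon X(s)=\epsilon^{-1}(X_{x+\epsilon y,mt}(s)-X_{xmt}(s))$ and likewise $\delta^\epsilon u$, $\delta^\epsilon Z$, $\delta^\epsilon r^j$, $\delta^\epsilon\rho^j$, these satisfy a system of the same form as \eqref{eq:4-175}--\eqref{eq:4-177} but with every Hessian replaced by a finite-difference average, e.g.\ $\int_0^1 l_{xx}(X_{xmt}+\theta\epsilon\,\delta^\epsilon X,\cdots)\,d\theta$, and with the measure flow genuinely frozen. Using the Lipschitz-in-initial-data estimates \eqref{eq:6-5063}, the continuity and boundedness of the Hessians from \eqref{eq:4-1000}--\eqref{eq:4-1002}, \eqref{eq:3-4}, \eqref{eq:3-50}, together with the coercivity furnished by \eqref{eq:3-6}, I would derive a uniform a priori bound on $(\delta^\epsilon X,\delta^\epsilon u,\delta^\epsilon Z)$ and then pass to the limit via an FBSDE stability estimate, obtaining $\delta^\epsilon X\to\mathcal{X}_{xmt}y$, $\delta^\epsilon u\to\mathcal{U}_{xmt}y$, $\delta^\epsilon Z\to\mathcal{Z}_{xmt}y$ (and similarly $\delta^\epsilon r^j\to\mathcal{R}^j_{xmt}y$, $\delta^\epsilon\rho^j\to\Theta^j_{xmt}y$) in $L^2$. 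The hard point is precisely that one cannot close a Gronwall estimate on the forward or the backward equation in isolation, because of the two-way coupling; the convexity/monotonicity structure encoded in \eqref{eq:3-6} must be used to control this coupling, exactly as in Proposition \ref{prop3-1}. Continuity in $x$ of the resulting gradients then follows from the uniform estimates and the continuity of the data.

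Finally, \eqref{eq:4-178} is immediate: Proposition \ref{prop4-3} gives $D\frac{d}{d\nu}V(m,t)(x)=Z_{xmt}(t)$, and since $\delta^\epsilon Z(t)\to\mathcal{Z}_{xmt}(t)\,y$ for every direction $y$, the map $x\mapsto Z_{xmt}(t)$ is differentiable with derivative $\mathcal{Z}_{xmt}(t)$, i.e.\ $D^2\frac{d}{d\nu}V(m,t)(x)=\mathcal{Z}_{xmt}(t)$.
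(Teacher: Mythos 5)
Your proposal is correct and follows essentially the same route as the paper's proof in Appendix C: the paper likewise identifies \eqref{eq:4-175}--\eqref{eq:4-177} (column by column in $y$) with the optimality system of the linear-quadratic second-variation problem \eqref{eq:4-179}--\eqref{eq:4-180}, made strictly convex by \eqref{eq:3-6}, writes the difference-quotient system with integrated Hessians and the measure flow $(X_{\cdot mt}(s)\otimes m)^{\mathcal{B}_t^s}$ frozen, and obtains uniform bounds before passing to the limit. The only cosmetic differences are that the paper extracts weakly convergent subsequences and identifies the limit by uniqueness (remarking that strong convergence then follows by standard techniques) rather than running your direct FBSDE stability estimate, and that the pointwise-in-$x$ a priori bounds you need come from the arguments in the proof of Proposition \ref{prop4-3} (e.g.\ \eqref{eq:4-21}) rather than from \eqref{eq:6-5063}, which concerns perturbations of the lifted initial random variable.
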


The proof can be found in Appendix C. 

Let $y\in \mathbb{R}$. We consider a linear quadratic control problem: the
control is in $L_{\mathcal{F}_{t}}^{2}(t,T;\mathcal{H}_{m}).$ For
$\mathcal{V}_{t}(\cdot)\in L_{\mathcal{F}_{t}}^{2}(t,T;\mathcal{H}_{m})$, the state $\mathcal{X}_{yt}(\cdot)$ is defined by 
\begin{equation}
\mathcal{X}_{yt}(s) :=y+\int_{t}^{s}\mathcal{V}_{t}(\tau)d\tau, \label{eq:4-179}
\end{equation}
and the payoff is 
\begin{align}
\mathcal{J}_{xymt}(\mathcal{V}_{t}(\cdot))=&\, \dfrac{1}{2}\int_{t}^{T}\mathbb{E}\bigg[\left(l_{xx}(X_{xmt}(s),u_{xmt}(s))+D^{2}\dfrac{d}{d\nu}F((X_{xmt}(s)\otimes m)^{\mathcal{B}_{t}^{s}})(X_{xmt}(s))\right)\mathcal{X}_{yt}(s) \cdot \mathcal{X}_{yt}(s) \nonumber\\
&+2l_{xv}(X_{xmt}(s),u_{xmt}(s))\mathcal{V}_{t}(s)\cdot\mathcal{X}_{yt}(s)+l_{vv}(Y_{xmt}(s),u_{xmt}(s))\mathcal{V}_{t}(s) \cdot \mathcal{V}_{t}(s)\bigg]ds \nonumber\\
&+\dfrac{1}{2}\mathbb{E} \left[\left(h_{xx}(X_{xmt}(T))+D^{2}\dfrac{d}{d\nu}F_{T}((X_{xmt}(T)\otimes m)^{\mathcal{B}_{t}^{T}})(X_{xmt}(T))\right)\mathcal{X}_{\mathcal{X}t}(T) \cdot \mathcal{X}_{\mathcal{X}t}(T) \right]. \label{eq:4-180}
\end{align}
The optimal control is $\mathcal{U}_{xmt}(s)y$ and the optimal state
$\mathcal{X}_{xmt}(s)y.$
\begin{proposition}
\label{prop7-100} We make the assumptions of Proposition \ref{prop4-3}.
The function $(x,m,t)\mapsto D^{2}\dfrac{d}{d\nu}V(m,t)(x)$ satisfies 
\begin{equation}
\left|D^{2}\dfrac{d}{d\nu}V(m,t)(x)\right|\leq C_{T}.\label{eq:4-181}
\end{equation}
\end{proposition}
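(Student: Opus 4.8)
The plan is to read off $D^{2}\frac{d}{d\nu}V(m,t)(x)$ from the linear--quadratic (LQ) control problem \eqref{eq:4-179}--\eqref{eq:4-180} attached to it in Proposition \ref{prop7-10}. By \eqref{eq:4-178} we have $D^{2}\frac{d}{d\nu}V(m,t)(x)=\mathcal{Z}_{xmt}(t)$, and the system \eqref{eq:4-175}--\eqref{eq:4-177} is precisely the necessary and sufficient optimality system for the LQ problem with state \eqref{eq:4-179} and cost $\mathcal{J}_{xymt}$ in \eqref{eq:4-180}. Since that cost is homogeneous of degree two in the pair $(y,\mathcal{V}_{t}(\cdot))$, its value $\Phi_{xmt}(y):=\min_{\mathcal{V}_{t}(\cdot)}\mathcal{J}_{xymt}(\mathcal{V}_{t}(\cdot))$ is a quadratic form in $y$ with $\Phi_{xmt}(0)=0$; the co-state at the initial time is its gradient, $D_{y}\Phi_{xmt}(y)=\mathcal{Z}_{xmt}(t)\,y$, so that $\Phi_{xmt}(y)=\tfrac12\,\mathcal{Z}_{xmt}(t)\,y\cdot y$. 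As $\mathcal{Z}_{xmt}(t)$ is a symmetric matrix, proving \eqref{eq:4-181} is equivalent to bounding the quadratic form $\Phi_{xmt}(y)$ above and below by $C_{T}|y|^{2}$, with $C_{T}$ independent of $x,m,t$.

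For the upper bound I would test with the null control $\mathcal{V}_{t}(\cdot)\equiv 0$, for which \eqref{eq:4-179} gives $\mathcal{X}_{yt}(s)\equiv y$. Then
\[
\Phi_{xmt}(y)\le \mathcal{J}_{xymt}(0)=\tfrac12\int_{t}^{T}\mathbb{E}\Big[\big(l_{xx}+D^{2}\tfrac{d}{d\nu}F\big)y\cdot y\Big]ds+\tfrac12\mathbb{E}\Big[\big(h_{xx}+D^{2}\tfrac{d}{d\nu}F_{T}\big)y\cdot y\Big],
\]
and the uniform bounds $|l_{xx}|\le c_{l}$, $|h_{xx}|\le c_{h}$ from \eqref{eq:4-1000}, \eqref{eq:4-1001} together with $|D^{2}\frac{d}{d\nu}F|\le c$ and $|D^{2}\frac{d}{d\nu}F_{T}|\le c_{T}$ from \eqref{eq:3-4} yield $\Phi_{xmt}(y)\le C_{T}|y|^{2}$, i.e. $\mathcal{Z}_{xmt}(t)\le C_{T}I$.

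The lower bound is the main obstacle and reuses the coercivity mechanism behind Proposition \ref{prop3-1}. For an arbitrary control, the pointwise convexity estimate \eqref{eq:4-1003} together with the monotonicity bounds $D^{2}\frac{d}{d\nu}F\ge -c'I$, $h_{xx}\ge -c_{h}'I$ and $D^{2}\frac{d}{d\nu}F_{T}\ge -c_{T}'I$ from \eqref{eq:4-1004}, \eqref{eq:3-51} give
\[
2\mathcal{J}_{xymt}(\mathcal{V}_{t}(\cdot))\ge \lambda\int_{t}^{T}\mathbb{E}|\mathcal{V}_{t}(s)|^{2}ds-(c_{l}'+c')\int_{t}^{T}\mathbb{E}|\mathcal{X}_{yt}(s)|^{2}ds-(c_{h}'+c_{T}')\mathbb{E}|\mathcal{X}_{yt}(T)|^{2}.
\]
Estimating $\mathbb{E}|\mathcal{X}_{yt}(s)|^{2}$ from $\mathcal{X}_{yt}(s)=y+\int_{t}^{s}\mathcal{V}_{t}$ by Cauchy--Schwarz exactly as in the proof of Proposition \ref{prop3-1} separates a term proportional to $|y|^{2}$ from a term proportional to $\int_{t}^{T}\mathbb{E}|\mathcal{V}_{t}|^{2}$, whose coefficient is made nonnegative precisely by the standing assumption \eqref{eq:3-6}. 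Hence $\mathcal{J}_{xymt}(\mathcal{V}_{t}(\cdot))\ge -C_{T}|y|^{2}$ uniformly in the control, so $\Phi_{xmt}(y)\ge -C_{T}|y|^{2}$ and $\mathcal{Z}_{xmt}(t)\ge -C_{T}I$. Combining the two bounds gives $|\mathcal{Z}_{xmt}(t)|\le C_{T}$, and since $\lambda$ and all the constants $c_{l},c_{l}',c_{h},c_{h}',c,c',c_{T},c_{T}'$ are independent of $x,m,t$, the resulting $C_{T}$ depends only on the data and $T$. The delicate point to verify carefully is the identification $\Phi_{xmt}(y)=\tfrac12\mathcal{Z}_{xmt}(t)y\cdot y$---that the co-state supplied by \eqref{eq:4-175}--\eqref{eq:4-178} really is the gradient in $y$ of the LQ value---and that the Cauchy--Schwarz bookkeeping reproduces the exact threshold \eqref{eq:3-6} rather than a lossy version of it.
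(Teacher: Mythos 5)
Your outline tracks the paper's own proof for most of its length. The paper likewise derives the quadratic identity $y\cdot\mathcal{Z}_{xmt}(t)y=2\Phi_{xmt}(y)$ (its formula \eqref{eq:8-60}), and it obtains the control estimate $\mathbb{E}\big(\int_t^T|\mathcal{U}_{xmt}(s)y|^2ds\big)\leq C_T|y|^2$ from exactly your two ingredients: the null-control test for the upper bound and the coercivity mechanism of Proposition \ref{prop3-1}, with the $(1+\epsilon)$ Cauchy--Schwarz bookkeeping preserving the threshold \eqref{eq:3-6} just as you describe. Where you diverge is the final step. The paper never sandwiches the quadratic form: it integrates the BSDE \eqref{eq:4-177} and takes expectations to get the \emph{linear} representation \eqref{eq:8-61} of $\mathcal{Z}_{xmt}(t)y$, whose coefficient matrices are uniformly bounded; feeding the control estimate into it bounds the vector $\mathcal{Z}_{xmt}(t)y$ directly for every $y$, hence the matrix norm, with no symmetry discussion at all.

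This matters because the soft spot you flag yourself is real: the two-sided bound on $\Phi_{xmt}$ only controls the symmetric part $\tfrac12(\mathcal{Z}_{xmt}(t)+\mathcal{Z}_{xmt}(t)^{T})$, so your conclusion stands or falls with the identification $D_y\Phi_{xmt}(y)=\mathcal{Z}_{xmt}(t)y$. Fortunately this can be settled without redoing the nonlinear argument of Proposition \ref{prop4-2}: polarize the computation behind \eqref{eq:8-60}. Computing $\frac{d}{ds}\,\mathbb{E}\big[\mathcal{Z}_{xmt}(s)y_1\cdot\mathcal{X}_{xmt}(s)y_2\big]$ from \eqref{eq:4-175}--\eqref{eq:4-177} and eliminating $\mathcal{Z}_{xmt}(s)$ along the trajectory via \eqref{eq:4-176} yields
\begin{align*}
y_2\cdot\mathcal{Z}_{xmt}(t)y_1 &= \int_t^T \mathbb{E}\Big[\Big(l_{xx}+D^2\tfrac{dF}{d\nu}\Big)\mathcal{X}y_1\cdot\mathcal{X}y_2 + l_{xv}\,\mathcal{U}y_1\cdot\mathcal{X}y_2 + l_{vx}\,\mathcal{X}y_1\cdot\mathcal{U}y_2 + l_{vv}\,\mathcal{U}y_1\cdot\mathcal{U}y_2\Big]ds \\
&\quad + \mathbb{E}\Big[\Big(h_{xx}+D^2\tfrac{dF_T}{d\nu}\Big)\mathcal{X}(T)y_1\cdot\mathcal{X}(T)y_2\Big],
\end{align*}
which is symmetric under $y_1\leftrightarrow y_2$ because all coefficient matrices are symmetric and $l_{xv}^{T}=l_{vx}$; hence $\mathcal{Z}_{xmt}(t)$ is symmetric, your sandwich applies, and the proof closes with a $C_T$ depending only on the data and $T$. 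So your route is correct once this one-line polarization is added; the trade-off is that the paper's \eqref{eq:8-61} argument is slightly more robust (it needs no symmetry and bounds $|\mathcal{Z}_{xmt}(t)y|$ for each $y$ at the cost of using the control estimate), while yours gives the sharper-looking two-sided statement $-C_TI\leq\mathcal{Z}_{xmt}(t)\leq C_TI$ and the symmetry of $D^2\frac{d}{d\nu}V(m,t)(x)$ as a by-product.
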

The proof can be found in Appendix C. 

\subsection{EXISTENCE OF THE SECOND DERIVATIVE OF THE VALUE FUNCTION }

We state the following important proposition:
\begin{proposition}
\label{prop5-10}We make the assumptions of Proposition \ref{prop4-3}.
The value function $V(X_{\cdot}\otimes m,t)$ has a second-order G\^{a}teaux
derivative in $X\in\mathcal{H}_{m}$ independent of $\mathcal{F}_{t}$,
denoted $D_{X}^{2}V(X\otimes m,t)\in\mathcal{L}(\mathcal{H}_{m};\mathcal{H}_{m})$
and 
\begin{equation}
||D_{X}^{2}V(X_{\cdot}\otimes m,t)(\mathcal{X}_{\cdot})||\leq C_{T}||\mathcal{X}||,\label{eq:5-2000}
\end{equation}
where $C_{T}$ is a constant not depending on $X$, $t$, and $m.$
In addition, we have the following continuity property: let $t_{k}\downarrow t$
and $X_{k},\mathcal{X}_{k}$ independent of $\mathcal{F}_{t_{k}}$
converge to $X_{\cdot},\mathcal{X}_{\cdot}$ in $\mathcal{H}_{m}$, then 
\begin{equation}
D_{X}^{2}V(X_{k}\otimes m,t_{k})(\mathcal{X}_{k})\rightarrow D_{X}^{2}V(X_{\cdot}\otimes m,t)(\mathcal{X}_{\cdot})\ \text{in}\ \mathcal{H}_{m}.\label{eq:5-2001}
\end{equation}
The limits $X,\mathcal{X}$ are independent of $\mathcal{F}_{t}.$
We can give an explicit formula for the second-order G\^ateaux derivative.
Let $u_{Xt}(s)$, $X_{Xt}(s)$, $Z_{Xt}(s)$, $r_{Xt}^{j}(s)$, $\rho_{Xt}^{j}(s)$ be
the solution of the system \eqref{eq:3-7}. We define $\mathcal{\mathcal{U}}_{X_{\cdot}\mathcal{X}_{\cdot}t}(s)$, $\mathcal{X}_{X_{\cdot}\mathcal{X}_{\cdot}t}(s)$, $\mathcal{Z}_{X_{\cdot}\mathcal{X}_{\cdot}t}(s)$, $\mathcal{R}_{X_{\cdot}\mathcal{X}_{\cdot}t}^{j}(s)$, $\Theta_{X_{\cdot}\mathcal{X}_{\cdot}t}^{j}(s)$ to be the unique solution of the system:
\begin{equation} \label{eq:5-2002}
\left\{
\begin{aligned}
&\mathcal{X}_{X_{\cdot}\mathcal{X}_{\cdot}t}(s)=\mathcal{X}_{\cdot}+\int_{t}^{s}\mathcal{U}_{X_{\cdot}\mathcal{X}_{\cdot}t}(\tau)d\tau,\\
&l_{vx}(X_{Xt}(s),u_{Xt}(s))\mathcal{X}_{X_{\cdot}\mathcal{X}_{\cdot}t}(s)+l_{vv}(X_{X_{\cdot}t}(s),u_{X_{\cdot}t}(s))\mathcal{\mathcal{U}}_{X_{\cdot}\mathcal{X}_{\cdot}t}(s)+\mathcal{Z}_{X_{\cdot}\mathcal{X}_{\cdot}t}(s)=0, \\
&-d\mathcal{Z}_{X_{\cdot}\mathcal{X}_{\cdot}t}(s)=\Big[l_{xx}(X_{X_{\cdot}t}(s),u_{X_{\cdot}t}(s))\mathcal{X}_{X_{\cdot}\mathcal{X}_{\cdot}t}(s)+l_{xv}(X_{Xt}(s),u_{Xt}(s))\mathcal{\mathcal{U}}_{X_{\cdot}\mathcal{X}_{\cdot}t}(s) \\
&\qquad\qquad+D_{X}^{2}\mathbb{E}\left(F((X_{X_{\cdot}t}(s)\otimes m)^{\mathcal{B}_{t}^{s}})\right)(\mathcal{X}_{X_{\cdot}\mathcal{X}_{\cdot}t}(s))\Big]ds-\sum_{j=1}^{n}\mathcal{R}_{X_{\cdot}\mathcal{X}_{\cdot}t}^{j}(s)dw_{j}(s)-\sum_{j=1}^{n}\Theta_{X_{\cdot}\mathcal{X}_{\cdot}t}^{j}(s)db_{j}(s), \\
&\mathcal{Z}_{X_{\cdot}\mathcal{X}_{\cdot}t}(T)=h_{xx}(X_{X_{\cdot}t}(T))\mathcal{X}_{X_{\cdot}\mathcal{X}_{\cdot}t}(T)+D_{X}^{2}\mathbb{E}\left(F_{T}((X_{X_{\cdot}t}(T)\otimes m)^{\mathcal{B}_{t}^{T}})\right)(\mathcal{X}_{X_{\cdot}\mathcal{X}_{\cdot}t}(T)).
\end{aligned}\right.
\end{equation}

The random variables $\mathcal{\mathcal{U}}_{X_{\cdot}\mathcal{X}_{\cdot}t}(s)$, $\mathcal{X}_{X_{\cdot}\mathcal{X}_{\cdot}t}(s)$, $\mathcal{Z}_{X_{\cdot}\mathcal{X}_{\cdot}t}(s)$, $\mathcal{R}_{X_{\cdot}\mathcal{X}_{\cdot}t}^{j}(s)$, $\Theta_{X_{\cdot}\mathcal{X}_{\cdot}t}^{j}(s)$ are $\mathcal{F}_{X_{\cdot}\mathcal{X}_{\cdot}t}^{s}=\sigma(X_{\cdot},\mathcal{X}_{\cdot})\cup\mathcal{F}_{t}^{s}$ measurable. 
They belong to $L_{\mathcal{F}_{X_{\cdot}\mathcal{X}_{\cdot}t}}^{2}(t,T;\mathcal{H}_{m}),$
where $\mathcal{F}_{X_{\cdot}\mathcal{X}_{\cdot}t}$ is the filtration generated
by the $\sigma$-algebras $\mathcal{F}_{X_{\cdot}\mathcal{X}_{\cdot}t}^{s}$.
Then 
\begin{equation}
D_{X}^{2}V(X_{\cdot}\otimes m,t)(\mathcal{X}_{\cdot})=\mathcal{Z}_{X_{\cdot}\mathcal{X}_{\cdot}t}(t). \label{eq:5-2005}
\end{equation}
\end{proposition}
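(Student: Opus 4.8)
The plan is to obtain $D_X^2 V$ by differentiating, with respect to the initial datum $X_\cdot$ in a direction $\mathcal{X}_\cdot$, the optimality system \eqref{eq:3-7}, and then to identify the resulting object $\mathcal{Z}_{X_\cdot\mathcal{X}_\cdot t}(t)$ with the G\^ateaux derivative of the map $X_\cdot\mapsto D_X V(X_\cdot\otimes m,t)=Z_{X_\cdot t}(t)$ provided by Proposition \ref{prop4-2}. Formal differentiation of the four relations in \eqref{eq:3-7} yields exactly the linear forward-backward system \eqref{eq:5-2002}: the derivative of the state gives the forward equation for $\mathcal{X}_{X_\cdot\mathcal{X}_\cdot t}$, the derivative of the stationarity relation $l_v+Z=0$ gives the algebraic relation for $\mathcal{U}_{X_\cdot\mathcal{X}_\cdot t}$, and the derivative of the backward equation produces the backward equation for $\mathcal{Z}_{X_\cdot\mathcal{X}_\cdot t}$, the nonlocal term $D_X\mathbb{E}(F(\cdots))$ differentiating into $D_X^2\mathbb{E}(F(\cdots))(\mathcal{X}_{X_\cdot\mathcal{X}_\cdot t})$ by the definition \eqref{eq:2-227} of the second G\^ateaux derivative.

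First I would establish well-posedness of \eqref{eq:5-2002}. This linear FBSDE is the system of necessary and sufficient optimality conditions for the linear-quadratic control problem that constitutes the second variation of $J_{X_\cdot t}$; the operators $D_X^2\mathbb{E}(F(\cdots))$ and $D_X^2\mathbb{E}(F_T(\cdots))$ are bounded and self-adjoint by Proposition \ref{prop2-3}, and the monotonicity bounds \eqref{eq:3-51}, \eqref{eq:3-52} give the lower estimates $\langle D_X^2\mathbb{E}(F(\cdots))(\mathcal{Y}_\cdot),\mathcal{Y}_\cdot\rangle\ge -c'\|\mathcal{Y}\|^2$ and likewise for $F_T$. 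Consequently the strict-convexity and coercivity argument of Proposition \ref{prop3-1} applies verbatim under \eqref{eq:3-6}, producing a unique optimal $\mathcal{U}_{X_\cdot\mathcal{X}_\cdot t}$ and hence a unique $\mathcal{F}_{X_\cdot\mathcal{X}_\cdot t}$-adapted solution of \eqref{eq:5-2002}. The a priori bound \eqref{eq:5-2000} then follows from the energy estimates used in Proposition \ref{prop4-1}, now on the linear system: eliminating $\mathcal{U}_{X_\cdot\mathcal{X}_\cdot t}=-(l_{vv})^{-1}(l_{vx}\mathcal{X}_{X_\cdot\mathcal{X}_\cdot t}+\mathcal{Z}_{X_\cdot\mathcal{X}_\cdot t})$ and applying Gronwall's inequality controls $\sup_s(\|\mathcal{X}_{X_\cdot\mathcal{X}_\cdot t}(s)\|+\|\mathcal{Z}_{X_\cdot\mathcal{X}_\cdot t}(s)\|)\le C_T\|\mathcal{X}\|$, in particular at $s=t$.

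The crux is the convergence of difference quotients. Writing $\delta^\epsilon X(s)=\epsilon^{-1}(X_{(X_\cdot+\epsilon\mathcal{X}_\cdot)t}(s)-X_{X_\cdot t}(s))$ and likewise $\delta^\epsilon u,\delta^\epsilon Z$, subtracting the two copies of \eqref{eq:3-7} and dividing by $\epsilon$ produces a linear forward-backward system for $(\delta^\epsilon X,\delta^\epsilon u,\delta^\epsilon Z)$ whose coefficients are integral averages of $l_{xx},l_{xv},l_{vv},h_{xx}$ along the segment joining the two optimal trajectories, plus the difference quotient of the nonlocal term. Setting $e^\epsilon$ to be the difference between $(\delta^\epsilon X,\delta^\epsilon u,\delta^\epsilon Z)$ and the solution of \eqref{eq:5-2002}, I would run the convexity-based energy estimate of Proposition \ref{prop3-1} to obtain $\sup_s\|e^\epsilon(s)\|^2\le C_T\cdot(\text{coefficient error})\to 0$, which gives $\delta^\epsilon Z(t)\to\mathcal{Z}_{X_\cdot\mathcal{X}_\cdot t}(t)$, i.e. \eqref{eq:5-2005}. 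The main obstacle is the limit of the nonlocal term $\epsilon^{-1}(D_X\mathbb{E}(F((X_{(X_\cdot+\epsilon\mathcal{X}_\cdot)t}(s)\otimes m)^{\mathcal{B}_t^s}))-D_X\mathbb{E}(F((X_{X_\cdot t}(s)\otimes m)^{\mathcal{B}_t^s})))$, which must converge in $\mathcal{H}_m$ to $D_X^2\mathbb{E}(F(\cdots))(\mathcal{X}_{X_\cdot\mathcal{X}_\cdot t}(s))$. This is precisely where the tailored continuity property \eqref{eq:2-103} of Proposition \ref{prop2-3} is used, with $\delta^\epsilon X$ in the role of the argument $Y^k$; the required uniform-integrability condition $\|\delta^\epsilon X\,\mathbbm{1}_{|\delta^\epsilon X|\ge M}\|\le c(M)$ with $c(M)\to 0$ follows from the Lipschitz-in-initial-datum estimate \eqref{eq:6-5063}, which bounds the difference quotients in $\mathcal{H}_m$ uniformly in $\epsilon$.

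Finally, the continuity property \eqref{eq:5-2001} follows from a stability estimate for \eqref{eq:5-2002} under perturbation of its data $(X_\cdot,\mathcal{X}_\cdot,t)$. Along $t_k\downarrow t$ with $X_k\to X_\cdot$ and $\mathcal{X}_k\to\mathcal{X}_\cdot$ in $\mathcal{H}_m$, the coefficients of \eqref{eq:5-2002} converge because of the continuity of $l_{xx},l_{xv},l_{vv},h_{xx}$ together with the convergence of $(X_{X_k t_k}(s),u_{X_k t_k}(s))$, the operator continuity \eqref{eq:2-103}, and the time-regularity of Proposition \ref{prop4-5}; subtracting the two systems and invoking once more the convexity-based energy estimate yields $\mathcal{Z}_{X_k\mathcal{X}_k t_k}(t_k)\to\mathcal{Z}_{X_\cdot\mathcal{X}_\cdot t}(t)$ in $\mathcal{H}_m$, which is \eqref{eq:5-2001}.
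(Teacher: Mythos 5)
Your overall architecture coincides with the paper's: you interpret \eqref{eq:5-2002} as the optimality system of the linear--quadratic second-variation problem (which, under \eqref{eq:3-6}, gives uniqueness, adaptedness, and the bound \eqref{eq:5-2000} exactly as in the paper), you differentiate the system \eqref{eq:3-7} through difference quotients, and you prove \eqref{eq:5-2001} by stability of the linear FBSDE. The genuine gap is in your difference-quotient step. You insist on \emph{strong} convergence of the error $e^{\epsilon}$ via a convexity/Gronwall estimate, and to pass to the limit in the nonlocal term you invoke \eqref{eq:2-103} with $\delta^{\epsilon}X$ in the role of $Y^{k}$, claiming that the hypothesis $\lVert \delta^{\epsilon}X\,\mathbbm{1}_{|\delta^{\epsilon}X|\geq M}\rVert\leq c(M)$ with $c(M)\rightarrow 0$ follows from \eqref{eq:6-5063}. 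It does not: \eqref{eq:6-5063} only yields the uniform bound $\lVert\delta^{\epsilon}X(s)\rVert\leq C_{T}\lVert\mathcal{X}\rVert$, and a family bounded in $\mathcal{H}_{m}$ need not be uniformly square-integrable --- the mass of $|\delta^{\epsilon}X|^{2}$ may concentrate, as $\epsilon\rightarrow 0$, precisely on the set where the coefficient errors (the integral averages of $l_{xx},l_{xv},l_{vv}$ along the perturbed trajectories minus their limits) are not small. Consequently your key estimate $\sup_{s}\lVert e^{\epsilon}(s)\rVert^{2}\leq C_{T}\cdot(\text{coefficient error})\rightarrow 0$ is unsubstantiated: the coefficient errors converge to zero only pointwise (boundedly), and multiplying them by a merely $L^{2}$-bounded family does not produce convergence to zero in $\mathcal{H}_{m}$ without uniform integrability, which nothing in the standing assumptions provides.

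The repair is to weaken the mode of convergence, which is what the paper does. Boundedness of $\delta^{\epsilon}u,\delta^{\epsilon}X,\delta^{\epsilon}Z$ (and of the martingale integrands in $L^{2}$) gives weakly convergent subsequences in $L_{\mathcal{F}_{X_{\cdot}\mathcal{X}_{\cdot}t}}^{2}(t,T;\mathcal{H}_{m})$; the limit system \eqref{eq:5-2002} is identified by pairing with arbitrary test processes, where bounded coefficients converging strongly multiply weakly convergent quotients, and the nonlocal term is handled by \eqref{eq:3-157} --- no uniform integrability enters. Since the definition \eqref{eq:2-227} of the second-order G\^ateaux derivative requires only convergence of the scalar pairing $\langle\delta^{\epsilon}Z(t),\mathcal{X}_{\cdot}\rangle$, weak convergence already yields \eqref{eq:5-2005}; strong convergence is never needed at this stage. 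Relatedly, your stability argument for \eqref{eq:5-2001} glosses over the fact that the solutions for $(X_{k},\mathcal{X}_{k},t_{k})$ and $(X_{\cdot},\mathcal{X}_{\cdot},t)$ live on different time intervals and are adapted to different filtrations, so they cannot simply be subtracted: the paper first reduces to a common $\mathcal{X}$ via \eqref{eq:5-2000}, extends the $t_{k}$-system to $[t,t_{k}]$ inside an enlarged filtration, extracts weak limits, and only then upgrades to strong convergence by combining the convergence of the quadratic forms $\langle\mathcal{Z}_{X_{k}\mathcal{X}t_{k}}(t_{k}),\mathcal{X}\rangle$ with the coercivity coming from \eqref{eq:3-6} --- the rigorous version of your ``convexity-based energy estimate.'' With the weak-compactness step inserted in place of the unjustified strong estimate, your plan becomes essentially the paper's proof.
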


The proof can be found in Appendix C. 

\subsection{CASE WHEN $\mathcal{X}$ IS INDEPENDENT OF $X$}
In this section, we consider the characterization of $D_{X}^{2}V(X\otimes m,t)(\mathcal{X})$
when $\mathcal{X}$ is independent of $X.$ It is assumed that $\mathcal{X}$
is independent of the filtration $\mathcal{F}_{t}$. Therefore, $\mathcal{X}$
is also independent of the filtration $\mathcal{F}_{Xt}.$ We then have the following proposition: 
\begin{proposition}
\label{prop5-100} We make the assumptions of Proposition \ref{prop5-10}
and $\mathcal{X}$ independent of the filtration $\mathcal{F}_{Xt}$,
with $\mathbb{E}(\mathcal{X})=0$. Then the system \eqref{eq:5-2002} becomes 
\begin{numcases}{}
\mathcal{X}_{X_{\cdot}\mathcal{X}_{\cdot}t}(s)=\mathcal{X}+\int_{t}^{s}\mathcal{U}_{X_{\cdot}\mathcal{X}_{\cdot}t}(\tau)d\tau,\label{eq:5-2010}\\
l_{vx}(X_{Xt}(s),u_{Xt}(s))\mathcal{X}_{X_{\cdot}\mathcal{X}_{\cdot}t}(s)+l_{vv}(X_{Xt}(s),u_{Xt}(s))\mathcal{\mathcal{U}}_{X_{\cdot}\mathcal{X}_{\cdot}t}(s)+\mathcal{Z}_{X_{\cdot}\mathcal{X}_{\cdot}t}(s)=0,\label{eq:5-2011}\\
-d\mathcal{Z}_{X_{\cdot}\mathcal{X}_{\cdot}t}(s)=\Big[\Big(l_{xx}(X_{Xt}(s),u_{Xt}(s))+D^{2}\dfrac{dF}{d\nu}((X_{Xt}(s)\otimes m)^{\mathcal{B}_{t}^{s}})(X_{Xt}(s))\Big)\mathcal{X}_{X_{\cdot}\mathcal{X}_{\cdot}t}(s)\nonumber\\
\qquad\qquad\qquad+l_{xv}(X_{Xt}(s),u_{Xt}(s))\mathcal{\mathcal{U}}_{X_{\cdot}\mathcal{X}_{\cdot}t}(s)\Big]ds -\sum_{j=1}^{n}\mathcal{R}_{X_{\cdot}\mathcal{X}_{\cdot}t}^{j}(s)dw_{j}(s)-\sum_{j=1}^{n}\Theta_{X_{\cdot}\mathcal{X}_{\cdot}t}^{j}(s)db_{j}(s),\label{eq:5-2013}\\
\mathcal{Z}_{X_{\cdot}\mathcal{X}_{\cdot}t}(T)=\left(h_{xx}(X_{Xt}(T))+D^{2}\dfrac{dF_{T}}{d\nu}((X_{Xt}(T)\otimes m)^{\mathcal{B}_{t}^{T}})(X_{Xt}(T))\right)\mathcal{X}_{X_{\cdot}\mathcal{X}_{\cdot}t}(T).
\end{numcases}
% \begin{equation}
% \mathcal{X}_{X_{\cdot}\mathcal{X}_{\cdot}t}(s)=\mathcal{X}+\int_{t}^{s}\mathcal{U}_{X_{\cdot}\mathcal{X}_{\cdot}t}(\tau)d\tau\label{eq:5-2010}
% \end{equation}

% \begin{equation}
% l_{vx}(X_{Xt}(s),u_{Xt}(s))\mathcal{X}_{X\mathcal{X}t}(s)+l_{vv}(X_{Xt}(s),u_{Xt}(s))\mathcal{\mathcal{U}}_{X\mathcal{X}t}(s)+\mathcal{Z}_{X\mathcal{X}t}(s)=0\label{eq:5-2011}
% \end{equation}

% \[
% -d\mathcal{Z}_{X_{\cdot}\mathcal{X}_{\cdot}t}(s)=[(l_{xx}(X_{Xt}(s),u_{Xt}(s))+D^{2}\dfrac{dF}{d\nu}((X_{Xt}(s)\otimes m)^{\mathcal{B}_{t}^{s}})(X_{Xt}(s)))\mathcal{X}_{X_{\cdot}\mathcal{X}_{\cdot}t}(s)+l_{xv}(X_{Xt}(s),u_{Xt}(s))\mathcal{\mathcal{U}}_{X_{\cdot}\mathcal{X}_{\cdot}t}(s)]ds
% \]
% \begin{equation}
% -\sum_{j=1}^{n}\mathcal{R}_{X_{\cdot}\mathcal{X}_{\cdot}t}^{j}(s)dw_{j}(s)-\sum_{j=1}^{n}\Theta_{X_{\cdot}\mathcal{X}_{\cdot}t}^{j}(s)db_{j}(s)\label{eq:5-2013}
% \end{equation}
% \[
% \mathcal{Z}_{X_{\cdot}\mathcal{X}_{\cdot}t}(T)=(h_{xx}(X_{Xt}(T))+D^{2}\dfrac{dF_{T}}{d\nu}((X_{Xt}(T)\otimes m)^{\mathcal{B}_{t}^{T}})(X_{Xt}(T)))\mathcal{X}_{X_{\cdot}\mathcal{X}_{\cdot}t}(T)
% \]
\end{proposition}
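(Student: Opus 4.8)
The plan is to show that the unique solution of the general system \eqref{eq:5-2002}, whose well-posedness is guaranteed by Proposition \ref{prop5-10}, in fact satisfies the simpler system \eqref{eq:5-2010}--\eqref{eq:5-2013}; by uniqueness the two then coincide. The only discrepancy between the systems lies in the second-order term of the driver and of the terminal condition: \eqref{eq:5-2002} carries $D_{X}^{2}\mathbb{E}(F((X_{X_{\cdot}t}(s)\otimes m)^{\mathcal{B}_{t}^{s}}))(\mathcal{X}_{X_{\cdot}\mathcal{X}_{\cdot}t}(s))$, while \eqref{eq:5-2013} carries the \emph{local} expression $D^{2}\frac{dF}{d\nu}((X_{X_{\cdot}t}(s)\otimes m)^{\mathcal{B}_{t}^{s}})(X_{X_{\cdot}t}(s))\,\mathcal{X}_{X_{\cdot}\mathcal{X}_{\cdot}t}(s)$ (and likewise at $s=T$ with $F_{T}$). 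Hence the proposition reduces to verifying, along the solution, the identity
\begin{equation*}
D_{X}^{2}\mathbb{E}\left(F((X_{X_{\cdot}t}(s)\otimes m)^{\mathcal{B}_{t}^{s}})\right)(\mathcal{X}_{X_{\cdot}\mathcal{X}_{\cdot}t}(s))=D^{2}\frac{dF}{d\nu}((X_{X_{\cdot}t}(s)\otimes m)^{\mathcal{B}_{t}^{s}})(X_{X_{\cdot}t}(s))\,\mathcal{X}_{X_{\cdot}\mathcal{X}_{\cdot}t}(s),
\end{equation*}
together with its analogue at the terminal time. By the explicit formula \eqref{eq:2-100}, this is equivalent to the vanishing of the interaction integral $\mathbb{E}^{1\mathcal{B}_{t}^{s}}\left(\int_{\mathbb{R}^{n}}DD_{1}\frac{d^{2}}{d\nu^{2}}F((X_{X_{\cdot}t}(s)\otimes m)^{\mathcal{B}_{t}^{s}})(X_{X_{x}t}(s),X^{1}_{X_{x^{1}}t}(s))\,Y^{1}_{x^{1}}\,dm(x^{1})\right)$, where the superscript $1$ denotes a conditionally-on-$\mathcal{B}_{t}^{s}$ independent copy and $Y^{1}_{x^{1}}$ is the copy of the tangent component $\mathcal{X}_{X_{\cdot}\mathcal{X}_{\cdot}t}(s)$.

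The key structural fact I would establish first is that the tangent process is a linear image of the perturbation $\mathcal{X}$ through an $\mathcal{F}_{Xt}$-adapted kernel that does not involve $\mathcal{X}$. Indeed, \eqref{eq:5-2010}--\eqref{eq:5-2013} is a linear forward-backward system that is \emph{local in $x$}: its coefficients $l_{xx},l_{xv},l_{vx},l_{vv}$ evaluated at $(X_{X_{\cdot}t}(s),u_{X_{\cdot}t}(s))$, the matrix $D^{2}\frac{dF}{d\nu}((X_{X_{\cdot}t}(s)\otimes m)^{\mathcal{B}_{t}^{s}})(X_{X_{\cdot}t}(s))$, and the terminal data are all $\mathcal{F}_{Xt}^{s}$-measurable and act on the $x$-component alone, with no coupling across different $x$. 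Solvability of this local LQ system follows, under the convexity assumption \eqref{eq:4-1003} and the monotonicity conditions, exactly as in Proposition \ref{prop7-10}; by linearity and uniqueness the solution operator $\mathcal{X}\mapsto\mathcal{X}_{X_{\cdot}\mathcal{X}_{\cdot}t}(s)$ is linear, and (via the associated decoupling field) one may write $\mathcal{X}_{X_{x}\mathcal{X}_{x}t}(s)=\Phi_{X_{x}t}(s)\,\mathcal{X}_{x}$ with $\Phi_{X_{x}t}(s)$ an $\mathcal{F}_{Xt}^{s}$-measurable matrix independent of $\mathcal{X}$. Since by hypothesis $\mathcal{X}$ is independent of $\mathcal{F}_{Xt}$ with $\mathbb{E}(\mathcal{X})=0$, the residual factor $\mathcal{X}_{x}$ remains independent of $\mathcal{B}_{t}^{s}$ and of every coefficient field, with $\mathbb{E}^{\mathcal{B}_{t}^{s}}(\mathcal{X}_{x})=0$.

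With this factorization in hand the vanishing of the interaction integral is immediate. Passing to the independent copy, write $Y^{1}_{x^{1}}=\Phi^{1}_{X_{x^{1}}t}(s)\,\mathcal{X}^{1}_{x^{1}}$; the product $DD_{1}\frac{d^{2}}{d\nu^{2}}F((X_{X_{\cdot}t}(s)\otimes m)^{\mathcal{B}_{t}^{s}})(X_{X_{x}t}(s),X^{1}_{X_{x^{1}}t}(s))\,\Phi^{1}_{X_{x^{1}}t}(s)$ is a function of the copy's $\mathcal{F}_{Xt}$-data (with the original $X_{X_{x}t}(s)$ frozen under $\mathbb{E}^{1\mathcal{B}_{t}^{s}}$), whereas the residual factor $\mathcal{X}^{1}_{x^{1}}$ is conditionally independent of that data given $\mathcal{B}_{t}^{s}$ with conditional mean zero. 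Therefore $\mathbb{E}^{1\mathcal{B}_{t}^{s}}$ factorizes and leaves $\mathbb{E}^{1\mathcal{B}_{t}^{s}}(\mathcal{X}^{1}_{x^{1}})=0$, killing the interaction term; the identical computation at $s=T$ handles the terminal condition. Consequently the solution of \eqref{eq:5-2010}--\eqref{eq:5-2013} solves \eqref{eq:5-2002}, and uniqueness (Proposition \ref{prop5-10}) concludes that the two systems agree.

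The step I expect to be the main obstacle is the rigorous extraction of the fresh perturbation carried out in the second paragraph, namely writing the tangent process as an $\mathcal{F}_{Xt}$-measurable linear image of $\mathcal{X}$. This matters because Remark \ref{rem3-10} cannot be invoked verbatim: the tangent process $\mathcal{X}_{X_{\cdot}\mathcal{X}_{\cdot}t}(s)$ is \emph{not} conditionally independent of the state $X_{X_{\cdot}t}(s)$, since its dynamics are driven by coefficients correlated with the state, so the hypothesis ``$Y$ independent of $Z$ given $\mathcal{B}$'' fails for $Y=\mathcal{X}_{X_{\cdot}\mathcal{X}_{\cdot}t}(s)$. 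What survives is that the \emph{source} randomness $\mathcal{X}$ can be factored out with an $\mathcal{F}_{Xt}$-adapted coefficient, and the conditional independence and mean-zero property are then applied to $\mathcal{X}$ itself rather than to the tangent process. Justifying this factorization carefully, through solvability of the local decoupling/Riccati system in the presence of the common noise $b$, is the technical crux; once it is in place the remaining computations are routine.
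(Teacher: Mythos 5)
Your proposal is correct, and its engine is the same as the paper's: reduce the claim to the vanishing of the interaction term in the explicit formula \eqref{eq:2-100}, factor the tangent process as an $\mathcal{F}_{X_{\cdot}t}$-adapted linear image of the fresh perturbation, and let $\mathbb{E}^{1\mathcal{B}_{t}^{s}}$ annihilate the mean-zero factor $\chi^{1}$, which is independent of all the coefficient data; you are also right that Remark \ref{rem3-10} cannot be invoked verbatim here, and indeed the paper does not use it. Where you diverge is in the packaging, and your detour is heavier than necessary in two respects. First, the paper argues directly on the solution of the general system \eqref{eq:5-2002}: linearity of $\mathcal{X}_{\cdot}\mapsto(\mathcal{X}_{X_{\cdot}\mathcal{X}_{\cdot}t},\mathcal{U}_{X_{\cdot}\mathcal{X}_{\cdot}t},\mathcal{Z}_{X_{\cdot}\mathcal{X}_{\cdot}t})$ was already established in Proposition \ref{prop5-10} (via the LQ problem \eqref{eq:8-2} on enlarged $\sigma$-algebras), so one may write $\mathcal{X}_{X_{\cdot}^{1}\chi^{1}t}^{1}(s)=\mathcal{X}_{X^{1}t}^{1}(s)(\chi^{1})$ with $\mathcal{X}_{X^{1}t}^{1}(s)(\cdot)\in L_{\mathcal{F}_{X^{1}t}}^{2}(t,T;\mathcal{L}(\mathcal{H}_{m};\mathcal{H}_{m}))$, whence \eqref{eq:5-2500} equals the same expression with $\chi^{1}$ replaced by $\mathbb{E}^{1}(\chi^{1})=0$; this renders your route---solve the local system \eqref{eq:5-2010}--\eqref{eq:5-2013} first, then transfer by uniqueness---logically sound but superfluous (though its well-posedness does follow as in Proposition \ref{prop7-10} under \eqref{eq:3-6}, as you say). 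Second, and more importantly, the step you flag as the technical crux---the pointwise matrix factorization $\mathcal{X}_{X_{x}\mathcal{X}_{x}t}(s)=\Phi_{X_{x}t}(s)\,\mathcal{X}_{x}$ obtained through a decoupling field/backward stochastic Riccati equation in the presence of the common noise---is avoidable: the annihilation argument only requires the operator-valued factorization above (a linear operator built from $\mathcal{F}_{X_{\cdot}t}$-measurable data and independent of $\chi$, not a multiplication operator), and even if one insists on the matrix form for the local system, it can be produced elementarily by solving with the basis initial data $e^{1},\dots,e^{n}$ as in Proposition \ref{prop7-10} and assembling $\Phi$ columnwise, noting that $\mathcal{X}_{x}$ is constant in $s$ so multiplying the basis solutions by it introduces no It\^o correction and preserves the BSDE; no BSRE solvability theory is needed. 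With that simplification your argument and the paper's proof coincide in substance, the only residual difference being the direction of the identification (the paper shows the solution of \eqref{eq:5-2002} satisfies the simplified system, while you show the converse and appeal to uniqueness, which yields the same conclusion).
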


\begin{proof}
It is sufficient to show that 
\begin{equation}
D^{2}\dfrac{dF}{d\nu}((X_{Xt}(s)\otimes m)^{\mathcal{B}_{t}^{s}})\mathcal{X}_{X\mathcal{X}t}(s)=D_{X}^{2}\mathbb{E}\left(F((X_{Xt}(s)\otimes m)^{\mathcal{B}_{t}^{s}})\right)(\mathcal{X}_{X\mathcal{X}t}(s)).\label{eq:5-2014}
\end{equation}
From formula (\ref{eq:2-100}), we have 
\begin{equation}
\mathbb{E}^{1\mathcal{B}_{t}^{s}}\left(\int_{\mathbb{R}^{n}}DD_{1}\dfrac{d^{2}F}{d\nu^{2}}((X_{Xt}(s)\otimes m)^{\mathcal{B}_{t}^{s}})(X_{Xt}(s),X_{X_{\cdot}^{1}t}^{1}(s))\mathcal{X}_{X_{\cdot}^{1}\mathcal{\chi}^{1}t}^{1}(s)dm(x^{1})\right)=0\label{eq:5-2500}
\end{equation}
since $\mathcal{\chi}^{1}$ is independent of the filtration $\mathcal{F}_{X^{1}t}$
and $\mathcal{X}_{X_{\cdot}^{1}\mathcal{\chi}^{1}t}^{1}(s)$ can be written
as $\mathcal{X}_{X^{1}t}^{1}(s)(\mathcal{\chi}^{1})$, where $\mathcal{X}_{X^{1}t}^{1}(s)(\cdot)\in L_{\mathcal{F}_{X^{1}t}}^{2}(t,T;\mathcal{L}(\mathcal{H}_{m};\mathcal{H}_{m})).$
So $\mathcal{\chi}^{1}$ is independent of $X_{X_{\cdot}^{1}t}^{1}(\cdot)$  and $\mathcal{X}_{X^{1}t}^{1}(\cdot)(\cdot).$ Therefore, the left hand side
of (\ref{eq:5-2500}) is 
\[
\mathbb{E}^{1\mathcal{B}_{t}^{s}}\left(\int_{\mathbb{R}^{n}}DD_{1}\dfrac{d^{2}F}{d\nu^{2}}((X_{Xt}(s)\otimes m)^{\mathcal{B}_{t}^{s}})(X_{Xt}(s),X_{X_{\cdot}^{1}t}^{1}(s))\mathcal{X}_{X^{1}t}^{1}(s)(\mathbb{E}^{1}(\mathcal{\chi}^{1}))dm(x^{1})\right)=0
\]
 since $\mathbb{E}^{1}(\mathcal{\chi}^{1})=0.$  
\end{proof}

\subsection{SECOND-ORDER FUNCTIONAL DERIVATIVE OF THE VALUE FUNCTION}

We want to compute the second-order functional derivative of the value
function $\dfrac{d^{2}}{d\nu^{2}}V(m,t)(x,x^{1}).$ We know that it
is sufficient to obtain the second gradient $D_{1}D_{2}\dfrac{d^{2}}{d\nu^{2}}V(m,t)(x,x^{1}).$
We can use the formula (\ref{eq:2-3003}). If $x\mapsto y_{x}$ is
in $L_{m}^{2}(\mathbb{R}^{n};\mathbb{R}^{n})$ then we can write 
\begin{equation}
D_{X}^{2}V(m,t)(y_{\cdot})=D^{2}\dfrac{d}{d\nu}V(m,t)(x)y_{x}+\int_{\mathbb{R}^{n}}DD_{1}\dfrac{d^{2}}{d\nu^{2}}V(m,t)(x,x^{1})y_{x^{1}}dm(x^{1}).\label{eq:5-2015}
\end{equation}
Consider $\mathcal{\mathcal{U}}_{X_{\cdot}\mathcal{X}_{\cdot}t}(s)$, $\mathcal{X}_{X_{\cdot}\mathcal{X}_{\cdot}t}(s)$, $\mathcal{Z}_{X_{\cdot}\mathcal{X}_{\cdot}t}(s)$, $\mathcal{R}_{X_{\cdot}\mathcal{X}_{\cdot}t}^{j}(s)$, $\Theta_{X_{\cdot}\mathcal{X}_{\cdot}t}^{j}(s)$
with $X_{\cdot}=J_{\cdot},\mathcal{X}_{\cdot}=y_{\cdot}.$ Since they represent the
value of random linear operators from $L_{m}^{2}(\mathbb{R}^{n})$ into itself, we shall represent them with kernels denoted $u_{mt}(x,x^{1},s)$, $X_{mt}(x,x^{1},s)$, $Z_{mt}(x,x^{1},s)$, $r_{mt}^{j}(x,x^{1},s)$, $\rho_{mt}^{j}(x,x^{1},s).$
In particular,
\begin{equation}
(D_{X}^{2}V(m,t)(y_{\cdot}))_{x}=\int_{\mathbb{R}^{n}}Z_{mt}(x,x^{1},t)y_{x^{1}}dm(x^{1}).\label{eq:5-2016}
\end{equation}
We need to introduce the identity operator on $L_{m}^{2}(\mathbb{R}^{n})$,
represented formally by the kernel $I_{m}(x,x^{1}),$ so that 
\[
\int_{\mathbb{R}^{n}}I_{m}(x,x^{1})y_{x^{1}}dm(x^{1})=y_{x}.
\]
Formally $I_{m}(x,x^{1})=\delta(x-x^{1}).$

The system \eqref{eq:5-2002}
yields 
\begin{numcases}{}
 X_{mt}(x,x^{1},s)=I_{m}(x,x^{1})+\int_{t}^{s}u_{mt}(x,x^{1},\tau)d\tau,\label{eq:5-2017}\\
l_{vx}(X_{xmt}(s),u_{xmt}(s))X_{mt}(x,x^{1},s)+l_{vv}(X_{xmt}(s),u_{xmt}(s))u_{mt}(x,x^{1},s)+Z_{mt}(x,x^{1},s)=0,\label{eq:5-2018}\\
-dZ_{mt}(x,x^{1},s)=\Bigg[l_{xx}(X_{xmt}(s),u_{xmt}(s))X_{mt}(x,x^{1},s)+l_{xv}(X_{xmt}(s),u_{xmt}(s))u_{mt}(x,x^{1},s)\label{eq:5-2019}\\
\qquad\qquad\qquad\qquad \quad +D^{2}\dfrac{dF}{d\nu}((X_{\cdot mt}(s)\otimes m)^{\mathcal{B}_{t}^{s}})(X_{xmt}(s))X_{mt}(x,x^{1},s)\nonumber\\
\qquad\qquad\qquad\qquad \quad +\mathbb{E}^{1\mathcal{B}_{t}^{s}} \left(\int_{\mathbb{R}^{n}}DD_{1}\dfrac{d^{2}F}{d\nu^{2}}((X_{\cdot mt}(s)\otimes m)^{\mathcal{B}_{t}^{s}})(X_{xmt}(s),X_{\eta mt}^{1}(s))X_{mt}^{1}(\eta,x^{1},s)dm(\eta)\right)\Bigg]ds\nonumber\\
\qquad\qquad\qquad\qquad -\sum_{j=1}^{n}r_{mt}^{j}(x,x^{1},s)dw_{j}(s)-\sum_{j=1}^{n}\rho_{mt}^{j}(x,x^{1},s)db_{j}(s\nonumber),\\
 Z_{mt}(x,x^{1},T)=h_{xx}(X_{xmt}(T),u_{xmt}(T))X_{mt}(x,x^{1},T)\nonumber\\
\qquad\qquad\qquad\quad+D^{2}\dfrac{dF}{d\nu}((X_{\cdot mt}(T)\otimes m)^{\mathcal{B}_{t}^{T}})(X_{xmt}(T))X_{mt}(x,x^{1},T)\nonumber \\
\qquad\qquad\qquad\quad+\mathbb{E}^{1\mathcal{B}_{t}^{T}}\left(\int_{\mathbb{R}^{n}}DD_{1}\dfrac{d^{2}F_{T}}{d\nu^{2}}((X_{\cdot mt}(T)\otimes m)^{\mathcal{B}_{t}^{T}})(X_{xmt}(T),X_{\eta mt}^{1}(T))X_{mt}^{1}(\eta,x^{1},T)dm(\eta)\right).\nonumber  
\end{numcases}

% \begin{equation}
% X_{mt}(x,x^{1},s)=I_{m}(x,x^{1})+\int_{t}^{s}u_{mt}(x,x^{1},\tau)d\tau\label{eq:5-2017}
% \end{equation}

% \begin{equation}
% l_{vx}(X_{xmt}(s),u_{xmt}(s))X_{mt}(x,x^{1},s)+l_{vv}(X_{xmt}(s),u_{xmt}(s))u_{mt}(x,x^{1},s)+Z_{mt}(x,x^{1},s)=0\label{eq:5-2018}
% \end{equation}

% \begin{equation}
% -dZ_{mt}(x,x^{1},s)=[l_{xx}(X_{xmt}(s),u_{xmt}(s))X_{mt}(x,x^{1},s)+l_{xv}(X_{xmt}(s),u_{xmt}(s))u_{mt}(x,x^{1},s)+\label{eq:5-2019}
% \end{equation}

% \[
% +D^{2}\dfrac{dF}{d\nu}((X_{\cdot mt}(s)\otimes m)^{\mathcal{B}_{t}^{s}})(X_{xmt}(s))X_{mt}(x,x^{1},s)+\mathbb{E}^{1\mathcal{B}_{t}^{s}}\int_{\mathbb{R}^{n}}DD_{1}\dfrac{d^{2}F}{d\nu^{2}}((X_{\cdot mt}(s)\otimes m)^{\mathcal{B}_{t}^{s}})(X_{xmt}(s),X_{\eta mt}^{1}(s))X_{mt}^{1}(\eta,x^{1},s)dm(\eta)]ds
% \]
% \[
% -\sum_{j=1}^{n}r_{mt}^{j}(x,x^{1},s)dw_{j}(s)-\sum_{j=1}^{n}\rho_{mt}^{j}(x,x^{1},s)db_{j}(s)
% \]
% \[
% Z_{mt}(x,x^{1},T)=h_{xx}(X_{xmt}(T),u_{xmt}(T))X_{mt}(x,x^{1},T)+D^{2}\dfrac{dF}{d\nu}((X_{\cdot mt}(T)\otimes m)^{\mathcal{B}_{t}^{T}})(X_{xmt}(T))X_{mt}(x,x^{1},T)+
% \]

% \[
% +\mathbb{E}^{1\mathcal{B}_{t}^{T}}\int_{\mathbb{R}^{n}}DD_{1}\dfrac{d^{2}F_{T}}{d\nu^{2}}((X_{\cdot mt}(T)\otimes m)^{\mathcal{B}_{t}^{T}})(X_{xmt}(T),X_{\eta mt}^{1}(T))X_{mt}^{1}(\eta,x^{1},T)dm(\eta)
% \]
From (\ref{eq:5-2005}), (\ref{eq:5-2015}), and (\ref{eq:5-2016}), we can write
\begin{align}
    \int_{\mathbb{R}^{n}}Z_{mt}(x,x^{1},t)y_{x^{1}}dm(x^{1})&=D^{2}\dfrac{d}{d\nu}V(m,t)(x)y_{x}+\int_{\mathbb{R}^{n}}DD_{1}\dfrac{d^{2}}{d\nu^{2}}V(m,t)(x,x^{1})y_{x^{1}}dm(x^{1}) \nonumber\\
    &=\mathcal{Z}_{xmt}(t)y_{x}+\int_{\mathbb{R}^{n}}DD_{1}\dfrac{d^{2}}{d\nu^{2}}V(m,t)(x,x^{1})y_{x^{1}}dm(x^{1}).\label{eq:5-2020}
\end{align}
%\begin{equation}
% \int_{\mathbb{R}^{n}}Z_{mt}(x,x^{1},t)y_{x^{1}}dm(x^{1})=D^{2}\dfrac{d}{d\nu}V(m,t)(x)y_{x}+\int_{\mathbb{R}^{n}}DD_{1}\dfrac{d^{2}}{d\nu^{2}}V(m,t)(x,x^{1})y_{x^{1}}dm(x^{1})\label{eq:5-2020}
% \end{equation}

% \[
% =\mathcal{Z}_{xmt}(t)y_{x}+\int_{\mathbb{R}^{n}}DD_{1}\dfrac{d^{2}}{d\nu^{2}}V(m,t)(x,x^{1})y_{x^{1}}dm(x^{1})
% \]
So we obtain the formula:
\begin{equation}
Z_{mt}(x,x^{1},t)=\mathcal{Z}_{xmt}(t)I_{m}(x,x^{1})+DD_{1}\dfrac{d^{2}}{d\nu^{2}}V(m,t)(x,x^{1}).\label{eq:5-2021}
\end{equation}
We introduce, more generally,
\begin{equation}\label{eq:5-2022}
\begin{aligned}
\overline{X}_{mt}(x,x^{1},s) &:=X_{mt}(x,x^{1},s)-\mathcal{X}_{xmt}(s)I_{m}(x,x^{1}), \\
\overline{u}_{mt}(x,x^{1},s) &:=u_{mt}(x,x^{1},s)-\mathcal{U}_{xmt}(s)I_{m}(x,x^{1}), \\
\overline{Z}_{mt}(x,x^{1},s) &:=Z_{mt}(x,x^{1},s)-\mathcal{Z}_{xmt}(s)I_{m}(x,x^{1}), \\
\overline{r}_{mt}^{j}(x,x^{1},s) &:=r_{mt}^{j}(x,x^{1},s)-\mathcal{R}_{xmt}^{j}(s)I_{m}(x,x^{1}), \\
\overline{\rho}_{mt}^{j}(x,x^{1},s) &:=\rho_{mt}^{j}(x,x^{1},s)-\Theta_{xmt}^{j}(s)I_{m}(x,x^{1}).
\end{aligned}
\end{equation}
Combining (\ref{eq:5-2017}), (\ref{eq:5-2018}), (\ref{eq:5-2019})
with (\ref{eq:4-175}), (\ref{eq:4-176}), (\ref{eq:4-177}) we obtain the system:
\begin{numcases}{}
    \overline{X}_{mt}(x,x^{1},s)=\int_{t}^{s}\overline{u}_{mt}(x,x^{1},\tau)d\tau,\label{eq:5-2023}\\
     l_{vx}(X_{xmt}(s),u_{xmt}(s))\overline{X}_{mt}(x,x^{1},s)+l_{vv}(X_{xmt}(s),u_{xmt}(s))\overline{u}_{mt}(x,x^{1},s)+\overline{Z}_{mt}(x,x^{1},s)=0,\label{eq:5-2024}\\
     -d\overline{Z}_{mt}(x,x^{1},s)=\Bigg[l_{xx}(X_{xmt}(s),u_{xmt}(s))\overline{X}_{mt}(x,x^{1},s)+l_{xv}(X_{xmt}(s),u_{xmt}(s))\overline{u}_{mt}(x,x^{1},s)\label{eq:5-2025}\\
     \qquad\qquad\qquad\qquad \quad +D^{2}\dfrac{dF}{d\nu}((X_{\cdot mt}(s)\otimes m)^{\mathcal{B}_{t}^{s}})(X_{xmt}(s))\overline{X}_{mt}(x,x^{1},s)\nonumber\\
\qquad\qquad\qquad\qquad \quad +\mathbb{E}^{1\mathcal{B}_{t}^{s}}\left(\int_{\mathbb{R}^{n}}DD_{1}\dfrac{d^{2}F}{d\nu^{2}}((X_{\cdot mt}(s)\otimes m)^{\mathcal{B}_{t}^{s}})(X_{xmt}(s),X_{\eta mt}^{1}(s))\overline{X}_{mt}^{1}(\eta,x^{1},s)dm(\eta)\right)\nonumber\\
\qquad\qquad\qquad\qquad \quad +\mathbb{E}^{1\mathcal{B}_{t}^{s}}\left(DD_{1}\dfrac{d^{2}F}{d\nu^{2}}((X_{\cdot mt}(s)\otimes m)^{\mathcal{B}_{t}^{s}})(X_{xmt}(s),X_{x^{1}mt}^{1}(s))\right)\Bigg]ds\nonumber\\
     \qquad\qquad\qquad\qquad-\sum_{j=1}^{n}\overline{r}_{mt}^{j}(x,x^{1},s)dw_{j}(s)-\sum_{j=1}^{n}\overline{\rho}_{mt}^{j}(x,x^{1},s)db_{j}(s),\nonumber\\
     \overline{Z}_{mt}(x,x^{1},T)=h_{xx}(X_{xmt}(T),u_{xmt}(T))\overline{X}_{mt}(x,x^{1},T)\nonumber\\
    \qquad\qquad\qquad\quad +D^{2}\dfrac{dF_{T}}{d\nu}((X_{\cdot mt}(T)\otimes m)^{\mathcal{B}_{t}^{T}})(X_{xmt}(T))\overline{X}_{mt}(x,x^{1},T)\nonumber\\
\qquad\qquad\qquad\quad+\mathbb{E}^{1\mathcal{B}_{t}^{T}}\left(\int_{\mathbb{R}^{n}}DD_{1}\dfrac{d^{2}F_{T}}{d\nu^{2}}((X_{\cdot mt}(T)\otimes m)^{\mathcal{B}_{t}^{T}})(X_{xmt}(T),X_{\eta mt}^{1}(T))\overline{X}_{mt}^{1}(\eta,x^{1},T)dm(\eta)\right)\nonumber\\
\qquad\qquad\qquad\quad+\mathbb{E}^{1\mathcal{B}_{t}^{T}}\left(DD_{1}\dfrac{d^{2}F_{T}}{d\nu^{2}}((X_{\cdot mt}(T)\otimes m)^{\mathcal{B}_{t}^{T}})(X_{xmt}(T),X_{x^{1}mt}^{1}(T))\right),\label{eq:5-2026}
\end{numcases}
and we have 
\begin{equation}
\overline{Z}_{mt}(x,x^{1},t)=D_{1}D_{2}\dfrac{d^{2}}{d\nu^{2}}V(m,t)(x,x^{1}).\label{eq:5-2028}
\end{equation}
Using the estimate (\ref{eq:5-2000}) and the relation (\ref{eq:5-2015})
we can state
\begin{equation}
\left|\int_{\mathbb{R}^{n}}\int_{\mathbb{R}^{n}}DD_{1}\dfrac{d^{2}}{d\nu^{2}}V(m,t)(x,x^{1})y_{x} \cdot y_{x^{1}}dm(x)dm(x^{1})\right|\leq C_{T}\int_{\mathbb{R}^{n}}|y_{x}|^{2}dm(x).\label{eq:5-027}
\end{equation}
Since $y_{x}$ is arbitrary, we can state 
\begin{equation}
\left|DD_{1}\dfrac{d^{2}}{d\nu^{2}}V(m,t)(x,x^{1})\right|\leq C_{T}.\label{eq:5-029}
\end{equation}
A rigorous proof can be obtained by using the system (\ref{eq:5-2023}),
(\ref{eq:5-2024}), (\ref{eq:5-2025}), (\ref{eq:5-2026}), and proceeding
as in the proof of Proposition \ref{prop7-100}. As a consequence
the function $(x,m) \mapsto D\dfrac{d}{d\nu}V(m,t)$ is continuous.
From (\ref{eq:4-27}) we also know that the function $t \mapsto D\dfrac{d}{d\nu}V(m,t)$
is continuous. 

\subsection{BELLMAN EQUATION }

We can now state the following:
\begin{theorem}
\label{theo5-10} We make the assumptions of Proposition \ref{prop4-3}, the value function $V(X\otimes m,t)$ is solution of the Bellman equation:
\begin{equation} \label{eq:5-2008}
\left\{
\begin{aligned}
&\dfrac{\partial V}{\partial t}(X\otimes m,t)+\mathbb{E}\left(\int_{\mathbb{R}^{n}}H(X_{\cdot},D_{X}V(X_{\cdot}\otimes m,t))\,dm(x)\right)+F(X_{\cdot}\otimes m)\\
&\qquad+\dfrac{1}{2}\sum_{j=1}^{n} \left\langle D_{X}^{2}V(X_{\cdot}\otimes m,t)(\sigma N_{t}),\sigma N_{t}\right\rangle +\dfrac{\beta^{2}}{2}\sum_{j=1}^{n}\left\langle D_{X}^{2}V(X_{\cdot}\otimes m,t)(e^{j}),e^{j}\right \rangle =0,\ \text{a.e. }\ t;\\
&V(X_{\cdot}\otimes m,T)=\mathbb{E}\left(\int_{\mathbb{R}^{n}}h(X_{x})dm(x)\right)+F_{T}(X_{\cdot}\otimes m),
\end{aligned}\right.
\end{equation}
% \begin{equation}
% \dfrac{\partial V}{\partial t}(X\otimes m,t)+\mathbb{E}\int_{\mathbb{R}^{n}}H(X_{\cdot},D_{X}V(X_{\cdot}\otimes m,t))\,dm(x)+F(X_{\cdot}\otimes m)+\dfrac{1}{2}\sum_{j=1}^{n}((D_{X}^{2}V(X_{\cdot}\otimes m,t)(\sigma N_{t}),\sigma N_{t}))\label{eq:5-2008}
% \end{equation}
% \[
% +\dfrac{\beta^{2}}{2}\sum_{j=1}^{n}((D_{X}^{2}V(X_{\cdot}\otimes m,t)(e^{j}),e^{j}))=0,\ \text{a.e. }\ t
% \]
where $N_{t}$ is a standard Gaussian variable in $\mathbb{R}^{n}$ independent
of $X_{\cdot}$. Among functions which satisfy the regularity properties
of the value function, it is the only one solution. 
\end{theorem}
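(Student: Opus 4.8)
The plan is to obtain existence from the optimality principle \eqref{eq:3-16} combined with the Itô formula of Theorem \ref{theo3-10}, and uniqueness from a verification argument. As a preliminary step I would check that the value function, viewed as the datum $(m,s)\mapsto V(m,s)$, satisfies every hypothesis of Theorem \ref{theo3-10}: quadratic growth and continuity follow from \eqref{eq:4-2} and \eqref{eq:6-5062}; a.e.\ differentiability in time with the appropriate bound follows from the Lipschitz estimate \eqref{eq:4-26} of Proposition \ref{prop4-5}; the bounds and continuity of the first and second Gâteaux derivatives follow from Propositions \ref{prop4-2}, \ref{prop7-100}, \ref{prop5-10}, in particular \eqref{eq:4-181}, \eqref{eq:5-029}, and the continuity \eqref{eq:5-2001}. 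Granting this, Theorem \ref{theo3-10} applies to $V$ along the optimal state process, so that $g(s):=\mathbb{E}\left(V((X_{X_\cdot t}(s)\otimes m)^{\mathcal{B}_t^s},s)\right)$ is a.e.\ differentiable, $g'$ being given by \eqref{eq:3-113} with drift equal to the optimal control $u_{X_\cdot t}(s)$.

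I would then combine this with dynamic programming. Since $(X_{X_\cdot t}(t)\otimes m)^{\mathcal{B}_t^t}=X_\cdot\otimes m$ we have $g(t)=V(X\otimes m,t)$, so \eqref{eq:3-16} reads $g(t)-g(t+\epsilon)=\int_t^{t+\epsilon}\mathbb{E}\left(\int_{\mathbb{R}^n}l(X_{X_xt}(s),u_{X_xt}(s))\,dm(x)+F((X_{X_\cdot t}(s)\otimes m)^{\mathcal{B}_t^s})\right)ds$, whereas the Itô formula gives $g(t+\epsilon)-g(t)=\int_t^{t+\epsilon}g'(s)\,ds$. Equating, dividing by $\epsilon$ and letting $\epsilon\downarrow0$ at a Lebesgue point $t$ of $s\mapsto V(\cdot,s)$ yields
\[
\frac{\partial V}{\partial t}+\langle D_X V,u_{X_\cdot t}(t)\rangle+\tfrac12\langle D_X^2 V(\sigma N_t),\sigma N_t\rangle+\tfrac{\beta^2}{2}\sum_{j=1}^n\langle D_X^2 V(e^j),e^j\rangle+\mathbb{E}\int_{\mathbb{R}^n} l\,dm+F=0,
\]
all functionals evaluated at $(X\otimes m,t)$. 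Evaluating the feedback rule \eqref{eq:4-45} at $s=t$ gives $u_{X_\cdot t}(t)=H_p(X_\cdot,D_X V(X\otimes m,t))$, and the Hamiltonian identity $l(x,H_p(x,p))+p\cdot H_p(x,p)=H(x,p)$ from \eqref{eq:4-43}--\eqref{eq:4-44} turns $\mathbb{E}\int l\,dm+\langle D_X V,u_{X_\cdot t}(t)\rangle$ into $\mathbb{E}\int_{\mathbb{R}^n}H(X_\cdot,D_X V)\,dm(x)$; this is exactly \eqref{eq:5-2008}. The terminal condition is read off \eqref{eq:3-11} at $t=T$.

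For uniqueness I would argue by verification. Let $W$ have the regularity of the value function and solve \eqref{eq:5-2008}. For an arbitrary admissible control $v$ with state $X^v$, apply Theorem \ref{theo3-10} to $W$ and substitute the equation satisfied by $W$: the two second-order noise contributions cancel against those produced by Itô, leaving $\frac{d}{ds}\mathbb{E}(W(\cdots,s))=-\mathbb{E}\int H(X^v,D_X W)\,dm-F+\langle D_X W,v\rangle$. The pointwise inequality $H(x,p)\le l(x,v)+p\cdot v$ then gives $\frac{d}{ds}\mathbb{E}(W(\cdots,s))\ge-\mathbb{E}\int l(X^v,v)\,dm-F$; integrating over $[t,T]$ and inserting the terminal condition produces $W(X\otimes m,t)\le J_{X_\cdot t}(v)$, hence $W\le V$ after infimizing. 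For the reverse inequality I would use the closed-loop control $\hat u(s)=H_p(\hat X(s),D_X W)$; since $D_X W$ is Lipschitz and $H_p$ has bounded derivatives \eqref{eq:4-42}, the closed-loop state is well defined and $\hat u$ admissible, and along it the Hamiltonian inequality is an equality, so $W(X\otimes m,t)=J_{X_\cdot t}(\hat u)\ge V(X\otimes m,t)$. Thus $W=V$.

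The main obstacle is the first, preparatory step: establishing that $V$ meets all hypotheses of the Itô formula, above all the continuity \eqref{eq:5-2001} of the second Gâteaux derivative and the a.e.\ differentiability in $t$, so that Theorem \ref{theo3-10} may be applied to $V$ itself rather than to a prescribed smooth datum. A related point of care is that the derivatives appearing in \eqref{eq:3-113} are those of the lifted conditional-measure functional, so one must reconcile them with the Hilbert-space derivatives in \eqref{eq:5-2008} through the conditional-measure calculus of Propositions \ref{prop2-1}, \ref{prop2-2}. Finally, the difference-quotient limit only delivers the equation at Lebesgue points of $t\mapsto V$, which is why the conclusion holds a.e.\ in $t$, and the uniqueness argument hinges on the well-posedness of the closed-loop equation, which in turn rests on the assumed Lipschitz regularity of $D_X W$.
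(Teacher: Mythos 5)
Your proposal is correct, and the existence half is essentially the paper's own argument: the paper likewise first records that $V$, viewed as the datum in Theorem \ref{theo3-10}, satisfies all hypotheses of the It\^o formula (via Propositions \ref{prop4-1}, \ref{prop4-2}, \ref{prop4-5}, \ref{prop7-100}, \ref{prop5-10}), applies it along the optimal state, combines with the optimality principle \eqref{eq:3-16}, and converts $\mathbb{E}\int l\,dm+\langle Z_{X_\cdot t}(s),u_{X_\cdot t}(s)\rangle$ into the Hamiltonian through the optimality condition $l_{v}+Z_{X_\cdot t}=0$ (see \eqref{eq:9-2}), which is equivalent to your use of the feedback rule \eqref{eq:4-45} at $s=t$. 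Where you genuinely diverge is uniqueness. The paper compresses it into one step: a candidate $W$ with the value function's regularity satisfies \eqref{eq:9-4}--\eqref{eq:9-5}, hence \eqref{eq:9-1}, and integrating from $t$ to $T$ gives $W(X_\cdot\otimes m,t)=J_{X_\cdot t}(u_{X_\cdot t}(\cdot))$, identified with the value function; this tacitly uses the closed-loop control generated by $W$ and relies on the strict convexity of Proposition \ref{prop3-1} to know that a control satisfying the first-order conditions is the unique optimum. You instead carry out the standard two-sided verification: $W\le J_{X_\cdot t}(v)$ for admissible $v$ from the pointwise inequality $H(x,p)\le l(x,v)+p\cdot v$, and equality along the closed-loop feedback, whose well-posedness you correctly ground in the Lipschitz estimate \eqref{eq:4-4} together with the boundedness of $u_x,u_p$ in \eqref{eq:4-42}. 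Your route is more explicit about where each regularity property enters and does not leave the identification of the feedback control with the optimal control implicit; the paper's shortcut is shorter but more elliptic. One caveat worth adding to your first inequality: Theorem \ref{theo3-10} is proved under the drift condition \eqref{eq:3-1000} (continuity in $s$ and linear growth), so $W\le J_{X_\cdot t}(v)$ is literally obtained only for controls in that class; this suffices for the conclusion $W\le V$, since the optimal control lies in it by \eqref{eq:4-1} and the feedback representation, but the restriction (or a density argument) should be stated.
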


The proof can be found in Appendix C. 

\subsection{BELLMAN EQUATION AT $X=J$ }

When $X_{\cdot}=J_{\cdot}$, we have 
\begin{align}
    D_{X}V(m,t)&=D\dfrac{d}{d\nu}V(m,t)(x),\nonumber\\
    D_{X}^{2}V(m,t)(Y_{\cdot})&=D^{2}\dfrac{d}{d\nu}V(m,t)(x)Y_{x}+\mathbb{E}^{1}\left(\int_{\mathbb{R}^{n}}DD_{1}\dfrac{d^{2}}{d\nu^{2}}V(m,t)(x,x^{1})Y_{x^{1}}^{1}dm(x^{1})\right).\nonumber
\end{align}
% D_{X}V(m,t)=D\dfrac{d}{d\nu}V(m,t)(x)
% \]

% \[
% D_{X}^{2}V(m,t)(Y_{\cdot})=D^{2}\dfrac{d}{d\nu}V(m,t)(x)Y_{x}+\mathbb{E}^{1}\int_{\mathbb{R}^{n}}DD_{1}\dfrac{d^{2}}{d\nu^{2}}V(m,t)(x,x^{1})Y_{x^{1}}^{1}dm(x^{1})
% \]
Therefore, 
\begin{equation*} \hspace{-0.5cm}
\begin{aligned}
    D_{X}^{2}V(m,t)(\sigma N_{t})&=D^{2}\dfrac{d}{d\nu}V(m,t)(x)\sigma N_{t},\\
    D_{X}^{2}V(m,t)(e^{j})&=D^{2}\dfrac{d}{d\nu}V(m,t)(x)e^{j}+\int_{\mathbb{R}^{n}}DD_{1}\dfrac{d^{2}}{d\nu^{2}}V(m,t)(x,x^{1})e^{j}dm(x^{1}),\\
    \dfrac{1}{2}\left\langle D_{X}^{2}V(m,t)(\sigma N_{t}),\sigma N_{t}\right\rangle &=\dfrac{1}{2}\int_{\mathbb{R}^{n}}\text{tr}\left(D^{2}\dfrac{d}{d\nu}V(m,t)(x)\sigma\sigma^{*}\right)dm(x),\\
    \dfrac{\beta^{2}}{2}\sum_{j=1}^{n}\left\langle D_{X}^{2}V(X_{\cdot}\otimes m,t)(e^{j}),e^{j} \right\rangle &=\dfrac{\beta^{2}}{2}\left(\int_{\mathbb{R}^{n}}\Delta\dfrac{d}{d\nu}V(m,t)(x)dm(x)+\sum_{j=1}^{n}\int_{\mathbb{R}^{n}}D_{j}D_{1j}\dfrac{d^{2}}{d\nu^{2}}V(m,t)(x,x^{1})dm(x)dm(x^{1})\right).
\end{aligned}
\end{equation*}
% \[
% D_{X}^{2}V(m,t)(\sigma N_{t})=D^{2}\dfrac{d}{d\nu}V(m,t)(x)\sigma N_{t}
% \]

% \[
% D_{X}^{2}V(m,t)(e^{j})=D^{2}\dfrac{d}{d\nu}V(m,t)(x)e^{j}+\int_{\mathbb{R}^{n}}DD_{1}\dfrac{d^{2}}{d\nu^{2}}V(m,t)(x,x^{1})e^{j}dm(x^{1})
% \]
The Bellman equation reads:
\begin{equation}
\left\{ 
\begin{aligned} 
    &\dfrac{\partial V}{\partial t}(m,t)+\int_{\mathbb{R}^{n}}H\left(x,D\dfrac{d}{d\nu}V(m,t)(x)\right)dm(x)+F(m)+\dfrac{1}{2}\int_{\mathbb{R}^{n}}\text{tr}\left(D^{2}\dfrac{d}{d\nu}V(m,t)(x)\sigma\sigma^*\right)dm(x)\label{eq:5-2009}\\
    &\qquad+\dfrac{\beta^{2}}{2} \left(\int_{\mathbb{R}^{n}}\Delta\dfrac{d}{d\nu}V(m,t)(x)dm(x)+\sum_{j=1}^{n}\int_{\mathbb{R}^{n}}D_{j}D_{1j}\dfrac{d^{2}}{d\nu^{2}}V(m,t)(x,x^{1})dm(x)dm(x^{1}) \right)=0, \\
    & V(m,T)=\int_{\mathbb{R}^{n}}h(x)dm(x)+F_{T}(m).
\end{aligned}\right.
\end{equation}
% \begin{equation}
% \dfrac{\partial V}{\partial t}(m,t)+\int_{\mathbb{R}^{n}}H(x,D\dfrac{d}{d\nu}V(m,t)(x)dm(x)+F(m)+\label{eq:5-2009}
% \end{equation}

% \[
% +\dfrac{1}{2}\int_{\mathbb{R}^{n}}\text{tr}D^{2}\dfrac{d}{d\nu}V(m,t)(x)\sigma\sigma*dm(x)+\dfrac{\beta^{2}}{2}(\int_{\mathbb{R}^{n}}\Delta\dfrac{d}{d\nu}V(m,t)(x)dm(x)+\sum_{j=1}^{n}\int_{\mathbb{R}^{n}}D_{j}D_{1j}\dfrac{d^{2}}{d\nu^{2}}V(m,t)(x,x^{1})dm(x)dm(x^{1}))=0
% \]

% \[
% V(m,T)=\int_{\mathbb{R}^{n}}h(x)dm(x)+F_{T}(m)
% \]

\section{THE MASTER EQUATION}

\subsection{OBTAINING THE MASTER EQUATION }

The Bellman equation (\ref{eq:5-2008}) links the value function $V(X_{\cdot}\otimes m,t)$
and its first- and second-order gradients $D_{X}V(X_{\cdot}\otimes m,t)$, $D_{X}^{2}V(X_{\cdot}\otimes m,t)$, as well as the time derivative $\dfrac{\partial}{\partial t}V(X_{\cdot}\otimes m,t).$
It is a partial differential equation defined on $\mathcal{H}_{m}\times(0,T).$ Recall that $(X_{\cdot},t)\mapsto D_{X}V(X_{\cdot}\otimes m,t)$ takes $\mathcal{H}_{m}\times(0,T)$
into itself, with $D_{X}V(X_{\cdot}\otimes m,t)$ being $\sigma(X)$-measurable.
Similarly, the mapping $(X_{\cdot},t)\mapsto D_{X}^{2}V(X_{\cdot}\otimes m,t)$ takes
$\mathcal{H}_{m}\times(0,T)$ into $\mathcal{L}(\mathcal{H}_{m},\mathcal{H}_{m})$,
where $D_{X}^{2}V(X_{\cdot}\otimes m,t)(Y_{\cdot})\in\mathcal{H}_{m}$ is $\sigma(X_{\cdot},Y_{\cdot})$-measurable. 

It is well-known that with the Bellman equation, there exists an equation
for the gradient of the value function that does not include
the value function itself. This equation is derived by taking the gradient
of the Bellman equation. Rather than being a scalar equation, it forms a system
of equations. This feature also applies to the Bellman equation (\ref{eq:5-2008}).
By taking the gradient in $X_{\cdot}$ formally we obtain: 
\begin{equation} \label{eq:8-500}
\left\{
\begin{aligned}
    &\dfrac{\partial}{\partial t}D_{X}V(X_{\cdot}\otimes m,t)+H_{x}(X_{\cdot},D_{X}V(X_{\cdot}\otimes m,t))+D_{X}^{2}V(X_{\cdot}\otimes m,t)(H_{p}(X_{\cdot},D_{X}V(X_{\cdot}\otimes m,t)))+D_{X}F(X_{\cdot}\otimes m) \\
    &\qquad \qquad +\dfrac{1}{2}D_{X}\,\left\langle D_{X}^{2}V(X_{\cdot}\otimes m,t)(\sigma N_{t}),\sigma N_{t}\right\rangle +\dfrac{\beta^{2}}{2}\sum_{j=1}^{n}D_{X}\ \left\langle D_{X}^{2}V(X_{\cdot}\otimes m,t)(e^{j}),e^{j}\right\rangle =0,\ \text{a.e. }\ t, \\
    &D_{X}V(X_{\cdot}\otimes m,T)=h_{x}(X_{x})+D_{X}F_{T}(X_{\cdot}\otimes m),
\end{aligned}\right.
\end{equation}
where $N_{t}$ is a standard Gaussian random variable independent of $X_{\cdot}$

\subsection{DEVELOPED FORMULAS}

The key function in the following formulas is 
\begin{equation}
U(x,m,t)=\dfrac{d}{d\nu}V(m,t)(x).\label{eq:8-501}
\end{equation}
We then consider the functional derivative of $U(x,m,t),$ namely 
\begin{equation}
\dfrac{d}{d\nu}U(x,m,t)(x^{1})=\dfrac{d^{2}}{d\nu^{2}}V(m,t)(x,x^{1}),\label{eq:8-502}
\end{equation}
and also the second-order functional derivative:
\begin{equation}
\dfrac{d^{2}}{d\nu^{2}}U(x,m,t)(x^{1},x^{2})=\dfrac{d^{3}}{d\nu^{3}}V(m,t)(x,x^{1},x^{2}).\label{eq:8-503}
\end{equation}
We shall use also the notation $D$ for the gradient in $x,$ $D_{1}$
for the gradient with respect to $x^{1}$ and $D_{2}$ for the gradient
with respect to $x^{2}.$ The variables $x,x^{1},x^{2}$ may not be
listed in that order. Let the reader not confuse $D_{2}$ with $D^{2}.$ According
to formula (\ref{eq:2-3002}) we thus have: 
\begin{align}
    \left\langle D_{X}^{2}V(X_{\cdot}\otimes m,t)(Y_{\cdot}),Y_{\cdot}\right\rangle &=\mathbb{E}\left(\int_{\mathbb{R}^{n}}D^{2}U(X_{x},X_{\cdot}\otimes m,t)Y_{x} \cdot Y_{x}dm(x)\right)\label{eq:8-504}\\
    &\quad +\mathbb{E}\left(\int_{\mathbb{R}^{n}}\mathbb{E}^{1}\left(\int_{\mathbb{R}^{n}}D\,D_{1}\dfrac{d}{d\nu}U(X_{x},X_{\cdot}\otimes m,t)(X_{x^{1}}^{1})Y_{x^{1}}^{1} \cdot Y_{x}dm(x^{1})\right)dm(x)\right),\nonumber
\end{align}
% \begin{equation}
% ((D_{X}^{2}V(X_{\cdot}\otimes m,t)(Y_{\cdot}),Y_{\cdot}))=\mathbb{E}\int_{\mathbb{R}^{n}}D^{2}U(X_{x},X_{\cdot}\otimes m,t)Y_{x}.Y_{x}dm(x)+\label{eq:8-504}
% \end{equation}

% \[
% +\mathbb{E}\int_{\mathbb{R}^{n}}\mathbb{E}^{1}\int_{\mathbb{R}^{n}}D\,D_{1}\dfrac{d}{d\nu}U(X_{x},X_{\cdot}\otimes m,t)(X_{x^{1}}^{1})Y_{x^{1}}^{1}.Y_{x}dm(x^{1})dm(x)
% \]
 where $X_{x^{1}}^{1},$$Y_{x^{1}}^{1}$ is an independent copy of
$X_{x},Y_{x}$.

We first take $Y_{\cdot}=\sigma N_{t}$, to obtain, since $N_{t}$ is independent
of $X_{\cdot}$,
\begin{equation}
\left\langle D_{X}^{2}V(X_{\cdot}\otimes m,t)(\sigma N_{t}),\sigma N_{t}\right\rangle =\mathbb{E}\left(\int_{\mathbb{R}^{n}}\text{tr}\left(D^{2}U(X_{x},X_{\cdot}\otimes m,t)\sigma\sigma^{*}\right)dm(x)\right)\label{eq:8-505}
\end{equation}

and also
\begin{align}
    \sum_{j=1}^{n} \left\langle D_{X}^{2}V(X_{\cdot}\otimes m,t)(e_{j}),e_{j}\right\rangle &=\mathbb{E}\left(\int_{\mathbb{R}^{n}}\Delta U(X_{x},X_{\cdot}\otimes m,t)dm(x)\right)\label{eq:8-510}\\
    &\quad +\mathbb{E}\left(\int_{\mathbb{R}^{n}}\mathbb{E}^{1}\left(\int_{\mathbb{R}^{n}}\sum_{j=1}^{n}D_{j}D_{1j}\dfrac{d}{d\nu}U(X_{x},X_{\cdot}\otimes m,t)(X_{x^{1}}^{1})dm(x^{1})\right)dm(x)\right).\nonumber
\end{align}
% \begin{equation}
% \sum_{j=1}^{n}((D_{X}^{2}V(X_{\cdot}\otimes m,t)(e_{j}),e_{j}))=\mathbb{E}\int_{\mathbb{R}^{n}}\Delta U(X_{x},X_{\cdot}\otimes m,t)dm(x)+\label{eq:8-510}
% \end{equation}

% \[
% +\mathbb{E}\int_{\mathbb{R}^{n}}\mathbb{E}^{1}\int_{\mathbb{R}^{n}}\sum_{j=1}^{n}D_{j}D_{1j}\dfrac{d}{d\nu}U(X_{x},X_{\cdot}\otimes m,t)(X_{x^{1}}^{1})dm(x^{1})dm(x)
% \]

\subsection{FURTHER COMPUTATIONS}

We can then compute 
\begin{align}
    \bigg\langle D_{X} \Big(\Big\langle D_{X}^{2}V(X_{\cdot}&\otimes m,t)(\sigma N_{t}),\sigma N_{t} \Big\rangle\Big) ,Z_{\cdot} \bigg\rangle =\mathbb{E}\left(\int_{\mathbb{R}^{n}}D\,(\text{tr}(D^{2}U(X_{x},X_{\cdot}\otimes m,t)\sigma\sigma^{*})) \cdot Z_{x}dm(x)\right) \nonumber\\
&+\mathbb{E}\left(\int_{\mathbb{R}^{n}}\mathbb{E}^{1}\left(\int_{\mathbb{R}^{n}}D \left(\,\text{tr}\left(D_{1}^{2}\,\dfrac{d}{d\nu}U(X_{x^{1}}^{1},X_{\cdot}\otimes m,t)(X_{x})\sigma\sigma^{*} \right)\right) \cdot Z_{x}dm_{1}(x)\right)dm(x)\right).\label{eq:8-511}
\end{align}
% \begin{equation}
% ((D_{X}((D_{X}^{2}V(X_{\cdot}\otimes m,t)(\sigma N_{t}),\sigma N_{t})),Z_{\cdot}))=\label{eq:8-511}
% \end{equation}

% \[
% \mathbb{E}\int_{\mathbb{R}^{n}}D\,(\text{tr}D^{2}U(X_{x},X_{\cdot}\otimes m,t)\sigma\sigma^{*}).Z_{x}dm(x)+
% \]

% \[
% +\mathbb{E}\int_{\mathbb{R}^{n}}\mathbb{E}^{1}\int_{\mathbb{R}^{n}}D(\,\text{tr}D_{1}^{2}\,\dfrac{d}{d\nu}U(X_{x^{1}}^{1},X_{\cdot}\otimes m,t)(X_{x})\sigma\sigma^{*}).Z_{x}dm_{1}(x)dm(x)
% \]
Thus,
\begin{align}
    D_{X} \Big(\left\langle D_{X}^{2}V(X_{\cdot}\otimes m,t)(\sigma N_{t}),\sigma N_{t} \right\rangle\Big) &=D\,(\text{tr}(D^{2}U(X_{x},X_{\cdot}\otimes m,t)\sigma\sigma^{*}))\nonumber\\
&\quad \, +\mathbb{E}^{1}\left(\int_{\mathbb{R}^{n}}D \left(\,\text{tr}\left(D_{1}^{2}\,\dfrac{d}{d\nu}U(X_{x},X_{\cdot}\otimes m,t)(X_{x^{1}}^{1})\sigma\sigma^{*} \right)\right)dm(x^{1})\right).\label{eq:8-512}
\end{align}
% \begin{equation}
% D_{X}((D_{X}^{2}V(X_{\cdot}\otimes m,t)(\sigma N_{t}),\sigma N_{t}))=D\,(\text{tr}D^{2}U(X_{x},X_{\cdot}\otimes m,t)\sigma\sigma^{*})+\label{eq:8-512}
% \end{equation}

% \[
% +\mathbb{E}^{1}\int_{\mathbb{R}^{n}}D(\,\text{tr}D_{1}^{2}\,\dfrac{d}{d\nu}U(X_{x},X_{\cdot}\otimes m,t)(X_{x^{1}}^{1})\sigma\sigma^{*})dm(x^{1})
% \]
Similarly,
\begin{align}
    &\Bigg\langle D_{X} \Bigg(\sum_{j=1}^{n} \Big\langle D_{X}^{2}V(X_{\cdot}\otimes m,t)(e_{j}),e_{j}\Big\rangle\Bigg),Z_{\cdot} \Bigg\rangle=\mathbb{E}\left(\int_{\mathbb{R}^{n}}D(\Delta U(X_{x},X_{\cdot}\otimes m,t)) \cdot Z_{x}dm(x)\right) \nonumber\\ &\qquad+\mathbb{E}\left(\int_{\mathbb{R}^{n}}\mathbb{E}^{1}\left(\int_{\mathbb{R}^{n}}D \left(\Delta_{1}\dfrac{d}{d\nu}U(X_{x},X_{\cdot}\otimes m,t)(X_{x^{1}}^{1}) \right) \cdot Z_{x}dm(x^{1})\right)dm(x)\right)\nonumber\\
&\qquad+2\mathbb{E}\left(\int_{\mathbb{R}^{n}}\mathbb{E}^{1}\left(\int_{\mathbb{R}^{n}}D\, \left(\sum_{j=1}^{n}D_{j}D_{1j}\dfrac{d}{d\nu}U(X_{x},X_{\cdot}\otimes m,t)(X_{x^{1}}^{1}) \right) \cdot Z_{x}dm(x^{1})\right)dm(x)\right)\nonumber\\
&\qquad+\mathbb{E}\left(\int_{\mathbb{R}^{n}}\mathbb{E}^{1}\left(\int_{\mathbb{R}^{n}}\mathbb{E}^{2}\left(\int_{\mathbb{R}^{n}}D\,\left(\sum_{j=1}^{n}D_{1j}D_{2j}\dfrac{d^{2}}{d\nu^{2}}U(X_{x^{1}}^{1},X_{\cdot}\otimes m,t)(X_{x^{2}}^{2},X_{x}) \right) \cdot Z_{x}dm(x^{2})\right)dm(x^{1})\right)dm(x)\right). \label{eq:8-513}
\end{align}
% \begin{equation}
% ((D_{X}\sum_{j=1}^{n}((D_{X}^{2}V(X_{\cdot}\otimes m,t)(e_{j}),e_{j})),Z_{\cdot}))=\label{eq:8-513}
% \end{equation}

% \[
% \mathbb{E}\int_{\mathbb{R}^{n}}D(\Delta U(X_{x},X_{\cdot}\otimes m,t)).Z_{x}dm(x)+
% \]

% \[
% +\mathbb{E}\int_{\mathbb{R}^{n}}\mathbb{E}^{1}\int_{\mathbb{R}^{n}}D(\Delta_{1}\dfrac{d}{d\nu}U(X_{x},X_{\cdot}\otimes m,t)(X_{x^{1}}^{1}).Z_{x}dm(x^{1})dm(x)+
% \]

% \[
% +2E\int_{\mathbb{R}^{n}}\mathbb{E}^{1}\int_{\mathbb{R}^{n}}D\,(\sum_{j=1}^{n}D_{j}D_{1j}\dfrac{d}{d\nu}U(X_{x},X_{\cdot}\otimes m,t)(X_{x^{1}}^{1})).Z_{x}dm(x^{1})dm(x)+
% \]
% \[
% +\mathbb{E}\int_{\mathbb{R}^{n}}\mathbb{E}^{1}\int_{\mathbb{R}^{n}}\mathbb{E}^{2}\int_{\mathbb{R}^{n}}D\,(\sum_{j=1}^{n}D_{1j}D_{2j}\dfrac{d^{2}}{d\nu^{2}}U(X_{x^{1}}^{1},X_{\cdot}\otimes m,t)(X_{x^{2}}^{2},X_{x}).Z_{x}dm(x^{2})dm(x^{1})dm(x)
% \]
Therefore, 
\begin{align}
    &D_{X} \Bigg(\sum_{j=1}^{n}\left\langle D_{X}^{2}V(X_{\cdot}\otimes m,t)(e_{j}),e_{j}\right\rangle \Bigg) =D(\Delta U(X_{x},X_{\cdot}\otimes m,t)) \nonumber\\
    &\qquad+\mathbb{E}^{1}\left(\int_{\mathbb{R}^{n}}D \left(\Delta_{1}\dfrac{d}{d\nu}U(X_{x},X_{\cdot}\otimes m,t)(X_{x^{1}}^{1}) \right) dm(x^{1})\right)\nonumber\\
    &\qquad+2\mathbb{E}^{1} \left(\int_{\mathbb{R}^{n}}D\, \left(\sum_{j=1}^{n}D_{j}D_{1j}\dfrac{d}{d\nu}U(X_{x},X_{\cdot}\otimes m,t)(X_{x^{1}}^{1}) \right)dm(x^{1})\right)\nonumber\\
&\qquad+\mathbb{E}^{1}\left(\int_{\mathbb{R}^{n}}\mathbb{E}^{2} \left(\int_{\mathbb{R}^{n}}D\, \left(\sum_{j=1}^{n}D_{1j}D_{2j}\dfrac{d^{2}}{d\nu^{2}}U(X_{x^{1}}^{1},X_{\cdot}\otimes m,t)(X_{x^{2}}^{2},X_{x}) \right)dm(x^{2})\right)dm(x^{1})\right).\label{eq:8-514}
\end{align}
% \begin{equation}
% D_{X}\sum_{j=1}^{n}((D_{X}^{2}V(X_{\cdot}\otimes m,t)(e_{j}),e_{j}))=D(\Delta U(X_{x},X_{\cdot}\otimes m,t))+\label{eq:8-514}
% \end{equation}

% \[
% +\mathbb{E}^{1}\int_{\mathbb{R}^{n}}D(\Delta_{1}\dfrac{d}{d\nu}U(X_{x},X_{\cdot}\otimes m,t)(X_{x^{1}}^{1})dm(x^{1})+
% \]

% \[
% +2\mathbb{E}^{1}\int_{\mathbb{R}^{n}}D\,(\sum_{j=1}^{n}D_{j}D_{1j}\dfrac{d}{d\nu}U(X_{x},X_{\cdot}\otimes m,t)(X_{x^{1}}^{1}))dm(x^{1})+
% \]

% \[
% +\mathbb{E}^{1}\int_{\mathbb{R}^{n}}\mathbb{E}^{2}\int_{\mathbb{R}^{n}}D\,(\sum_{j=1}^{n}D_{1j}D_{2j}\dfrac{d^{2}}{d\nu^{2}}U(X_{x^{1}}^{1},X_{\cdot}\otimes m,t)(X_{x^{2}}^{2},X_{x})dm(x^{2})dm(x^{1})
% \]

\subsection{MASTER EQUATION}

Going back to (\ref{eq:8-500}) we first note, with formula (\ref{eq:8-504}),
\begin{align}
    &D_{X}^{2}V(X_{\cdot}\otimes m,t)(H_{p}(X_{\cdot},D_{X}V(X_{\cdot}\otimes m,t)))=D^{2}U(X_{x},X_{\cdot}\otimes m,t)H_{p}(X_{x},DU(X_{x},X_{\cdot}\otimes m,t)) \nonumber\\
    &\quad +\mathbb{E}^{1}\left(\int_{\mathbb{R}^{n}}D\, \left(D_{1}\dfrac{d}{d\nu}U(X_{x},X_{\cdot}\otimes m,t)(X_{x^{1}}^{1})\right)H_{p}(X_{x^{1}}^{1},DU(X_{x^{1}}^{1},X_{\cdot}\otimes m,t))dm(x^{1})\right).\label{eq:8-515}
\end{align}
% \begin{equation}
% D_{X}^{2}V(X_{\cdot}\otimes m,t)(H_{p}(X_{\cdot},D_{X}V(X_{\cdot}\otimes m,t)))=\label{eq:8-515}
% \end{equation}

% \[
% D^{2}U(X_{x},X_{\cdot}\otimes m,t)H_{p}(X_{x},DU(X_{x},X_{\cdot}\otimes m,t))+
% \]

% \[
% +\mathbb{E}^{1}\int_{\mathbb{R}^{n}}D\,D_{1}\dfrac{d}{d\nu}U(X_{x},X_{\cdot}\otimes m,t)(X_{x^{1}}^{1})H_{p}(X_{x^{1}}^{1},DU(X_{x^{1}}^{1},X_{\cdot}\otimes m,t))dm(x^{1})
% \]
 Collecting results, the master equation (\ref{eq:8-500}) yields: 
\begin{equation} \label{eq:8-516}
\left\{
\begin{aligned}
    &\dfrac{\partial}{\partial t}DU(X_{x},X_{\cdot}\otimes m,t)+DH(X_{x},DU(X_{x},X_{\cdot}\otimes m,t))\\
    &+D^{2}U(X_{x},X_{\cdot}\otimes m,t)H_{p}(X_{x},DU(X_{x},X_{\cdot}\otimes m,t))+D\dfrac{dF}{d\nu}(X_{\cdot}\otimes m)(X_{x})\\
    &+\mathbb{E}^{1}\left(\int_{\mathbb{R}^{n}}D\, \left(D_{1}\dfrac{d}{d\nu}U(X_{x},X_{\cdot}\otimes m,t)(X_{x^{1}}^{1})\right)H_{p}(X_{x^{1}}^{1},DU(X_{x^{1}}^{1},X_{\cdot}\otimes m,t))dm(x^{1})\right)\\
    &+\dfrac{1}{2}D\,(\text{tr}(D^{2}U(X_{x},X_{\cdot}\otimes m,t)\sigma\sigma^{*}))+\dfrac{1}{2}\mathbb{E}^{1}\left(\int_{\mathbb{R}^{n}}D \left(\,\text{tr}\left(D_{1}^{2}\,\dfrac{d}{d\nu}U(X_{x},X_{\cdot}\otimes m,t)(X_{x^{1}}^{1})\sigma\sigma^{*}\right) \right)dm(x^{1})\right)\\
    &+\dfrac{\beta^{2}}{2}D(\Delta U(X_{x},X_{\cdot}\otimes m,t))+\dfrac{\beta^{2}}{2}\mathbb{E}^{1}\left(\int_{\mathbb{R}^{n}}D \left(\Delta_{1}\dfrac{d}{d\nu}U(X_{x},X_{\cdot}\otimes m,t)(X_{x^{1}}^{1})\right)dm(x^{1})\right)\\
    &+\beta^{2}\mathbb{E}^{1}\left(\int_{\mathbb{R}^{n}}D\,\left(\sum_{j=1}^{n}D_{j}D_{1j}\dfrac{d}{d\nu}U(X_{x},X_{\cdot}\otimes m,t)(X_{x^{1}}^{1})\right)dm(x^{1})\right)\\
    &+\dfrac{\beta^{2}}{2}\mathbb{E}^{1}\left(\int_{\mathbb{R}^{n}}\mathbb{E}^{2}\left(\int_{\mathbb{R}^{n}}D\,\left(\sum_{j=1}^{n}D_{1j}D_{2j}\dfrac{d^{2}}{d\nu^{2}}U(X_{x^{1}}^{1},X_{\cdot}\otimes m,t)(X_{x^{2}}^{2},X_{x})\right)dm(x^{2})\right)dm(x^{1})\right)=0,\\
    &DU(X_{x},X_{\cdot}\otimes m,T)=Dh(X_{x})+D\dfrac{dF_{T}}{d\nu}(X_{\cdot}\otimes m)(X_{x}).
\end{aligned}\right.
\end{equation}

% \begin{equation}
% \dfrac{\partial}{\partial t}DU(X_{x},X_{\cdot}\otimes m,t)+DH(X_{x},DU(X_{x},X_{\cdot}\otimes m,t))+\label{eq:8-516}
% \end{equation}

% \[
% +D^{2}U(X_{x},X_{\cdot}\otimes m,t)H_{p}(X_{x},DU(X_{x},X_{\cdot}\otimes m,t))+D\dfrac{dF}{d\nu}(X_{\cdot}\otimes m)(X_{x})
% \]

% \[
% +\mathbb{E}^{1}\int_{\mathbb{R}^{n}}D\,D_{1}\dfrac{d}{d\nu}U(X_{x},X_{\cdot}\otimes m,t)(X_{x^{1}}^{1})H_{p}(X_{x^{1}}^{1},DU(X_{x^{1}}^{1},X_{\cdot}\otimes m,t))dm(x^{1})+
% \]

% \[
% +\dfrac{1}{2}D\,(\text{tr}D^{2}U(X_{x},X_{\cdot}\otimes m,t)\sigma\sigma^{*})+\dfrac{1}{2}\mathbb{E}^{1}\int_{\mathbb{R}^{n}}D(\,\text{tr}D_{1}^{2}\,\dfrac{d}{d\nu}U(X_{x},X_{\cdot}\otimes m,t)(X_{x^{1}}^{1})\sigma\sigma^{*})dm(x^{1})+
% \]

% \[
% +\dfrac{\beta^{2}}{2}D(\Delta U(X_{x},X_{\cdot}\otimes m,t))+\dfrac{\beta^{2}}{2}\mathbb{E}^{1}\int_{\mathbb{R}^{n}}D(\Delta_{1}\dfrac{d}{d\nu}U(X_{x},X_{\cdot}\otimes m,t)(X_{x^{1}}^{1}))dm(x^{1})+
% \]

% \[
% +\beta^{2}\mathbb{E}^{1}\int_{\mathbb{R}^{n}}D\,(\sum_{j=1}^{n}D_{j}D_{1j}\dfrac{d}{d\nu}U(X_{x},X_{\cdot}\otimes m,t)(X_{x^{1}}^{1}))dm(x^{1})+
% \]

% \[
% +\dfrac{\beta^{2}}{2}\mathbb{E}^{1}\int_{\mathbb{R}^{n}}\mathbb{E}^{2}\int_{\mathbb{R}^{n}}D\,(\sum_{j=1}^{n}D_{1j}D_{2j}\dfrac{d^{2}}{d\nu^{2}}U(X_{x^{1}}^{1},X_{\cdot}\otimes m,t)(X_{x^{2}}^{2},X_{x}))dm(x^{2})dm(x^{1})=0
% \]

% \[
% DU(X_{x},X_{\cdot}\otimes m,T)=Dh(X_{x})+D\dfrac{dF_{T}}{d\nu}(X_{\cdot}\otimes m)(X_{x})
% \]

\subsection{THE CASE $X_{\cdot}=J_{\cdot}$}

In the case $X_{\cdot}=J_{\cdot}$, we obtain: 
\begin{equation} \label{eq:8-517}
\left\{
\begin{aligned}
    &\dfrac{\partial}{\partial t}DU(x,m,t)+DH(x,DU(x,m,t))+D^{2}U(x,m,t)H_{p}(x,DU(x,m,t))+D\dfrac{dF}{d\nu}(m)(x)\\
    &\qquad\qquad+\int_{\mathbb{R}^{n}}D\, \left(D_{1}\dfrac{d}{d\nu}U(x,m,t)(x^{1}) \right) H_{p}(x^{1},DU(x^{1},m,t))dm(x^{1})\\
    &\qquad\qquad+\dfrac{1}{2}D\,(\text{tr}(D^{2}U(x,m,t)\sigma\sigma^{*}))+\dfrac{1}{2}\int_{\mathbb{R}^{n}}D\left(\,\text{tr}\left(D_{1}^{2}\,\dfrac{d}{d\nu}U(x,m,t)(x^{1})\sigma\sigma^{*}\right)\right)dm(x^{1})\\
    &\qquad\qquad+\dfrac{\beta^{2}}{2}D(\Delta U(x,m,t))+\dfrac{\beta^{2}}{2}\int_{\mathbb{R}^{n}}D \left(\Delta_{1}\dfrac{d}{d\nu}U(x,m,t)(x^{1})\right)dm(x^{1})\\
    &\qquad\qquad+\beta^{2}\int_{\mathbb{R}^{n}}D\, \left(\sum_{j=1}^{n}D_{j}D_{1j}\dfrac{d}{d\nu}U(x,m,t)(x^{1})\right)dm(x^{1})\\
    &\qquad\qquad+\dfrac{\beta^{2}}{2}\int_{\mathbb{R}^{n}}\int_{\mathbb{R}^{n}}D\, \left(\sum_{j=1}^{n}D_{1j}D_{2j}\dfrac{d^{2}}{d\nu^{2}}U(x^{1},m,t)(x^{2},x)\right)dm(x^{2})dm(x^{1})=0,\\
    &DU(x,m,T)=Dh(x)+D\dfrac{dF_{T}}{d\nu}(m)(x).
\end{aligned}\right.
\end{equation}

% \begin{equation}
% \dfrac{\partial}{\partial t}DU(x,m,t)+DH(x,DU(x,m,t))+\label{eq:8-517}
% \end{equation}

% \[
% +D^{2}U(x,m,t)H_{p}(x,DU(x,m,t))+D\dfrac{dF}{d\nu}(m)(x)
% \]

% \[
% +\int_{\mathbb{R}^{n}}D\,D_{1}\dfrac{d}{d\nu}U(x,m,t)(x^{1})H_{p}(x^{1},DU(x^{1},m,t))dm(x^{1})+
% \]

% \[
% +\dfrac{1}{2}D\,(\text{tr}D^{2}U(x,m,t)\sigma\sigma^{*})+\dfrac{1}{2}\int_{\mathbb{R}^{n}}D(\,\text{tr}D_{1}^{2}\,\dfrac{d}{d\nu}U(x,m,t)(x^{1})\sigma\sigma^{*})dm(x^{1})+
% \]

% \[
% +\dfrac{\beta^{2}}{2}D(\Delta U(x,m,t))+\dfrac{\beta^{2}}{2}\int_{\mathbb{R}^{n}}D(\Delta_{1}\dfrac{d}{d\nu}U(x,m,t)(x^{1}))dm(x^{1})+
% \]

% \[
% +\beta^{2}\int_{\mathbb{R}^{n}}D\,(\sum_{j=1}^{n}D_{j}D_{1j}\dfrac{d}{d\nu}U(x,m,t)(x^{1}))dm(x^{1})+
% \]

% \[
% +\dfrac{\beta^{2}}{2}\int_{\mathbb{R}^{n}}\int_{\mathbb{R}^{n}}D\,(\sum_{j=1}^{n}D_{1j}D_{2j}\dfrac{d^{2}}{d\nu^{2}}U(x^{1},m,t)(x^{2},x))dm(x^{2})dm(x^{1})=0
% \]

% \[
% DU(x,m,T)=Dh(x)+D\dfrac{dF_{T}}{d\nu}(m)(x)
% \]
We recognize that (\ref{eq:8-517}) is the gradient of a scalar equation, which is the equation for $U(x,m,t)$, i.e.
\begin{equation} \label{eq:8-518}
\left\{
\begin{aligned}
    &\dfrac{\partial}{\partial t}U(x,m,t)+H(x,DU(x,m,t))+\dfrac{dF}{d\nu}(m)(x)\\
    &\qquad\qquad+\int_{\mathbb{R}^{n}}D_{1}\dfrac{d}{d\nu}U(x,m,t)(x^{1}) H_{p}(x^{1},DU(x^{1},m,t))dm(x^{1})\\
    &\qquad\qquad+\dfrac{1}{2}\text{tr}(D^{2}U(x,m,t)\sigma\sigma^{*})+\dfrac{1}{2}\int_{\mathbb{R}^{n}}\left(\,\text{tr}\left(D_{1}^{2}\,\dfrac{d}{d\nu}U(x,m,t)(x^{1})\sigma\sigma^{*}\right)\right)dm(x^{1})\\
    &\qquad\qquad+\dfrac{\beta^{2}}{2}\Delta U(x,m,t)+\dfrac{\beta^{2}}{2}\int_{\mathbb{R}^{n}}\Delta_{1}\dfrac{d}{d\nu}U(x,m,t)(x^{1})dm(x^{1})\\
    &\qquad\qquad+\beta^{2}\int_{\mathbb{R}^{n}} \left(\sum_{j=1}^{n}D_{j}D_{1j}\dfrac{d}{d\nu}U(x,m,t)(x^{1})\right) dm(x^{1})\\
    &\qquad\qquad+\dfrac{\beta^{2}}{2}\int_{\mathbb{R}^{n}}\int_{\mathbb{R}^{n}} \left(\sum_{j=1}^{n}D_{1j}D_{2j}\dfrac{d^{2}}{d\nu^{2}}U(x,m,t)(x^{1},x^{2})\right)dm(x^{2})dm(x^{1})=0,\\
    &U(x,m,T)=h(x)+\dfrac{dF_{T}}{d\nu}(m)(x).
\end{aligned}\right.
\end{equation}
% \begin{equation}
% \dfrac{\partial}{\partial t}U(x,m,t)+H(x,DU(x,m,t))+\dfrac{dF}{d\nu}(m)(x)\label{eq:8-518}
% \end{equation}

% \[
% +\int_{\mathbb{R}^{n}}D_{1}\dfrac{d}{d\nu}U(x,m,t)(x^{1}).H_{p}(x^{1},DU(x^{1},m,t))dm(x^{1})+
% \]

% \[
% +\dfrac{1}{2}\text{tr}D^{2}U(x,m,t)\sigma\sigma^{*}+\dfrac{1}{2}\int_{\mathbb{R}^{n}}(\,\text{tr}D_{1}^{2}\,\dfrac{d}{d\nu}U(x,m,t)(x^{1})\sigma\sigma^{*})dm(x^{1})+
% \]

% \[
% +\dfrac{\beta^{2}}{2}\Delta U(x,m,t))+\dfrac{\beta^{2}}{2}\int_{\mathbb{R}^{n}}\Delta_{1}\dfrac{d}{d\nu}U(x,m,t)(x^{1})dm(x^{1})+
% \]

% \[
% +\beta^{2}\int_{\mathbb{R}^{n}}(\sum_{j=1}^{n}D_{j}D_{1j}\dfrac{d}{d\nu}U(x,m,t)(x^{1})dm(x^{1})+
% \]

% \[
% +\dfrac{\beta^{2}}{2}\int_{\mathbb{R}^{n}}\int_{\mathbb{R}^{n}}(\sum_{j=1}^{n}D_{1j}D_{2j}\dfrac{d^{2}}{d\nu^{2}}U(x,m,t)(x^{1},x^{2}))dm(x^{2})dm(x^{1})=0
% \]

% \[
% U(x,m,T)=h(x)+\dfrac{dF_{T}}{d\nu}(m)(x)
% \]
Equation (\ref{eq:8-518}) is commonly called the Master equation. 

\subsection{JUSTIFICATION }

The above calculations are justified if the functions $X_{\cdot}\mapsto \langle D_{X}^{2}V(X_{\cdot}\otimes m,t)(\sigma N_{t}),\sigma N_{t}\rangle $
and $X_{\cdot}\mapsto\sum_{j=1}^{n} \langle D_{X}^{2}V(X_{\cdot}\otimes m,t)(e_{j}),e_{j} \rangle$
have a G\textroundcap{a}teaux differential. We need additional regularity
assumptions. We assume: 
\begin{equation}
l_{xxx}(x,v),l_{xxv}(x,v),l_{xvv}(x,v),l_{vvv}(x,v)\ \text{ continuous and bounded},\label{eq:8-600}
\end{equation}
\begin{equation}
(x,m)\mapsto D^{3}\dfrac{dF}{d\nu}(m)(x),\ \text{ continuous and bounded},\label{eq:8-601}
\end{equation}
\begin{equation}
(x,m)\mapsto D^{2}D_{1}\dfrac{d^{2}F}{d\nu^{2}}(m)(x,x^{1}),DD_{1}^{2}\dfrac{d^{2}F}{d\nu^{2}}(m)(x,x^{1}),\text{ continuous and bounded},\label{eq:8-602}
\end{equation}
\begin{equation}
(x,m)\mapsto DD_{1}D_{2}\dfrac{d^{3}F}{d\nu^{3}}(m)(x,x^{1},x^{2}),\ \text{ continuous and bounded}.\label{eq:8-603}
\end{equation}

We then have: 
\begin{proposition}
\label{prop8-1} We make the assumptions \ref{theo5-10} and (\ref{eq:8-600}), (\ref{eq:8-601}), (\ref{eq:8-602}), (\ref{eq:8-603}).
Then the functions $X_{\cdot}\mapsto \langle D_{X}^{2}V(X_{\cdot}\otimes m,t)(\sigma N_{t}),\sigma N_{t} \rangle$
and $X_{\cdot}\mapsto\sum_{j=1}^{n} \langle D_{X}^{2}V(X_{\cdot}\otimes m,t)(e_{j}),e_{j} \rangle$
have a G\textroundcap{a}teaux differential.
\end{proposition}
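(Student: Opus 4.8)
The plan is to reduce the claim to the differentiability in $X_\cdot$ of the second-order objects appearing in the representation formulas \eqref{eq:8-505} and \eqref{eq:8-510}, and then to obtain that differentiability by differentiating once more the linearized systems of Propositions \ref{prop7-10} and \ref{prop5-10}. Recall that $U(x,m,t)=\frac{d}{d\nu}V(m,t)(x)$, and that by \eqref{eq:8-505} one has $\langle D_X^2 V(X_\cdot\otimes m,t)(\sigma N_t),\sigma N_t\rangle=\mathbb{E}(\int_{\mathbb{R}^n}\text{tr}(D^2U(X_x,X_\cdot\otimes m,t)\sigma\sigma^*)\,dm(x))$, where $D^2U(X_x,X_\cdot\otimes m,t)$ is the terminal value $\mathcal{Z}(t)$ of the linearized system \eqref{eq:4-175}--\eqref{eq:4-178} written with measure $X_\cdot\otimes m$ and evaluated at the spatial point $X_x$. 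Because $N_t$ is independent of $X_\cdot$ and mean zero, the off-diagonal coupling is already absent from \eqref{eq:8-505} (this is the simplification recorded in Proposition \ref{prop5-100}), whereas for the second function the direction $e^j$ is deterministic and nonzero, so the full coupling term $\sum_j D_jD_{1j}\frac{d}{d\nu}U$ of \eqref{eq:8-510} must also be differentiated. Thus the two statements differ only in which of the two terms in \eqref{eq:8-510} is present.

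First I would establish that the map $X_\cdot\mapsto D^2U(X_x,X_\cdot\otimes m,t)$, together with the second functional-derivative kernel $DD_1\frac{d^2}{d\nu^2}V(m,t)(x,x^1)$ described by \eqref{eq:5-2023}--\eqref{eq:5-2026}, is Gâteaux differentiable in $X_\cdot$. To this end I differentiate the forward--backward systems \eqref{eq:4-175}--\eqref{eq:4-178} and \eqref{eq:5-2023}--\eqref{eq:5-2026} with respect to the base point $X_\cdot$, exactly as Proposition \ref{prop7-10} was obtained by differentiating the optimality system \eqref{eq:3-7}. Since by Proposition \ref{prop7-10} the optimal trajectory and control are themselves differentiable with derivatives $\mathcal{X}_{xmt}(s),\mathcal{U}_{xmt}(s)$, differentiating the coefficients of these already-linear systems produces a further, doubly linearized forward--backward system. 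Its coefficients involve the third derivatives $l_{xxx},l_{xxv},l_{xvv},l_{vvv}$ and the third functional derivatives $D^3\frac{d}{d\nu}F$, $D^2D_1\frac{d^2}{d\nu^2}F$, $DD_1^2\frac{d^2}{d\nu^2}F$ and $DD_1D_2\frac{d^3}{d\nu^3}F$ (with the analogous terminal data built from $F_T$), all of which are continuous and bounded by \eqref{eq:8-600}--\eqref{eq:8-603}. Under the convexity and monotonicity conditions already in force this system is well posed and carries the uniform a priori bounds produced by the energy estimate of Proposition \ref{prop7-100}; its value at time $t$ represents the directional derivatives of $D^2\frac{d}{d\nu}V$ and of the second functional derivative.

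With these derivatives in hand I would pass to the limit in the difference quotient $\epsilon^{-1}(g(X_\cdot+\epsilon\mathcal{X}_\cdot)-g(X_\cdot))$, for each fixed direction $\mathcal{X}_\cdot\in\mathcal{H}_m$ and for both functions $g$. The chain rule contributes a spatial term $D^3U\cdot\mathcal{X}_x$, arising from the perturbation of the argument $X_x$, and a measure term, arising from the perturbation of the law $X_\cdot\otimes m$ and computed through the independent-copy expectations $\mathbb{E}^1,\mathbb{E}^2$ exactly as in \eqref{eq:8-512} and \eqref{eq:8-514}. The uniform estimates of the previous step, the continuity property \eqref{eq:5-2001} of $D_X^2V$, and dominated convergence justify interchanging the limit with the expectation and the $dm$-integrations, so that the difference quotient converges to a limit that is linear and continuous in $\mathcal{X}_\cdot$; this is the required Gâteaux differential. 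The main obstacle is the bookkeeping of the mixed spatial--measure derivatives through the conditional-law structure: the term carrying $DD_1D_2\frac{d^3}{d\nu^3}V$ in \eqref{eq:8-514} involves two independent copies, and one must verify that the doubly linearized system genuinely controls this third functional derivative with the integrability needed to convert the formal chain-rule computation into a rigorous passage to the limit, rather than a merely symbolic identity.
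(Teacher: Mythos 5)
Your outline is viable, but it takes a genuinely different---and substantially heavier---route than the paper's Appendix D. You propose to differentiate the linearized optimality systems \eqref{eq:4-175}--\eqref{eq:4-178} and \eqref{eq:5-2023}--\eqref{eq:5-2026} once more in the base point $X_{\cdot}$, producing a doubly linearized forward--backward system whose time-$t$ value represents third derivatives of $V$ (in particular $DD_{1}D_{2}\frac{d^{3}}{d\nu^{3}}V$), and then to justify the chain rule behind \eqref{eq:8-512} and \eqref{eq:8-514} by dominated convergence. The paper never constructs any third derivative of $V$. Instead it represents $\frac{1}{2}\langle D_{X}^{2}V(X_{\cdot}\otimes m,t)(\mathcal{X}_{\cdot}),\mathcal{X}_{\cdot}\rangle$ as the optimal value of the linear-quadratic problem \eqref{eq:9-7}--\eqref{eq:9-8} (with the control space enlarged to $\mathcal{B}_{\cdot}$-measurable processes so that directions such as $\sigma N_{t}$ and $e^{j}$ are admissible), and then exploits uniqueness of the LQ minimizer to differentiate the infimum \emph{at frozen optimal control}---the envelope identity \eqref{eq:9-10}. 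The resulting formula \eqref{eq:9-11} involves only third derivatives of the data $l,h,F,F_{T}$ (made explicit in \eqref{eq:9-12}) together with the first-order linearized processes $\mathcal{X}_{X_{\cdot}\mathcal{X}_{\cdot}t},\mathcal{U}_{X_{\cdot}\mathcal{X}_{\cdot}t}$ and $\mathcal{X}_{X_{\cdot}\mathcal{Z}_{\cdot}t},\mathcal{U}_{X_{\cdot}\mathcal{Z}_{\cdot}t}$, all already controlled by Proposition \ref{prop5-10}; the assumptions \eqref{eq:8-600}--\eqref{eq:8-603} enter only through boundedness of those data derivatives. What the envelope argument buys is precisely the elimination of the obstacle you flag in your last sentence: there is no need to differentiate the optimal control $\mathcal{U}_{X_{\cdot}\mathcal{X}_{\cdot}t}$ in $X_{\cdot}$, no doubly linearized system, and no third functional derivative of $V$ to estimate.

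Consequently, on your route two substantive debts remain that your plan acknowledges but does not discharge. First, well-posedness and a priori bounds for the doubly linearized system, and strong (not merely weak) convergence of its difference quotients, must be proved before the ``measure term'' involving $DD_{1}D_{2}\frac{d^{3}}{d\nu^{3}}V$ in \eqref{eq:8-514} is anything more than symbolic; the continuity property \eqref{eq:5-2001} concerns $D_{X}^{2}V$ and does not by itself give this. Second, you omit the measurability bookkeeping: to perturb $X_{\cdot}$ while holding the directions $\sigma N_{t}$ and $e^{j}$ in the quadratic form, one must work with the enlarged $\sigma$-algebra framework (cf.\ Remark \ref{rem4-1} and the $\mathcal{B}_{\cdot}$-measurable setup of Appendix D), since $\sigma N_{t}$ is not $\sigma(X_{\cdot})$-measurable. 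Both points are resolvable under \eqref{eq:8-600}--\eqref{eq:8-603}, so your approach can be completed, but it proves strictly more than the proposition requires, whereas the paper's envelope argument reaches the conclusion with the machinery already established in Proposition \ref{prop5-10}.
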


The proof can be found in Appendix D.

\appendix

\section{PROOFS FROM SECTION \ref{sec:control problem}} \label{appendix A}

\subsection{PROOF OF LEMMA \ref{lem3-1}}

Consider a control $v_{X_{\cdot}t}(s)+\theta\tilde{v}_{X_{\cdot}t}(s)$,
the corresponding state is $X_{X_{\cdot}t}(s)+\theta\int_{t}^{s}\tilde{v}_{X_{\cdot}t}(\tau)d\tau.$
We first check that 
\begin{align}
    \dfrac{d}{d\theta}J_{X_{\cdot}t}(v_{Xt}(\cdot)+\theta\tilde{v}_{X_{\cdot}t}(\cdot)) \bigg|_{\theta=0} &=\int_{t}^{T} \langle l_{v}(X_{X_{\cdot}t}(s),v_{X_{\cdot}t}(s)),\tilde{v}_{X_{\cdot}t}(s)\rangle ds\label{eq:8-100}\\
    &\quad \,+\int_{t}^{T} \left\langle l_{x}(X_{X_{\cdot}t}(s),v_{X_{\cdot}t}(s))+D_{X}\mathbb{E}\left(F((X_{X_{\cdot}t}(s)\otimes m)^{\mathcal{B}_{t}^{^{s}}})\right),\int_{t}^{s}\widetilde{v}_{X_{\cdot}t}(\tau)d\tau \right\rangle ds\nonumber\\
    &\quad \,+ \left\langle h_{x}(X_{X_{\cdot}t}(T))+D_{X}\mathbb{E}\left( F_{T}((X_{X_{\cdot}t}(T)\otimes m)^{\mathcal{B}_{t}^{^{T}}})\right),\int_{t}^{T}\widetilde{v}_{X_{\cdot}t}(\tau)d\tau \right\rangle \,. \nonumber
\end{align}
% \begin{equation}
% \dfrac{d}{d\theta}J_{X_{\cdot}t}(v_{Xt}(\cdot)+\theta\tilde{v}_{X_{\cdot}t}(\cdot))|_{\theta=0}=\int_{t}^{T}((l_{v}(X_{X_{\cdot}t}(s),v_{X_{\cdot}t}(s)),\tilde{v}_{X_{\cdot}t}(s)))ds+\label{eq:8-100}
% \end{equation}

% \[
% +\int_{t}^{T}((l_{x}(X_{X_{\cdot}t}(s),v_{X_{\cdot}t}(s))+D_{X}\mathbb{E}F((X_{X_{\cdot}t}(s)\otimes m)^{\mathcal{B}_{t}^{^{s}}}),\int_{t}^{s}\widetilde{v}_{X_{\cdot}t}(\tau)d\tau))ds+
% \]

% \[
% +((h_{x}(X_{X_{\cdot}t}(T))+D_{X}\mathbb{E}F_{T}((X_{X_{\cdot}t}(T)\otimes m)^{\mathcal{B}_{t}^{^{T}}}),\int_{t}^{T}\widetilde{v}_{X_{\cdot}t}(\tau)d\tau))
% \]
Then 
\begin{align*}
    &\quad \, \int_{t}^{T} \left\langle l_{x}(X_{X_{\cdot}t}(s),v_{X_{\cdot}t}(s))+D_{X}\mathbb{E}\left(F((X_{X_{\cdot}t}(s)\otimes m)^{\mathcal{B}_{t}^{^{s}}})\right),\int_{t}^{s}\widetilde{v}_{X_{\cdot}t}(\tau)d\tau \right\rangle ds\\
    &\quad \,+ \left\langle h_{x}(X_{X_{\cdot}t}(T))+D_{X}\mathbb{E}\left(F_{T}((X_{X_{\cdot}t}(T)\otimes m)^{\mathcal{B}_{t}^{^{T}}})\right),\int_{t}^{s}\widetilde{v}_{X_{\cdot}t}(\tau)d\tau \right\rangle \\
    &=\int_{t}^{T}\int_{\mathbb{R}^{n}}\mathbb{E}\left[ \left\langle l_{x}(X_{\eta t}(s),v_{\eta t}(s))+D\dfrac{dF}{d\nu}((X_{\cdot, t}(s)\otimes(X_{\cdot}\otimes m))^{\mathcal{B}_{t}^{s}})(X_{\eta t}(s)), \int_{t}^{s}\widetilde{v}_{\eta t}(\tau)d\tau \right\rangle \right]d(X_{\cdot}\otimes m)(\eta)ds\\
    &\quad \, +\int_{\mathbb{R}^{n}}\mathbb{E}\left[\left\langle h_{x}(X_{\eta t}(T))+D\dfrac{dF_{T}}{d\nu}((X_{\cdot, t}(T)\otimes(X_{\cdot}\otimes m))^{\mathcal{B}_{t}^{T}})(X_{\eta t}(T)), \int_{t}^{T}\widetilde{v}_{\eta t}(\tau)d\tau\right\rangle \right]d(X_{\cdot}\otimes m)(\eta) \,.
\end{align*}
% \[
% \int_{t}^{T}((l_{x}(X_{X_{\cdot}t}(s),v_{X_{\cdot}t}(s))+D_{X}\mathbb{E}F((X_{X_{\cdot}t}(s)\otimes m)^{\mathcal{B}_{t}^{^{s}}}),\int_{t}^{s}\widetilde{v}_{X_{\cdot}t}(\tau)d\tau))ds+
% \]

% \[
% +((h_{x}(X_{X_{\cdot}t}(T))+D_{X}\mathbb{E}F_{T}((X_{X_{\cdot}t}(T)\otimes m)^{\mathcal{B}_{t}^{^{T}}}),\int_{t}^{s}\widetilde{v}_{X_{\cdot}t}(\tau)d\tau))=
% \]

% \[
% \int_{t}^{T}\int_{\mathbb{R}^{n}}\mathbb{E}[(l_{x}(X_{\eta t}(s),v_{\eta t}(s))+D\dfrac{dF}{d\nu}((X_{\cdot t}(s)\otimes(X_{\cdot}\otimes m))^{\mathcal{B}_{t}^{s}})(X_{\eta t}(s))).\int_{t}^{s}\widetilde{v}_{\eta t}(\tau)d\tau]dX_{\cdot}\otimes m(\eta)ds+
% \]

% \[
% +\int_{\mathbb{R}^{n}}\mathbb{E}[(h_{x}(X_{\eta t}(T))+D\dfrac{dF_{T}}{d\nu}((X_{\cdot t}(T)\otimes(X_{\cdot}\otimes m))^{\mathcal{B}_{t}^{T}})(X_{\eta t}(T))).\int_{t}^{T}\widetilde{v}_{\eta t}(\tau)d\tau]dX_{\cdot}\otimes m(\eta)
% \]
Define $\Gamma_{\eta t}$ by 
\begin{equation} \label{eq:8-101}
\begin{aligned}
    \Gamma_{\eta t}:=&\int_{t}^{T} \left(l_{x}(X_{\eta t}(s),v_{\eta t}(s))+D\dfrac{dF}{d\nu} \left((X_{\cdot, t}(s)\otimes(X_{\cdot}\otimes m))^{\mathcal{B}_{t}^{s}} \right)(X_{\eta t}(s))\right)ds\\
    &+h_{x}(X_{\eta t}(T))+D\dfrac{dF_{T}}{d\nu} \left((X_{\cdot, t}(T)\otimes(X_{\cdot}\otimes m))^{\mathcal{B}_{t}^{T}} \right)(X_{\eta t}(T)).
\end{aligned}  
\end{equation}
% \begin{equation}
% \Gamma_{\eta t}=\int_{t}^{T}(l_{x}(X_{\eta t}(s),v_{\eta t}(s))+D\dfrac{dF}{d\nu}((X_{\cdot t}(s)\otimes(X_{\cdot}\otimes m))^{\mathcal{B}_{t}^{s}})(X_{\eta t}(s)))ds+\label{eq:8-101}
% \end{equation}

% \[
% +h_{x}(X_{\eta t}(T))+D\dfrac{dF_{T}}{d\nu}((X_{\cdot t}(T)\otimes(X_{\cdot}\otimes m))^{\mathcal{B}_{t}^{T}})(X_{\eta t}(T))
% \]
$\Gamma_{\eta t}$ is $\mathcal{F}_{t}^{T}$-measurable. By the martingale
representation theorem we can write 
\begin{equation}
\mathbb{E}(\Gamma_{\eta t}|\mathcal{F}_{t}^{s})=\mathbb{E}(\Gamma_{\eta t})+\sum_{j=1}^{n}\int_{t}^{s}r_{\eta t}^{j}(\tau)dw^{j}(\tau)+\sum_{j=1}^{n}\int_{t}^{s}\rho_{\eta t}^{j}(\tau)db^{j}(\tau).\label{eq:8-1000}
\end{equation}
Define 
\begin{equation}
Z_{\eta t}(s):=\mathbb{E}(\Gamma_{\eta t}|\mathcal{F}_{t}^{s})-\int_{t}^{s} \left(l_{x}(X_{\eta t}(\tau),v_{\eta t}(\tau))+D\dfrac{dF}{d\nu} \left((X_{\cdot, t}(\tau)\otimes(X_{\cdot}\otimes m))^{\mathcal{B}_{t}^{\tau}} \right)(X_{\eta t}(\tau)) \right)d\tau.\label{eq:8-1001}
\end{equation}
Then $Z_{\eta t}(s)$ is unique solution of the backward SDE:
\begin{equation} \hspace{-0.5cm}\label{eq:8-1002}
\left\{
\begin{aligned}
&-dZ_{\eta t}(s)= \left(l_{x}(X_{\eta t}(s),v_{\eta t}(s))+D\dfrac{dF}{d\nu} \left((X_{\cdot, t}(s)\otimes(X_{\cdot}\otimes m))^{\mathcal{B}_{t}^{s}} \right)(X_{\eta t}(s)) \right)ds-\sum_{j=1}^{n}r_{\eta t}^{j}(s)dw^{j}(s)-\sum_{j=1}^{n}\rho_{\eta t}^{j}(s)db^{j}(s), \\
&Z_{\eta t}(T)=h_{x}(X_{\eta t}(T))+D\dfrac{dF_{T}}{d\nu} \left((X_{\cdot, t}(T)\otimes(X_{\cdot}\otimes m))^{\mathcal{B}_{t}^{T}} \right)(X_{\eta t}(T)),
\end{aligned}\right.
\end{equation}
and 
\begin{equation}\label{eq:8-1003}
\begin{aligned}
    &\quad \int_{t}^{T}\int_{\mathbb{R}^{n}}\mathbb{E} \left[\left(l_{x}(X_{\eta t}(s),v_{\eta t}(s))+D\dfrac{dF}{d\nu}((X_{\cdot, t}(s)\otimes(X_{\cdot}\otimes m))^{\mathcal{B}_{t}^{s}})(X_{\eta t}(s)) \right) \cdot \int_{t}^{s}\widetilde{v}_{\eta t}(\tau)d\tau \right]d(X_{\cdot}\otimes m)(\eta)ds\\
    &\quad \, +\int_{\mathbb{R}^{n}}\mathbb{E}\left[\left(h_{x}(X_{\eta t}(T))+D\dfrac{dF_T}{d\nu} \left((X_{\cdot, t}(T)\otimes(X_{\cdot}\otimes m))^{\mathcal{B}_{t}^{T}} \right)(X_{\eta t}(T))\right) \cdot \int_{t}^{T}\widetilde{v}_{\eta t}(\tau)d\tau\right] d(X_{\cdot}\otimes m)(\eta)\\
    &=\int_{t}^{T}\int_{\mathbb{R}^{n}}\mathbb{E}\left(Z_{\eta t}(s) \cdot \widetilde{v}_{\eta t}(s)\right)d(X_{\cdot}\otimes m)(\eta)ds=\int_{t}^{T} \langle Z_{X_{\cdot}t}(s),\widetilde{v}_{X_{\cdot}t}(s) \rangle ds.
\end{aligned}
\end{equation}
% \[
% \int_{t}^{T}\int_{\mathbb{R}^{n}}\mathbb{E}[(l_{x}(X_{\eta t}(s),v_{\eta t}(s))+D\dfrac{dF}{d\nu}((X_{\cdot t}(s)\otimes(X_{\cdot}\otimes m))^{\mathcal{B}_{t}^{s}})(X_{\eta t}(s))).\int_{t}^{s}\widetilde{v}_{\eta t}(\tau)d\tau]dX_{\cdot}\otimes m(\eta)ds+
% \]

% \begin{equation}
% +\int_{\mathbb{R}^{n}}\mathbb{E}[(h_{x}(X_{\eta t}(T))+D\dfrac{dF}{d\nu}((X_{\cdot t}(T)\otimes(X_{\cdot}\otimes m))^{\mathcal{B}_{t}^{T}})(X_{\eta t}(T))).\int_{t}^{T}\widetilde{v}_{\eta t}(\tau)d\tau]dX_{\cdot}\otimes m(\eta)\label{eq:8-1003}
% \end{equation}

% \[
% =\int_{t}^{T}\int_{\mathbb{R}^{n}}EZ_{\eta t}(s).\widetilde{v}_{\eta t}(s)dX_{\cdot}\otimes m(\eta)ds=\int_{t}^{T}((Z_{X_{\cdot}t}(s),\widetilde{v}_{X_{\cdot}t}(s)))ds
% \]
Therefore
\[
\dfrac{d}{d\theta}J_{X_{\cdot}t}(v_{X_{\cdot}t}(\cdot)+\theta\tilde{v}_{X_{\cdot}t}(\cdot)) \bigg|_{\theta=0}=\int_{t}^{T} \langle l_{v}(X_{X_{\cdot}t}(s),v_{X_{\cdot}t}(s))+Z_{X_{\cdot}t}(s),\tilde{v}_{X_{\cdot}t}(s) \rangle ds,
\]
which implies the result (\ref{eq:3-5}).  

\subsection{PROOF OF PROPOSITION \ref{prop3-1}}

We take two controls $v_{X_{\cdot}t}^{1}$, $v_{X_{\cdot}t}^{2}$. We are
going to check that 
\begin{align}
    &\quad \, \int_{t}^{T} \bigg\langle D_{v}J_{X_{\cdot}t}(v_{X_{\cdot}t}^{1}(\cdot))(s)-D_{v}J_{X_{\cdot}t}(v_{X_{\cdot}t}^{2}(\cdot))(s),v_{X_{\cdot}t}^{1}(s)-v_{X_{\cdot}t}^{2}(s) \bigg\rangle ds\nonumber\\
    &\geq \left(\lambda-T(c'_{T}+c'_{h})-\dfrac{(c'+c'_{l})}{2}T^{2} \right)\int_{t}^{T}\left\lVert v_{X_{\cdot}t}^{1}(s)-v_{X_{\cdot}t}^{2}(s) \right\rVert^{2}ds,\label{eq:Ap1}
\end{align}
% \begin{equation}
% \int_{t}^{T}((D_{v}J_{X_{\cdot}t}(v_{X_{\cdot}t}^{1}(\cdot))(s)-D_{v}J_{X_{\cdot}t}(v_{X_{\cdot}t}^{2}(\cdot))(s),v_{X_{\cdot}t}^{1}(s)-v_{X_{\cdot}t}^{2}(s)))ds\geq(\lambda-T(c'_{T}+c'_{h})-\dfrac{(c'+c'_{l})}{2}T^{2})\int_{t}^{T}||v_{X_{\cdot}t}^{1}(s)-v_{X_{\cdot}t}^{2}(s)||^{2}ds\label{eq:Ap1}
% \end{equation}
and from the assumption (\ref{eq:3-6}) the result will follow immediateley.
To simplify notation, we set $v^{1}(s)=v_{X_{\cdot}t}^{1}(s),\ v^{2}(s)=v_{X_{\cdot}t}^{2}(s)$
and 
\[
X^{1}(s)=X_{X_{\cdot}t}(s;v_{X_{\cdot}t}^{1}(\cdot)),\ X^{2}(s)=X_{X_{\cdot}t}(s;v_{X_{\cdot}t}^{2}(\cdot)),
\]
and $Z^{1}(s),Z^{2}(s)$ for the corresponding solutions of (\ref{eq:3-600}). 

From formula (\ref{eq:3-5}) we have 
\begin{align}
    &\quad \int_{t}^{T} \left\langle D_{v}J_{X_{\cdot}t}(v_{X_{\cdot}t}^{1}(\cdot))(s)-D_{v}J_{X_{\cdot}t}(v_{X_{\cdot}t}^{2}(\cdot))(s),v_{X_{\cdot}t}^{1}(s)-v_{X_{\cdot}t}^{2}(s) \right\rangle ds\nonumber\\
    &=\int_{t}^{T} \left\langle l_{v}(X^{1}(s),v^{1}(s))-l_{v}(X^{2}(s),v^{2}(s)),v^{1}(s)-v^{2}(s)\right\rangle ds+\int_{t}^{T}\left\langle Z^{1}(s)-Z^{2}(s),v^{1}(s)-v^{2}(s) \right\rangle ds.\nonumber
\end{align}
% \[
% \int_{t}^{T}((D_{v}J_{X_{\cdot}t}(v_{X_{\cdot}t}^{1}(\cdot))(s)-D_{v}J_{X_{\cdot}t}(v_{X_{\cdot}t}^{2}(\cdot))(s),v_{X_{\cdot}t}^{1}(s)-v_{X_{\cdot}t}^{2}(s)))ds=\int_{t}^{T}((l_{v}(X^{1}(s),v^{1}(s))-l_{v}(X^{2}(s),v^{2}(s)),v^{1}(s)-v^{2}(s)))ds+
% \]

% \[
% +\int_{t}^{T}((Z^{1}(s)-Z^{2}(s),v^{1}(s)-v^{2}(s)))ds
% \]
 Next, since $v^{1}(s)-v^{2}(s)=\dfrac{d}{ds}(X^{1}(s)-X^{2}(s))$
and $X^{1}(t)-X^{2}(t)=0,$ we have 
\begin{equation*} \hspace{-1.5cm}
\begin{aligned}
    &\quad \int_{t}^{T} \left\langle Z^{1}(s)-Z^{2}(s),v^{1}(s)-v^{2}(s) \right\rangle ds \\
    &= \left\langle Z^{1}(T)-Z^{2}(T),X^{1}(T)-X^{2}(T) \right\rangle \\
    &\quad \, + \int_{t}^{T} \bigg\langle l_{x}(X^{1}(s),v^{1}(s))-l_{x}(X^{2}(s),v^{2}(s))+D_{X}\mathbb{E}\left(F((X^{1}(s)\otimes m)^{\mathcal{B}_{t}^{s}})\right) \\
    &\quad\,-D_{X}\mathbb{E}\left(F((X^{2}(s)\otimes m)^{\mathcal{B}_{t}^{s}})\right),X^{1}(s)-X^{2}(s)\bigg\rangle ds.
\end{aligned}
\end{equation*}

% \[
% \int_{t}^{T}((Z^{1}(s)-Z^{2}(s),v^{1}(s)-v^{2}(s)))ds=((Z^{1}(T)-Z^{2}(T),X^{1}(T)-X^{2}(T)))
% \]
% \[
% \int_{t}^{T}((l_{x}(X^{1}(s),v^{1}(s))-l_{x}(X^{2}(s),v^{2}(s))+D_{X}\mathbb{E}F((X^{1}(s)\otimes m)^{\mathcal{B}_{t}^{s}})-D_{X}\mathbb{E}F((X^{2}(s)\otimes m)^{\mathcal{B}_{t}^{s}}),X^{1}(s)-X^{2}(s)))ds
% \]
Therefore, using the assumptions (\ref{eq:4-1003}), (\ref{eq:4-1004}), (\ref{eq:3-51}), (\ref{eq:3-52})
we obtain 
\begin{align}
    &\quad \int_{t}^{T} \left\langle D_{v}J_{X_\cdot t}(v^{1}(\cdot))(s)-D_{v}J_{X_\cdot t}(v^{2}(\cdot))(s),v^{1}(s)-v^{2}(s)\right\rangle ds\nonumber\\
    &\geq \lambda\int_{t}^{T} \left\lVert v^{1}(s)-v^{2}(s)\right\rVert^{2}ds-(c'_{l}+c')\int_{t}^{T}\left\lVert X^{1}(s)-X^{2}(s)\right\rVert^{2}ds-(c'_{T}+c'_{h})\left\lVert X^{1}(T)-X^{2}(T)\right\rVert^{2}.\nonumber
\end{align}
% \[
% \int_{t}^{T}((D_{v}J_{Xt}(v^{1}(\cdot))(s)-D_{v}J_{Xt}(v^{2}(\cdot))(s),v^{1}(s)-v^{2}(s)))ds\geq\lambda\int_{t}^{T}||v^{1}(s)-v^{2}(s)||^{2}ds-
% \]

% \[
% -(c'_{l}+c')\int_{t}^{T}||X^{1}(s)-X^{2}(s)||^{2}ds-(c'_{T}+c'_{h})||X^{1}(T)-X^{2}(T)||^{2}
% \]
We next use 
\[
X^{1}(s)-X^{2}(s)=\int_{t}^{s}(v^{1}(\tau)-v^{2}(\tau))d\tau,
\]
 hence 
\begin{align*}
\left\lVert X^{1}(T)-X^{2}(T) \right\rVert^{2} &\leq T\int_{t}^{T} \left\lVert v^{1}(s)-v^{2}(s)\right\rVert^{2}ds, \\
\int_{t}^{T} \left\lVert X^{1}(s)-X^{2}(s)\right\rVert^{2}ds &\leq\dfrac{T^{2}}{2}\int_{t}^{T} \left\lVert v^{1}(s)-v^{2}(s)\right\rVert^{2}ds.
\end{align*}
Collecting results, we obtain easily (\ref{eq:Ap1}) and the convexity
is proven. From (\ref{eq:Ap1}), it also follows that 
\begin{equation}
\int_{t}^{T} \left\langle D_{v}J_{X_{\cdot}t}(v_{X_{\cdot}t}(\cdot))(s)-D_{v}J_{X_{\cdot}t}(0)(s),v_{X_{\cdot}t}(s)\right\rangle ds\geq c_{0}\int_{t}^{T} \left\lVert v_{X_{\cdot}t}(s)\right\rVert^{2}ds.\label{eq:Ap2}
\end{equation}
 But 
\[
J_{X_{\cdot}t}(v_{X_{\cdot}t}(\cdot))-J_{X_{\cdot}t}(0)=\int_{0}^{1} \int_{t}^{T} \left\langle D_{v}J_{X_{\cdot}t}(\theta v_{X_{\cdot}t}(\cdot))(s),v_{X_{\cdot}t}(s)\right\rangle dsd\theta,
\]
and combining with (\ref{eq:Ap2}) we obtain 
\[
J_{X_{\cdot}t}(v_{X_{\cdot}t}(\cdot))-J_{X_{\cdot}t}(0)\geq\int_{t}^{T} \langle D_{v}J_{X_{\cdot}t}(0)(s),v_{X_{\cdot}t}(s) \rangle ds+\dfrac{c_{0}}{2}\int_{t}^{T}||v_{X_{\cdot}t}(s)||^{2}ds,
\]
which implies the coercivity, and the existence and uniqueness of
a minimum of $J_{Xt}(v_{Xt}(\cdot)).$ This completes the proof. 
\subsection{PROOF OF PROPOSITION \ref{prop3-2}}

We begin by writing the equivalent of (\ref{eq:3-603}) for a problem
where the initial conditions $X_{x},t$ are replaced with $X_{X_{x}t}(t+\epsilon),t+\epsilon,$
conditioning with respect to $\mathcal{B}_{t}^{t+\epsilon}.$ The
optimal control is $u_{X_{X_{\cdot}t}(t+\epsilon),t+\epsilon}(s)$ and
the optimal state is $X_{X_{X_{\cdot}t}(t+\epsilon),t+\epsilon}(s)$ defined
by:
\begin{equation}
X_{X_{X_{\cdot}t}(t+\epsilon),t+\epsilon}(s)=X_{X_{\cdot}t}(t+\epsilon)+\int_{t+\epsilon}^{s}u_{X_{X_{\cdot}t}(t+\epsilon),t+\epsilon}(\tau)d\tau+\sigma(w(s)-w(t+\epsilon))+\beta(b(s)-b(t+\epsilon)).\label{eq:Ap3}
\end{equation}
Also $X_{\cdot}\otimes m$ must be replaced with $(X_{\cdot, t}(t+\epsilon)\otimes(X_{\cdot}\otimes m))^{\mathcal{B}_{t}^{t+\epsilon}}.$

The condition is then the following: 
\begin{align}
    &\int_{t+\epsilon}^{T}\mathbb{E}\Bigg(\int_{\mathbb{R}^{n}}\left[l_{v}(X_{\eta,t+\epsilon}(s),u_{\eta,t+\epsilon}(s))+\int_{s}^{T}\Bigg(l_{x}(X_{\eta,t+\epsilon}(\tau),u_{\eta,t+\epsilon}(\tau))\right.\nonumber \\
    &\qquad\qquad +D\dfrac{dF}{d\nu} \left(\left(X_{\cdot ,t+\epsilon}(\tau)\otimes(X_{\cdot, t}(t+\epsilon)\otimes(X_{\cdot}\otimes m))^{\mathcal{B}_{t}^{t+\epsilon}}\right)^{\mathcal{B}_{t+\epsilon}^{\tau}}\right)(X_{\eta,t+\epsilon}(\tau))\Bigg)d\tau+h_{x}(X_{\eta,t+\epsilon}(T))\nonumber\\
    &\qquad\qquad +\left.D\dfrac{dF_{T}}{d\nu}\left(\left(X_{\cdot ,t+\epsilon}(T)\otimes(X_{\cdot, t}(t+\epsilon)\otimes(X_{\cdot}\otimes m))^{\mathcal{B}_{t}^{t+\epsilon}}\right)^{\mathcal{B}_{t+\epsilon}^{T}}\right)(X_{\eta,t+\epsilon}(T))\right]\nonumber\\
    &\qquad\qquad \cdot\widetilde{v}_{\eta,t+\epsilon}(s)d \left((X_{\cdot, t}(t+\epsilon)\otimes(X_{\cdot}\otimes m))^{\mathcal{B}_{t}^{t+\epsilon}}\right)(\eta)\Bigg)ds=0.  \label{eq:Ap4}
\end{align}
% \begin{equation}
% \int_{t+\epsilon}^{T}\mathbb{E}\int_{\mathbb{R}^{n}}\left[l_{v}(X_{\eta,t+\epsilon}(s),u_{\eta,t+\epsilon}(s))+\int_{s}^{T}(l_{x}(X_{\eta,t+\epsilon}(\tau),u_{\eta,t+\epsilon}(\tau))+\right.\label{eq:Ap4}
% \end{equation}

% \[
% +D\dfrac{dF}{d\nu}((X_{\cdot ,t+\epsilon}(\tau)\otimes(X_{\cdot t}(t+\epsilon)\otimes(X_{\cdot}\otimes m))^{\mathcal{B}_{t}^{t+\epsilon}})^{\mathcal{B}_{t+\epsilon}^{\tau}})(X_{\eta,t+\epsilon}(\tau)))d\tau+h_{x}(X_{\eta,t+\epsilon}(T))+
% \]

% \[
% \left.D\dfrac{dF_{T}}{d\nu}((X_{\cdot ,t+\epsilon}(T)\otimes(X_{\cdot t}(t+\epsilon)\otimes(X_{\cdot}\otimes m))^{\mathcal{B}_{t}^{t+\epsilon}})^{\mathcal{B}_{t+\epsilon}^{T}})(X_{\eta,t+\epsilon}(T))\right]\cdot\widetilde{v}_{\eta,t+\epsilon}(s)d((X_{\cdot t}(t+\epsilon)\otimes(X_{\cdot}\otimes m))^{\mathcal{B}_{t}^{t+\epsilon}})(\eta)ds=0
% \]
By the independence of $\mathcal{B}_{t}^{t+\epsilon}$ and $X_{\eta,t+\epsilon}(s),u_{\eta,t+\epsilon}(s)$
this can be written as follows:
\begin{align}
    &\int_{t+\epsilon}^{T}\mathbb{E}\Bigg(\int_{\mathbb{R}^{n}}\Bigg[l_{v}(X_{X_{\eta t}(t+\epsilon),t+\epsilon}(s),u_{X_{\eta t}(t+\epsilon),t+\epsilon}(s))+\int_{s}^{T}\Bigg(l_{x}(X_{X_{\eta t}(t+\epsilon),t+\epsilon}(\tau),u_{X_{\eta t}(t+\epsilon),t+\epsilon}(\tau) )\nonumber  \\
    &\qquad\qquad +D\dfrac{dF}{d\nu}\left(\left(X_{\cdot ,t+\epsilon}(\tau)\otimes(X_{\cdot, t}(t+\epsilon)\otimes(X_{\cdot}\otimes m))^{\mathcal{B}_{t}^{t+\epsilon}}\right)^{\mathcal{B}_{t+\epsilon}^{\tau}}\right)(X_{X_{\eta t}(t+\epsilon),t+\epsilon}(\tau))\Bigg)d\tau+h_{x}(X_{X_{\eta t}(t+\epsilon),t+\epsilon}(T))\nonumber\\
    &\qquad\qquad + D\dfrac{dF_{T}}{d\nu}\left(\left(X_{\cdot ,t+\epsilon}(T)\otimes(X_{\cdot, t}(t+\epsilon)\otimes(X_{\cdot}\otimes m))^{\mathcal{B}_{t}^{t+\epsilon}}\right)^{\mathcal{B}_{t+\epsilon}^{T}}\right)(X_{X_{\eta t}(t+\epsilon),t+\epsilon}(T))\Bigg]\nonumber\\
    &\qquad\qquad \cdot \widetilde{v}_{X_{\eta t}(t+\epsilon),t+\epsilon}(s)d(X_{\cdot}\otimes m)(\eta)\Bigg) ds=0.\label{eq:Ap40}
\end{align}
% \begin{equation}
% \int_{t+\epsilon}^{T}\mathbb{E}\int_{\mathbb{R}^{n}}\left[l_{v}(X_{X_{\eta t}(t+\epsilon),t+\epsilon}(s),u_{X_{\eta t}(t+\epsilon),t+\epsilon}(s))+\int_{s}^{T}(l_{x}(X_{X_{\eta t}(t+\epsilon),t+\epsilon}(\tau),u_{X_{\eta t}(t+\epsilon),t+\epsilon}(\tau))+\right.\label{eq:Ap40}
% \end{equation}

% \[
% +D\dfrac{dF}{d\nu}((X_{\cdot ,t+\epsilon}(\tau)\otimes(X_{\cdot t}(t+\epsilon)\otimes(X_{\cdot}\otimes m))^{\mathcal{B}_{t}^{t+\epsilon}})^{\mathcal{B}_{t+\epsilon}^{\tau}})(X_{X_{\eta t}(t+\epsilon),t+\epsilon}(\tau)))d\tau+h_{x}(X_{X_{\eta t}(t+\epsilon),t+\epsilon}(T))+
% \]

% \[
% \left.D\dfrac{dF_{T}}{d\nu}((X_{\cdot ,t+\epsilon}(T)\otimes(X_{\cdot t}(t+\epsilon)\otimes(X_{\cdot}\otimes m))^{\mathcal{B}_{t}^{t+\epsilon}})^{\mathcal{B}_{t+\epsilon}^{T}})(X_{X_{\eta t}(t+\epsilon),t+\epsilon}(T))\right].\widetilde{v}_{X_{\eta t}(t+\epsilon),t+\epsilon}(s)d(X_{\cdot}\otimes m)(\eta)ds=0
% \]
If we consider the control $u_{X_{\cdot}t}(s)$, for which the corresponding
state is $X_{X_{\cdot}t}(s)$, we first check 
\begin{equation}
\left(X_{\cdot, t+\epsilon}(s)\otimes(X_{\cdot, t}(t+\epsilon)\otimes(X_{\cdot}\otimes m))^{\mathcal{B}_{t}^{t+\epsilon}} \right)^{\mathcal{B}_{t+\epsilon}^{s}}= (X_{\cdot, t}(s)\otimes(X_{\cdot}\otimes m))^{\mathcal{B}_{t}^{s}}, \quad \forall s>t+\epsilon.\label{eq:Ap5}
\end{equation}
Indeed for a continuous bounded test function $\varphi(\xi)$ we have 
\begin{align}
    &\quad \int_{\mathbb{R}^{n}}\varphi(\xi)d \left(X_{\cdot, t+\epsilon}(s)\otimes(X_{\cdot, t}(t+\epsilon)\otimes(X_{\cdot}\otimes m))^{\mathcal{B}_{t}^{t+\epsilon}}\right)^{\mathcal{B}_{t+\epsilon}^{s}}(\xi)\nonumber\\
&=\mathbb{E}^{\mathcal{B}_{t+\epsilon}^{s}} \left(\int_{\mathbb{R}^{n}}\varphi(X_{\eta,t+\epsilon}(s))d(X_{\cdot, t}(t+\epsilon)\otimes(X_{\cdot}\otimes m))^{\mathcal{B}_{t}^{t+\epsilon}}(\eta)\right)\nonumber\\
    &=\mathbb{E}^{\mathcal{B}_{t+\epsilon}^{s}}\left[\mathbb{E}^{1\mathcal{B}_{t}^{^{t+\epsilon}}}\left(\int_{\mathbb{R}^{n}}\varphi(X_{X_{\zeta t}^{1}(t+\epsilon),t+\epsilon}(s))d(X_{\cdot}\otimes m)(\zeta)\right)\right],\nonumber
\end{align}
% \[
% \int_{\mathbb{R}^{n}}\varphi(\xi)d(X_{\cdot t+\epsilon}(s)\otimes(X_{\cdot t}(t+\epsilon)\otimes(X_{\cdot}\otimes m))^{\mathcal{B}_{t}^{t+\epsilon}})^{\mathcal{B}_{t+\epsilon}^{s}}(\xi)=
% \]

% \[
% \mathbb{E}^{\mathcal{B}_{t+\epsilon}^{s}}\int_{\mathbb{R}^{n}}\varphi(X_{\eta,t+\epsilon}(s))d(X_{\cdot t}(t+\epsilon)\otimes(X_{\cdot}\otimes m))^{\mathcal{B}_{t}^{t+\epsilon}}(\eta)=
% \]
% \[
% \mathbb{E}^{\mathcal{B}_{t+\epsilon}^{s}}[\mathbb{E}^{1\mathcal{B}_{t}^{^{t+\epsilon}}}\int_{\mathbb{R}^{n}}\varphi(X_{X_{\zeta t}^{1}(t+\epsilon),t+\epsilon}(s))d(X_{\cdot}\otimes m)(\zeta)]
% \]
where, given $\mathcal{B}_{t}^{t+\epsilon}$, $X_{\zeta t}^{1}(t+\epsilon)$
is an independent copy of $X_{\zeta t}(t+\epsilon)$ and the conditional
expectation $\mathbb{E}^{1\mathcal{B}_{t}^{^{t+\epsilon}}}$ is taken with
respect to this copy. We can then interchange the conditional expectations to simplify it to
\[
\mathbb{E}^{1\mathcal{B}_{t}^{^{t+\epsilon}}} \left[\mathbb{E}^{\mathcal{B}_{t+\epsilon}^{s}}\left(\int_{\mathbb{R}^{n}}\varphi(X_{X_{\zeta t}^{1}(t+\epsilon),t+\epsilon}(s))d(X_{\cdot}\otimes m)(\zeta)\right) \right],
\]
and from the independence of $X_{\xi,t+\epsilon}(s)$ and $X_{\zeta t}(t+\epsilon$)
for fixed $\xi,\zeta$, we finally can write the integral in question as
\[
\mathbb{E}^{\mathcal{B}_{t}^{s}}\left(\int_{\mathbb{R}^{n}}\varphi(X_{\zeta t}(s))d(X_{\cdot}\otimes m)(\zeta)\right),
\]
which proves (\ref{eq:Ap5}). But then (\ref{eq:Ap40}) becomes: 
\begin{align}
    &\int_{t+\epsilon}^{T}\mathbb{E}\Bigg(\int_{\mathbb{R}^{n}}\Bigg[l_{v}(X_{\eta t}(s),u_{\eta t}(s))+\int_{s}^{T} \Bigg(l_{x}(X_{\eta t}(\tau),u_{\eta t}(\tau)) + D\dfrac{dF}{d\nu} \left((X_{\cdot, t}(\tau)\otimes(X_{\cdot}\otimes m))^{\mathcal{B}_{t}^{\tau}} \right)(X_{\eta t}(\tau))\Bigg) d\tau\nonumber\\
    &\qquad\qquad +h_{x}(X_{\eta t}(T))+D\dfrac{dF_{T}}{d\nu}\left((X_{\cdot, t}(T)\otimes(X_{\cdot}\otimes m))^{\mathcal{B}_{t}^{T}} \right)(X_{\eta t}(T))\Bigg]\cdot\widetilde{v}_{X_{\eta t}(t+\epsilon),t+\epsilon}(s)d(X_{\cdot}\otimes m)(\eta)\Bigg)ds=0,\label{eq:Ap41}
\end{align}
% \begin{equation}
% \int_{t+\epsilon}^{T}\mathbb{E}\int_{\mathbb{R}^{n}}\left[l_{v}(X_{\eta t}(s),u_{\eta t}(s))+\int_{s}^{T}(l_{x}(X_{\eta t}(\tau),u_{\eta t}(\tau))+\right.\label{eq:Ap41}
% \end{equation}

% \[
% +D\dfrac{dF}{d\nu}((X_{\cdot t}(\tau)\otimes(X_{\cdot}\otimes m))^{\mathcal{B}_{t}^{\tau}})(X_{\eta t}(\tau)))d\tau+h_{x}(X_{\eta t}(T))+
% \]

% \[
% \left.D\dfrac{dF_{T}}{d\nu}((X_{\cdot t}(T)\otimes(X_{\cdot}\otimes m))^{\mathcal{B}_{t}^{T}})(X_{\eta t}(T))\right].\widetilde{v}_{X_{\eta t}(t+\epsilon),t+\epsilon}(s)d(X_{\cdot}\otimes m)(\eta)ds=0
% \]
and this is true, by taking in (\ref{eq:3-603}):
\begin{numcases}
    {\widetilde{v}_{\eta t}(s)=}
    0, & if $t<s<t+\epsilon$;\nonumber\\
    \widetilde{v}_{X_{\eta t}(t+\epsilon),t+\epsilon}(s), & if $s>t+\epsilon$.\nonumber
\end{numcases}
% \[
% \widetilde{v}_{\eta t}(s)=\left|\begin{array}{c}
% 0\ \text{if }\ t<s<t+\epsilon\\
% \widetilde{v}_{X_{\eta t}(t+\epsilon),t+\epsilon}(s)\ \text{if}\ s>t+\epsilon
% \end{array}\right.
% \]
This completes the proof.

\section{PROOFS FROM SECTION \ref{sec:properties of value}} \label{appendix B}

\subsection{PROOF OF PROPOSITION \ref{prop4-1}}

To simplify notation, we omit the indices $X_{\cdot}t$ in $X_{X_{\cdot}t}(s),Z_{X_{\cdot}t}(s),u_{X_{\cdot} t}(s).$
We have 
\begin{align}
    &\quad \dfrac{d}{ds} \langle Z(s),X(s)-\sigma(w(s)-w(t))-\beta(b(s)-b(t))\rangle \nonumber\\
    &=\langle Z(s),u(s)\rangle -\bigg\langle l_{x}(X(s),u(s))+D_{X}\mathbb{E}\left(F((X_{X_{\cdot}t}(s)\otimes m)^{\mathcal{B}_{t}^{s}})\right),X(s)-\sigma(w(s)-w(t))-\beta(b(s)-b(t))\bigg\rangle .\nonumber
\end{align}
% \[
% \dfrac{d}{ds}((Z(s),X(s)-\sigma(w(s)-w(t))-\beta(b(s)-b(t))))=((Z(s),u(s)))
% \]

% \[
% -((l_{x}(X(s),u(s))+D_{X}\mathbb{E}F((X_{X_{\cdot}t}(s)\otimes m)^{\mathcal{B}_{t}^{s}}),X(s)-\sigma(w(s)-w(t))-\beta(b(s)-b(t))))
% \]

Integrating between $t$ and $T,$ we obtain 
\begin{align}
    \langle X,Z(t)\rangle &=\int_{t}^{T} \langle l_{v}(X(s),u(s)),u(s)\rangle ds \nonumber\\
    &\quad \, +\int_{t}^{T} \left\langle l_{x}(X(s),u(s))+D_{X}\mathbb{E}\left(F\left((X_{X_{\cdot}t}(s)\otimes m)^{\mathcal{B}_{t}^{s}} \right)\right),X(s)-\sigma(w(s)-w(t))-\beta(b(s)-b(t))\right\rangle ds\nonumber\\
    &\quad \, +\left\langle h_{x}(X(T))+D_{X}\mathbb{E}\left(F\left((X_{X_{\cdot}t}(T)\otimes m)^{\mathcal{B}_{t}^{T}}\right)\right),X(T)-\sigma(w(T)-w(t))-\beta(b(T)-b(t) \right\rangle. \label{eq:ApB2}
\end{align}
% \begin{equation}
% ((X,Z(t)))=\int_{t}^{T}((l_{v}(X(s),u(s)),u(s)))ds+\int_{t}^{T}((l_{x}(X(s),u(s))+D_{X}\mathbb{E}F((X_{X_{\cdot}t}(s)\otimes m)^{\mathcal{B}_{t}^{s}}),X(s)-\sigma(w(s)-w(t))-\beta(b(s)-b(t))))ds+\label{eq:ApB2}
% \end{equation}

% \[
% +((h_{x}(X(T))+D_{X}\mathbb{E}F((X_{X_{\cdot}t}(T)\otimes m)^{\mathcal{B}_{t}^{T}}),X(T)-\sigma(w(T)-w(t))-\beta(b(T)-b(t)))
% \]
We also have 
\begin{equation} \label{eq:ApB3}
\begin{aligned}
    \langle X,Z(t)\rangle =& \left\langle X,\int_{t}^{T}\bigg(l_{x}(X(s),u(s))+D_{X}\mathbb{E}\left(F\left((X_{X_{\cdot}t}(s)\otimes m)^{\mathcal{B}_{t}^{s}}\right) \right)\bigg)ds\right\rangle \\
    &+\left\langle X,h_{x}(X(T))+D_{X}\mathbb{E}\left(F\left((X_{X_{\cdot}t}(T)\otimes m)^{\mathcal{B}_{t}^{T}}\right)\right)\right\rangle.
\end{aligned}
\end{equation}

% \begin{equation}
% ((X,Z(t)))=((X,\int_{t}^{T}(l_{x}(X(s),u(s))+D_{X}\mathbb{E}F((X_{X_{\cdot}t}(s)\otimes m)^{\mathcal{B}_{t}^{s}}))ds\ ))+\label{eq:ApB3}
% \end{equation}

% \[
% +((X,h_{x}(X(T))+\ D_{X}\mathbb{E}F((X_{X_{\cdot}t}(T)\otimes m)^{\mathcal{B}_{t}^{T}})))
% \]
Therefore, 
\begin{align}
    0=&\int_{t}^{T} \langle l_{v}(X(s),u(s)),u(s) \rangle ds \nonumber\\
    &+\int_{t}^{T} \left\langle l_{x}(X(s),u(s))+D_{X}\mathbb{E}\left(F\left((X_{X_{\cdot}t}(s)\otimes m)^{\mathcal{B}_{t}^{s}}\right)\right),X(s)-X-\sigma(w(s)-w(t))-\beta(b(s)-b(t))\right\rangle ds\nonumber\\
    &+\left\langle h_{x}(X(T))+\ D_{X}\mathbb{E}\left(F_{T}\left((X_{X_{\cdot}t}(T)\otimes m)^{\mathcal{B}_{t}^{T}}\right)\right),X(T)-X-\sigma(w(T)-w(t))-\beta(b(T)-b(t))\right\rangle \,. \label{eq:ApB4}
\end{align}
% \begin{equation}
% 0=\int_{t}^{T}((l_{v}(X(s),u(s)),u(s)))ds+\label{eq:ApB4}
% \end{equation}

% \[
% +\int_{t}^{T}((l_{x}(X(s),u(s))+D_{X}\mathbb{E}F((X_{X_{\cdot}t}(s)\otimes m)^{\mathcal{B}_{t}^{s}}),X(s)-X-\sigma(w(s)-w(t))-\beta(b(s)-b(t))))ds
% \]

% \[
% +((h_{x}(X(T))+\ D_{X}\mathbb{E}F_{T}((X_{X_{\cdot}t}(T)\otimes m)^{\mathcal{B}_{t}^{T}}),X(T)-X-\sigma(w(T)-w(t))-\beta(b(T)-b(t))))
% \]
From the assumptions (\ref{eq:4-1000}), (\ref{eq:4-1001}), (\ref{eq:4-1003}), (\ref{eq:4-1004}), (\ref{eq:3-2}), (\ref{eq:3-3}), (\ref{eq:3-4}), (\ref{eq:3-51}), (\ref{eq:3-52}) we obtain, after
easy majorations 
\begin{equation}
0\geq \left(\lambda-(c'_{l}+c')\dfrac{T^{2}}{2}-(c'_{h}+c'_{T})T\right)\int_{t}^{T}||u(s)||^{2}ds-C_{T}(1+||X_{\cdot}||)\left(\int_{t}^{T}||u(s)||^{2}ds\right)^{\frac{1}{2}}. \label{eq:ApB5}
\end{equation}
Thanks to (\ref{eq:3-6}), we obtain 
\begin{equation}
\int_{t}^{T}||u(s)||^{2}ds\leq C_{T}(1+||X_{\cdot}||^{2}), \label{eq:ApB6}
\end{equation}
therefore
\begin{equation}
\sup_{s\in(t,T)}||X(s)||\leq C_{T}(1+||X||). \label{eq:ApB8}
\end{equation}
From (\ref{eq:3-600}) we can check that 
\begin{align}
    ||Z(s)||^{2} &=2\int_{s}^{T} \left\langle Z(\tau),l_{x}(X(\tau),u(\tau))+D_{X}\mathbb{E}\left(F\left((X_{X_{\cdot}t}(\tau)\otimes m)^{\mathcal{B}_{t}^{\tau}} \right)\right)\  \right\rangle d\tau \nonumber\\
    &\quad \, +\left\lVert h_{x}(X(T))+D_{X}\mathbb{E}\left(F_{T}\left((X_{X_{\cdot}t}(\tau)\otimes m)^{\mathcal{B}_{t}^{\tau}}\right)\right)\right\rVert^{2}-\sum_{j=1}^{n}\int_{s}^{T}||r^{j}(\tau)||^{2}d\tau-\sum_{j=1}^{n}\int_{s}^{T}||\rho^{j}(\tau)||^{2}d\tau,\label{eq:ApB7}
\end{align}
% \begin{equation}
% ||Z(s)||^{2}=2\int_{s}^{T}((Z(\tau),l_{x}(X(\tau),u(\tau))+D_{X}\mathbb{E}F((X_{X_{\cdot}t}(\tau)\otimes m)^{\mathcal{B}_{t}^{\tau}})\ ))d\tau+\label{eq:ApB7}
% \end{equation}

% \[
% +||h_{x}(X(T))+D_{X}\mathbb{E}F_{T}((X_{X_{\cdot}t}(\tau)\otimes m)^{\mathcal{B}_{t}^{\tau}})||^{2}-\sum_{j=1}^{n}\int_{s}^{T}||r^{j}(\tau)||^{2}d\tau-\sum_{j=1}^{n}\int_{s}^{T}||\rho^{j}(\tau)||^{2}d\tau
% \]
from which it follows immediately that
\begin{equation} \label{eq:ApB70}
\left\{
\begin{aligned}
    &\sup_{s\in(t,T)}||Z(s)||\leq C_{T}(1+||X||),\\
    &\sum_{j=1}^{n}\int_{s}^{T}||r^{j}(\tau)||^{2}d\tau+\sum_{j=1}^{n}\int_{s}^{T}||\rho^{j}(\tau)||^{2}d\tau\leq C_{T}(1+||X||^{2}).
\end{aligned}\right.
\end{equation}
From (\ref{eq:3-7}) and the assumptions (\ref{eq:4-1000}), (\ref{eq:4-1003})
it follows also that
\begin{equation}
\sup_{s\in(t,T)}||u(s)||\leq C_{T}(1+||X||), \label{eq:Ap71}
\end{equation}
which completes the proof.

\subsection{PROOF OF PROPOSITION \ref{prop4-2} }

Let $X_{\cdot}^{1},X_{\cdot}^{2}\in\mathcal{H}_{m}$, independent of $\mathcal{W}_{t}.$
We consider the systems of necessary and sufficient conditions \eqref{eq:3-7} corresponding to the initial
conditions $X_{\cdot}=X_{\cdot}^{1}$ and $X_{\cdot} = X_{\cdot}^{2}$, respectively. To simplify
notation, we denote by $u^{1},X^{1},Z^{1},r^{j,1},\rho^{j,1}$ and
$u^{2},X^{2},Z^{2},r^{j,2},\rho^{j,2}$ the related processes $u,X,Z,r^{j},\rho^{j}.$

According to Remark \ref{rem4-1}, we can use a common space of controls
for the two problems with initial conditions $X_{\cdot}^{1}$ or $X_{\cdot}^{2},$
namely $L_{\mathcal{F}_{X^{1}X^{2}t}}^{2}(t,T;\mathcal{H}_{m}).$
Therefore, we may consider using the control $u^{2}(s)$ with the initial condition $X^{1}.$
This is suboptimal, and the corresponding trajectory is $X^{2}(s)+X_{\cdot}^{1}-X_{\cdot}^{2}$. The suboptimality allows to write the inequality: 
\begin{align}
    &\quad V(X_{\cdot}^{1}\otimes m,t)-V(X_{\cdot}^{2}\otimes m,t) \nonumber\\
    &\leq\int_{t}^{T} \Bigg[\mathbb{E}\left(\int_{\mathbb{R}^{n}}\left(l(X^{2}(s)+X_{\cdot}^{1}-X_{\cdot}^{2},u^{2}(s))-l(X^{2}(s),u^{2}(s)) \right)dm(x)\right) \notag\\
    &\qquad\qquad+\mathbb{E}\left(F\left(((X_{X_{\cdot}^{2}t}(s)+X_{\cdot}^{1}-X_{\cdot}^{2})\otimes m)^{\mathcal{B}_{t}^{s}}\right)\right)-\mathbb{E}\left(F \left((X_{X_{\cdot}^{2}t}(s)\otimes m)^{\mathcal{B}_{t}^{s}}\right)\right)\Bigg]ds\nonumber\\
    &\qquad\quad+\mathbb{E}\left(\int_{\mathbb{R}^{n}} \left(h(X^{2}(T)+X_{\cdot}^{1}-X_{\cdot}^{2})-h(X^{2}(T))\right)dm(x)\right)\nonumber\\
    &\qquad\quad+\mathbb{E}\left(F_{T} \left(((X_{X_{\cdot}^{2}t}(T)+X_{\cdot}^{1}-X_{\cdot}^{2})\otimes m)^{\mathcal{B}_{t}^{T}}\right)\right)-\mathbb{E}\left(F\left((X_{X_{\cdot}^{2}t}(T)\otimes m)^{\mathcal{B}_{t}^{T}}\right)\right)\nonumber\\
    &=\int_{t}^{T}\left\langle l_{x}(X^{2}(s),u^{2}(s))+D_{X}\mathbb{E}\left(F((X_{X_{\cdot}^{2}t}(s)\otimes m)^{\mathcal{B}_{t}^{s}})\right),X_{\cdot}^{1}-X_{\cdot}^{2}\right\rangle ds\nonumber\\
    &\quad \,+\int_{0}^{1}\int_{0}^{1}\int_{t}^{T}\lambda \Bigg[\left\langle l_{xx}(X^{2}(s)+\lambda\mu(X_{\cdot}^{1}-X_{\cdot}^{2}),u^{2}(s))(X_{\cdot}^{1}-X_{\cdot}^{2}),X_{\cdot}^{1}-X_{\cdot}^{2}\right\rangle \nonumber\\
    &\qquad\qquad\qquad\qquad +\left\langle D_{X}^{2}\mathbb{E}\left(F\left(((X_{X_{\cdot}^{2}t}(s)+\lambda\mu(X_{\cdot}^{1}-X_{\cdot}^{2}))\otimes m)^{\mathcal{B}_{t}^{s}}\right)\right)(X_{\cdot}^{1}-X_{\cdot}^{2}),X_{\cdot}^{1}-X_{\cdot}^{2}\right\rangle \Bigg]dsd\lambda d\mu\nonumber\\
    &\quad+\left\langle h_{x}(X^{2}(T))+D_{X}\mathbb{E}\left(F_{T}\left((X_{X_{\cdot}^{2}t}(T)\otimes m)^{\mathcal{B}_{t}^{T}}\right)\right),X_{\cdot}^{1}-X_{\cdot}^{2}\right\rangle \nonumber\\
    &\quad+\int_{0}^{1}\int_{0}^{1}\lambda\Bigg[\left\langle h_{xx}(X^{2}(T)+\lambda\mu(X_{\cdot}^{1}-X_{\cdot}^{2}))(X_{\cdot}^{1}-X_{\cdot}^{2}),X_{\cdot}^{1}-X_{\cdot}^{2}\right\rangle \nonumber\\
    &\qquad\qquad\qquad \, +\left\langle D_{X}^{2}\mathbb{E}\left(F_{T}\left(((X_{X_{\cdot}^{2}t}(T)+\lambda\mu(X_{\cdot}^{1}-X_{\cdot}^{2}))\otimes m)^{\mathcal{B}_{t}^{T}}\right)\right)(X_{\cdot}^{1}-X_{\cdot}^{2}),X_{\cdot}^{1}-X_{\cdot}^{2}\right\rangle \Bigg]d\lambda d\mu,\label{eq:ApB14}
\end{align}

and from the assumptions (\ref{eq:4-1000}), (\ref{eq:4-1001}), (\ref{eq:3-4}), (\ref{eq:3-50}) we have
\begin{align} \label{eq:ApB15}
    V(X_{\cdot}^{1}\otimes m,t)-V(X_{\cdot}^{2}\otimes m,t)\leq \left\langle Z^{2}(t),X_{\cdot}^{1}-X_{\cdot}^{2}\right\rangle + \dfrac{1}{2}(c_{h}+c_{T}+(c_{l}+c)T) \left\lVert X_{\cdot}^{1}-X_{\cdot}^{2}\right\rVert^{2}.
\end{align}
% \begin{equation}
% V(X_{\cdot}^{1}\otimes m,t)-V(X_{\cdot}^{2}\otimes m,t)\leq((Z^{2}(t),X_{\cdot}^{1}-X_{\cdot}^{2}))+\label{eq:ApB15}
% \end{equation}

% \[
% +\dfrac{1}{2}(c_{h}+c_{T}+(c_{l}+c)T)||X_{\cdot}^{1}-X_{\cdot}^{2}||^{2}
% \]
By changing the role of $X_{\cdot}^{1}$ and $X_{\cdot}^{2}$, we get 
\begin{equation}
V(X^{1}\otimes m,t)-V(X^{2}\otimes m,t)\geq \left\langle Z^{2}(t),X^{1}-X^{2}\right\rangle -\dfrac{1}{2}(c_{h}+c_{T}+(c_{l}+c)T) \left\lVert X^{1}-X^{2}\right\rVert ^{2}+ \left\langle Z^{1}(t)-Z^{2}(t),X_{\cdot}^{1}-X_{\cdot}^{2}\right\rangle.\label{eq:ApB16}
\end{equation}
Next, 
\begin{equation*} \hspace{-1.5cm}
\begin{aligned}
    &\quad \dfrac{d}{ds} \left\langle X^{1}(s)-X^{2}(s),Z^{1}(s)-Z^{2}(s)\right\rangle \\
    &= \left\langle u^{1}(s)-u^{2}(s),l_{v}(X^{1}(s),u^{1}(s))-l_{v}(X^{2}(s),u^{2}(s))\right\rangle \\
    &\quad \, -\bigg\langle X^{1}(s)-X^{2}(s),l_{x}(X^{1}(s),u^{1}(s))-l_{x}(X^{2}(s),u^{2}(s))\bigg\rangle\\
    &\quad \, + \bigg\langle D_{X}\mathbb{E}\left(F\left((X_{X_{\cdot}^{1}t}(s)\otimes m)^{\mathcal{B}_{t}^{s}}\right)\right)-D_{X}\mathbb{E}\left(F\left((X_{X_{\cdot}^{2}t}(s)\otimes m)^{\mathcal{B}_{t}^{s}}\right)\right), X^1(s) - X^2(s)\bigg\rangle .
\end{aligned}
\end{equation*}

% \[
% \dfrac{d}{ds}((X^{1}(s)-X^{2}(s),Z^{1}(s)-Z^{2}(s)))=((u^{1}(s)-u^{2}(s),l_{v}(X^{1}(s),u^{1}(s))-l_{v}(X^{2}(s),u^{2}(s))))-
% \]

% \[
% -((X^{1}(s)-X^{2}(s),l_{x}(X^{1}(s),u^{1}(s))-l_{x}(X^{2}(s),u^{2}(s))+D_{X}\mathbb{E}F((X_{X_{\cdot}^{1}t}(s)\otimes m)^{\mathcal{B}_{t}^{s}})-D_{X}\mathbb{E}F((X_{X_{\cdot \cdot}^{2}t}(s)\otimes m)^{\mathcal{B}_{t}^{s}})\ ))
% \]
Integrating between $t$ and $T$ and using (\ref{eq:4-1003}), (\ref{eq:4-1004}), (\ref{eq:3-51}), (\ref{eq:3-52})
it follows that
\begin{equation} \label{eq:ApB17}
\begin{aligned}
\left\langle Z^{1}(t)-Z^{2}(t),X^{1}-X^{2} \right\rangle &\geq\lambda\int_{t}^{T} \left\lVert u^{1}(s)-u^{2}(s) \right\rVert ^{2}ds-(c'_{l}+c')\int_{t}^{T} \left\lVert X^{1}(s)-X^{2}(s)\right\rVert^{2}ds  \\
&\quad \, -(c'_{h}+c'_{T}) \left\lVert X^{1}(T)-X^{2}(T) \right\rVert^{2}.
\end{aligned}  
\end{equation}
But 
\[
X^{1}(s)-X^{2}(s)=X^{1}-X^{2}+\int_{t}^{s}(u^{1}(\tau)-u^{2}(\tau))d\tau,
\]
from which we get 
\begin{align*}
    \int_{t}^{T} \left\lVert X^{1}(s)-X^{2}(s)\right\rVert^{2}ds&\leq(1+\epsilon)\dfrac{T^{2}}{2}\int_{t}^{T} \left\lVert u^{1}(s)-u^{2}(s)\right\rVert^{2}ds+T\left(1+\dfrac{1}{\epsilon}\right)\left\lVert X^{1}-X^{2}\right\rVert^{2}, \\
    \left\lVert X^{1}(T)-X^{2}(T)\right\rVert^{2}&\leq(1+\epsilon)T\int_{t}^{T}\left\lVert u^{1}(s)-u^{2}(s)\right\rVert^{2}ds+\left(1+\dfrac{1}{\epsilon}\right)\left\lVert X^{1}-X^{2}\right\rVert^{2},
\end{align*}
so we obtain the estimate:
\begin{equation} \label{eq:ApB18}
\begin{aligned}
    \left\langle Z^{1}(t)-Z^{2}(t),X_{\cdot}^{1}-X_{\cdot}^{2}\right\rangle &\geq \lambda-T(1+\epsilon)\left((c'_{l}+c')\dfrac{T}{2}+(c'_{h}+c'_{T})\right)\int_{t}^{T} \left\lVert u^{1}(s)-u^{2}(s)\right\rVert^{2}ds\\
    &\quad -\left(1+\dfrac{1}{\epsilon}\right)((c'_{l}+c')T+c'_{h}+c'_{T})\left\lVert X^{1}-X^{2}\right\rVert^{2} \,.
\end{aligned}   
\end{equation}

% \begin{equation}
% ((Z^{1}(t)-Z^{2}(t),X_{\cdot}^{1}-X_{\cdot}^{2}))\geq(\lambda-T(1+\epsilon)((c'_{l}+c')\dfrac{T}{2}+(c'_{h}+c'_{T}))\int_{t}^{T}||u^{1}(s)-u^{2}(s)||^{2}ds\label{eq:ApB18}
% \end{equation}

% \[
% -(1+\dfrac{1}{\epsilon})((c'_{l}+c')T+c'_{h}+c'_{T})||X^{1}-X^{2}||^{2}
% \]
From the assumption (\ref{eq:3-6}) and choosing $\epsilon$ sufficiently
small, the first term in the right-hand side is positive. Combined
with (\ref{eq:ApB16}) and taking account of (\ref{eq:ApB15}) it
follows that 
\begin{equation}
\left|V(X^{1}\otimes m,t)-V(X^{2}\otimes m,t)-\left\langle Z^{2}(t),X^{1}-X^{2}\right\rangle \right|\leq C_{T}\left\lVert X^{1}-X^{2}\right\rVert^{2}.\label{eq:ApB19}
\end{equation}
This proves immediately the result (\ref{eq:4-3}). We have also proved
the following estimate:
\begin{equation}
\alpha\int_{t}^{T}\left\lVert u^{1}(s)-u^{2}(s)\right\rVert^{2}ds\leq \left\langle Z^{1}(t)-Z^{2}(t),X_{\cdot}^{1}-X_{\cdot}^{2}\right\rangle +\beta \left\lVert X_{\cdot}^{1}-X_{\cdot}^{2}\right\rVert^{2},\label{eq:ApB200}
\end{equation}
for convenient constants $\alpha,\beta.$ Next, we also have
\begin{align}
    &\quad \, \left\langle Z^{1}(t)-Z^{2}(t),X_{\cdot}^{1}-X_{\cdot}^{2}\right\rangle \nonumber\\
    &=\Bigg\langle \int_{t}^{T} \left[l_{x}(X^{1}(s),u^{1}(s))-l_{x}(X^{2}(s),u^{2}(s))+D_{X}\mathbb{E}\left(F\left((X_{X_{\cdot}^{1}t}(s)\otimes m)^{\mathcal{B}_{t}^{s}}\right)\right)-D_{X}\mathbb{E}\left(F\left((X_{X_{\cdot}^{2}t}(s)\otimes m)^{\mathcal{B}_{t}^{s}}\right)\right)\right]ds\nonumber\\
    &\quad +h_{x}(X^{1}(T))-h_{x}(X^{2}(T))+D_{X}\mathbb{E}\left(F_{T}\left((X_{X_{\cdot}^{1}t}(T)\otimes m)^{\mathcal{B}_{t}^{T}}\right)\right)-D_{X}\mathbb{E}\left(F_{T}\left((X_{X_{\cdot}^{2}t}(T)\otimes m)^{\mathcal{B}_{t}^{T}}\right)\right),X_{\cdot}^{1}-X_{\cdot}^{2}\Bigg\rangle \nonumber\\
    &\leq\left\lVert X_{\cdot}^{1}-X_{\cdot}^{2}\right\rVert \Bigg[(c_{l}+c)\int_{t}^{T} \left\lVert X^{1}(s)-X^{2}(s)\right\rVert ds+(c_{h}+c_{T}) \left\lVert X^{1}(T)-X^{2}(T)\right\rVert \Bigg],\label{eq:ApB201}
\end{align}

% \[
% ((Z^{1}(t)-Z^{2}(t),X_{\cdot}^{1}-X^{2}))=((\int_{t}^{T}[l_{x}(X^{1}(s),u^{1}(s))-l_{x}(X^{2}(s),u^{2}(s))+D_{X}\mathbb{E}F((X_{X_{\cdot}^{1}t}(s)\otimes m)^{\mathcal{B}_{t}^{s}})-D_{X}\mathbb{E}F((X_{X_{\cdot \cdot}^{2}t}(s)\otimes m)^{\mathcal{B}_{t}^{s}})]ds+
% \]
% \[
% +h_{x}(X^{1}(T))-h_{x}(X^{2}(T))+D_{X}\mathbb{E}F_{T}((X_{X_{\cdot}^{1}t}(T)\otimes m)^{\mathcal{B}_{t}^{T}})-D_{X}\mathbb{E}F_{T}((X_{X_{\cdot \cdot}^{2}t}(T)\otimes m)^{\mathcal{B}_{t}^{T}}),X^{1}-X_{\cdot}^{2}))
% \]
% \begin{equation}
% \leq||X_{\cdot}^{1}-X_{\cdot}^{2}||[(c_{l}+c)\int_{t}^{T}||X^{1}(s)-X^{2}(s)||ds+(c_{h}+c_{T})||X^{1}(T)-X^{2}(T)||\label{eq:ApB201}
% \end{equation}
 and 
\begin{align*}
    \int_{t}^{T} \left\lVert X^{1}(s)-X^{2}(s)\right\rVert ds&\leq \left\lVert X^{1}-X^{2}\right\rVert +\dfrac{2}{3}\sqrt{T^{3}}\sqrt{\int_{t}^{T} \left\lVert u^{1}(s)-u^{2}(s)\right\rVert^{2}ds}, \\
    \left\lVert X^{1}(T)-X^{2}(T)\right\rVert&\leq \left\lVert X^{1}-X^{2}\right\rVert +\sqrt{T}\sqrt{\int_{t}^{T} \left\lVert u^{1}(s)-u^{2}(s)\right\rVert^{2}ds}.
\end{align*}
Using these estimates in (\ref{eq:ApB201}), combined with (\ref{eq:ApB200}),
we obtain 
\begin{equation}
\int_{t}^{T} \left\lVert u^{1}(s)-u^{2}(s)\right\rVert ^{2}ds\leq C_{T} \left\lVert X_{\cdot}^{1}-X_{\cdot}^{2} \right\rVert^{2},\label{eq:ApB21}
\end{equation}
which implies 
\begin{equation} \label{eq:ApB202}
\begin{aligned}
    \left\lVert X^{1}(s)-X^{2}(s)\right\rVert &\leq C_{T} \left\lVert X_{\cdot}^{1}-X_{\cdot}^{2}\right\lVert ,\\
    \left\lVert Z^{1}(s)-Z^{2}(s)\right\rVert &\leq C_{T} \left\lVert X_{\cdot}^{1}-X_{\cdot}^{2} \right\rVert , \\
    \left\lVert u^{1}(s)-u^{2}(s)\right\rVert &\leq C_{T} \left\lVert X_{\cdot}^{1}-X_{\cdot}^{2} \right\rVert , \\
    \sum_{j=1}^{n}\int_{t}^{T} \left\lVert r_{X^{1}t}^{j}(s)-r_{X^{2}t}^{j}(s)\right\rVert ^{2}ds+\sum_{j=1}^{n}\int_{t}^{T} \left\lVert \rho_{X^{1}t}^{j}(s)-\rho_{X^{2}t}^{j}(s)\right\rVert^{2}ds &\leq C_{T} \left\lVert X_{\cdot}^{1}-X_{\cdot}^{2} \right\rVert^{2},
\end{aligned}
\end{equation}
and thus (\ref{eq:4-4}) has been proven, which completes the proof. 

\subsection{PROOF OF PROPOSITION \ref{prop4-5}}

We begin with (\ref{eq:4-26}). We have, from the optimality principle
(\ref{eq:3-16}),
\begin{equation} \label{eq:ApB22}
\begin{aligned}
    V(X\otimes m,t)-V(X\otimes m,t+\epsilon)=&\int_{t}^{t+\epsilon}\left[\mathbb{E}\left(\int_{\mathbb{R}^{n}}l(X_{X_{\cdot}t}(s),u_{X_{\cdot}t}(s))dm(x)\right)+F\left((Y_{X_{\cdot}t}(s)\otimes m)^{\mathcal{B}_{t}^{s}}\right)\right]ds\\
    &+\mathbb{E}\left(V\left((X_{X_{\cdot}t}(t+\epsilon)\otimes m)^{\mathcal{B}_{t}^{t+\epsilon}},t+\epsilon\right)\right)-V(X\otimes m,t+\epsilon).
\end{aligned}
\end{equation}

% \begin{equation}
% V(X\otimes m,t)-V(X\otimes m,t+\epsilon)=\int_{t}^{t+\epsilon}[\mathbb{E}\int_{\mathbb{R}^{n}}l(X_{X_{\cdot}t}(s),u_{X_{\cdot}t}(s)dm(x)+F((Y_{X_{\cdot}t}(s)\otimes m)^{\mathcal{B}_{t}^{s}})]ds\label{eq:ApB22}
% \end{equation}

% \[
% +\mathbb{E}V((X_{X_{\cdot}t}(t+\epsilon)\otimes m)^{\mathcal{B}_{t}^{t+\epsilon}},t+\epsilon)-V(X\otimes m,t+\epsilon)
% \]
Since 
\begin{equation}
\left|\int_{t}^{t+\epsilon}\left[\mathbb{E}\left(\int_{\mathbb{R}^{n}}l(X_{X_{\cdot}t}(s),u_{X_{\cdot}t}(s))dm(x)\right)+F(Y_{X_{\cdot}t}(s)\otimes m)\right]ds\right|\leq C_{T}\epsilon(1+||X_{\cdot}||^{2}), \label{eq:ApB220}
\end{equation}
it suffices to prove 
\begin{equation}
\left|\mathbb{E}\left(V\left((X_{X_{\cdot}t}(t+\epsilon)\otimes m)^{\mathcal{B}_{t}^{t+\epsilon}},t+\epsilon\right)\right)-V(X\otimes m,t+\epsilon)\right|\leq C_{T}\epsilon(1+||X_{\cdot}||^{2}).\label{eq:ApB221}
\end{equation}
But 
\begin{equation}\label{eq:ApB222}
    \begin{aligned}
    &\quad \, \mathbb{E}\left(V\left((X_{X_{\cdot}t}(t+\epsilon)\otimes m)^{\mathcal{B}_{t}^{t+\epsilon}},t+\epsilon\right)\right)-V(X\otimes m,t+\epsilon)\\
    &=\int_{t+\epsilon}^{T}\mathbb{E} \Bigg[\int_{\mathbb{R}^{n}}\bigg(l\left(X_{X_{X_{x}t}(t+\epsilon),t+\epsilon}(s),u_{X_{X_{x}t}(t+\epsilon),t+\epsilon}(s)\right)-l \left(X_{X_{x},t+\epsilon}(s),u_{X_{x},t+\epsilon}(s)\right)\bigg)dm(x)\\
    &\qquad\qquad\,\,\,+F\left((X_{X_{X_{\cdot}t}(t+\epsilon),t+\epsilon}(s)\otimes m)^{\mathcal{B}_{t}^{s}}\right)-F\left((X_{X_{\cdot},t+\epsilon}(s)\otimes m)^{\mathcal{B}_{t}^{s}}\right)\Bigg]ds\\
    &\quad+\mathbb{E}\Bigg[\int_{\mathbb{R}^{n}} \bigg(h(X_{X_{X_{x}t}(t+\epsilon),t+\epsilon}(T))-h(X_{X_{x},t+\epsilon}(T))\bigg)dm(x)\\
    &\qquad\quad\,\,\, +F_{T}\left((X_{X_{X_{\cdot}t}(t+\epsilon),t+\epsilon}(T)\otimes m)^{\mathcal{B}_{t}^{T}}\right)-F_{T}\left((X_{X_{\cdot},t+\epsilon}(T)\otimes m)^{\mathcal{B}_{t}^{T}}\right)\Bigg].
\end{aligned}
\end{equation}
% \begin{equation}
% \mathbb{E}V((X_{X_{\cdot}t}(t+\epsilon)\otimes m)^{\mathcal{B}_{t}^{t+\epsilon}},t+\epsilon)-V(X\otimes m,t+\epsilon)=\label{eq:ApB222}
% \end{equation}

% \[
% \int_{t+\epsilon}^{T}\mathbb{E}[\int_{\mathbb{R}^{n}}(l(X_{X_{X_{x}t}(t+\epsilon),t+\epsilon}(s),u_{X_{X_{x}t}(t+\epsilon),t+\epsilon}(s))-l(X_{X_{x},t+\epsilon}(s),u_{X_{x},t+\epsilon}(s)))dm(x)+
% \]

% \[
% +F((X_{X_{X_{\cdot}t}(t+\epsilon),t+\epsilon}(s)\otimes m)^{\mathcal{B}_{t}^{s}})-F((X_{X_{\cdot},t+\epsilon}(s)\otimes m)^{\mathcal{B}_{t}^{s}})]ds+
% \]

% \[
% +\mathbb{E}[\int_{\mathbb{R}^{n}}(h(X_{X_{X_{x}t}(t+\epsilon),t+\epsilon}(T))-h(X_{X_{x},t+\epsilon}(T)))dm(x)+
% \]

% \[
% +F_{T}((X_{X_{X_{\cdot}t}(t+\epsilon),t+\epsilon}(T)\otimes m)^{\mathcal{B}_{t}^{T}})-F_{T}((X_{X_{\cdot},t+\epsilon}(T)\otimes m)^{\mathcal{B}_{t}^{T}})]
% \]
Note that we have used the fact $(X_{X_{\cdot},t+\epsilon}(s)\otimes m)^{\mathcal{B}_{t}^{s}}=(X_{X_{\cdot},t+\epsilon}(s)\otimes m)^{\mathcal{B}_{t+\epsilon}^{s}}$
and the same with $s=T.$

Using Taylor expansion and making use of assumptions (\ref{eq:4-1000}), (\ref{eq:4-1001}), (\ref{eq:3-4}), (\ref{eq:3-50}),
we can check that 
\begin{align}
    &\quad \Bigg|\mathbb{E}\left(V\left((X_{X_{\cdot}t}(t+\epsilon)\otimes m)^{\mathcal{B}_{t}^{t+\epsilon}},t+\epsilon\right)\right)-V(X\otimes m,t+\epsilon)\nonumber\\
    &\quad -\int_{t+\epsilon}^{T} \left\langle l_{v}(X_{X_{\cdot},t+\epsilon}(s),u_{X_{\cdot},t+\epsilon}(s)),u_{X_{X_{\cdot}t}(t+\epsilon),t+\epsilon}(s)-u_{X_{\cdot},t+\epsilon}(s)\right\rangle ds\nonumber\\
    &\quad -\int_{t+\epsilon}^{T}\left\langle l_{x}(X_{X_{\cdot},t+\epsilon}(s),u_{X_{\cdot},t+\epsilon}(s))+D_{X}\mathbb{E}\left(F\left((X_{X_{\cdot},t+\epsilon}(s)\otimes m)^{\mathcal{B}_{t+\epsilon}^{s}}\right)\right),X_{X_{X_{\cdot}t}(t+\epsilon),t+\epsilon}(s)-X_{X_{\cdot},t+\epsilon}(s)\right\rangle ds\nonumber\\
    &\quad -\left\langle h_{x}(X_{X_{\cdot},t+\epsilon}(T))+D_{X}\mathbb{E}\left(F\left((X_{X_{\cdot},t+\epsilon}(T)\otimes m)^{\mathcal{B}_{t+\epsilon}^{T}}\right)\right),X_{X_{X_{\cdot}t}(t+\epsilon),t+\epsilon}(T)-X_{X_{\cdot},t+\epsilon}(T)\right\rangle \Bigg|\nonumber\\
    &\leq C_{T}\sup_{t<s<T} \left\lVert X_{X_{X_{\cdot}t}(t+\epsilon),t+\epsilon}(s)-X_{X_{\cdot},t+\epsilon}(s)\right\rVert^{2}. \label{eq:ApB223}
\end{align}
% \begin{equation}
% |\mathbb{E}V((X_{X_{\cdot}t}(t+\epsilon)\otimes m)^{\mathcal{B}_{t}^{t+\epsilon}},t+\epsilon)-V(X\otimes m,t+\epsilon)-\int_{t+\epsilon}^{T}((l_{v}(X_{X_{\cdot},t+\epsilon}(s),u_{X_{\cdot},t+\epsilon}(s)),u_{X_{X_{\cdot}t}(t+\epsilon),t+\epsilon}(s)-u_{X_{\cdot}t+\epsilon}(s)))ds\label{eq:ApB223}
% \end{equation}

% \[
% -\int_{t+\epsilon}^{T}((l_{x}(X_{X_{\cdot},t+\epsilon}(s),u_{X_{\cdot},t+\epsilon}(s))+D_{X}\mathbb{E}F((X_{X_{\cdot},t+\epsilon}(s)\otimes m)^{\mathcal{B}_{t+\epsilon}^{s}}),X_{X_{X_{\cdot}t}(t+\epsilon),t+\epsilon}(s)-X_{X_{\cdot},t+\epsilon}(s)))ds-
% \]

% \[
% -((h_{x}(X_{X_{\cdot},t+\epsilon}(T)+D_{X}\mathbb{E}F((X_{X_{\cdot},t+\epsilon}(T)\otimes m)^{\mathcal{B}_{t+\epsilon}^{T}}),X_{X_{X_{\cdot}t}(t+\epsilon),t+\epsilon}(T)-X_{X_{\cdot},t+\epsilon}(T)))|\leq C_{T}\sup_{t<s<T}||X_{X_{X_{\cdot}t}(t+\epsilon),t+\epsilon}(s)-X_{X_{\cdot},t+\epsilon}(s)||^{2}
% \]
Since $X_{X_{X_{\cdot}t}(t+\epsilon),t+\epsilon}(s),X_{X_{\cdot},t+\epsilon}(s)$
are the optimal states corresponding to different initial conditions
$X_{X_{\cdot}t}(t+\epsilon)$ and $X_{\cdot}$ at initial time $t+\epsilon,$
we can use (\ref{eq:ApB202}) to see that 
\begin{equation}
\sup_{t<s<T} \left\lVert X_{X_{X_{\cdot}t}(t+\epsilon),t+\epsilon}(s)-X_{X_{\cdot},t+\epsilon}(s) \right\rVert^{2}\leq C_{T} \left\lVert X_{X_{\cdot}t}(t+\epsilon)-X_{\cdot} \right\rVert^{2}\leq C_{T}\epsilon(1+||X_{\cdot}||^{2}).\label{eq:ApB224}
\end{equation}
Next,
\begin{equation}
X_{X_{X_{\cdot}t}(t+\epsilon),t+\epsilon}(s)-X_{X_{\cdot},t+\epsilon}(s)=X_{X_{\cdot}t}(t+\epsilon)-X_{\cdot}+\int_{t+\epsilon}^{s}\widetilde{u}_{\mathcal{F}_{X_{\cdot}t}^{t+\epsilon}}(\tau)d\tau,\label{eq:ApB225}
\end{equation}
where we have denoted $\widetilde{u}_{\mathcal{F}_{X_{\cdot}t}^{t+\epsilon}}(\tau)=u_{X_{X_{\cdot}t}(t+\epsilon),t+\epsilon}(\tau)-u_{X_{\cdot},t+\epsilon}(\tau).$
From the optimality of $u_{X_{\cdot},t+\epsilon}(s)$ from the problem
with initial condition $X_{\cdot}$ at $t+\epsilon$, we can write, by
(\ref{eq:3-603}).
\begin{equation}\label{eq:ApB226}
    \begin{aligned}
    &\int_{t+\epsilon}^{T}\left\langle l_{v}(X_{X_{\cdot},t+\epsilon}(s),u_{X_{\cdot},t+\epsilon}(s)),\widetilde{u}_{\mathcal{F}_{X_{\cdot}t}^{t+\epsilon}}(s)\right\rangle ds\\
    &\quad+\int_{t+\epsilon}^{T}\left\langle l_{x}(X_{X_{\cdot},t+\epsilon}(s),u_{X_{\cdot},t+\epsilon}(s))+D_{X}\mathbb{E}\left(F\left((X_{X_{\cdot},t+\epsilon}(s)\otimes m)^{\mathcal{B}_{t+\epsilon}^{s}}\right)\right),\int_{t+\epsilon}^{s}\widetilde{u}_{\mathcal{F}_{X_{\cdot}t}^{t+\epsilon}}(\tau)d\tau\right\rangle ds\\
    &\quad+\left\langle h_{x}(X_{X_{\cdot},t+\epsilon}(T)+D_{X}\mathbb{E}\left(F\left((X_{X_{\cdot},t+\epsilon}(T)\otimes m)^{\mathcal{B}_{t+\epsilon}^{T}}\right)\right),\int_{t+\epsilon}^{T}\widetilde{u}_{\mathcal{F}_{X_{\cdot}t}^{t+\epsilon}}(\tau)d\tau\right\rangle =0.
\end{aligned}
\end{equation}
% \begin{equation}
% \int_{t+\epsilon}^{T}((l_{v}(X_{X_{\cdot},t+\epsilon}(s),u_{X_{\cdot},t+\epsilon}(s)),\widetilde{u}_{\mathcal{F}_{X_{\cdot}t}^{t+\epsilon}}(s)))ds+\label{eq:ApB226}
% \end{equation}

% \[
% +\int_{t+\epsilon}^{T}((l_{x}(X_{X_{\cdot},t+\epsilon}(s),u_{X_{\cdot},t+\epsilon}(s))+D_{X}\mathbb{E}F((X_{X_{\cdot},t+\epsilon}(s)\otimes m)^{\mathcal{B}_{t+\epsilon}^{s}}),\int_{t+\epsilon}^{s}\widetilde{u}_{\mathcal{F}_{X_{\cdot}t}^{t+\epsilon}}(\tau)d\tau))ds+
% \]

% \[
% +((h_{x}(X_{X_{\cdot},t+\epsilon}(T)+D_{X}\mathbb{E}F((X_{X_{\cdot},t+\epsilon}(T)\otimes m)^{\mathcal{B}_{t+\epsilon}^{T}}),\int_{t+\epsilon}^{T}\widetilde{u}_{\mathcal{F}_{X_{\cdot}t}^{t+\epsilon}}(\tau)d\tau))=0
% \]
In addition, 
\begin{align}
    &\Bigg\langle \int_{t+\epsilon}^{T} \left(l_{x}(X_{X_{\cdot},t+\epsilon}(s),u_{X_{\cdot},t+\epsilon}(s))+D_{X}\mathbb{E}\left(F\left((X_{X_{\cdot},t+\epsilon}(s)\otimes m)^{\mathcal{B}_{t+\epsilon}^{s}}\right)\right)\right)ds +h_{x}(X_{X_{\cdot},t+\epsilon}(T))\nonumber\\
    &\quad +D_{X}\mathbb{E}\left(F\left((X_{X_{\cdot},t+\epsilon}(T)\otimes m)^{\mathcal{B}_{t+\epsilon}^{T}}\right)\right),\sigma(w(t+\epsilon)-w(t))+\beta(b(t+\epsilon)-b(t))\ \Bigg\rangle =0.\label{eq:ApB227}
\end{align}
% \begin{equation}
% ((\int_{t+\epsilon}^{T}(l_{x}(X_{X_{\cdot},t+\epsilon}(s),u_{X_{\cdot},t+\epsilon}(s))+D_{X}\mathbb{E}F((X_{X_{\cdot},t+\epsilon}(s)\otimes m)^{\mathcal{B}_{t+\epsilon}^{s}}))ds+\label{eq:ApB227}
% \end{equation}

% \[
% +h_{x}(X_{X_{\cdot},t+\epsilon}(T)+D_{X}\mathbb{E}F((X_{X_{\cdot},t+\epsilon}(T)\otimes m)^{\mathcal{B}_{t+\epsilon}^{T}}),\sigma(w(t+\epsilon)-w(t))+\beta(b(t+\epsilon)-b(t))\ ))=0
% \]
Thanks to (\ref{eq:ApB226}), (\ref{eq:ApB227}) the inequality (\ref{eq:ApB223})
reduces to 
\begin{equation} \label{eq:ApB228}
\begin{aligned}
    &\Bigg|\mathbb{E}\left(V\left((X_{X_{\cdot}t}(t+\epsilon)\otimes m)^{\mathcal{B}_{t}^{t+\epsilon}},t+\epsilon\right)\right)-V(X\otimes m,t+\epsilon)\\
    &-\Bigg\langle \int_{t+\epsilon}^{T}\left(l_{x}(X_{X_{\cdot},t+\epsilon}(s),u_{X_{\cdot},t+\epsilon}(s))+D_{X}\mathbb{E}\left(F\left((X_{X_{\cdot},t+\epsilon}(s)\otimes m)^{\mathcal{B}_{t+\epsilon}^{s}}\right)\right)\right)ds\\
    &+h_{x}(X_{X_{\cdot},t+\epsilon}(T))+D_{X}\mathbb{E}\left(F\left((X_{X_{\cdot},t+\epsilon}(T)\otimes m)^{\mathcal{B}_{t+\epsilon}^{T}}\right)\right),\int_{t}^{t+\epsilon}u_{X_{\cdot}t}(\tau)d\tau\ \Bigg\rangle \Bigg|\leq C_{T}\epsilon(1+||X_{\cdot}||^{2}),
\end{aligned}
\end{equation}
% \begin{equation}
% |\mathbb{E}V((X_{X_{\cdot}t}(t+\epsilon)\otimes m)^{\mathcal{B}_{t}^{t+\epsilon}},t+\epsilon)-V(X\otimes m,t+\epsilon)-\label{eq:ApB228}
% \end{equation}

% \[
% -((\int_{t+\epsilon}^{T}(l_{x}(X_{X_{\cdot},t+\epsilon}(s),u_{X_{\cdot},t+\epsilon}(s))+D_{X}\mathbb{E}F((X_{X_{\cdot},t+\epsilon}(s)\otimes m)^{\mathcal{B}_{t+\epsilon}^{s}}))ds+
% \]

% \[
% h_{x}(X_{X_{\cdot},t+\epsilon}(T)+D_{X}\mathbb{E}F((X_{X_{\cdot},t+\epsilon}(T)\otimes m)^{\mathcal{B}_{t+\epsilon}^{T}}),\int_{t}^{t+\epsilon}u_{X_{\cdot}t}(\tau)d\tau\ ))|\leq C_{T}\epsilon(1+||X_{\cdot}||^{2})
% \]
from which (\ref{eq:ApB221}) follows immediately. We have thus proved
(\ref{eq:4-26}). We turn to (\ref{eq:4-27}), which is equivalent to 
\begin{equation}
||Z_{X,t+\epsilon}(t+\epsilon)-Z_{Xt}(t)||\leq C_{T} \left(\epsilon^{\frac{1}{2}}+\epsilon||X||\right).\label{eq:ApB24}
\end{equation}
From the BSDE (\ref{eq:3-7}) we can write
$$Z_{X_{\cdot}t}(t)=\mathbb{E}[Z_{X_{\cdot}t}(t+\epsilon)|\sigma(X)]+\mathbb{E} \left[\int_{t}^{t+\epsilon}\left(l_{x}(X_{X_{\cdot}t}(s),u_{X_{\cdot}t}(s))+D_{X}\mathbb{E}\left(F\left((X_{X_{\cdot}t}(s)\otimes m)^{\mathcal{B}_{t}^{s}}\right)\right)\right)ds\bigg|\sigma(X)\right],$$
and
\begin{equation*} \hspace{-1cm}
\begin{aligned}
    Z_{X_{\cdot},t+\epsilon}(t+\epsilon)-Z_{X_{\cdot}t}(t)&=Z_{X_{\cdot},t+\epsilon}(t+\epsilon)-\mathbb{E}[Z_{X_{\cdot}t}(t+\epsilon)|\sigma(X)]\\
   &\quad \,\, -\mathbb{E}\left[\int_{t}^{t+\epsilon}\left(l_{x}(X_{X_{\cdot}t}(s),u_{X_{\cdot}t}(s))+D_{X}\mathbb{E}\left(F\left((X_{X_{\cdot}t}(s)\otimes m)^{\mathcal{B}_{t}^{s}}\right)\right)\right)ds\bigg|\sigma(X)\right].
\end{aligned} 
\end{equation*}

% \[
% Z_{X_{\cdot},t+\epsilon}(t+\epsilon)-Z_{X_{\cdot}t}(t)=Z_{X_{\cdot},t+\epsilon}(t+\epsilon)-\mathbb{E}[Z_{X_{\cdot}t}(t+\epsilon)|\sigma(X)]-
% \]

% \[
% -\mathbb{E}[\int_{t}^{t+\epsilon}(l_{x}(X_{X_{\cdot}t}(s),u_{Xt}(s))+D_{X}\mathbb{E}F((X_{X_{\cdot}t}(s)\otimes m)^{\mathcal{B}_{t}^{s}}))ds|\sigma(X)]
% \]
Therefore, 
\[
||Z_{X_{\cdot},t+\epsilon}(t+\epsilon)-Z_{X_{\cdot}t}(t)||\leq C_{T}\epsilon(1+||X_{\cdot}||)+||Z_{X_{\cdot},t+\epsilon}(t+\epsilon)-Z_{X_{\cdot}t}(t+\epsilon)||.
\]
Noting that $Z_{X_{\cdot}t}(t+\epsilon)=Z_{X_{X_{\cdot}t}(t+\epsilon),t+\epsilon}(t+\epsilon)$
we can write from Proposition \ref{prop4-2}:
\[
||Z_{X_{X_{\cdot}t}(t+\epsilon),t+\epsilon}(t+\epsilon)-Z_{X_{\cdot},t+\epsilon}(t+\epsilon)||\leq C_{T}||X_{X_{\cdot}t}(t+\epsilon)-X_{\cdot}||,
\]
from which it follows that
\[
|Z_{X_{\cdot},t+\epsilon}(t+\epsilon)-Z_{X_{\cdot}t}(t)||\leq C_{T}\epsilon(1+||X_{\cdot}||)+C_{T}||X_{X_{\cdot}t}(t+\epsilon)-X_{\cdot}||,
\]
 and the result (\ref{eq:4-27}) is obtained immediately. 

\subsection{PROOF OF PROPOSITION \ref{prop4-3}}
\begin{proof}
Since
\begin{equation}
Z_{xmt}(t)=D_{X}V(m,t),\label{eq:4-600}
\end{equation}
we first need to show that the right hand side is a gradient. For
that purpose, we introduce an ordinary stochastic control parametrized
by the trajectory $X_{xmt}(\cdot)$ through the conditional probabilities
$(X_{\cdot mt}(s)\otimes m)^{\mathcal{B}_{t}^{s}}.$ We take controls in
$L_{\mathcal{F}_{t}}^{2}(t,T;\mathbb{R}^{n})$ and the state is defined by 
\begin{equation}
x_{xt}(s)=x+\int_{t}^{s}v(\tau)d\tau+\sigma(w(s)-w(t))+\beta(b(s)-b(t)).\label{eq:4-13}
\end{equation}
The payoff to minimize is given by 
\begin{align}
    K_{xmt}(v(\cdot))=& \, \mathbb{E}\left(\int_{t}^{T}l(x_{xt}(s),v(s))ds\right)+\int_{t}^{T}\mathbb{E}\left(\dfrac{dF}{d\nu}\left((X_{\cdot mt}(s)\otimes m)^{\mathcal{B}_{t}^{s}}\right)(x_{xt}(s))\right)ds\label{eq:4-14}\\
    &+\mathbb{E} (h(x_{xt}(T)))+\mathbb{E}\left(\dfrac{dF_{T}}{d\nu}\left((X_{\cdot mt}(T)\otimes m)^{\mathcal{B}_{t}^{T}}\right)(x_{xt}(T))\right).\nonumber
\end{align}
% \begin{equation}
% K_{xmt}(v(\cdot))=\mathbb{E}\int_{t}^{T}l(x_{xt}(s),v(s))ds+\int_{t}^{T}\mathbb{E}\dfrac{dF}{d\nu}((X_{\cdot mt}(s)\otimes m)^{\mathcal{B}_{t}^{s}})(x_{xt}(s))ds+\label{eq:4-14}
% \end{equation}

% \[
% +Eh(x_{xt}(T))+\mathbb{E}\dfrac{dF_{T}}{d\nu}((X_{\cdot mt}(T)\otimes m)^{\mathcal{B}_{t}^{T}})(x_{xt}(T))
% \]
If we write the necessary conditions of optimality, it is easy to
check that the optimal control coincides with $u_{xmt}(s)$ and the
optimal state is $X_{xmt}(s).$ So 
\begin{equation}
\inf_{v(\cdot)}K_{xmt}(v(\cdot))=K_{xmt}(u_{xmt}(\cdot)),\label{eq:4-15}
\end{equation}
which is the right-hand side of (\ref{eq:4-10}). We set $\Psi(x,m,t)=K_{xmt}(u_{xmt}(\cdot)).$
So we need to prove that 
\begin{equation}
Z_{xmt}(t)=D\Psi(x,m,t).\label{eq:4-16}
\end{equation}
This will be a consequence of the estimate 
\begin{equation}
|\Psi(x_{1},m,t)-\Psi(x_{2},m,t)-Z_{x_{2}mt}(t) \cdot (x_{1}-x_{2})|\leq C_{T}|x_{1}-x_{2}|^{2}.\label{eq:4-17}
\end{equation}
The proof has similarities with that of Proposition \ref{prop4-2},
although it is simpler. To simplify notation, we set 
\[
u^{1}(s)=u_{x_{1}mt}(s),\ u^{2}(s)=u_{x_{2}mt}(s),\  X^{1}(s)=X_{x_{1}mt}(s),\ X^{2}(s)=X_{x_{2}mt}(s).
\]
If we use the control $u^{2}(s)$ with initial condition $x_{1},$
to get the state $X^{2}(s)+x_{1}-x_{2,}$ it is suboptimal, so we
can write: 
\begin{align}
    &\quad \, \Psi(x_{1},m,t)-\Psi(x_{2},m,t)\label{eq:4-18}\\
    &\leq\int_{t}^{T}\bigg[\mathbb{E}\left(l(X^{2}(s)+x_{1}-x_{2},u^{2}(s))-l(X^{2}(s),u^{2}(s))\right)\nonumber\\
    &\quad\qquad \,\, +\mathbb{E}\left(\dfrac{dF}{d\nu}\left((X_{\cdot mt}(s)\otimes m)^{\mathcal{B}_{t}^{s}}\right)(X^{2}(s)+x_{1}-x_{2})-\dfrac{dF}{d\nu}\left((X_{\cdot mt}(s)\otimes m)^{\mathcal{B}_{t}^{s}}\right)(X^{2}(s))\right)\bigg]ds\nonumber\\
    &\quad \,+\mathbb{E}\left(h(X^{2}(T)+x_{1}-x_{2})-h(X^{2}(T))\right)\nonumber\\
    &\quad \,+\mathbb{E}\left(\dfrac{dF}{d\nu}\left((X_{\cdot mt}(T)\otimes m)^{\mathcal{B}_{t}^{T}}\right)(X^{2}(T)+x_{1}-x_{2})-\dfrac{dF}{d\nu}\left((X_{\cdot mt}(T)\otimes m)^{\mathcal{B}_{t}^{T}}\right)(X^{2}(T))\right).\nonumber
\end{align}
% \begin{equation}
% \Psi(x_{1},m,t)-\Psi(x_{2},m,t)\leq\label{eq:4-18}
% \end{equation}

% \[
% \int_{t}^{T}[\mathbb{E}(l(X^{2}(s)+x_{1}-x_{2},u^{2}(s))-l(X^{2}(s),u^{2}(s)))+
% \]

% \[
% \mathbb{E}(\dfrac{dF}{d\nu}((X_{\cdot mt}(s)\otimes m)^{\mathcal{B}_{t}^{s}})(X^{2}(s)+x_{1}-x_{2})-\dfrac{dF}{d\nu}((X_{\cdot mt}(s)\otimes m)^{\mathcal{B}_{t}^{s}})(X^{2}(s))]ds+
% \]

% \[
% +\mathbb{E}(h(X^{2}(T)+x_{1}-x_{2})-h(X^{2}(T)))+\mathbb{E}(\dfrac{dF}{d\nu}((X_{\cdot mt}(T)\otimes m)^{\mathcal{B}_{t}^{T}})(X^{2}(T)+x_{1}-x_{2})-\dfrac{dF}{d\nu}((X_{\cdot mt}(T)\otimes m)^{\mathcal{B}_{t}^{T}})(X^{2}(T))
% \]
Using Taylor expansion and making use of the assumptions, we obtain
the estimate:
\begin{equation}
\Psi(x_{1},m,t)-\Psi(x_{2},m,t)\leq Z_{x_{2}mt}(t)\cdot(x_{1}-x_{2})+C|x_{1}-x_{2}|^{2}.\label{eq:4-19}
\end{equation}
By interchanging $x_{1}$ and $x_{2}$ and rearranging we have also 
\begin{equation}
\Psi(x_{1},m,t)-\Psi(x_{2},m,t)\geq Z_{x_{2}mt}(t)\cdot(x_{1}-x_{2})+(Z_{x_{1}mt}(t)-Z_{x_{2}mt}(t))(x_{1}-x_{2})-C|x_{1}-x_{2}|^{2}.\label{eq:4-20}
\end{equation}
The result will follow from the estimate:
\begin{equation}
(Z_{x_{1}mt}(t)-Z_{x_{2}mt}(t))(x_{1}-x_{2})\geq-C_{T}|x_{1}-x_{2}|^{2}.\label{eq:4-21}
\end{equation}
To obtain (\ref{eq:4-21}), observe that 
\begin{align*}
    &\quad (Z^{1}(t)-Z^{2}(t)) \cdot (x_{1}-x_{2})\nonumber\\
    &=\int_{t}^{T}\Bigg\{\mathbb{E} \bigg[\Big(l_{v}(X^{1}(s),u^{1}(s))-l_{v}(X^{2}(s),u^{2}(s))\Big) \cdot (u^{1}(s)-u^{2}(s))\nonumber\\
    &\qquad\qquad\quad +\bigg(l_{x}(X^{1}(s),u^{1}(s))-l_{x}(X^{2}(s),u^{2}(s)) +\dfrac{dF}{d\nu}((X_{\cdot mt}(s)\otimes m)^{\mathcal{B}_{t}^{s}})(X^{1}(s))\nonumber\\
    &\qquad\qquad\qquad\quad -\dfrac{dF}{d\nu}((X_{\cdot mt}(s)\otimes m)^{\mathcal{B}_{t}^{s}})(X^{2}(s))\bigg) \cdot(X^{1}(s)-X^{2}(s))\bigg]\Bigg\}ds\nonumber\\
    &\quad \,+\mathbb{E}\bigg(h_{x}(X^{1}(T))-h_{x}(X^{2}(T))+\dfrac{dF_{T}}{d\nu}((X_{\cdot mt}(T)\otimes m)^{\mathcal{B}_{t}^{T}})(X^{1}(T))\nonumber\\
    &\qquad\quad \,\,\, -\dfrac{dF_T}{d\nu}((X_{\cdot mt}(T)\otimes m)^{\mathcal{B}_{t}^{T}})(X^{2}(T))\cdot(X^{1}(T)-X^{2}(T))\bigg)\nonumber\\
    &\quad \,+\lambda \mathbb{E}\left(\int_{t}^{T}|u^{1}(s)-u^{2}(s)|^{2}ds\right)-(c'_{l}+c')\mathbb{E}\left(\int_{t}^{T}|X^{1}(s)-X^{2}(s)|^{2}ds\right)-(c'_{h}+c'_{T})\mathbb{E}\left(|X^{1}(T)-X^{2}(T)|^{2}\right).\nonumber
\end{align*}

% \[
% (Z^{1}(t)-Z^{2}(t)).(x_{1}-x_{2})=
% \]

% \[
% \int_{t}^{T}[\mathbb{E}(l_{v}(X^{1}(s),u^{1}(s))-l_{v}(X^{2}(s),u^{2}(s))).(u^{1}(s)-u^{2}(s))+
% \]

% \[
% +\mathbb{E}(l_{x}(X^{1}(s),u^{1}(s))-l_{x}(X^{2}(s),u^{2}(s))+
% \]

% \[
% D\dfrac{dF}{d\nu}((X_{\cdot mt}(s)\otimes m)^{\mathcal{B}_{t}^{s}})(X^{1}(s))-D\dfrac{dF}{d\nu}((X_{\cdot mt}(s)\otimes m)^{\mathcal{B}_{t}^{s}})(X^{2}(s))).(X^{1}(s)-X^{2}(s))]ds
% \]

% \[
% +\mathbb{E}(h_{x}(X^{1}(T))-h_{x}(X^{2}(T))+D\dfrac{dF_{T}}{d\nu}((X_{\cdot mt}(T)\otimes m)^{\mathcal{B}_{t}^{T}})(X^{1}(T))-D\dfrac{dF}{d\nu}((X_{\cdot mt}(T)\otimes m)^{\mathcal{B}_{t}^{T}})(X^{2}(T)).(X^{1}(T)-X^{2}(T)))
% \]

% \[
% \lambda \mathbb{E}\int_{t}^{T}|u^{1}(s)-u^{2}(s)|^{2}ds-(c'_{l}+c')\mathbb{E}\int_{t}^{T}|X^{1}(s)-X^{2}(s)|^{2}ds-(c'_{h}+c'_{T})\mathbb{E}|X^{1}(T)-X^{2}(T)|^{2}
% \]
Since 
\[
X^{1}(s)-X^{2}(s)=x_{1}-x_{2}+\int_{t}^{s}(u^{1}(\tau)-u^{2}(\tau))d\tau,
\]
it is straightforward to complete the calculations, and thanks to (\ref{eq:3-6})
we obtain (\ref{eq:4-21}) and then (\ref{eq:4-17}). So $\Psi(x,m,t)$
is Lipschitz in $x.$ Let us check that it is Lipschitz in $m.$ We
give the main results but skip the details. The key point is to establish
the following formula:
\begin{align}
    &\quad \Psi(x,m',t)-\Psi(x,m,t)\nonumber\\
    &= \mathbb{E}\Bigg(\int_{t}^{T}\int_{0}^{1}\int_{0}^{1}\Bigg[\theta l_{xx}\Big(\delta_{(\theta\mu)}X_{x\widehat{X}_{m}t}(s),\delta_{(\theta\mu)}u_{x\widehat{X}_{m}t}(s)\Big)\Big(X_{x\widehat{X}_{m'\,t}}(s)-X_{x\widehat{X}_{m}t}(s)\Big)\cdot\Big(X_{x\widehat{X}_{m'\,t}}(s)-X_{x\widehat{X}_{m}t}(s)\Big)\nonumber\\
    &\qquad\qquad +2\theta l_{xv}\Big(\delta_{(\theta\mu)} X_{x\widehat{X}_{m}t}(s), \delta_{(\theta\mu)} u_{x\widehat{X}_{m}t}(s) \Big)\Big(u_{x\widehat{X}_{m'\,t}}(s)-u_{x\widehat{X}_{m}t}(s)\Big)\cdot \Big(X_{x\widehat{X}_{m'\,t}}(s)-X_{x\widehat{X}_{m}t}(s)\Big)\nonumber\\
    &\qquad\qquad +\theta l_{vv}\Big(\delta_{(\theta\mu)} X_{x\widehat{X}_{m}t}(s), \delta_{(\theta\mu)}u_{x\widehat{X}_{m}t}(s)\Big)\Big(u_{x\widehat{X}_{m'\,t}}(s)-u_{x\widehat{X}_{m}t}(s)\Big)\cdot\Big(u_{x\widehat{X}_{m'\,t}}(s)-u_{x\widehat{X}_{m}t}(s)\Big)\nonumber\\
    &\qquad\qquad +\theta D^{2}\dfrac{dF}{d\nu}\Big((\mathcal{L}_{\widehat{X}_{m}t}(s))^{\mathcal{B}_{t}^{s}}\Big)\Big(\delta_{(\theta\mu)} X_{x\widehat{X}_{m}t}(s)\Big) \Big(X_{x\widehat{X}_{m'\,t}}(s)-X_{x\widehat{X}_{m}t}(s)\Big)\cdot\Big(X_{x\widehat{X}_{m'\,t}}(s)-X_{x\widehat{X}_{m}t}(s)\Big)\Bigg]d\theta d\mu ds\Bigg)\nonumber\\
    &\quad \, +\mathbb{E}\Bigg(\int_{0}^{1}\int_{0}^{1}\Bigg[\theta h_{xx}\Big(\delta_{(\theta\mu)} X_{x\widehat{X}_{m}t}(T)\Big)\Big(X_{x\widehat{X}_{m'\,t}}(T)-X_{x\widehat{X}_{m}t}(T)\Big)\cdot\Big(X_{x\widehat{X}_{m'\,t}}(T)-X_{x\widehat{X}_{m}t}(T)\Big)\nonumber\\
    &\qquad\qquad+\theta D^{2}\dfrac{dF_{T}}{d\nu}\Big((\mathcal{L}_{\widehat{X}_{m}t}(T))^{\mathcal{B}_{t}^{T}}\Big)\Big(\delta_{(\theta\mu)} X_{x\widehat{X}_{m}t}(T)\Big) \Big(X_{x\widehat{X}_{m'\,t}}(T)-X_{x\widehat{X}_{m}t}(T)\Big)\cdot\Big(X_{x\widehat{X}_{m'\,t}}(T)-X_{x\widehat{X}_{m}t}(T)\Big)\Bigg]d\theta d\mu\Bigg)\nonumber\\
&\quad \, +\mathbb{E}\left(\int_{t}^{T}\int_{0}^{1}\int_{0}^{1}\left[\mathbb{E}^{1\mathcal{B}_{t}^{s}} \left(D_{1}\dfrac{d^{2}F}{d\nu^{2}}\Big(\delta_{(\theta)} \mathcal{L}_{\widehat{X}_{m}t}(s)\Big)\Big(X_{x\widehat{X}_{m'\,t}}(s), \delta_{(\mu)}X_{\widehat{X}_{m}^{1}t}^{1}(s)\Big)\cdot\Big(X_{\widehat{X}_{m'}^{1}t}^{1}(s)-X_{\widehat{X}_{m}^{1}t}^{1}(s)\Big)\right)\right]d\theta d\mu ds\right)\nonumber\\
&\quad \, +\mathbb{E}\left(\int_{0}^{1}\int_{0}^{1}\left[\mathbb{E}^{1\mathcal{B}_{t}^{T}} \left(D_{1}\dfrac{d^{2}F_{T}}{d\nu^{2}}\Big(\delta_{(\theta)}\mathcal{L}_{\widehat{X}_{m}t}(T)\Big) \Big(X_{x\widehat{X}_{m'\,t}}(T), \delta_{(\mu)}X_{\widehat{X}_{m}^{1}t}^{1}(T)\Big) \cdot \Big(X_{\widehat{X}_{m'}^{1}t}^{1}(T)-X_{\widehat{X}_{m}^{1}t}^{1}(T)\Big)\right)\right]d\theta d\mu\right), \label{eq:4-22}
\end{align}
where
\begin{equation}
\begin{split}
\delta_{(\theta\mu)}X_{x\widehat{X}_{m}t}(s) &:= X_{x\widehat{X}_{m}t}(s) + \theta\mu\Big(X_{x\widehat{X}_{m'\,t}}(s) - X_{x\widehat{X}_{m}t}(s)\Big),\\
\delta_{(\theta\mu)}u_{x\widehat{X}_{m}t}(s) &:= u_{x\widehat{X}_{m}t}(s) + \theta\mu \Big(u_{x\widehat{X}_{m'\,t}}(s) - u_{x\widehat{X}_{m}t}(s)\Big),\\
\delta_{(\theta)}\mathcal{L}_{\widehat{X}_{m}t}(s) &:= (\mathcal{L}_{\widehat{X}_{m}t}(s))^{\mathcal{B}_{t}^{s}} + \theta \Big((\mathcal{L}_{\widehat{X}_{m'}t}(s))^{\mathcal{B}_{t}^{s}} - (\mathcal{L}_{\widehat{X}_{m}t}(s))^{\mathcal{B}_{t}^{s}}\Big),\\
\delta_{(\mu)}X_{\widehat{X}_{m}^{1}t}^{1}(s) &:= X_{\widehat{X}_{m}^{1}t}^{1}(s) + \mu\Big(X_{\widehat{X}_{m'}^{1}t}^{1}(s) - X_{\widehat{X}_{m}^{1}t}^{1}(s)\Big).
\end{split}
\end{equation}
Interchanging $m$ with $m'$ and adding up, we obtain, using the assumptions
of Section \ref{subsec:ASSUMPTIONS},
\begin{equation}\label{eq:4-23}
    \begin{aligned}
    0 &\geq \mathbb{E}\left(\int_{t}^{T}\left(\lambda \left|u_{x\,\widehat{X}_{m'}t}(s)-u_{x\widehat{X}_{m}t}(s)\right|^{2}-(c'_{l}+c') \left|X_{x\widehat{X}_{m'\,t}}(s)-X_{x\widehat{X}_{m}t}(s)\right|^{2}\right)ds\right)\\
    &\qquad -(c'_{h}+c'_{T})\mathbb{E}\left(\left|X_{x\widehat{X}_{m'\,t}}(T)-X_{x\widehat{X}_{m}t}(T)\right|^{2}\right)\\
    &\qquad + \mathbb{E}\Bigg(\int_{t}^{T}\int_{0}^{1}\int_{0}^{1}\int_{0}^{1}\mathbb{E}^{1\mathcal{B}_{t}^{s}} \bigg(DD_{1}\dfrac{d^{2}F}{d\nu^{2}} \Big(\delta_{(\theta)} \mathcal{L}_{\widehat{X}_{m}t}(s)\Big) \Big(\delta_{(\nu)} X_{x\widehat{X}_{m}t}(s),\delta_{(\mu)} X_{\widehat{X}_{m}^{1}t}^{1}(s)\Big) \\
    &\qquad \qquad \qquad \Big(X_{x\widehat{X}_{m'\,t}}(s)-X_{x\widehat{X}_{m}t}(s)\Big) \cdot\Big(X_{\widehat{X}_{m'}^{1}t}^{1}(s)-X_{\widehat{X}_{m}^{1}t}^{1}(s)\Big)\bigg)d\theta d\mu d\nu ds\Bigg)\\
&\qquad +\mathbb{E}\Bigg(\int_{0}^{1}\int_{0}^{1}\int_{0}^{1}\mathbb{E}^{1\mathcal{B}_{t}^{T}} \bigg(DD_{1}\dfrac{d^{2}F}{d\nu^{2}}\Big(\delta_{(\theta)}\mathcal{L}_{\widehat{X}_{m}t}(T)\Big) \Big(\delta_{(\nu)} X_{x\widehat{X}_{m}t}(T), \delta_{(\mu)}X_{\widehat{X}_{m}^{1}t}^{1}(T)\Big)\\
&\qquad\qquad\qquad \Big(X_{x\widehat{X}_{m'}t}(T)-X_{x\widehat{X}_{m}t}(T)\Big) \cdot \Big(X_{\widehat{X}_{m'}^{1}t}^{1}(T)-X_{\widehat{X}_{m}^{1}t}^{1}(T)\Big)\bigg)d\theta d\mu d\nu\Bigg).
\end{aligned}
\end{equation}
Using 
\begin{align*}
    \mathbb{E} \left(\int_{t}^{T} \left|X_{x\widehat{X}_{m'}t}(s)-X_{x\widehat{X}_{m}t}(s)\right|^{2}ds\right)&\leq\dfrac{(T-t)^{2}}{2}\mathbb{E}\left(\int_{t}^{T} \left|u_{x\widehat{X}_{m'}t}(s)-u_{x\widehat{X}_{m}t}(s)\right|^{2}ds\right), \\
    \mathbb{E} \left(\left|X_{x\widehat{X}_{m'}t}(T)-X_{x\widehat{X}_{m}t}(T)\right|^{2}\right)&\leq(T-t)\mathbb{E}\left(\int_{t}^{T} \left|u_{x\widehat{X}_{m'}t}(s)-u_{x\widehat{X}_{m}t}(s)\right|^{2}ds\right),
\end{align*}
 we deduce after simple estimates 
\begin{equation}
\mathbb{E}\left(\int_{t}^{T} \left|u_{x\widehat{X}_{m'}t}(s)-u_{x\widehat{X}_{m}t}(s)\right|^{2}ds\right)\leq C_{T}\mathbb{E} \left(\left|\widehat{X}_{m'}-\widehat{X}_{m}\right|^{2}\right)=C_{T}W_{2}^{2}(m,m'),\label{eq:4-24}
\end{equation}
\begin{equation}
\sup_{s\in(t,T)}\mathbb{E} \left(\left|X_{x\widehat{X}_{m'}t}(s)-X_{x\widehat{X}_{m}t}(s)\right|^{2}\right),\sup_{s\in(t,T)}\mathbb{E} \left(\left|u_{x\widehat{X}_{m'}t}(s)-u_{x\widehat{X}_{m}t}(s)\right|^{2}\right)\leq C_{T}W_{2}^{2}(m,m'), \text{ and}\label{eq:4-25}
\end{equation}
\begin{equation}
\sup_{s\in(t,T)}\mathbb{E} \left(\left|X_{x\widehat{X}_{m'}t}(s)\right|^{2}\right)\leq C_{T}(1+W_{2}^{2}(m,\delta)).\label{eq:4-250}
\end{equation}
Collecting estimates, we can assert that 
\begin{equation}
|\Psi(x,m',t)-\Psi(x,m,t)|\leq C_{T}W_{2}^{2}(m,m')+K_{T}(m)W_{2}(m,m'),\label{eq:4-251}
\end{equation}
which completes the proof. 
\end{proof}

\section{PROOFS FROM SECTION \ref{sec:2nd order diff}} \label{appendix C}

\subsection{PROOF OF PROPOSITION \ref{prop7-10}}

The control problem (\ref{eq:4-179}), (\ref{eq:4-180}) is
linear quadratic. One can check that it is strictly convex and has
a unique minimum. The optimal control is linear in $y$ and can be
written $\mathcal{U}_{xmt}(s)y$, where $\mathcal{U}_{xmt}(s)$ is
defined by (\ref{eq:4-175}), (\ref{eq:4-176}), (\ref{eq:4-177}).
Define
\[
u_{xymt}^{\epsilon}(s)=\dfrac{u_{x+\epsilon y,mt}(s)-u_{xmt}(s)}{\varepsilon},\,X_{xymt}^{\epsilon}(s)=\dfrac{X_{x+\epsilon y,mt}(s)-X_{xmt}(s)}{\varepsilon},
\]
\begin{equation}
Z_{xymt}^{\epsilon}(s)=\dfrac{Z_{x+\epsilon y,mt,t}(s)-Z_{xmt}(s)}{\epsilon}\ ,r_{xymt}^{j,\epsilon}(s)=\dfrac{r_{x+\epsilon y,mt}^{j}(s)-r_{xmt}^{j}(s)}{\epsilon},\label{eq:8-5}
\end{equation}
\[
\rho_{xymt}^{j,\epsilon}(s)=\dfrac{\rho_{x+\epsilon y,mt}^{j}(s)-\rho_{xmt}^{j}(s)}{\epsilon},
\]
then we have the relations:
\begin{equation}
X_{xymt}^{\epsilon}(s)=y+\int_{t}^{s}u_{xymt}^{\epsilon}(\tau)d\tau,\label{eq:8-6}
\end{equation}
\begin{align}
    &\int_{0}^{1} \bigg[l_{vx}\Big(X_{xmt}(s)+\theta\epsilon X_{xymt}^{\epsilon}(s),u_{xmt}(s)+\theta\epsilon u_{xymt}^{\epsilon}(s)\Big)X_{xymt}^{\epsilon}(s)\nonumber\\
    &\qquad +l_{vv}\Big(X_{xmt}(s)+\theta\epsilon X_{xymt}^{\epsilon}(s),u_{xmt}(s)+\theta\epsilon u_{xymt}^{\epsilon}(s)\Big)u_{xymt}^{\epsilon}(s)\bigg]d\theta+Z_{xymt}^{\epsilon}(s)=0,\label{eq:8-7}
\end{align}
% \begin{equation}
% \int_{0}^{1}[l_{vx}(X_{xmt}(s)+\theta\epsilon X_{xymt}^{\epsilon}(s),u_{xmt}(s)+\theta\epsilon u_{xymt}^{\epsilon}(s))X_{xymt}^{\epsilon}(s)+l_{vv}(X_{xmt}(s)+\theta\epsilon X_{xymt}^{\epsilon}(s),u_{xmt}(s)+\theta\epsilon u_{xymt}^{\epsilon}(s))u_{xymt}^{\epsilon}(s)]d\theta+Z_{xymt}^{\epsilon}(s)=0\label{eq:8-7}
% \end{equation}
\begin{align}
    -dZ_{xymt}^{\epsilon}(s))=&\int_{0}^{1} \bigg[l_{xx}\Big(X_{xmt}(s)+\theta\epsilon X_{xymt}^{\epsilon}(s),u_{xmt}(s)+\theta\epsilon u_{xymt}^{\epsilon}(s)\Big)X_{xymt}^{\epsilon}(s)\nonumber\\
    &\qquad +l_{xv}\Big(X_{xmt}(s)+\theta\epsilon X_{xymt}^{\epsilon}(s),u_{xmt}(s)+\theta\epsilon u_{xymt}^{\epsilon}(s)\Big)u_{xymt}^{\epsilon}(s) \bigg]d\theta\label{eq:8-70}\\
    &+\int_{0}^{1}D^{2}\dfrac{d}{d\nu}F \left((X_{\cdot mt}(s)\otimes m)^{\mathcal{B}_{t}^{s}}\right)\Big(X_{xmt}(s)+\theta\epsilon X_{xymt}^{\epsilon}(s)\Big)X_{xymt}^{\epsilon}(s)d\theta\nonumber\\
    &-\sum_{j}r_{xymt}^{j,\epsilon}(s)dw_{j}(s)-\sum_j\rho_{xymt}^{j,\epsilon}(s)db_{j}(s),\nonumber
\end{align}
% \begin{equation}
% -dZ_{xymt}^{\epsilon}(s))=\int_{0}^{1}[l_{xx}(X_{xmt}(s)+\theta\epsilon X_{xymt}^{\epsilon}(s),u_{xmt}(s)+\theta\epsilon u_{xymt}^{\epsilon}(s))X_{xymt}^{\epsilon}(s)+l_{xv}(X_{xmt}(s)+\theta\epsilon X_{xymt}^{\epsilon}(s),u_{xmt}(s)+\theta\epsilon u_{xymt}^{\epsilon}(s))u_{xymt}^{\epsilon}(s)]d\theta\label{eq:8-70}
% \end{equation}
% \[
% +\int_{0}^{1}D^{2}\dfrac{d}{d\nu}F((X_{\cdot mt}(s)\otimes m)^{\mathcal{B}_{t}^{s}})(X_{xmt}(s)+\theta\epsilon X_{xymt}^{\epsilon}(s))X_{xymt}^{\epsilon}(s)d\theta-\sum_{j}r_{xymt}^{j,\epsilon}(s)dw_{j}(s)-\sum\rho_{xymt}^{j,\epsilon}(s)db_{j}(s)
% \]
\begin{equation}
    \begin{aligned}
    Z_{xymt}^{\epsilon}(T)=&\int_{0}^{1}h_{xx}\Big(X_{xmt}(T)+\theta\epsilon X_{xymt}^{\epsilon}(T)\Big)X_{xymt}^{\epsilon}(T)d\theta\\
    &+\int_{0}^{1}D^{2}\dfrac{d}{d\nu}F\left((X_{\cdot mt}(T)\otimes m)^{\mathcal{B}_{t}^{T}}\right)\Big(X_{xmt}(T)+\theta\epsilon X_{xymt}^{\epsilon}(T)\Big)X_{xymt}^{\epsilon}(T)d\theta.
\end{aligned}
\end{equation}

% \[
% Z_{xymt}^{\epsilon}(T)=\int_{0}^{1}h_{xx}(X_{xmt}(T)+\theta\epsilon X_{xymt}^{\epsilon}(T))X_{xymt}^{\epsilon}(T)d\theta+\int_{0}^{1}D^{2}\dfrac{d}{d\nu}F((X_{\cdot mt}(T)\otimes m)^{\mathcal{B}_{t}^{T}})(X_{xmt}(T)+\theta\epsilon X_{xymt}^{\epsilon}(T))X_{xymt}^{\epsilon}(T)d\theta
% \]
By techniques already used, we can check that $u_{xymt}^{\epsilon}(s),$
$X_{xymt}^{\epsilon}(s),Z_{xymt}^{\epsilon}(s)$ are bounded in $L_{\mathcal{F}_{t}}^{\infty}(t,T;\mathcal{H}_{m})$
and $r_{xymt}^{j,\epsilon}(s), \rho_{xymt}^{j,\epsilon}(s)$ are bounded
in $L_{\mathcal{F}_{t}}^{2}(t,T;\mathcal{H}_{m}).$ 

We consider then subsequences of $u_{xymt}^{\epsilon}(s),$ $X_{xymt}^{\epsilon}(s),Z_{xymt}^{\epsilon}(s)$$,r_{xymt}^{j,\epsilon}(s),\rho_{xymt}^{j,\epsilon}(s)$
which converge weakly in $L_{\mathcal{F}_{t}}^{2}(t,T;\mathcal{H}_{m})$
to $\mathcal{U}_{xymt}(s),\mathcal{X}_{xymt}(s),\mathcal{Z}_{xymt}(s),\mathcal{R}_{xymt}^{j}(s),\Theta_{xymt}^{j}(s).$
It follows easily that the limit is $\mathcal{U}_{xmt}(s)y$, $\mathcal{X}_{xmt}(s)y$, $\mathcal{Z}_{xmt}(s)y$, $\mathcal{R}_{xmt}^{j}(s)y$, $\Theta_{xmt}^{j}(s)y$.
It is possible to show that the convergence is strong using standard techniques. This completes
the proof.

\subsection{PROOF OF PROPOSITION \ref{prop7-100}}

By calculations already done, we can check the relation 
\begin{equation} \label{eq:8-60}
\begin{aligned}
    y\cdot\mathcal{Z}_{xmt}(t)y&=\mathbb{E}\Bigg(\int_{t}^{T}\Bigg\{ \mathcal{U}_{xmt}(s)y\cdot l_{vv}(X_{xmt}(s),u_{xmt}(s))\mathcal{U}_{xmt}(s)y\\
    &\qquad\qquad\quad +2\mathcal{U}_{xmt}(s)y \cdot l_{vx}(X_{xmt}(s),u_{xmt}(s))\mathcal{X}_{xmt}(s)y\\
    &\qquad\qquad\quad +\mathcal{X}_{xmt}(s)y \cdot \bigg(l_{xx}(X_{xmt}(s),u_{xmt}(s))\\
    &\qquad\qquad\quad +D^{2}\dfrac{d}{d\nu}F\left((X_{\cdot mt}(s)\otimes m)^{\mathcal{B}_{t}^{s}}\right)(X_{xmt}(s))\bigg)\mathcal{X}_{xmt}(s)y\Bigg\} ds\Bigg)\\
    &\quad \, +\mathbb{E}\left(\mathcal{X}_{xmt}(T)y \cdot \left(h_{xx}(X_{xmt}(T))+D^{2}\dfrac{d}{d\nu}F\left((X_{\cdot mt}(T)\otimes m)^{\mathcal{B}_{t}^{T}}\right)(X_{xmt}(T))\right)\mathcal{X}_{xmt}(T)y\right).
\end{aligned}  
\end{equation}
% \begin{equation}
% y.\mathcal{Z}_{xmt}(t)y=\mathbb{E}\int_{t}^{T}\left\{ \mathcal{U}_{xmt}(s)y.l_{vv}(X_{xmt}(s),u_{xmt}(s))\mathcal{U}_{xmt}(s)y+\right.\label{eq:8-60}
% \end{equation}

% \[
% +\left.2\mathcal{U}_{xmt}(s)y.l_{vx}(X_{xmt}(s),u_{xmt}(s))\mathcal{X}_{xmt}(s)y+\mathcal{X}_{xmt}(s)y.(l_{xx}(X_{xmt}(s),u_{xmt}(s))+D^{2}\dfrac{d}{d\nu}F((X_{\cdot mt}(s)\otimes m)^{\mathcal{B}_{t}^{s}})(X_{xmt}(s)))\mathcal{X}_{xmt}(s)y\right\} ds+
% \]

% \[
% +\mathbb{E}\mathcal{X}_{xmt}(T)y.(h_{xx}(X_{xmt}(T)+D^{2}\dfrac{d}{d\nu}F((X_{\cdot mt}(T)\otimes m)^{\mathcal{B}_{t}^{T}})(X_{xmt}(T)))\mathcal{X}_{xmt}(T)y
% \]

Also,
\begin{equation} \label{eq:8-61}
\begin{aligned}
    \mathcal{Z}_{xmt}(t)y &=\mathbb{E}\bigg[\int_{t}^{T} \bigg(\left(l_{xx}(X_{xmt}(s),u_{xmt}(s))+D^{2}\dfrac{d}{d\nu}F\left((X_{\cdot mt}(s)\otimes m)^{\mathcal{B}_{t}^{s}}\right)(X_{xmt}(s))\right)\mathcal{X}_{xmt}(s)y\\
    &\qquad\qquad\quad \, +l_{xv}(X_{xmt}(s),u_{xmt}(s))\mathcal{U}_{xmt}(s)y)\bigg)ds \\
    &\qquad \quad +\left(h_{xx}(X_{xmt}(T))+D^{2}\dfrac{d}{d\nu}F\left((X_{\cdot mt}(T)\otimes m)^{\mathcal{B}_{t}^{T}}\right)(X_{xmt}(T))\right)\mathcal{X}_{xmt}(T)y\bigg].
\end{aligned}
\end{equation}

% \begin{equation}
% \mathcal{Z}_{xmt}(t)y=\mathbb{E}\left[\int_{t}^{T}(l_{xx}(X_{xmt}(s),u_{xmt}(s))+D^{2}\dfrac{d}{d\nu}F((X_{\cdot mt}(s)\otimes m)^{\mathcal{B}_{t}^{s}})(X_{xmt}(s)))\mathcal{X}_{xmt}(s)y+\right.\label{eq:8-61}
% \end{equation}

% \[
% +l_{xv}(X_{xmt}(s),u_{xmt}(s))\mathcal{U}_{xmt}(s)y)ds+(h_{xx}(X_{xmt}(T)+D^{2}\dfrac{d}{d\nu}F((X_{\cdot mt}(T)\otimes m)^{\mathcal{B}_{t}^{T}})(X_{xmt}(T)))\mathcal{X}_{xmt}(T)y
% \]
 From the assumptions and estimates already done, we deduce $\mathbb{E}\left(\int_{t}^{T}|\mathcal{U}_{xmt}(s)y|^{2}ds\right) \leq C_{T}|y|^{2}.$
It easily follows $|\mathcal{Z}_{xmt}(t)y|\leq C_{T}$. This concludes
the proof. 

\subsection{PROOF OF PROPOSITION \ref{prop5-10}}

We connect the system \eqref{eq:5-2002}
to a control problem. The space of controls is $L_{\mathcal{F}_{X_{\cdot}\mathcal{X}_{\cdot}t}}^{2}(t,T;\mathcal{H}_{m})$
where $\mathcal{F}_{X_{\cdot}\mathcal{X}_{\cdot}t}$ is the filtration generated
by the $\sigma$-algebras $\mathcal{F}_{X_{\cdot}\mathcal{X}_{\cdot}t}^{s}$.
If $\mathcal{V}_{X_{\cdot}\mathcal{X}_{\cdot}t}(s)$ is a control, the state
is defined by 
\begin{equation}
\mathcal{X}_{X_{\cdot}\mathcal{X}_{\cdot}t}(s)=\mathcal{X}+\int_{t}^{s}\mathcal{V}_{X_{\cdot}\mathcal{X}_{\cdot}t}(\tau)d\tau,\label{eq:8-1}
\end{equation}
and the payoff is 
\begin{align}
    \mathcal{J}_{X_{\cdot}\mathcal{X}_{\cdot}t}(\mathcal{V}_{X_{\cdot}\mathcal{X}_{\cdot}t}(\cdot)) &=\dfrac{1}{2}\int_{t}^{T} \Bigg[\left\langle l_{xx}(X_{X_{\cdot}t}(s),u_{X_{\cdot}t}(s))\mathcal{X}_{X_{\cdot}\mathcal{X}_{\cdot}t}(s)+D_{X}^{2}\mathbb{E}\left(F\left((X_{X_{\cdot}t}(s)\otimes m)^{\mathcal{B}_{t}^{s}}\right)\right)(\mathcal{X}_{X_{\cdot}\mathcal{X}_{\cdot}t}(s)),\mathcal{X}_{X_{\cdot}\mathcal{X}_{\cdot}t}(s)\right\rangle \nonumber\\
    &\quad \, +2\left\langle l_{xv}(X_{X_{\cdot}t}(s),u_{X_{\cdot}t}(s))\mathcal{V}_{X_{\cdot}\mathcal{X}_{\cdot}t}(s),\mathcal{X}_{X_{\cdot}\mathcal{X}_{\cdot}t}(s)\right\rangle +\left\langle l_{vv}(X_{X_{\cdot}t}(s),u_{X_{\cdot}t}(s))\mathcal{V}_{X_{\cdot}\mathcal{X}_{\cdot}t}(s),\mathcal{\mathcal{V}}_{X_{\cdot}\mathcal{X}_{\cdot}t}(s)\right\rangle \Bigg]ds\nonumber\\
    &\quad \, +\dfrac{1}{2}\left\langle h_{xx}(X_{X_{\cdot}t}(T))\mathcal{X}_{X_{\cdot}\mathcal{X}_{\cdot}t}(T)+D_{X}^{2}\mathbb{E}\left(F_{T}\left((X_{X_{\cdot}t}(T)\otimes m)^{\mathcal{B}_{t}^{T}}\right)\right)(\mathcal{X}_{X_{\cdot}\mathcal{X}_{\cdot}t}(T)),\mathcal{X}_{X_{\cdot}\mathcal{X}_{\cdot}t}(T)\right\rangle.\label{eq:8-2}
\end{align}

% \begin{equation}
% \mathcal{J}_{X_{\cdot}\mathcal{X}_{\cdot}t}(\mathcal{V}_{X_{\cdot}\mathcal{X}_{\cdot}t}(\cdot))=\dfrac{1}{2}\int_{t}^{T}[((l_{xx}(X_{X_{\cdot}t}(s),u_{X_{\cdot}t}(s))\mathcal{X}_{X_{\cdot}\mathcal{X}_{\cdot}t}(s)+D_{X}^{2}\mathbb{E}F((X_{X_{\cdot}t}(s)\otimes m)^{\mathcal{B}_{t}^{s}})(\mathcal{X}_{X_{\cdot}\mathcal{X}_{\cdot}t}(s)),\mathcal{X}_{X_{\cdot}\mathcal{X}_{\cdot}t}(s)))+\label{eq:8-2}
% \end{equation}

% \[
% +2((l_{xv}(X_{X_{\cdot}t}(s),u_{X_{\cdot}t}(s))\mathcal{V}_{X_{\cdot}\mathcal{X}_{\cdot}t}(s),\mathcal{X}_{X_{\cdot}\mathcal{X}_{\cdot}t}(s)))+((l_{vv}(X_{X_{\cdot}t}(s),u_{X_{\cdot}t}(s))\mathcal{V}_{X_{\cdot}\mathcal{X}_{\cdot}t}(s),\mathcal{\mathcal{V}}_{X_{\cdot}\mathcal{X}_{\cdot}t}(s)))]ds
% \]

% \[
% +\dfrac{1}{2}((h_{xx}(X_{X_{\cdot}t}(T))\mathcal{X}_{X_{\cdot}\mathcal{X}_{\cdot}t}(T)+D_{X}^{2}\mathbb{E}F_{T}((X_{X_{\cdot}t}(T)\otimes m)^{\mathcal{B}_{t}^{T}})(\mathcal{X}_{X_{\cdot}\mathcal{X}_{\cdot}t}(T)),\mathcal{X}_{X_{\cdot}\mathcal{X}_{\cdot}t}(T)))
% \]
Thanks to the assumption (\ref{eq:3-6}) this is a linear quadratic
convex problem, which has a unique optimal control. The system
\eqref{eq:5-2002} has a unique
solution and the optimal control is $\mathcal{U}_{X_{\cdot}\mathcal{X}_{\cdot}t}(s)$. The
optimal state is $\mathcal{\mathcal{X}}_{X_{\cdot}\mathcal{X}_{\cdot}t}(s)$.
Moreover, one has 
\begin{equation}\label{eq:8-3}
\begin{aligned}
    &\quad \inf_{\mathcal{V}_{X_{\cdot}\mathcal{X}_{\cdot}t}(\cdot)}\mathcal{J}_{X_{\cdot}\mathcal{X}_{\cdot}t}(\mathcal{V}_{X_{\cdot}\mathcal{X}_{\cdot}t}(\cdot))=\dfrac{1}{2}\langle \mathcal{Z}_{X_{\cdot}\mathcal{X}_{\cdot}t}(t),\mathcal{X}\rangle \\
    &=\dfrac{1}{2}\int_{t}^{T} \Bigg[\left\langle l_{xx}(X_{X_{\cdot}t}(s),u_{X_{\cdot}t}(s))\mathcal{X}_{X_{\cdot}\mathcal{X}_{\cdot}t}(s)+D_{X}^{2}\mathbb{E}\left(F\left((X_{X_{\cdot}t}(s)\otimes m)^{\mathcal{B}_{t}^{s}}\right)\right)(\mathcal{X}_{X_{\cdot}\mathcal{X}_{\cdot}t}(s)),\mathcal{X}_{X_{\cdot}\mathcal{X}_{\cdot}t}(s)\right\rangle \\
    &\qquad \, +2 \left\langle l_{xv}(X_{X_{\cdot}t}(s),u_{X_{\cdot}t}(s))\mathcal{U}_{X_{\cdot}\mathcal{X}_{\cdot}t}(\tau),\mathcal{X}_{X_{\cdot}\mathcal{X}_{\cdot}t}(s)\right\rangle + \left\langle l_{vv}(X_{X_{_{\cdot}}t}(s),u_{X_{\cdot}t}(s))\mathcal{U}_{X_{\cdot}\mathcal{X}_{\cdot}t}(s),\mathcal{\mathcal{U}}_{X_{\cdot}\mathcal{X}_{\cdot}t}(s)\right\rangle \Bigg]ds\\
    &\quad \, +\dfrac{1}{2} \left\langle h_{xx}(X_{X_{\cdot}t}(T))\mathcal{X}_{X_{\cdot}\mathcal{X}_{\cdot}t}(T)+D_{X}^{2}\mathbb{E}\left(F_{T}\left((X_{X_{\cdot}t}(T)\otimes m)^{\mathcal{B}_{t}^{T}}\right)\right)(\mathcal{X}_{X_{\cdot}\mathcal{X}_{\cdot}t}(T)),\mathcal{X}_{X\mathcal{X}t}(T)\right\rangle,
\end{aligned}
\end{equation}

% \begin{equation}
% \inf_{\mathcal{V}_{X_{\cdot}\mathcal{X}_{\cdot}t}(\cdot)}\mathcal{J}_{X_{\cdot}\mathcal{X}_{\cdot}t}(\mathcal{V}_{X_{\cdot}\mathcal{X}_{\cdot}t}(\cdot))=\dfrac{1}{2}((\mathcal{Z}_{X_{\cdot}\mathcal{X}_{\cdot}t}(t),\mathcal{X}))=\label{eq:8-3}
% \end{equation}

% \[
% \dfrac{1}{2}\int_{t}^{T}[((l_{xx}(X_{X_{\cdot}t}(s),u_{X_{\cdot}t}(s))\mathcal{X}_{X_{\cdot}\mathcal{X}_{\cdot}t}(s)+D_{X}^{2}\mathbb{E}F((X_{X_{\cdot}t}(s)\otimes m)^{\mathcal{B}_{t}^{s}})(\mathcal{X}_{X_{\cdot}\mathcal{X}_{\cdot}t}(s)),\mathcal{X}_{X_{\cdot}\mathcal{X}_{\cdot}t}(s)))+
% \]

% \[
% +2((l_{xv}(X_{X_{\cdot}t}(s),u_{X_{\cdot}t}(s))\mathcal{U}_{X_{\cdot}\mathcal{X}_{\cdot}t}(\tau),\mathcal{X}_{X_{\cdot}\mathcal{X}_{\cdot}t}(s)))+((l_{vv}(X_{X_{_{\cdot}}t}(s),u_{X_{\cdot}t}(s))\mathcal{U}_{X_{\cdot}\mathcal{X}_{\cdot}t}(s),\mathcal{\mathcal{U}}_{X_{\cdot}\mathcal{X}_{\cdot}t}(s)))]ds
% \]
% \[
% +\dfrac{1}{2}((h_{xx}(X_{X_{\cdot}t}(T))\mathcal{X}_{X_{\cdot}\mathcal{X}_{\cdot}t}(T)+D_{X}^{2}\mathbb{E}F_{T}((X_{X_{\cdot}t}(T)\otimes m)^{\mathcal{B}_{t}^{T}})(\mathcal{X}_{X_{\cdot}\mathcal{X}_{\cdot}t}(T)),\mathcal{X}_{X\mathcal{X}t}(T)))
% \]
where in (\ref{eq:8-3}) $\mathcal{U}_{X\mathcal{X}t}(s)$, $\mathcal{Y}_{X\mathcal{X}t}(s),\,\mathcal{Z}_{X\mathcal{X}t}(s)$
is the solution of \eqref{eq:5-2002}.
Thanks to (\ref{eq:3-6}) we can check, by estimates similar to those
already done, that
\begin{equation}
||\mathcal{U}_{X_{\cdot}\mathcal{X}_{\cdot}t}(s)||,\ ||\mathcal{X}_{X_{\cdot}\mathcal{X}_{\cdot}t}(s)||,\ ||\mathcal{Z}_{X_{\cdot}\mathcal{X}_{\cdot}t}(s)||\leq C_{T}||\mathcal{X}_{\cdot}||.\label{eq:8-4}
\end{equation}
 Now $\mathcal{X}\mapsto\mathcal{U}_{X_{\cdot}\mathcal{X}_{\cdot}t}(s),\mathcal{X}_{X_{\cdot}\mathcal{X}_{\cdot}t}(s),\mathcal{Z}_{X_{\cdot}\mathcal{X}_{\cdot}t}(s)$
are linear. Indeed, taking an initial condition $\alpha\mathcal{X}_{1}+\beta\mathcal{X}_{2}$,
one can use the $\sigma$-algebras $\mathcal{F}_{X_{\cdot}\mathcal{X}_{1\cdot}\mathcal{X}_{2\cdot}t}^{s}$
and the linearity follows easily. Therefore the maps $\mathcal{X}\mapsto\mathcal{U}_{X_{\cdot}\mathcal{X}_{\cdot}t}(s)$, $\mathcal{X}_{X_{\cdot}\mathcal{X}_{\cdot}t}(s)$,
$\mathcal{Z}_{X_{\cdot}\mathcal{X}_{\cdot}t}(s)$ belong to $\mathcal{L}(\mathcal{H}_{m},\mathcal{H}_{m}).$
Define
\[
u_{X_{\cdot}\mathcal{X}_{\cdot}t}^{\epsilon}(s):=\dfrac{u_{X+\epsilon\mathcal{X},t}(s)-u_{Xt}(s)}{\varepsilon},\,X_{X_{\cdot}\mathcal{X}_{\cdot}t}^{\epsilon}(s):=\dfrac{X_{X+\epsilon\mathcal{X},t}(s)-X_{Xt}(s)}{\varepsilon},
\]
\begin{equation}
Z_{X_{\cdot}\mathcal{X}_{\cdot}t}^{\epsilon}(s):=\dfrac{Z_{X+\epsilon\mathcal{X},t}(s)-Z_{Xt}(s)}{\epsilon}\ ,r_{X_{\cdot}\mathcal{X}_{\cdot}t}^{j,\epsilon}(s):=\dfrac{r_{X+\epsilon\mathcal{X},t}^{j}(s)-r_{Xt}^{j}(s)}{\epsilon},\label{eq:8-5-1}
\end{equation}
\[
\rho_{X_{\cdot}\mathcal{X}_{\cdot}t}^{j,\epsilon}(s):=\dfrac{\rho_{X+\epsilon\mathcal{X},t}^{j}(s)-\rho_{Xt}^{j}(s)}{\epsilon},
\]
then we have the relations: 
\begin{equation}
X_{X_{\cdot}\mathcal{X}_{\cdot}t}^{\epsilon}(s)=\mathcal{X}+\int_{t}^{s}u_{X_{\cdot}\mathcal{X}_{\cdot}t}^{\epsilon}(\tau)d\tau,\label{eq:8-6-1}
\end{equation}
\begin{align}
    &\int_{0}^{1} \bigg[l_{vx}\left(X_{X_{\cdot}t}(s)+\theta\epsilon X_{X_{\cdot}\mathcal{X}_{\cdot}t}^{\epsilon}(s),u_{X_{\cdot}t}(s)+\theta\epsilon u_{X_{\cdot}\mathcal{X}_{\cdot}t}^{\epsilon}(s) \right)X_{X_{\cdot}\mathcal{X}_{\cdot}t}^{\epsilon}(s)\nonumber\\
    &\qquad+l_{vv}\left(X_{X_{\cdot}t}(s)+\theta\epsilon X_{X_{\cdot}\mathcal{X}_{\cdot}t}^{\epsilon}(s),u_{X_{\cdot}t}(s)+\theta\epsilon u_{X_{\cdot}\mathcal{X}_{\cdot}t}^{\epsilon}(s)\right)u_{X_{\cdot}\mathcal{X}_{\cdot}t}^{\epsilon}(s)\bigg]d\theta+Z_{X_{\cdot}\mathcal{X}_{\cdot}t}^{\epsilon}(s)=0,\label{eq:8-7-1}
\end{align}
% \begin{equation}
% \int_{0}^{1}[l_{vx}(X_{X_{\cdot}t}(s)+\theta\epsilon X_{X_{\cdot}\mathcal{X}_{\cdot}t}^{\epsilon}(s),u_{X_{\cdot}t}(s)+\theta\epsilon u_{X_{\cdot}\mathcal{X}_{\cdot}t}^{\epsilon}(s))X_{X_{\cdot}\mathcal{X}_{\cdot}t}^{\epsilon}(s)+l_{vv}(X_{X_{\cdot}t}(s)+\theta\epsilon X_{X_{\cdot}\mathcal{X}_{\cdot}t}^{\epsilon}(s),u_{X_{\cdot}t}(s)+\theta\epsilon u_{X_{\cdot}\mathcal{X}_{\cdot}t}^{\epsilon}(s))u_{X_{\cdot}\mathcal{X}_{\cdot}t}^{\epsilon}(s)]d\theta+Z_{X_{\cdot}\mathcal{X}_{\cdot}t}^{\epsilon}(s)=0\label{eq:8-7-1}
% \end{equation}
\begin{align}
    -dZ_{X_{\cdot}\mathcal{X}_{\cdot}t}^{\epsilon}(s) &=\int_{0}^{1} \bigg[l_{xx}(X_{X_{\cdot}t}(s)+\theta\epsilon X_{X_{\cdot}\mathcal{X}_{\cdot}t}^{\epsilon}(s),u_{X_{\cdot}t}(s)+\theta\epsilon u_{X_{\cdot}\mathcal{X}_{\cdot}t}^{\epsilon}(s))X_{X_{\cdot}\mathcal{X}_{\cdot}t}^{\epsilon}(s)\nonumber\\
    &\qquad\quad \, +l_{xv}(X_{X_{\cdot} t}(s)+\theta\epsilon X_{X_{\cdot}\mathcal{X}_{\cdot}t}^{\epsilon}(s),u_{X_{\cdot}t}(s)+\theta\epsilon u_{X_{\cdot}\mathcal{X}_{\cdot}t}^{\epsilon}(s))u_{X_{\cdot}\mathcal{X}_{\cdot}t}^{\epsilon}(s) \bigg]d\theta\label{eq:8-70-1}\\
    &\quad \,\, +\int_{0}^{1}D_{X}^{2}\mathbb{E}\left(F\left(((X_{X_{\cdot} t}(s)+\theta\epsilon X_{X_{\cdot}\mathcal{X}_{\cdot}t}^{\epsilon}(s))\otimes m)^{\mathcal{B}_{t}^{s}}\right)\right)(X_{X\mathcal{X}t}^{\epsilon}(s))d\theta \nonumber \\
    &\quad \,\, -\sum_{j}r_{X_{\cdot}\mathcal{X}_{\cdot}t}^{j,\epsilon}(s)dw_{j}(s)-\sum_{j}\rho_{X_{\cdot}\mathcal{X}_{\cdot}t}^{j,\epsilon}(s)db_{j}(s),\nonumber
\end{align}
% \begin{equation}
% -dZ_{X_{\cdot}\mathcal{X}_{\cdot}t}^{\epsilon}(s)=\int_{0}^{1}[l_{xx}(X_{X_{\cdot}t}(s)+\theta\epsilon X_{X_{\cdot}\mathcal{X}_{\cdot}t}^{\epsilon}(s),u_{X_{\cdot}t}(s)+\theta\epsilon u_{X_{\cdot}\mathcal{X}_{\cdot}t}^{\epsilon}(s))X_{X_{\cdot}\mathcal{X}_{\cdot}t}^{\epsilon}(s)+l_{xv}(X_{Xt}(s)+\theta\epsilon X_{X_{\cdot}\mathcal{X}_{\cdot}t}^{\epsilon}(s),u_{X_{\cdot}t}(s)+\theta\epsilon u_{X_{\cdot}\mathcal{X}_{\cdot}t}^{\epsilon}(s))u_{X_{\cdot}\mathcal{X}_{\cdot}t}^{\epsilon}(s)]d\theta\label{eq:8-70-1}
% \end{equation}
% \[
% +\int_{0}^{1}D_{X}^{2}\mathbb{E}F(((X_{Xt}(s)+\theta\epsilon X_{X_{\cdot}\mathcal{X}_{\cdot}t}^{\epsilon}(s))\otimes m)^{\mathcal{B}_{t}^{s}})(X_{X\mathcal{X}t}^{\epsilon}(s))d\theta-\sum_{j}r_{X_{\cdot}\mathcal{X}_{\cdot}t}^{j,\epsilon}(s)dw_{j}(s)-\sum_{j}\rho_{X_{\cdot}\mathcal{X}_{\cdot}t}^{j,\epsilon}(s)db_{j}(s)
% \]
\begin{align}
    Z_{X\mathcal{X}t}^{\epsilon}(T)=&\int_{0}^{1}h_{xx}(X_{X_{\cdot}t}(T)+\theta\epsilon X_{X_{\cdot}\mathcal{X}_{\cdot}t}^{\epsilon}(T))X_{X_{\cdot}\mathcal{X}_{\cdot}t}^{\epsilon}(T)d\theta\nonumber\\
    &+\int_{0}^{1}D_{X}^{2}\mathbb{E}\left(F_{T} \left(((X_{X_{\cdot}t}(T)+\theta\epsilon X_{X_{\cdot}\mathcal{X}_{\cdot}t}^{\epsilon}(T))\otimes m)^{\mathcal{B}_{t}^{T}}\right)\right)(X_{X\mathcal{X}t}^{\epsilon}(T))d\theta.\nonumber
\end{align}
% \[
% Z_{X\mathcal{X}t}^{\epsilon}(T)=\int_{0}^{1}h_{xx}(X_{X_{\cdot}t}(T)+\theta\epsilon X_{X_{\cdot}\mathcal{X}_{\cdot}t}^{\epsilon}(T))X_{X_{\cdot}\mathcal{X}_{\cdot}t}^{\epsilon}(T)d\theta+\int_{0}^{1}D_{X}^{2}\mathbb{E}F_{T}(((X_{X_{\cdot}t}(T)+\theta\epsilon X_{X_{\cdot}\mathcal{X}_{\cdot}t}^{\epsilon}(T))\otimes m)^{\mathcal{B}_{t}^{T}})(X_{X\mathcal{X}t}^{\epsilon}(T))d\theta
% \]
By techniques already used, we can check that $u_{X_{\cdot}\mathcal{X}_{\cdot}t}^{\epsilon}(s),$
$X_{X_{\cdot}\mathcal{X}_{\cdot}t}^{\epsilon}(s),Z_{X_{\cdot}\mathcal{X}_{\cdot}t}^{\epsilon}(s)$
are bounded in $L_{\mathcal{F}_{X_{\cdot}\mathcal{X}_{\cdot}t}}^{\infty}(t,T;\mathcal{H}_{m})$
and $r_{X_{\cdot}\mathcal{X}_{\cdot}t}^{j,\epsilon}(s),\rho_{X_{\cdot}\mathcal{X}_{\cdot}t}^{j,\epsilon}(s)$
are bounded in $L_{\mathcal{F}_{X\mathcal{X}t}}^{2}(t,T;\mathcal{H}_{m}).$ 

We consider then subsequences of $u_{X_{\cdot}\mathcal{X}_{\cdot}t}^{\epsilon}(s),X_{X_{\cdot}\mathcal{X}_{\cdot}t}^{\epsilon}(s),$
$Z_{X_{\cdot}\mathcal{X}_{\cdot}t}^{\epsilon}(s),r_{X_{\cdot}\mathcal{X}_{\cdot}t}^{j,\epsilon}(s)$,
$\rho_{X_{\cdot}\mathcal{X}_{\cdot}t}^{j,\epsilon}(s)$ which converge weakly
in $L_{\mathcal{F}_{X_{\cdot}\mathcal{X}_{\cdot}t}}^{2}(t,T;\mathcal{H}_{m})$
to $\mathcal{U}_{X_{\cdot}\mathcal{X}_{\cdot}t}(s),\mathcal{X}_{X_{\cdot}\mathcal{X}_{\cdot}t}(s),\mathcal{Z}_{X_{\cdot}\mathcal{X}_{\cdot}t}(s),\mathcal{R}_{X_{\cdot}\mathcal{X}_{\cdot}t}^{j}(s),\Theta_{X_{\cdot}\mathcal{X}_{\cdot}t}^{j}(s).$
It follows immediately that 
\begin{equation}
\mathcal{X}_{X_{\cdot}\mathcal{X}_{\cdot}t}(s)=\mathcal{X}+\int_{t}^{s}\mathcal{U}_{X_{\cdot}\mathcal{X}_{\cdot}t}(\tau)d\tau.\label{eq:8-71}
\end{equation}
 Define 
\begin{align}
   L_{X_{\cdot}\mathcal{X}_{\cdot}t}^{\epsilon}(s) :=&\int_{0}^{1} \bigg[l_{vx} \Big(X_{X_{\cdot}t}(s)+\theta\epsilon X_{X_{\cdot}\mathcal{X}_{\cdot}t}^{\epsilon}(s),u_{X_{\cdot}t}(s)+\theta\epsilon u_{X_{\cdot}\mathcal{X}_{\cdot}t}^{\epsilon}(s)\Big)X_{X_{\cdot}\mathcal{X}_{\cdot}t}^{\epsilon}(s)\nonumber\\
    &\qquad +l_{vv}\Big(X_{X_{\cdot}t}(s)+\theta\epsilon X_{X_{\cdot}\mathcal{X}_{\cdot}t}^{\epsilon}(s),u_{X_{\cdot}t}(s)+\theta\epsilon u_{X_{\cdot}\mathcal{X}_{\cdot}t}^{\epsilon}(s)\Big)u_{X_{\cdot}\mathcal{X}_{\cdot}t}^{\epsilon}(s)\bigg]d\theta.\nonumber
\end{align}
% \[
% L_{X_{\cdot}\mathcal{X}_{\cdot}t}^{\epsilon}(s)=\int_{0}^{1}[l_{vx}(X_{X_{\cdot}t}(s)+\theta\epsilon X_{X_{\cdot}\mathcal{X}_{\cdot}t}^{\epsilon}(s),u_{X_{\cdot}t}(s)+\theta\epsilon u_{X_{\cdot}\mathcal{X}_{\cdot}t}^{\epsilon}(s))X_{X_{\cdot}\mathcal{X}_{\cdot}t}^{\epsilon}(s)+l_{vv}(X_{X_{\cdot}t}(s)+\theta\epsilon X_{X_{\cdot}\mathcal{X}_{\cdot}t}^{\epsilon}(s),u_{X_{\cdot}t}(s)+\theta\epsilon u_{X_{\cdot}\mathcal{X}_{\cdot}t}^{\epsilon}(s))u_{X_{\cdot}\mathcal{X}_{\cdot}t}^{\epsilon}(s)]d\theta
% \]
We want to show that 
\begin{equation}
L_{X_{\cdot}\mathcal{X}_{\cdot}t}^{\epsilon}(s)\rightarrow L_{X_{\cdot}\mathcal{X}_{\cdot}t}(s)=l_{vx}(X_{X_{\cdot}t}(s),u_{X_{\cdot}t}(s))\mathcal{X}_{X_{\cdot}\mathcal{X}_{\cdot}t}(s)+l_{vv}(X_{X_{\cdot}t}(s),u_{X_{\cdot}t}(s))\mathcal{U}_{X_{\cdot}\mathcal{X}_{\cdot}t}(s)\label{eq:8-72}
\end{equation}
in $L_{\mathcal{F}_{X_{\cdot}\mathcal{X}_{\cdot}t}}^{2}(t,T;\mathcal{H}_{m})$
weakly as $\epsilon \rightarrow 0$. Let us take $\Gamma_{X_{\cdot}\mathcal{X}_{\cdot}t}(s)$ in $L_{\mathcal{F}_{X_{\cdot}\mathcal{X}_{\cdot}t}}^{2}(t,T;\mathcal{H}_{m}).$
We have: 
\begin{align*}
&\quad\int_{t}^{T} \langle L_{X_{\cdot}\mathcal{X}_{\cdot}t}^{\epsilon}(s),\Gamma_{X\mathcal{X}t}(s)\rangle ds \\
&= \int_{t}^{T} \left\langle X_{X_{\cdot}\mathcal{X}_{\cdot}t}^{\epsilon}(s),\int_{0}^{1}l_{xv} \Big(X_{Xt}(s)+\theta\epsilon X_{X\mathcal{X}t}^{\epsilon}(s),u_{Xt}(s)+\theta\epsilon u_{X_{\cdot}\mathcal{X}_{\cdot}t}^{\epsilon}(s)\Big)\Gamma_{X_{\cdot}\mathcal{X}_{\cdot}t}(s)d\theta \right\rangle ds\\
&\quad+\int_{t}^{T} \left\langle u_{X_{\cdot}\mathcal{X}_{\cdot}t}^{\epsilon}(s),\int_{0}^{1}l_{vv} \Big(X_{Xt}(s)+\theta\epsilon X_{X_{\cdot}\mathcal{X}_{\cdot}t}^{\epsilon}(s),u_{Xt}(s)+\theta\epsilon u_{X_{\cdot}\mathcal{X}_{\cdot}t}^{\epsilon}(s) \Big)\Gamma_{X_{\cdot}\mathcal{X}_{\cdot}t}(s)d\theta \right\rangle ds.
\end{align*}
% \[
% \int_{t}^{T}((L_{X_{\cdot}\mathcal{X}_{\cdot}t}^{\epsilon}(s),\Gamma_{X\mathcal{X}t}(s)))ds=\int_{t}^{T}((X_{X_{\cdot}\mathcal{X}_{\cdot}t}^{\epsilon}(s),\int_{0}^{1}l_{xv}(X_{Xt}(s)+\theta\epsilon X_{X\mathcal{X}t}^{\epsilon}(s),u_{Xt}(s)+\theta\epsilon u_{X_{\cdot}\mathcal{X}_{\cdot}t}^{\epsilon}(s))\Gamma_{X_{\cdot}\mathcal{X}_{\cdot}t}(s)d\theta))ds+
% \]

% \[
% +\int_{t}^{T}((u_{X_{\cdot}\mathcal{X}_{\cdot}tt}^{\epsilon}(s),\int_{0}^{1}l_{vv}(X_{Xt}(s)+\theta\epsilon X_{X_{\cdot}\mathcal{X}_{\cdot}t}^{\epsilon}(s),u_{Xt}(s)+\theta\epsilon u_{X_{\cdot}\mathcal{X}_{\cdot}t}^{\epsilon}(s))\Gamma_{X_{\cdot}\mathcal{X}_{\cdot}t}(s)d\theta))ds
% \]
Since $X_{Xt}(\cdot)+\theta\epsilon X_{X_{\cdot}\mathcal{X}_{\cdot}t}^{\epsilon}(\cdot)\rightarrow X_{Xt}(\cdot)$,$\ u_{Xt}(\cdot)+\theta\epsilon u_{X_{\cdot}\mathcal{X}_{\cdot}t}^{\epsilon}(\cdot) \rightarrow u_{Xt}(\cdot)$
in $L_{\mathcal{F}_{X\mathcal{X}t}}^{2}(t,T;\mathcal{H}_{m})$, and
$l_{xx}$ is continuous and bounded by assumptions (\ref{eq:4-1000}),
(\ref{eq:4-1002}), it follows by classical Lebesgue integral theory
that 
\[
\int_{0}^{1}l_{xv}\Big(X_{Xt}(s)+\theta\epsilon X_{X_{\cdot}\mathcal{X}_{\cdot}t}^{\epsilon}(s),u_{Xt}(s)+\theta\epsilon u_{X_{\cdot}\mathcal{X}_{\cdot}t}^{\epsilon}(s)\Big)\Gamma_{X_{\cdot}\mathcal{X}_{\cdot}t}(s)d\theta\rightarrow l_{xv}\Big(X_{Xt}(s),u_{Xt}(s)\Big)\Gamma_{X_{\cdot}\mathcal{X}_{\cdot}t}(s)\ \text{ in}\ L_{\mathcal{F}_{X\mathcal{X}t}}^{2}(t,T;\mathcal{H}_{m}),
\]
and a similar argument applies to $\int_{0}^{1}l_{vv}(X_{Xt}(s)+\theta\epsilon X_{X_{\cdot}\mathcal{X}_{\cdot}t}^{\epsilon}(s),u_{Xt}(s)+\theta\epsilon u_{X_{\cdot}\mathcal{X}_{\cdot}t}^{\epsilon}(s))\Gamma_{X_{\cdot}\mathcal{X}_{\cdot}t}(s)d\theta.$
Therefore 
\begin{align}
    \int_{t}^{T} \Big\langle L_{X_{\cdot}\mathcal{X}_{\cdot}t}^{\epsilon}(s),\Gamma_{X_{\cdot}\mathcal{X}_{\cdot}t}(s)\Big\rangle ds\rightarrow&\int_{t}^{T}\Big\langle \mathcal{X}_{X\mathcal{X}t}(s),l_{xv}(X_{Xt}(s),u_{Xt}(s))\Gamma_{X_{\cdot}\mathcal{X}_{\cdot}t}(s)\Big\rangle ds\nonumber\\
    &+\int_{t}^{T} \Big\langle \mathcal{U}_{X_{\cdot}\mathcal{X}_{\cdot}t}(s),l_{vv}(X_{Xt}(s),u_{Xt}(s))\Gamma_{X_{\cdot}\mathcal{X}_{\cdot}t}(s) \Big\rangle ds.\nonumber
\end{align}
% \[
% \int_{t}^{T}((L_{X_{\cdot}\mathcal{X}_{\cdot}t}^{\epsilon}(s),\Gamma_{X_{\cdot}\mathcal{X}_{\cdot}t}(s)))ds\rightarrow\int_{t}^{T}((\mathcal{X}_{X\mathcal{X}t}(s),l_{xv}(X_{Xt}(s),u_{Xt}(s))\Gamma_{X_{\cdot}\mathcal{X}_{\cdot}t}(s)))ds+
% \]

% \[
% +\int_{t}^{T}((\mathcal{U}_{X_{\cdot}\mathcal{X}_{\cdot}tt}(s),l_{vv}(X_{Xt}(s),u_{Xt}(s))\Gamma_{X_{\cdot}\mathcal{X}_{\cdot}t}(s)))ds
% \]
Transposing the matrices $l_{xv}$ and $l_{vv}$ we obtain 
\[
\int_{t}^{T} \Big\langle L_{X_{\cdot}\mathcal{X}_{\cdot}t}^{\epsilon}(s),\Gamma_{X_{\cdot}\mathcal{X}_{\cdot}t}(s)\Big\rangle ds\rightarrow\int_{t}^{T}\Big\langle L_{X_{\cdot}\mathcal{X}_{\cdot}t}(s),\Gamma_{X_{\cdot}\mathcal{X}_{\cdot}t}(s)\Big \rangle ds,
\]
and since $\Gamma_{X\mathcal{X}t}(s)$ is arbitrary, the result (\ref{eq:8-72})
follows. So (\ref{eq:8-7-1}) yields:
\begin{equation}
l_{vx}(X_{Xt}(s),u_{Xt}(s))\mathcal{X}_{X\mathcal{X}t}(s)+l_{vv}(X_{Xt}(s),u_{Xt}(s))\mathcal{U}_{X_{\cdot}\mathcal{X}_{\cdot}t}(s)+\mathcal{Z}_{X_{\cdot}\mathcal{X}_{\cdot}t}(s)=0.\label{eq:8-73}
\end{equation}
A similar argument implies:
\begin{align}
&\quad \int_{0}^{1} \bigg[l_{xx}\Big(X_{Xt}(s)+\theta\epsilon X_{X\mathcal{X}t}^{\epsilon}(s),u_{Xt}(s)+\theta\epsilon u_{X_{\cdot}\mathcal{X}_{\cdot}t}^{\epsilon}(s)\Big)X_{X_{\cdot}\mathcal{X}_{\cdot}t}^{\epsilon}(s)\label{eq:8-74}\\
&\qquad\quad +l_{xv}\Big(X_{Xt}(s)+\theta\epsilon X_{X_{\cdot}\mathcal{X}_{\cdot}t}^{\epsilon}(s),u_{Xt}(s)+\theta\epsilon u_{X_{\cdot}\mathcal{X}_{\cdot}t}^{\epsilon}(s)\Big)u_{X_{\cdot}\mathcal{X}_{\cdot}t}^{\epsilon}(s) \bigg]d\theta\nonumber\\
&\rightarrow l_{xx}\Big(X_{Xt}(s),u_{Xt}(s)\Big)\mathcal{X}_{X_{\cdot}\mathcal{X}_{\cdot}t}(s)+l_{xv}\Big(X_{Xt}(s),u_{Xt}(s)\Big)\mathcal{U}_{X_{\cdot}\mathcal{X}_{\cdot}t}(s)\ \ \text{in }L_{\mathcal{F}_{X\mathcal{X}t}}^{2}(t,T;\mathcal{H}_{m})\ \text{weakly}.\nonumber
\end{align}
% \begin{equation}
% \int_{0}^{1}[l_{xx}(X_{Xt}(s)+\theta\epsilon X_{X\mathcal{X}t}^{\epsilon}(s),u_{Xt}(s)+\theta\epsilon u_{X_{\cdot}\mathcal{X}_{\cdot}t}^{\epsilon}(s))X_{X_{\cdot}\mathcal{X}_{\cdot}t}^{\epsilon}(s)+l_{xv}(X_{Xt}(s)+\theta\epsilon X_{X_{\cdot}\mathcal{X}_{\cdot}t}^{\epsilon}(s),u_{Xt}(s)+\theta\epsilon u_{X_{\cdot}\mathcal{X}_{\cdot}t}^{\epsilon}(s))u_{X_{\cdot}\mathcal{X}_{\cdot}t}^{\epsilon}(s)]d\theta\rightarrow\label{eq:8-74}
% \end{equation}

% \[
% l_{xx}(X_{Xt}(s),u_{Xt}(s))\mathcal{X}_{X_{\cdot}\mathcal{X}_{\cdot}tt}(s)+l_{xv}(X_{Xt}(s),u_{Xt}(s))\mathcal{U}_{X_{\cdot}\mathcal{X}_{\cdot}tt}(s)\ \ \text{in }L_{\mathcal{F}_{X\mathcal{X}t}}^{2}(t,T;\mathcal{H}_{m})\ \text{weakly}
% \]
Now, the property (\ref{eq:3-157}) holds true, so we can also state
that 
\begin{align}
    &\quad \int_{0}^{1}D_{X}^{2}\mathbb{E} \left(F \left(((X_{Xt}(s)+\theta\epsilon X_{X_{\cdot}\mathcal{X}_{\cdot}t}^{\epsilon}(s))\otimes m)^{\mathcal{B}_{t}^{s}}\right)\right)(X_{X_{\cdot}\mathcal{X}_{\cdot}t}^{\epsilon}(s))d\theta\nonumber\\
    &\rightarrow D_{X}^{2}\mathbb{E}\left(F\left((X_{X_{\cdot}t}(s)\otimes m)^{\mathcal{B}_{t}^{s}}\right)\right)(\mathcal{X}_{X_{\cdot}\mathcal{X}_{\cdot}t}(s))\ \text{in }L_{\mathcal{F}_{X\mathcal{X}t}}^{2}(t,T;\mathcal{H}_{m})\ \text{weakly}.\label{eq:8-75}
\end{align}
% \begin{equation}
% \int_{0}^{1}D_{X}^{2}\mathbb{E}F(((X_{Xt}(s)+\theta\epsilon X_{X_{\cdot}\mathcal{X}_{\cdot}t}^{\epsilon}(s))\otimes m)^{\mathcal{B}_{t}^{s}})(X_{X_{\cdot}\mathcal{X}_{\cdot}t}^{\epsilon}(s))d\theta\rightarrow D_{X}^{2}\mathbb{E}F((X_{X_{\cdot}t}(s)\otimes m)^{\mathcal{B}_{t}^{s}})(\mathcal{X}_{X_{\cdot}\mathcal{X}_{\cdot}t}(s))\ \text{in }L_{\mathcal{F}_{X\mathcal{X}t}}^{2}(t,T;\mathcal{H}_{m})\ \text{weakly}\label{eq:8-75}
% \end{equation}
Also,
\begin{align}
    &\quad \int_{0}^{1}h_{xx}(X_{Xt}(T)+\theta\epsilon X_{X_{\cdot}\mathcal{X}_{\cdot}t}^{\epsilon}(T))X_{X_{\cdot}\mathcal{X}_{\cdot}t}^{\epsilon}(T)d\theta+\int_{0}^{1}D_{X}^{2}\mathbb{E}\left(F_{T} \left(((X_{Xt}(T)+\theta\epsilon X_{X_{\cdot}\mathcal{X}_{\cdot}t}^{\epsilon}(T))\otimes m)^{\mathcal{B}_{t}^{T}}\right)\right)(X_{X_{\cdot}\mathcal{X}_{\cdot}t}^{\epsilon}(T))d\theta\nonumber\\
    &\rightarrow h_{xx}(X_{Xt}(T))\mathcal{X}_{X_{\cdot}\mathcal{X}_{\cdot}t}(T)+D_{X}^{2}\mathbb{E} \left(F_{T} \left((X_{X_{\cdot}t}(T)\otimes m)^{\mathcal{B}_{t}^{T}}\right)\right)(\mathcal{X}_{X_{\cdot}\mathcal{X}_{\cdot}t}(T)) \label{eq:8-76}
\end{align}
weakly in the subspace of \ensuremath{\mathcal{H}_{m}} of $\mathcal{F}_{X_{\cdot}\mathcal{X}_{\cdot}t}^{T}$-measurable variable.
% \begin{equation}
% \int_{0}^{1}h_{xx}(X_{Xt}(T)+\theta\epsilon X_{X_{\cdot}\mathcal{X}_{\cdot}t}^{\epsilon}(T))X_{X_{\cdot}\mathcal{X}_{\cdot}t}^{\epsilon}(T)d\theta+\int_{0}^{1}D_{X}^{2}\mathbb{E}F_{T}(((X_{Xt}(T)+\theta\epsilon X_{X_{\cdot}\mathcal{X}_{\cdot}t}^{\epsilon}(T))\otimes m)^{\mathcal{B}_{t}^{T}})(X_{X_{\cdot}\mathcal{X}_{\cdot}t}^{\epsilon}(T))d\theta\rightarrow\label{eq:8-76}
% \end{equation}
% \[
% h_{xx}(X_{Xt}(T))\mathcal{X}_{X_{\cdot}\mathcal{X}_{\cdot}t}(T)+D_{X}^{2}\mathbb{E}F_{T}((X_{X_{\cdot}t}(T)\otimes m)^{\mathcal{B}_{t}^{T}})(\mathcal{X}_{X_{\cdot}\mathcal{X}_{\cdot}t}(T))\ \text{weakly in the subspace of \ensuremath{\mathcal{H}_{m}} of variables \ensuremath{\mathcal{F}_{X_{\cdot}\mathcal{X}_{\cdot}t}^{T}} measurable }
% \]
From relation (\ref{eq:8-70-1}), by conditioning and letting $\epsilon\rightarrow0$
we obtain 
\begin{align}
    \mathcal{Z}_{X_{\cdot}\mathcal{X}_{\cdot}t}(s) &=\mathbb{E} \Bigg[\int_{s}^{T} \bigg(l_{xx}(X_{Xt}(\tau),u_{Xt}(\tau))\mathcal{X}_{X_{\cdot}\mathcal{X}_{\cdot}t}(\tau)+l_{xv}(X_{Xt}(\tau),u_{Xt}(\tau))\mathcal{U}_{X_{\cdot}\mathcal{X}_{\cdot}t}(\tau)\label{eq:8-77}\\
    &\qquad\qquad \quad +D_{X}^{2}\mathbb{E} \left(F \left((X_{X_{\cdot}t}(\tau)\otimes m)^{\mathcal{B}_{t}^{\tau}}\right)\right)(\mathcal{X}_{X_{\cdot}\mathcal{X}_{\cdot}t}(\tau))\bigg)d\tau\nonumber\\
    &\quad \, +h_{xx}(X_{Xt}(T))\mathcal{X}_{X\mathcal{X}t}(T)+D_{X}^{2}\mathbb{E} \left(F_{T}\left((X_{X_{\cdot}t}(T)\otimes m)^{\mathcal{B}_{t}^{T}}\right)\right)(\mathcal{X}_{X_{\cdot}\mathcal{X}_{\cdot}t}(T)) \bigg|\mathcal{F}_{X_{\cdot}\mathcal{X}_{\cdot}t}^{s} \Bigg].\nonumber
\end{align}
% \begin{equation}
% \mathcal{Z}_{X_{\cdot}\mathcal{X}_{\cdot}t}(s)=\mathbb{E}[\int_{s}^{T}(l_{xx}(X_{Xt}(\tau),u_{Xt}(\tau))\mathcal{X}_{X_{\cdot}\mathcal{X}_{\cdot}t}(\tau)+l_{xv}(X_{Xt}(\tau),u_{Xt}(\tau))\mathcal{U}_{X_{\cdot}\mathcal{X}_{\cdot}t}(\tau)+\label{eq:8-77}
% \end{equation}

% \[
% +D_{X}^{2}\mathbb{E}F((X_{X_{\cdot}t}(\tau)\otimes m)^{\mathcal{B}_{t}^{\tau}})(\mathcal{X}_{X_{\cdot}\mathcal{X}_{\cdot}t}(\tau)))d\tau+h_{xx}(X_{Xt}(T))\mathcal{X}_{X\mathcal{X}t}(T)+D_{X}^{2}\mathbb{E}F_{T}((X_{X_{\cdot}t}(T)\otimes m)^{\mathcal{B}_{t}^{T}})(\mathcal{X}_{X_{\cdot}\mathcal{X}_{\cdot}t}(T))|\ensuremath{\mathcal{F}_{X_{\cdot}\mathcal{X}_{\cdot}tt}^{s}}]
% \]
This implies that $\mathcal{Z}_{X_{\cdot}\mathcal{X}_{\cdot}t}(s)$ is an
It\^o process, and satisfies 
\begin{equation} \label{eq:8-78}
\left\{
\begin{aligned}
    -d\mathcal{Z}_{X_{\cdot}\mathcal{X}_{\cdot}t}(s) &= \bigg[l_{xx}(X_{Xt}(s),u_{Xt}(s))\mathcal{X}_{X_{\cdot}\mathcal{X}_{\cdot}t}(s)+l_{xv}(X_{Xt}(s),u_{Xt}(s))\mathcal{\mathcal{U}}_{X_{\cdot}\mathcal{X}_{\cdot}t}(s)\\
    &\qquad +D_{X}^{2}\mathbb{E} \left(F\left((X_{X_{\cdot}t}(s)\otimes m)^{\mathcal{B}_{t}^{s}}\right)\right)(\mathcal{X}_{X_{\cdot}\mathcal{X}_{\cdot}t}(s))\bigg]ds\\
    &\quad \, -\sum_{j=1}^{n}\mathcal{R}_{X_{\cdot}\mathcal{X}_{\cdot}t}^{j}(s)dw_{j}(s)-\sum_{j=1}^{n}\Theta_{X_{\cdot}\mathcal{X}_{\cdot}t}^{j}(s)db_{j}(s),\\
    \mathcal{Z}_{X_{\cdot}\mathcal{X}_{\cdot}t}(T)&=h_{xx}(X_{Xt}(T))\mathcal{X}_{X_{\cdot}\mathcal{X}_{\cdot}t}(T)+D_{X}^{2}\mathbb{E} \left(F_{T}\left((X_{X_{\cdot}t}(T)\otimes m)^{\mathcal{B}_{t}^{T}}\right)\right)(\mathcal{X}_{X_{\cdot}\mathcal{X}_{\cdot}t}(T)).
\end{aligned}\right.
\end{equation}

% \[
% -d\mathcal{Z}_{X_{\cdot}\mathcal{X}_{\cdot}t}(s)=[l_{xx}(X_{Xt}(s),u_{Xt}(s))\mathcal{X}_{X_{\cdot}\mathcal{X}_{\cdot}t}(s)+l_{xv}(X_{Xt}(s),u_{Xt}(s))\mathcal{\mathcal{U}}_{X_{\cdot}\mathcal{X}_{\cdot}t}(s)+D_{X}^{2}\mathbb{E}F((X_{X_{\cdot}t}(s)\otimes m)^{\mathcal{B}_{t}^{s}})(\mathcal{X}_{X_{\cdot}\mathcal{X}_{\cdot}t}(s))]ds
% \]
% \begin{equation}
% -\sum_{j=1}^{n}\mathcal{R}_{X_{\cdot}\mathcal{X}_{\cdot}t}^{j}(s)dw_{j}(s)-\sum_{j=1}^{n}\Theta_{X_{\cdot}\mathcal{X}_{\cdot}t}^{j}(s)db_{j}(s)\label{eq:8-78}
% \end{equation}

We can also check that $\mathcal{R}_{X_{\cdot}\mathcal{X}_{\cdot}t}^{j}(s),\Theta_{X_{\cdot}\mathcal{X}_{\cdot}t}^{j}(s)$
are the weak limits of $r_{X_{\cdot}\mathcal{X}_{\cdot}t}^{j,\epsilon}(s)$, $\rho_{X_{\cdot}\mathcal{X}_{\cdot}t}^{j,\epsilon}(s)$.

Since we have also 
\begin{align}
    \Big\langle \mathcal{X},Z_{X_{\cdot}\mathcal{X}_{\cdot}t}^{\epsilon}(t)\Big\rangle =&\Bigg\langle \mathcal{X},\int_{0}^{1} \bigg[l_{xx} \Big(X_{Xt}(s)+\theta\epsilon X_{X_{\cdot}\mathcal{X}_{\cdot}t}^{\epsilon}(s),u_{Xt}(s)+\theta\epsilon u_{X_{\cdot}\mathcal{X}_{\cdot}t}^{\epsilon}(s)\Big)X_{X_{\cdot}\mathcal{X}_{\cdot}t}^{\epsilon}(s)\nonumber\\
    &\qquad\qquad +l_{xv}\Big(X_{Xt}(s)+\theta\epsilon X_{X_{\cdot}\mathcal{X}_{\cdot}t}^{\epsilon}(s),u_{Xt}(s)+\theta\epsilon u_{X_{\cdot}\mathcal{X}_{\cdot}t}^{\epsilon}(s)\Big)u_{X_{\cdot}\mathcal{X}_{\cdot}t}^{\epsilon}(s) \bigg]d\theta\nonumber\\
    &\qquad \,\, +\int_{0}^{1}D_{X}^{2}\mathbb{E} \left(F\left(((X_{Xt}(s)+\theta\epsilon X_{X_{\cdot}\mathcal{X}_{\cdot}t}^{\epsilon}(s))\otimes m)^{\mathcal{B}_{t}^{s}}\right)\right)(X_{X_{\cdot}\mathcal{X}_{\cdot}t}^{\epsilon}(s))d\theta\Bigg\rangle, \nonumber
\end{align}
% \[
% ((\mathcal{X},Z_{X_{\cdot}\mathcal{X}_{\cdot}t}^{\epsilon}(t)))=((\mathcal{X},\int_{0}^{1}[l_{xx}(X_{Xt}(s)+\theta\epsilon X_{X_{\cdot}\mathcal{X}_{\cdot}t}^{\epsilon}(s),u_{Xt}(s)+\theta\epsilon u_{X_{\cdot}\mathcal{X}_{\cdot}t}^{\epsilon}(s))X_{X_{\cdot}\mathcal{X}_{\cdot}t}^{\epsilon}(s)+
% \]

% \[
% l_{xv}(X_{Xt}(s)+\theta\epsilon X_{X_{\cdot}\mathcal{X}_{\cdot}tt}^{\epsilon}(s),u_{Xt}(s)+\theta\epsilon u_{X_{\cdot}\mathcal{X}_{\cdot}t}^{\epsilon}(s))u_{X_{\cdot}\mathcal{X}_{\cdot}t}^{\epsilon}(s)]d\theta+\int_{0}^{1}D_{X}^{2}\mathbb{E}F(((X_{Xt}(s)+\theta\epsilon X_{X_{\cdot}\mathcal{X}_{\cdot}t}^{\epsilon}(s))\otimes m)^{\mathcal{B}_{t}^{s}})(X_{X_{\cdot}\mathcal{X}_{\cdot}t}^{\epsilon}(s))d\theta))
% \]
 we can state that 
\begin{equation}
\Big\langle \mathcal{X},Z_{X_{\cdot}\mathcal{X}_{\cdot}t}^{\epsilon}(t)\Big\rangle \rightarrow \Big\langle \mathcal{X},\mathcal{Z}_{X_{\cdot}\mathcal{X}_{\cdot}t}(t)\Big\rangle. \label{eq:8-80}
\end{equation}
Since $Z_{Xt}(t)=D_{X}V(X\otimes m,t),$ from the definition of the
second derivative (see (\ref{eq:2-227})), we obtain: 
\begin{equation}
D_{X}^{2}V(X\otimes m,t)(\mathcal{X})=\mathcal{Z}_{X_{\cdot}\mathcal{X}_{\cdot}t}(t).\label{eq:8-81}
\end{equation}
We then proceed to prove the continuity property (\ref{eq:5-2001}).
By definition,
\begin{numcases}{}
\mathcal{X}_{X_{k}\mathcal{X}_{k}t_{k}}(s)=\mathcal{X}_{k}+\int_{t_{k}}^{s}\mathcal{\mathcal{U}}_{X_{k}\mathcal{X}_{k}t_{k}}(\tau)d\tau,\ s>t_{k},\label{eq:8-10}\\
    -d\mathcal{\ Z}_{X_{k}\mathcal{X}_{k}t_{k}}(s)=\Bigg[l_{xx}\Big(X_{X_{k}t_{k}}(s),u_{X_{k}t_{k}}(s) \Big)\mathcal{X}_{X_{k}\mathcal{X}_{k}t_{k}}(s)+l_{xv}\Big(X_{X_{k}t_{k}}(s),u_{X_{k}t_{k}}(s)\Big)\mathcal{\mathcal{U}}_{X_{k}\mathcal{X}_{k}t_{k}}(s)\label{eq:8-82}\\
    \qquad\qquad\qquad\qquad +D_{X}^{2}\mathbb{E} \left(F\left((X_{X_{k}t_{k}}(s)\otimes m)^{\mathcal{B}_{t_{k}}^{s}}\right)\right)(\mathcal{X}_{X_{k}\mathcal{X}_{k}t_{k}}(s)) \Bigg]ds \nonumber\\
    \qquad\qquad\qquad\qquad -\sum_{j=1}^{n}\mathcal{R}_{X_{k}\mathcal{X}_{k}t_{k}}^{j}(s)dw_{j}(s)-\sum_{j=1}^{n}\Theta_{X_{k}\mathcal{X}_{k}t_{k}}^{j}(s)db_{j}(s),\nonumber\\
    \mathcal{Z}_{X_{k}\mathcal{X}_{k}t_{k}}(T)=h_{xx}(X_{X_{k}t_{k}}(T))\mathcal{X}_{X_{k}\mathcal{X}_{k}t_{k}}(T)+D_{X}^{2}\mathbb{E} \left(F_{T} \left((X_{X_{k}t_{k}}(T)\otimes m)^{\mathcal{B}_{t_{k}}^{T}}\right)\right)(\mathcal{X}_{X_{k}\mathcal{X}_{k}t_{k}}(T)),\nonumber\\
    l_{vx}\Big(X_{X_{k}t_{k}}(s),u_{X_{k}t_{k}}(s)\Big)\mathcal{X}_{X_{k}\mathcal{X}_{k}t_{k}}(s)+l_{vv}\Big(X_{X_{k}t_{k}}(s),u_{X_{k}t_{k}}(s)\Big)\mathcal{U}_{X_{k}\mathcal{X}_{k}t_{k}}(s)+\mathcal{Z}_{X_{k}\mathcal{X}_{k}t_{k}}(s)=0,\label{eq:8-83}
\end{numcases}

% \begin{equation}
% \mathcal{X}_{X_{k}\mathcal{X}_{k}t_{k}}(s)=\mathcal{X}_{k}+\int_{t_{k}}^{s}\mathcal{\mathcal{U}}_{X_{k}\mathcal{X}_{k}t_{k}}(\tau)d\tau,\ s>t_{k}\label{eq:8-10}
% \end{equation}
% \[
% -d\mathcal{\ Z}_{X_{k}\mathcal{X}_{k}t_{k}}(s)=[l_{xx}(X_{X_{k}t_{k}}(s),u_{X_{k}t_{k}}(s))\mathcal{X}_{X_{k}\mathcal{X}_{k}t_{k}}(s)+l_{xv}(X_{X_{k}t_{k}}(s),u_{X_{k}t_{k}}(s))\mathcal{\mathcal{U}}_{X_{k}\mathcal{X}_{k}t_{k}}(s)+D_{X}^{2}\mathbb{E}F((X_{X_{k}t_{k}}(s)\otimes m)^{\mathcal{B}_{t_{k}}^{s}})(\mathcal{X}_{X_{k}\mathcal{X}_{k}t_{k}}(s))]ds
% \]

% \begin{equation}
% -\sum_{j=1}^{n}\mathcal{R}_{X_{k}\mathcal{X}_{k}t_{k}}^{j}(s)dw_{j}(s)-\sum_{j=1}^{n}\Theta_{X_{k}\mathcal{X}_{k}t_{k}}^{j}(s)db_{j}(s)\label{eq:8-82}
% \end{equation}

% \[
% \mathcal{Z}_{X_{k}\mathcal{X}_{k}t_{k}}(T)=h_{xx}(X_{X_{k}t_{k}}(T))\mathcal{X}_{X_{k}\mathcal{X}_{k}t_{k}}(T)+D_{X}^{2}\mathbb{E}F_{T}((X_{X_{k}t_{k}}(T)\otimes m)^{\mathcal{B}_{t_{k}}^{T}})(\mathcal{X}_{X_{k}\mathcal{X}_{k}t_{k}}(T))
% \]

% \begin{equation}
% l_{vx}(X_{X_{k}t_{k}}(s),u_{X_{k}t_{k}}(s))\mathcal{X}_{X_{k}\mathcal{X}_{k}t_{k}}(s)+l_{vv}(X_{X_{k}t_{k}}(s),u_{X_{k}t_{k}}(s))\mathcal{U}_{X_{k}\mathcal{X}_{k}t_{k}}(s)+\mathcal{Z}_{X_{k}\mathcal{X}_{k}t_{k}}(s)=0\label{eq:8-83}
% \end{equation}

and 
\begin{numcases}{}
X_{X_{k}t_{k}}(s)=X_{k}+\int_{t_{k}}^{s}u_{X_{k}t_{k}}(\tau)d\tau+\sigma(w(s)-w(t_{k}))+\beta(b(s)-b(t_{k})),\label{eq:8-11}\\
    -dZ_{X_{k}t_{k}}(s)= \left(l_{x}(X_{X_{k}t_{k}}(s),u_{X_{k}t_{k}}(s))+D_{X}\mathbb{E} \left(F\left((X_{X_{k}t_{k}}(s)\otimes m)^{\mathcal{B}_{t_{k}}^{s}}\right)\right)\right)ds\nonumber\\
    \qquad\qquad\qquad \,\,\, -\sum_{j=1}^{n}r_{X_{k}t_{k}}^{j}(s)dw_{j}(s)-\sum_{j=1}^{n}\rho_{X_{k}t_{k}}^{j}(s)db_{j}(s),\label{eq:8-84}\\
    Z_{X_{k}t_{k}}(T)=h_{x}(X_{X_{k}t_{k}}(T))+D_{X}\mathbb{E} \left(F_{T}\left((Y_{X_{k}t_{k}}(T)\otimes m)^{\mathcal{B}_{t_{k}}^{T}}\right)\right),\nonumber\\
l_{v}\Big(X_{X_{k}t_{k}}(s),u_{X_{k}t_{k}}(s)\Big)+Z_{X_{k}t_{k}}(s)=0.\label{eq:8-85}
\end{numcases}
% \begin{equation}
% X_{X_{k}t_{k}}(s)=X_{k}+\int_{t_{k}}^{s}u_{X_{k}t_{k}}(\tau)d\tau+\sigma(w(s)-w(t_{k}))+\beta(b(s)-b(t_{k}))\label{eq:8-11}
% \end{equation}

% \begin{equation}
% -dZ_{X_{k}t_{k}}(s)=(l_{x}(X_{X_{k}t_{k}}(s),u_{X_{k}t_{k}}(s))+D_{X}\mathbb{E}F((X_{X_{k}t_{k}}(s)\otimes m)^{\mathcal{B}_{t_{k}}^{s}}))ds-\sum_{j=1}^{n}r_{X_{k}t_{k}}^{j}(s)dw_{j}(s)-\sum_{j=1}^{n}\rho_{X_{k}t_{k}}^{j}(s)db_{j}(s)\label{eq:8-84}
% \end{equation}

% \[
% Z_{X_{k}t_{k}}(T)=h_{x}(X_{X_{k}t_{k}}(T))+D_{X}\mathbb{E}F_{T}((Y_{X_{k}t_{k}}(T)\otimes m)^{\mathcal{B}_{t_{k}}^{T}})
% \]

% \begin{equation}
% l_{v}(X_{X_{k}t_{k}}(s),u_{X_{k}t_{k}}(s))+Z_{X_{k}t_{k}}(s)=0\label{eq:8-85}
% \end{equation}

We need to prove that 
\begin{equation}
\mathcal{Z}_{X_{k}\mathcal{X}_{k}t_{k}}(t_{k})\rightarrow\mathcal{Z}_{X\mathcal{X}t}(t).\label{eq:8-86}
\end{equation}
We fix $s>t.$ We can assume that $s>t_{k}.$ Since, from (\ref{eq:8-4}), 
\begin{equation}
||\mathcal{Z}_{X_{k}\mathcal{X}_{k}t_{k}}(t_{k})-\mathcal{Z}_{X_{k}\mathcal{\mathcal{X}}t_{k}}(t_{k})||\leq C_{T}||\mathcal{X}_{k}-\mathcal{X}||,\label{eq:8-87}
\end{equation}
it is sufficient to prove that 
\begin{equation}
\mathcal{Z}_{X_{k}\mathcal{\mathcal{X}}t_{k}}(t_{k})\rightarrow\mathcal{Z}_{X\mathcal{X}t}(t).\label{eq:8-88}
\end{equation}
So we have to consider the system: 
\begin{numcases}{}
    \mathcal{Y}_{X_{k}\mathcal{\mathcal{X}}t_{k}}(s)=\mathcal{\mathcal{X}}+\int_{t_{k}}^{s}\mathcal{U}{}_{X\mathcal{\mathcal{X}}t_{k}}(\tau)d\tau,\ s>t_{k},\label{eq:8-89}\\
    -d\mathcal{\ Z}_{X_{k}\mathcal{\mathcal{X}}t_{k}}(s)=\Bigg[l_{xx}\Big(X_{X_{k}t_{k}}(s),u_{X_{k}t_{k}}(s)\Big)\mathcal{X}_{X_{k}\mathcal{\mathcal{X}}t_{k}}(s)+l_{xv}\Big(X_{X_{k}t_{k}}(s),u_{X_{k}t_{k}}(s)\Big)\mathcal{\mathcal{U}}_{X_{k}\mathcal{\mathcal{X}}t_{k}}(s)\nonumber\\
    \qquad\qquad\qquad\qquad +D_{X}^{2}\mathbb{E} \left(F\left((X_{X_{k}t_{k}}(s)\otimes m)^{B_{t_{k}}^{s}}\right)\right)(\mathcal{X}_{X_{k}\mathcal{\mathcal{X}}t_{k}}(s))\Bigg]ds \nonumber\\
    \qquad\qquad\qquad\qquad -\sum_{j=1}^{n}\mathcal{R}_{X_{k}\mathcal{\mathcal{X}}t_{k}}^{j}(s)dw_{j}(s)
    -\sum_{j=1}^{n}\Theta_{X_{k}\mathcal{X}_{k}t_{k}}^{j}(s)db_{j}(s),\label{eq:8-90}\\
    \mathcal{Z}_{X_{k}\mathcal{\mathcal{X}}t_{k}}(T)=h_{xx}(X_{X_{k}t_{k}}(T))\mathcal{X}_{X_{k}\mathcal{\mathcal{X}}t_{k}}(T)+D_{X}^{2}F_{T}\left((X_{X_{k}t_{k}}(T)\otimes m)^{\mathcal{B}_{t_{k}}^{T}}\right)(\mathcal{X}_{X_{k}\mathcal{\mathcal{X}}t_{k}}(T)),\nonumber\\
    l_{vx}\Big(X_{X_{k}t_{k}}(s),u_{X_{k}t_{k}}(s)\Big)\mathcal{X}_{X_{k}\mathcal{\mathcal{X}}t_{k}}(s)+l_{vv}\Big(X_{X_{k}t_{k}}(s),u_{X_{k}t_{k}}(s)\Big)\mathcal{U}_{X_{k}\mathcal{\mathcal{X}}t_{k}}(s)+\mathcal{Z}_{X_{k}\mathcal{\mathcal{X}}t_{k}}(s)=0.\label{eq:8-91}
\end{numcases}
% \begin{equation}
% \mathcal{Y}_{X_{k}\mathcal{\mathcal{X}}t_{k}}(s)=\mathcal{\mathcal{X}}+\int_{t_{k}}^{s}\mathcal{U}{}_{X\mathcal{\mathcal{X}}t_{k}}(\tau)d\tau,\ s>t_{k}\label{eq:8-89}
% \end{equation}
% \[
% -d\mathcal{\ Z}_{X_{k}\mathcal{\mathcal{X}}t_{k}}(s)=[l_{xx}(X_{X_{k}t_{k}}(s),u_{X_{k}t_{k}}(s))\mathcal{X}_{X_{k}\mathcal{\mathcal{X}}t_{k}}(s)+l_{xv}(X_{X_{k}t_{k}}(s),u_{X_{k}t_{k}}(s))\mathcal{\mathcal{U}}_{X_{k}\mathcal{\mathcal{X}}t_{k}}(s)+D_{X}^{2}\mathbb{E}F((X_{X_{k}t_{k}}(s)\otimes m)^{B_{t_{k}}^{s}})(\mathcal{X}_{X_{k}\mathcal{\mathcal{X}}t_{k}}(s))]ds
% \]

% \begin{equation}
% -\sum_{j=1}^{n}\mathcal{R}_{X_{k}\mathcal{\mathcal{X}}t_{k}}^{j}(s)dw_{j}(s)-\sum_{j=1}^{n}\Theta_{X_{k}\mathcal{X}_{k}t_{k}}^{j}(s)db_{j}(s)\label{eq:8-90}
% \end{equation}

% \[
% \mathcal{Z}_{X_{k}\mathcal{\mathcal{X}}t_{k}}(T)=h_{xx}(X_{X_{k}t_{k}}(T))\mathcal{X}_{X_{k}\mathcal{\mathcal{X}}t_{k}}(T)+D_{X}^{2}F_{T}((X_{X_{k}t_{k}}(T)\otimes m)^{\mathcal{B}_{t_{k}}^{T}})(\mathcal{X}_{X_{k}\mathcal{\mathcal{X}}t_{k}}(T))
% \]

% \begin{equation}
% l_{vx}(X_{X_{k}t_{k}}(s),u_{X_{k}t_{k}}(s))\mathcal{X}_{X_{k}\mathcal{\mathcal{X}}t_{k}}(s)+l_{vv}(X_{X_{k}t_{k}}(s),u_{X_{k}t_{k}}(s))\mathcal{U}_{X_{k}\mathcal{\mathcal{X}}t_{k}}(s)+\mathcal{Z}_{X_{k}\mathcal{\mathcal{X}}t_{k}}(s)=0\label{eq:8-91}
% \end{equation}

From the proof of Proposition \ref{prop4-5}, we obtain 
\begin{align}
    ||X_{X_{k}t_{k}}(s)-X_{Xt}(s)||&\leq||X_{X_{k}t_{k}}(s)-X_{Xt_{k}}(s)||+||X_{Xt_{k}}(s)-X_{Xt}(s)||\nonumber\\
    &\leq C_{T}||X_{k}-X||+C_{T}||X||(t_{k}-t)+C_{T}(t_{k}-t)^{\frac{1}{2}}\label{eq:8-110}
\end{align}
% \[
% ||X_{X_{k}t_{k}}(s)-X_{Xt}(s)||\leq||X_{X_{k}t_{k}}(s)-X_{Xt_{k}}(s)||+||X_{Xt_{k}}(s)-X_{Xt}(s)|||\leq
% \]
% \begin{equation}
% \leq C_{T}||X_{k}-X||+C_{T|}||X||(t_{k}-t)+C_{T}(t_{k}-t)^{\frac{1}{2}}\label{eq:8-110}
% \end{equation}
 and similar estimates for $||Z_{X_{k}t_{k}}(s)-Z_{Xt}(s)||$ and
$||u_{X_{k}t_{k}}(s)-u_{Xt}(s)||$. We next introduce the $\sigma$-algebras 
\begin{equation}
\mathcal{\widetilde{F}}_{X\mathcal{X}t}^{s}=\underset{\{j|t_{j}<s\}}{\cup}\mathcal{F}_{X_{j}\mathcal{X}t_{j}}^{s}\cup\mathcal{F}_{t}^{t_{j}}.\label{eq:8-111}
\end{equation}
Note that $X$ is $\mathcal{\widetilde{F}}_{X\mathcal{X}t}^{s}$-measurable,
so $\mathcal{\widetilde{F}}_{X\mathcal{X}t}^{s}$ is an extension
of $\mathcal{F}_{X\mathcal{X}t}^{s}.$ To define the processes $\mathcal{X}_{X_{k}\mathcal{\mathcal{X}}t_{k}}(s)$, $\mathcal{U}_{X_{k}\mathcal{\mathcal{X}}t_{k}}(s)$, $\mathcal{\ Z}_{X_{k}\mathcal{\mathcal{X}}t_{k}}(s)$ for
$t<s<t_{k}$, we set 
\begin{equation}
\mathcal{X}_{X_{k}\mathcal{X}t_{k}}(s)=\mathcal{X},\ \mathcal{U}_{X_{k}\mathcal{\mathcal{X}}t_{k}}(s)=0,\ \mathcal{Z}_{X_{k}\mathcal{X}t_{k}}(s)=\mathbb{E}[\mathcal{Z}_{X_{k}\mathcal{X}t_{k}}(t_{k})|\widetilde{\mathcal{F}}_{X\mathcal{X}t}^{s}],\ t<s<t_{k}. \label{eq:8-15}
\end{equation}
To simplify notation, we shall denote $\mathcal{X}^{k}(s)=\mathcal{X}_{X_{k}\mathcal{X}t_{k}}(s)$
, $\mathcal{U}^{k}(s)=$ $\mathcal{U}_{X_{k}\mathcal{X}t_{k}}(s)$$,\ \mathcal{Z}^{k}(s)=\mathcal{Z}_{X_{k}\mathcal{X}t_{k}}(s)$.
We first see that $\mathcal{X}^{k}(s)$, $\mathcal{U}^{k}(s)$ and
$\mathcal{Z}^{k}(s)$ remain bounded in $L_{\widetilde{\mathcal{F}}_{X\mathcal{X}t}}^{\infty}(t,T;\mathcal{H}_{m}).$
We pick a subsequence of $\mathcal{X}^{k}(s),\mathcal{U}^{k}(s),\mathcal{Z}^{k}(s)$,
which converges weakly to $\mathcal{\widetilde{X}}(\cdot),\mathcal{\widetilde{U}}(\cdot),$$\widetilde{\mathcal{Z}}(\cdot)$
in $L_{\widetilde{\mathcal{F}}_{X\mathcal{X}t}}^{2}(t,T;\mathcal{H}_{m}).$
Then 
\[
\mathcal{\widetilde{X}}(s)\ =\mathcal{X}+\int_{t}^{s}\mathcal{\widetilde{U}}(\tau)d\tau.
\]
As for the proof of (\ref{eq:8-72})-(\ref{eq:8-76}) we have 
\begin{align*}
    &\quad \, l_{xx}\Big(X_{X_{k}t_{k}}(s),u_{X_{k}t_{k}}(s)\Big)\mathcal{X}_{X_{k}\mathcal{\mathcal{X}}t_{k}}(s)+l_{xv}\Big(X_{X_{k}t_{k}}(s),u_{X_{k}t_{k}}(s)\Big)\mathcal{\mathcal{U}}_{X_{k}\mathcal{\mathcal{X}}t_{k}}(s)\\
    &\quad \, +D_{X}^{2}\mathbb{E} \left(F\left((X_{X_{k}t_{k}}(s)\otimes m)^{\mathcal{B}_{t_{k}}^{s}}\right)\right)(\mathcal{X}_{X_{k}\mathcal{\mathcal{X}}t_{k}}(s))\nonumber\\
    &\rightarrow l_{xx}\Big(X_{Xt}(s),u_{Xt}(s)\Big)\mathcal{\widetilde{X}}(s)+l_{xv}\Big(X_{Xt}(s),u_{Xt}(s)\Big)\mathcal{\widetilde{U}}(s)+D_{X}^{2}\mathbb{E} \left(F\left((X_{Xt}(s)\otimes m)^{\mathcal{B}_{t}^{s}}\right)\right)(\mathcal{\widetilde{X}}(s))\nonumber
\end{align*}
% \[
% l_{xx}(X_{X_{k}t_{k}}(s),u_{X_{k}t_{k}}(s))\mathcal{X}_{X_{k}\mathcal{\mathcal{X}}t_{k}}(s)+l_{xv}(X_{X_{k}t_{k}}(s),u_{X_{k}t_{k}}(s))\mathcal{\mathcal{U}}_{X_{k}\mathcal{\mathcal{X}}t_{k}}(s)+D_{X}^{2}\mathbb{E}F((X_{X_{k}t_{k}}(s)\otimes m)^{\mathcal{B}_{t_{k}}^{s}})(\mathcal{X}_{X_{k}\mathcal{\mathcal{X}}t_{k}}(s))\rightarrow
% \]

% \[
% l_{xx}(X_{Xt}(s),u_{Xt}(s))\mathcal{\widetilde{X}}(s)+l_{xv}(X_{Xt}(s),u_{Xt}(s))\mathcal{\widetilde{U}}(s)+D_{X}^{2}\mathbb{E}F((X_{Xt}(s)\otimes m)^{\mathcal{B}_{t}^{s}})(\mathcal{\widetilde{X}}(s))\ \text{in }L_{\widetilde{\mathcal{F}}_{X\mathcal{X}t}}^{2}(t,T;\mathcal{H}_{m})\ \text{weakly }
% \]
weakly in $L_{\widetilde{\mathcal{F}}_{X\mathcal{X}t}}^{2}(t,T;\mathcal{H}_{m})$, and likewise for similar terms. We have used here the property 
\[
(X_{X_{k}t_{k}}(s)\otimes m)^{\mathcal{B}_{t_{k}}^{s}}=(X_{X_{k}t_{k}}(s)\otimes m)^{\mathcal{B}_{t}^{s}}
\]
since $\mathcal{B}_{t}^{s}=\mathcal{B}_{t_{k}}^{s}\cup\mathcal{B}_{t}^{t_{k}}$
and $X_{X_{k}t_{k}}(s)$ is independent of $\mathcal{B}_{t}^{t_{k}}.$ 

Necessarily $\mathcal{\widetilde{X}}(s)=\mathcal{X}_{X\mathcal{X}t}(s),\ \mathcal{\widetilde{U}}(s)=\mathcal{\mathcal{U}}_{X\mathcal{X}t}(s),\ \widetilde{\mathcal{Z}}(s)=\mathcal{Z}_{X\mathcal{X}t}(s).$
We need next to prove the strong convergence. We first check the
relation that
\begin{equation*} \hspace{-1cm}
\begin{aligned}
    I_{k}&=\int_{t_{k}}^{T}\Bigg[\Big\langle l_{xx}\Big(X_{X_{k}t_{k}}(s),u_{X_{k}t_{k}}(s)\Big)\mathcal{X}_{X_{k}\mathcal{X}t_{k}}(s),\mathcal{X}_{X_{k}\mathcal{X}t_{k}}(s)\ \Big\rangle +2\Big\langle l_{xv}\Big(X_{X_{k}t_{k}}(s),u_{X_{k}t_{k}}(s)\Big)\mathcal{U}_{X_{k}\mathcal{X}t_{k}}(s),\mathcal{X}_{X_{k}\mathcal{X}t_{k}}(s)\ \Big\rangle \nonumber\\
    &\quad +\Big\langle l_{vv}\Big(X_{X_{k}t_{k}}(s),u_{X_{k}t_{k}}(s)\Big)\mathcal{U}_{X_{k}\mathcal{X}t_{k}}(s),\mathcal{U}_{X_{k}\mathcal{X}t_{k}}(s)\ \Big\rangle+\Big\langle D_{X}^{2}\mathbb{E}\left(F\left((X_{X_{k}t_{k}}(s)\otimes m)^{\mathcal{B}_{t_{k}}^{s}}\right)\right)(\mathcal{X}_{X_{k}\mathcal{X}t_{k}}(s)),\mathcal{X}_{X_{k}\mathcal{X}t_{k}}(s)\Big\rangle\Bigg]ds\nonumber\\
    &\quad +\bigg\langle h_{xx}(X_{X_{k}t_{k}}(T))\mathcal{X}_{X_{k}\mathcal{X}t_{k}}(T)+D_{X}^{2}\mathbb{E} \left(F_{T}\left((X_{X_{k}t_{k}}(T)\otimes m)^{\mathcal{B}_{t_{k}}^{T}}\right)\right)(\mathcal{X}_{X_{k}\mathcal{X}t_{k}}(T)),\mathcal{X}_{X_{k}\mathcal{X}t_{k}}(T)\bigg\rangle\nonumber\\
    &=\Big\langle \mathcal{Z}_{X_{k}\mathcal{X}t_{k}}(t_{k}),\mathcal{X}\Big\rangle \nonumber
\end{aligned}
\end{equation*}
converges to
% \[
% I_{k}=\int_{t_{k}}^{T}[((l_{xx}(X_{X_{k}t_{k}}(s),u_{X_{k}t_{k}}(s))\mathcal{X}_{X_{k}\mathcal{X},t_{k}}(s),\mathcal{X}_{X_{k}\mathcal{X},t_{k}}(s)\ ))+2((l_{xv}(X_{X_{k}t_{k}}(s),u_{X_{k}t_{k}}(s))\mathcal{U}_{X_{k}\mathcal{X},t_{k}}(s),\mathcal{X}_{X_{k}\mathcal{X},t_{k}}(s)\ ))+
% \]

% \[
% +((l_{vv}(X_{X_{k}t_{k}}(s),u_{X_{k}t_{k}}(s))\mathcal{U}_{X_{k}\mathcal{X},t_{k}}(s),\mathcal{U}_{X_{k}\mathcal{X},t_{k}}(s)\ ))+((D_{X}^{2}\mathbb{E}F((X_{X_{k}t_{k}}(s)\otimes m)^{\mathcal{B}_{t_{k}}^{s}})(\mathcal{X}_{X_{k}\mathcal{X},t_{k}}(s)),\mathcal{X}_{X_{k}\mathcal{X},t_{k}}(s)))]ds+
% \]

% \[
% +((h_{xx}(X_{X_{k}t_{k}}(T))\mathcal{X}_{X_{k}\mathcal{X},t_{k}}(T)+D_{X}^{2}\mathbb{E}F_{T}((X_{X_{k}t_{k}}(T)\otimes m)^{\mathcal{B}_{t_{k}}^{T}})(\mathcal{X}_{X_{k}\mathcal{X},t_{k}}(T)),\mathcal{X}_{X_{k}\mathcal{X},t_{k}}(T)))=((\mathcal{Z}_{X_{k}\mathcal{X}t_{k}}(t_{k}),\mathcal{X}))\rightarrow I=((\mathcal{Z}_{X\mathcal{X}t}(t),\mathcal{X}))
% \]
\begin{equation*} \hspace{-1.1cm}
\begin{aligned}
     I&=\Big\langle \mathcal{Z}_{X\mathcal{X}t}(t),\mathcal{X}\Big\rangle \\
     &=\int_{t}^{T}\Bigg[\bigg\langle l_{xx}\Big(X_{Xt}(s),u_{Xt}(s)\Big)\mathcal{X}_{X\mathcal{X}t}(s),\mathcal{X}_{X\mathcal{X}t}(s)\ \bigg\rangle +2\bigg\langle l_{xv}\Big(X_{Xt}(s),u_{Xt}(s)\Big)\mathcal{U}_{X\mathcal{X}t}(s),\mathcal{X}_{X\mathcal{X}t}(s)\ \bigg\rangle \nonumber\\
     &\qquad +\bigg\langle l_{vv}\Big(X_{Xt}(s),u_{Xt}(s)\Big)\mathcal{U}_{X\mathcal{X}t}(s),\mathcal{U}_{X\mathcal{X}t}(s)\ \bigg\rangle +\bigg\langle D_{X}^{2}\mathbb{E} \left(F\left((X_{Xt}(s)\otimes m)^{\mathcal{B}_{t}^{s}}\right)\right)(\mathcal{X}_{X\mathcal{X}t}(s)),\mathcal{X}_{X\mathcal{X}t}(s)\bigg\rangle \Bigg]ds\nonumber\\
     &\qquad +\bigg\langle h_{xx}(X_{Xt}(T))\mathcal{X}_{X\mathcal{X}t}(T)+D_{X}^{2}\mathbb{E} \left(F_{T}\left((X_{Xt}(T)\otimes m)^{\mathcal{B}_{t}^{T}}\right)\right)(\mathcal{X}_{X\mathcal{X}t}(T)),\mathcal{X}_{X\mathcal{X}t}(T)\bigg\rangle. \nonumber
\end{aligned}
\end{equation*}
% \[
% I=\int_{t}^{T}[((l_{xx}(X_{Xt}(s),u_{Xt}(s))\mathcal{X}_{X\mathcal{X}t}(s),\mathcal{X}_{X\mathcal{X}t}(s)\ ))+2((l_{xv}(X_{Xt}(s),u_{Xt}(s))\mathcal{U}_{X\mathcal{X}t}(s),\mathcal{X}_{X\mathcal{X}t}(s)\ ))+
% \]

% \[
% +((l_{vv}(X_{Xt}(s),u_{Xt}(s))\mathcal{U}_{X\mathcal{X}t}(s),\mathcal{U}_{X\mathcal{X}t}(s)\ ))+((D_{X}^{2}\mathbb{E}F((X_{Xt}(s)\otimes m)^{\mathcal{B}_{t}^{s}})(\mathcal{X}_{X\mathcal{X}t}(s)),\mathcal{X}_{X\mathcal{X}t}(s)))]ds+
% \]

% \[
% +((h_{xx}(X_{Xt}(T))\mathcal{X}_{X\mathcal{X}t}(T)+D_{X}^{2}\mathbb{E}F_{T}((X_{Xt}(T)\otimes m)^{\mathcal{B}_{t}^{T}})(\mathcal{X}_{X\mathcal{X}t}(T)),\mathcal{X}_{X\mathcal{X}t}(T)))
% \]
From this convergence and weak convergence, we obtain immediately 
\begin{align}
    J_{k}=&\int_{t_{k}}^{T}\Bigg[\bigg\langle l_{xx}\Big(X_{X_{k}t_{k}}(s),u_{X_{k}t_{k}}(s)\Big)(\mathcal{X}_{X_{k}\mathcal{X}t_{k}}(s)-\mathcal{X}_{X\mathcal{X}t}(s)),\mathcal{X}_{X_{k}\mathcal{X}t_{k}}(s)-\mathcal{X}_{X\mathcal{X}t}(s)\ \bigg\rangle \nonumber\\
    &\qquad +2\bigg\langle l_{xv}\Big(X_{X_{k}t_{k}}(s),u_{X_{k}t_{k}}(s)\Big)(\mathcal{U}_{X_{k}\mathcal{X}t_{k}}(s)-\mathcal{U}_{X\mathcal{X}t}(s)),\mathcal{X}_{X_{k}\mathcal{X}t_{k}}(s)-\mathcal{X}_{X\mathcal{X}t}(s)\ \bigg\rangle \nonumber\\
    &\qquad +\bigg\langle l_{vv}\Big(X_{Xt}(s),u_{Xt}(s)\Big)(\mathcal{U}_{X_{k}\mathcal{X}t_{k}}(s)-\mathcal{U}_{X\mathcal{X}t}(s)),\mathcal{U}_{X_{k}\mathcal{X}t_{k}}(s)-\mathcal{U}_{X\mathcal{X}t}(s)\ \bigg\rangle \nonumber\\
    &\qquad +\bigg\langle D_{X}^{2}\mathbb{E} \left(F\left((X_{X_{k}t_{k}}(s)\otimes m)^{\mathcal{B}_{t}^{s}}\right)\right)(\mathcal{X}_{X_{k}\mathcal{X}t_{k}}(s)-\mathcal{X}_{X\mathcal{X}t}(s)),\mathcal{X}{}_{X_{k}\mathcal{X}t_{k}}(s)-\mathcal{X}_{X\mathcal{X}t}(s)\bigg\rangle\Bigg]ds \\
    &+\bigg\langle h_{xx}(X_{X_{k}t_{k}}(T))(\mathcal{X}_{X_{k}\mathcal{X}t_{k}}(T)-\mathcal{X}_{X\mathcal{X}t}(T))\nonumber\\
    &\qquad +D_{X}^{2}\mathbb{E} \left(F_{T}\left((X_{X_{k}t_{k}}(T)\otimes m)^{\mathcal{B}_{t}^{T}}\right)\right) \cdot (\mathcal{X}_{X_{k}\mathcal{X}t_{k}}(T)-\mathcal{X}_{X\mathcal{X}t}(T)),\mathcal{X}_{X_{k}\mathcal{X}t_{k}}(T)-\mathcal{X}_{X\mathcal{X}t}(T)\bigg\rangle \nonumber\\
    \rightarrow &\ 0.\nonumber
\end{align}
% \[
% J_{k}=\int_{t_{k}}^{T}[((l_{xx}(X_{X_{k}t_{k}}(s),u_{X_{k}t_{k}}(s))(\mathcal{X}_{X_{k}\mathcal{X},t_{k}}(s)-\mathcal{X}_{X\mathcal{X}t}(s)),\mathcal{X}_{X_{k}\mathcal{X},t_{k}}(s)-\mathcal{X}_{X\mathcal{X}t}(s)\ ))+
% \]

% \[
% 2((l_{xv}(X_{X_{k}t_{k}}(s),u_{X_{k}t_{k}}(s))(\mathcal{U}_{X_{k}\mathcal{X},t_{k}}(s)-\mathcal{U}_{X\mathcal{X}t}(s)),\mathcal{X}_{X_{k}\mathcal{X},t_{k}}(s)-\mathcal{X}_{X\mathcal{X}t}(s)\ ))+
% \]

% \[
% ((l_{vv}(X_{Xt}(s),u_{Xt}(s))(\mathcal{U}_{X_{k}\mathcal{X},t_{k}}(s)-\mathcal{U}_{X\mathcal{X}t}(s)),\mathcal{U}_{X_{k}\mathcal{X},t_{k}}(s)-\mathcal{U}_{X\mathcal{X}t}(s)\ ))+
% \]

% \[
% +((D_{X}^{2}\mathbb{E}F((X_{X_{k}t_{k}}(s)\otimes m)^{\mathcal{B}_{t}^{s}})(\mathcal{X}_{X_{k}\mathcal{X},t_{k}}(s)-\mathcal{X}_{X\mathcal{X}t}(s)),\mathcal{X}{}_{X_{k}\mathcal{X},t_{k}}(s)-\mathcal{X}_{X\mathcal{X}t}(s)))]ds+
% \]

% \[
% +((h_{xx}(X_{X_{k}t_{k}}(T))(\mathcal{X}_{X_{k}\mathcal{X},t_{k}}(T)-\mathcal{X}_{X\mathcal{X}t}(T))+D_{X}^{2}\mathbb{E}F_{T}((X_{X_{k}t_{k}}(T)\otimes m)^{\mathcal{B}_{t}^{T}})(\mathcal{X}_{X_{k}\mathcal{X},t_{k}}(T)-\mathcal{X}_{X\mathcal{X}t}(T)),\mathcal{X}_{X_{k}\mathcal{X},t_{k}}(T)-\mathcal{X}_{X\mathcal{X}t}(T)))\rightarrow0
% \]
But 
\begin{equation}
\begin{aligned}
    J_{k}\geq&\, \lambda\int_{t_{k}}^{T} \left\lVert \mathcal{U}_{X_{k}\mathcal{X}t_{k}}(s)-\mathcal{U}_{X\mathcal{X}t}(s)\right\rVert^{2}ds-(c'_{l}+c')\int_{t_{k}}^{T}||\mathcal{X}_{X_{k}\mathcal{X}t_{k}}(s)-\mathcal{X}_{X\mathcal{X}t}(s)||^{2}ds\\
    &-(c'_{h}+c'_{T})||\mathcal{X}_{X_{k}\mathcal{X}t_{k}}(T)-\mathcal{X}_{X\mathcal{X}t}(T)||^{2},
\end{aligned}
\end{equation}

% \[
% J_{k}\geq\lambda\int_{t_{k}}^{T}||\mathcal{U}_{X_{k}\mathcal{X},t_{k}}(s)-\mathcal{U}_{X\mathcal{X}t}(s)||^{2}ds-(c'_{l}+c')\int_{t_{k}}^{T}||\mathcal{X}_{X_{k}\mathcal{X},t_{k}}(s)-\mathcal{X}_{X\mathcal{X}t}(s)||^{2}ds-
% \]

% \[
% -(c'_{h}+c'_{T})||\mathcal{X}_{X_{k}\mathcal{X},t_{k}}(T)-\mathcal{X}_{X\mathcal{X}t}(T)||^{2}
% \]
and 
\begin{equation}
\begin{aligned}
    ||\mathcal{X}_{X_{k}\mathcal{X}t_{k}}(T)-\mathcal{X}_{X\mathcal{X}t}(T)||^{2}\leq& \,(1+\epsilon)T\int_{t_{k}}^{T}||\mathcal{U}_{X_{k}\mathcal{X}t_{k}}(s)-\mathcal{U}_{X\mathcal{X}t}(s)||^{2}ds\\
    &+\left(1+\dfrac{1}{\epsilon}\right)(t_{k}-t)^{2}\sup_{s}||\mathcal{U}_{X\mathcal{X}t}(s)||^{2},\\
    \int_{t_{k}}^{T}||\mathcal{X}_{X_{k}\mathcal{X}t_{k}}(s)-\mathcal{X}_{X\mathcal{X}t}(s)||^{2}ds\leq&\,(1+\epsilon)\dfrac{T^{2}}{2}\int_{t_{k}}^{T}||\mathcal{U}_{X_{k}\mathcal{X}t_{k}}(s)-\mathcal{U}_{X\mathcal{X}t}(s)||^{2}ds\\
    &+\left(1+\dfrac{1}{\epsilon}\right)(t_{k}-t)^{2}\sup_{s}||\mathcal{U}_{X\mathcal{X}t}(s)||^{2}.
\end{aligned}
\end{equation}
% \[
% ||\mathcal{X}_{X_{k}\mathcal{X},t_{k}}(T)-\mathcal{X}_{X\mathcal{X}t}(T)||^{2}\leq(1+\epsilon)T\int_{t_{k}}^{T}||\mathcal{U}_{X_{k}\mathcal{X},t_{k}}(s)-\mathcal{U}_{X\mathcal{X}t}(s)||^{2}ds+
% \]
% \[
% +(1+\dfrac{1}{\epsilon})(t_{k}-t)^{2}\sup_{s}||\mathcal{U}_{X\mathcal{X}t}(s)||^{2}
% \]
% \[
% \int_{t_{k}}^{T}||\mathcal{X}_{X_{k}\mathcal{X},t_{k}}(s)-\mathcal{X}_{X\mathcal{X}t}(s)||^{2}ds\leq(1+\epsilon)\dfrac{T^{2}}{2}\int_{t_{k}}^{T}||\mathcal{U}_{X_{k}\mathcal{X},t_{k}}(s)-\mathcal{U}_{X\mathcal{X}t}(s)||^{2}ds+
% \]
% \[
% +(1+\dfrac{1}{\epsilon})(t_{k}-t)^{2}\sup_{s}||\mathcal{U}_{X\mathcal{X}t}(s)||^{2}
% \]
From the assumption (\ref{eq:3-6}), since $t_{k}\downarrow t$ and
$J_{k}\rightarrow0$, we obtain $\int_{t_{k}}^{T}||\mathcal{U}_{X_{k}\mathcal{X}t_{k}}(s)-\mathcal{U}_{X\mathcal{X}t}(s)||^{2}ds\rightarrow0.$
It is then easy to get (\ref{eq:8-86}). This concludes the proof
of Proposition \ref{prop5-10}.  

\subsection{PROOF OF THEOREM \ref{theo5-10}}

From the optimality principle we can write 
\begin{align}
    0=&\, \dfrac{1}{\epsilon}\int_{s}^{s+\epsilon} \left[\mathbb{E} \left(\int_{\mathbb{R}^{n}}l\Big(X_{X_{\cdot}t}(\tau),u_{X_{\cdot}t}(\tau)\Big)dm(x)\right)+\mathbb{E}\left(F\left((X_{X_{\cdot}t}(\tau)\otimes m)^{\mathcal{B}_{t}^{\tau}}\right)\right)\right]d\tau\nonumber\\
    &+\dfrac{1}{\epsilon} \bigg[\mathbb{E} \left(V\left((X_{X_{\cdot}t}(s+\epsilon)\otimes m)^{\mathcal{B}_{t}^{s+\epsilon}},t+\epsilon\right)\right)-\mathbb{E}\left(V\left((X_{Xt}(s)\otimes m,s)^{\mathcal{B}_{t}^{s}},s\right)\right)\bigg].\nonumber
\end{align}
% \[
% 0=\dfrac{1}{\epsilon}\int_{s}^{s+\epsilon}[\mathbb{E}\int_{\mathbb{R}^{n}}l(X_{X_{\cdot}t}(\tau),u_{X_{\cdot}t}(\tau))dm(x)+\mathbb{E}F((X_{X_{\cdot}t}(\tau)\otimes m)^{\mathcal{B}_{t}^{\tau}})]d\tau+\dfrac{1}{\epsilon}(\mathbb{E}V((X_{X_{\cdot}t}(s+\epsilon)\otimes m)^{\mathcal{B}_{t}^{s+\epsilon}},t+\epsilon)-\mathbb{E}V((X_{Xt}(s)\otimes m,s)^{\mathcal{B}_{t}^{s}},s)
% \]
From the continuity of functions $s\mapsto X_{X_{\cdot}t}(s),u_{X_{\cdot}t}(s)$,
the first term converges to $\mathbb{E}\left(\int_{\mathbb{R}^{n}}l(X_{X_{\cdot}t}(s),u_{X_{\cdot}t}(s))dm(x)\right)+\mathbb{E}\left(F((X_{X_{\cdot}t}(s)\otimes m)^{\mathcal{B}_{t}^{s}})\right)$.
Therefore we have 
\begin{equation}
\mathbb{E}\left(\int_{\mathbb{R}^{n}}l\Big(X_{X_{\cdot}t}(s),u_{X_{\cdot}t}(s)\Big)dm(x)\right)+\mathbb{E}\left(F\left((X_{X_{\cdot}t}(s)\otimes m)^{\mathcal{B}_{t}^{s}}\right)\right)+\dfrac{d}{ds}\mathbb{E}\left(V\left((X_{X_{\cdot}t}(s)\otimes m)^{\mathcal{B}_{t}^{s}},s\right)\right)=0.\label{eq:9-1}
\end{equation}
But we have proven that the function $X\mapsto \mathbb{E} \left(V((X_{X_{\cdot}t}(s)\otimes m)^{\mathcal{B}_{t}^{s}},s)\right)$
satisfies all the conditions for the applicability of Theorem \ref{theo3-10}.
So 
\begin{equation}\label{eq:9-3}
    \begin{aligned}
    \dfrac{d}{ds}\mathbb{E}\left(V\left((X_{X_{\cdot}t}(s)\otimes m)^{\mathcal{B}_{t}^{s}},s\right)\right)=& \,\dfrac{\partial}{\partial s}\mathbb{E}\left(V\left((X_{X_{\cdot}t}(s)\otimes m)^{\mathcal{B}_{t}^{s}},s\right)\right)\\
    &+\bigg\langle D_{X}\mathbb{E}\left(V\left((X_{X_{\cdot}t}(s)\otimes m)^{\mathcal{B}_{t}^{s}},s\right)\right),u_{X_{\cdot}t}(s)\bigg\rangle \\
    &+\dfrac{1}{2}\sum_{j=1}^{n}\bigg\langle D_{X}^{2}\mathbb{E}\left(V\left((X_{X_{\cdot}t}(s)\otimes m)^{\mathcal{B}_{t}^{s}},s\right)\right)(\sigma N^j_{s}),\sigma N^j_{s}\bigg\rangle \\
    &+\dfrac{\beta^{2}}{2}\sum_{j=1}^{n} \bigg\langle D_{X}^{2}\mathbb{E}\left(V\left((X_{X_{\cdot}t}(s)\otimes m)^{\mathcal{B}_{t}^{s}},s\right)\right)(e^{j}),e^{j}\bigg\rangle,\, \text{a.e.}\ s\in(0,T),
\end{aligned}
\end{equation}
% \begin{equation}
% \dfrac{d}{ds}\mathbb{E}V((X_{X_{\cdot}t}(s)\otimes m)^{\mathcal{B}_{t}^{s}},s)=\dfrac{\partial}{\partial s}\mathbb{E}V((X_{X_{\cdot}t}(s)\otimes m)^{\mathcal{B}_{t}^{s}},s)+((D_{X}\mathbb{E}V((X_{X_{\cdot}t}(s)\otimes m)^{\mathcal{B}_{t}^{s}},s),u_{X_{\cdot}t}(s)))+\label{eq:9-3}
% \end{equation}

% \[
% +\dfrac{1}{2}\sum_{j=1}^{n}((D_{X}^{2}\mathbb{E}V((X_{X_{\cdot}t}(s)\otimes m)^{\mathcal{B}_{t}^{s}},s)(\sigma N^j_{s}),\sigma N^j_{s}))+\dfrac{\beta^{2}}{2}\sum_{j=1}^{n}((D_{X}^{2}\mathbb{E}V((X_{X_{\cdot}t}(s)\otimes m)^{\mathcal{B}_{t}^{s}},s)(e^{j}),e^{j})),\text{a.e.}\ s\in(0,T)
% \]
where $N^{j}_s$ are scalar standard Gaussian independent of $\mathcal{F}_{Xt}^{s}.$ Recall
that $D_{X}\mathbb{E}\left(V((X_{X_{\cdot}t}(s)\otimes m)^{\mathcal{B}_{t}^{s}},s)\right)=Z_{X_{\cdot}t}(s)$
and 
\[
\Big\langle Z_{X_{\cdot}t}(s),u_{X_{\cdot}t}(s) \Big\rangle =\mathbb{E}\left(\int_{\mathbb{R}^{n}}Z_{X_{\cdot}t}(s) \cdot u_{X_{\cdot}t}(s)dm(x)\right).
\]
Since 
\[
l_{v}(X_{\cdot, t}(s),u_{X_{\cdot}t}(s))+Z_{X_{\cdot}t}(s)=0,
\]
we can write 
\begin{align}
    \mathbb{E}\left(\int_{\mathbb{R}^{n}}l\Big(X_{X_{\cdot}t}(s),u_{X_{\cdot}t}(s)\Big)dm(x)\right)+\mathbb{E}\left(\int_{\mathbb{R}^{n}}Z_{X_{\cdot}t}(s) \cdot u_{X_{\cdot}t}(s)dm(x)\right)=\mathbb{E}\left(\int_{\mathbb{R}^{n}}H\Big(X_{X_{\cdot}t}(s),Z_{X_{\cdot}t}(s)\Big)dm(x)\right).\label{eq:9-2}
\end{align}
% \begin{equation}
% \mathbb{E}\int_{\mathbb{R}^{n}}l(X_{X_{\cdot}t}(s),u_{X_{\cdot}t}(s))dm(x)+\mathbb{E}\int_{\mathbb{R}^{n}}Z_{X_{\cdot}t}(s).u_{X_{\cdot}t}(s)dm(x)=\label{eq:9-2}
% \end{equation}

% \[
% \mathbb{E}\int_{\mathbb{R}^{n}}H(X_{X_{\cdot}t}(s),Z_{X_{\cdot}t}(s))dm(x)
% \]
 Therefore (\ref{eq:9-1}) and (\ref{eq:9-3}) yields 
\begin{equation}\label{eq:9-4}
    \begin{aligned}
    &\quad \dfrac{\partial}{\partial s}\mathbb{E}\left(V\left((X_{X_{\cdot}t}(s)\otimes m)^{\mathcal{B}_{t}^{s}},s\right)\right)+\mathbb{E}\left(\int_{\mathbb{R}^{n}}H\Big(X_{X_{\cdot}t}(s),Z_{X_{\cdot}t}(s)\Big)dm(x)\right)+\mathbb{E}\left(V\left((X_{X_{\cdot}t}(s)\otimes m)^{\mathcal{B}_{t}^{s}},s\right)\right)\\
    &+\dfrac{1}{2} \bigg\langle D_{X}^{2}\mathbb{E}\left(V\left((X_{X_{\cdot}t}(s)\otimes m)^{\mathcal{B}_{t}^{s}},s\right)\right)(\sigma N_{s}),\sigma N_{s} \bigg\rangle +\dfrac{\beta^{2}}{2}\sum_{j=1}^{n} \bigg\langle D_{X}^{2}\mathbb{E}\left(V\left((X_{X_{\cdot}t}(s)\otimes m)^{\mathcal{B}_{t}^{s}},s\right)\right)(e^{j}),e^{j}\bigg\rangle =0,\text{a.e.}.
\end{aligned}
\end{equation}
% \begin{equation}
% \dfrac{\partial}{\partial s}\mathbb{E}V((X_{X_{\cdot}t}(s)\otimes m)^{\mathcal{B}_{t}^{s}},s)+\mathbb{E}\int_{\mathbb{R}^{n}}H(X_{X_{\cdot}t}(s),Z_{X_{\cdot}t}(s))dm(x)+\mathbb{E}V((X_{X_{\cdot}t}(s)\otimes m)^{\mathcal{B}_{t}^{s}},s)+\label{eq:9-4}
% \end{equation}

% \[
% \dfrac{1}{2}((D_{X}^{2}\mathbb{E}V((X_{X_{\cdot}t}(s)\otimes m)^{\mathcal{B}_{t}^{s}},s)(\sigma N_{s}),\sigma N_{s}))+\dfrac{\beta^{2}}{2}\sum_{j=1}^{n}((D_{X}^{2}\mathbb{E}V((X_{X_{\cdot}t}(s)\otimes m)^{\mathcal{B}_{t}^{s}},s)(e^{j}),e^{j}))=0,\text{a.e.}
% \]
 Also, 
\begin{equation}
\mathbb{E}\left(V\left((X_{X_{\cdot}t}(T)\otimes m)^{\mathcal{B}_{t}^{T}},T\right)\right)=\mathbb{E}\left(\int_{\mathbb{R}^{n}} h(X_{X_{\cdot}t}(T))dm(x)\right)+\mathbb{E}\left(F_{T}\left((X_{X_{\cdot}t}(T)\otimes m,T)^{\mathcal{B}_{t}^{T}}\right)\right).\label{eq:9-5}
\end{equation}
Taking $s=t$ in (\ref{eq:9-4}) and $t=T$ in (\ref{eq:9-5}), and
recalling that $Z_{X_{\cdot}t}(t)=D_{X}V(X_{\cdot}\otimes m,t),$ we obtain
(\ref{eq:5-2008}). Now if a functional $V(X\otimes m,t)$ satisfies
the regularity properties of the value function, then it satisfies
also (\ref{eq:9-4}) and (\ref{eq:9-5}). Then (\ref{eq:9-1}) holds
and integrating with respect to $s$, between $t$ and $T,$ we obtain
\begin{equation}
V(X_{\cdot}\otimes m,t)=J_{X_{\cdot}t}(u_{X_{\cdot}t}(\cdot)).\label{eq:9-6}
\end{equation}
Since the right-hand side is the value function and uniquely defined,
the solution is necessarily unique. Note that $X_{\cdot}$ must be independent
of $\mathcal{F}_{t}.$ But given $X_{\cdot}$, we can always construct
the Wiener process, so that this condition is satisfied. This concludes
the proof.  

\section{PROOF OF PROPOSITION \ref{prop8-1} }

Considering Proposition \ref{prop5-10}, we can enlarge the space
of controls as follows: Let $\mathcal{B}_{x}$ be a family of $\sigma$-algebras independent of the filtration $\mathcal{F}_{t}$ and $X_{\cdot}$, which is the initial condition of the system \eqref{eq:3-7}
be $\mathcal{B}_{\cdot}$-measurable. Let also $\mathcal{X}_{\cdot}$ be also
$\mathcal{B}_{\cdot}$-measurable. We consider the control problem 
\begin{equation}
\mathcal{X}_{\mathcal{B}_{\cdot}}(s)=\mathcal{X}_{\cdot}+\int_{t}^{s}\mathcal{V}_{\mathcal{B}_{\cdot}}(\tau)d\tau,\label{eq:9-7}
\end{equation}
\begin{equation} \label{eq:9-8}
\begin{aligned}
    \mathcal{J}_{X_{\cdot}\mathcal{B}_{\cdot}t}(\mathcal{V}_{\mathcal{B}_{\cdot}}(\cdot))=&\,\dfrac{1}{2}\int_{t}^{T} \Bigg[\bigg\langle l_{xx}\Big(X_{X_{\cdot}t}(s),u_{X_{\cdot}t}(s)\Big)\mathcal{X}_{\mathcal{B}_{\cdot}}(s)+D_{X}^{2}\mathbb{E}\left(F\left((X_{X_{\cdot}t}(s)\otimes m)^{\mathcal{B}_{t}^{s}}\right)\right)(\mathcal{X}_{\mathcal{B}_{\cdot}}(s)),\mathcal{X}_{\mathcal{B}_{\cdot}}(s)\bigg\rangle \\
    &\qquad +2\bigg\langle l_{xv}\Big(X_{X_{\cdot}t}(s),u_{X_{\cdot}t}(s)\Big)\mathcal{V}_{\mathcal{B}_{\cdot}}(s),\mathcal{X}_{\mathcal{B}_{\cdot}}(s)\bigg\rangle +\bigg\langle l_{vv}\Big(X_{X_{_{\cdot}}t}(s),u_{X_{\cdot}t}(s)\Big)\mathcal{V}_{\mathcal{B}_{\cdot}}(s),\mathcal{V}_{\mathcal{B}_{\cdot}}(s)\bigg\rangle \Bigg]ds\\
    &+\dfrac{1}{2}\bigg\langle h_{xx}(X_{X_{\cdot}t}(T))\mathcal{X}_{\mathcal{B}_{\cdot}}(T)+D_{X}^{2}\mathbb{E}\left(F_{T}\left((X_{X_{\cdot}t}(T)\otimes m)^{\mathcal{B}_{t}^{T}}\right)\right)(\mathcal{X}_{\mathcal{B}_{\cdot}}(T)),\mathcal{X}_{\mathcal{B}_{\cdot}}(T)\bigg\rangle.
\end{aligned}
\end{equation}

% \begin{equation}
% \mathcal{J}_{X_{\cdot}\mathcal{B}_{\cdot}t}(\mathcal{V}_{\mathcal{B}_{\cdot}}(\cdot))=\dfrac{1}{2}\int_{t}^{T}[((l_{xx}(X_{X_{\cdot}t}(s),u_{X_{\cdot}t}(s))\mathcal{X}_{\mathcal{B}_{\cdot}}(s)+D_{X}^{2}\mathbb{E}F((X_{X_{\cdot}t}(s)\otimes m)^{\mathcal{B}_{t}^{s}})(\mathcal{X}_{\mathcal{B}_{\cdot}}(s)),\mathcal{X}_{\mathcal{B}_{\cdot}}(s)))+\label{eq:9-8}
% \end{equation}

% \[
% +2((l_{xv}(X_{X_{\cdot}t}(s),u_{X_{\cdot}t}(s))\mathcal{V}_{\mathcal{B}_{\cdot}}(s),\mathcal{X}_{\mathcal{B}_{\cdot}}(s)))+((l_{vv}(X_{X_{_{\cdot}}t}(s),u_{X_{\cdot}t}(s))\mathcal{V}_{\mathcal{B}_{\cdot}}(s),\mathcal{V}_{\mathcal{B}_{\cdot}}(s)))]ds
% \]
% \[
% +\dfrac{1}{2}((h_{xx}(X_{X_{\cdot}t}(T))\mathcal{X}_{\mathcal{B}_{\cdot}}(T)+D_{X}^{2}\mathbb{E}F_{T}((X_{X_{\cdot}t}(T)\otimes m)^{\mathcal{B}_{t}^{T}})(\mathcal{X}_{\mathcal{B}_{\cdot}}(T)),\mathcal{X}_{\mathcal{B}_{\cdot}}(T)))
% \]
Then we have 
\begin{align}
    \inf_{\mathcal{V}_{\mathcal{B}_{\cdot}}(\cdot)}\mathcal{J}_{X_{\cdot}\mathcal{B}_{\cdot}t}(\mathcal{V}_{\mathcal{B}_{\cdot}}(\cdot))=\dfrac{1}{2} \Big\langle \mathcal{Z}_{X_{\cdot}\mathcal{X}_{\cdot}t}(t),\mathcal{X}_{\cdot} \Big\rangle &=\dfrac{1}{2}\Big\langle D_{X}^{2}V(X_{\cdot}\otimes m,t)(\mathcal{X}_{\cdot}),\mathcal{X}_{\cdot}\Big\rangle = \mathcal{J}_{X_{\cdot}\mathcal{B}_{\cdot}t}(\mathcal{U}_{X_{\cdot}\mathcal{X}_{\cdot}t}(\cdot)). \label{eq:9-9}
\end{align}
% \begin{equation}
% \inf_{\mathcal{V}_{\mathcal{B}_{\cdot}}(\cdot)}\mathcal{J}_{X_{\cdot}\mathcal{B}_{\cdot}t}(\mathcal{V}_{\mathcal{B}_{\cdot}}(\cdot))=\dfrac{1}{2}((\mathcal{Z}_{X_{\cdot}\mathcal{X}_{\cdot}t}(t),\mathcal{X}_{\cdot}))=\dfrac{1}{2}((D_{X}^{2}V(X_{\cdot}\otimes m,t)(\mathcal{X}_{\cdot}),\mathcal{X}_{\cdot}))\label{eq:9-9}
% \end{equation}

% \[
% =\mathcal{J}_{X_{\cdot}\mathcal{B}_{\cdot}t}(\mathcal{U}_{X_{\cdot}\mathcal{X}_{\cdot}t}(\cdot))
% \]
From this formula and the uniqueness of the point of minimum, we can
give a formula for $\langle D_{X} \langle D_{X}^{2}V(X_{\cdot}\otimes m,t)(\mathcal{X}_{\cdot}),\mathcal{X}_{\cdot}\rangle ,\mathcal{Z}_{\cdot} \rangle $
where $\mathcal{Z}_{\cdot}$ is also $\mathcal{B}_{\cdot}$-measurable and
independent of $\mathcal{F}_{t}.$ This formula can be applied to
$\mathcal{X}_{\cdot}=\sigma N_{t}$ and $e^{j}$, respectively. This formula
will be valid thanks to the assumptions (\ref{eq:8-600}), (\ref{eq:8-601}), (\ref{eq:8-602}), (\ref{eq:8-603}). This provides the justification needed to obtain the master equation
(\ref{eq:8-500}). Denoting by $D_{X_{\cdot}}\mathcal{J}_{X_{\cdot}\mathcal{B}_{\cdot}t}(\mathcal{V}_{\mathcal{B}_{\cdot}}(\cdot))$
the G\textroundcap{a}teaux differential of $\mathcal{J}_{X_{\cdot}\mathcal{B}_{\cdot}t}(\mathcal{V}_{\mathcal{B}_{\cdot}}(\cdot))$
with respect to $X_{\cdot}$, when $\mathcal{V}_{\mathcal{B}_{\cdot}}(\cdot)$
is fixed, then we have 
\begin{equation}
\dfrac{1}{2}\bigg\langle D_{X}\Big\langle D_{X}^{2}V(X_{\cdot}\otimes m,t)(\mathcal{X}_{\cdot}),\mathcal{X}_{\cdot}\Big\rangle ,\mathcal{Z}_{\cdot}\bigg\rangle =D_{X_{\cdot}}\mathcal{J}_{X_{\cdot}\mathcal{B}_{\cdot}t}(\mathcal{U}_{X_{\cdot}\mathcal{X}_{\cdot}t}(\cdot)).\label{eq:9-10}
\end{equation}
We get the very long formula: 
\begin{equation} \label{eq:9-11}
\begin{aligned}
    &\quad\ \dfrac{1}{2} \bigg\langle D_{X} \Big\langle D_{X}^{2}V(X_{\cdot}\otimes m,t)(\mathcal{X}_{\cdot}),\mathcal{X}_{\cdot}\Big\rangle ,\mathcal{Z}_{\cdot}\bigg\rangle \\
    &=\dfrac{1}{2}\int_{t}^{T}\left[\bigg\langle \ \left(l_{xxx}\Big(X_{X_{\cdot}t}(s),u_{X_{\cdot}t}(s)\Big)\mathcal{X}_{X_{\cdot}\mathcal{Z}_{\cdot}t}(s)\right)\,\mathcal{X}_{X_{\cdot}\mathcal{X}_{\cdot}t}(s),\mathcal{X}_{X_{\cdot}\mathcal{X}_{\cdot}t}(s)\,\bigg\rangle \right.\\
    &\qquad\qquad+\dfrac{1}{2}\bigg\langle \  \left(D_{X}^{3}\mathbb{E}\left(F\left((X_{X_{\cdot}t}(s)\otimes m)^{\mathcal{B}_{t}^{s}}\right)\right)(\mathcal{X}_{X_{\cdot}\mathcal{Z}_{\cdot}t}(s))\right)\,(\mathcal{X}_{X_{\cdot}\mathcal{X}_{\cdot}t}(s)),\mathcal{X}_{X_{\cdot}\mathcal{X}_{\cdot}t}(s)\,\bigg\rangle \\
    &\qquad\qquad+\dfrac{1}{2}\bigg\langle \ \Big(l_{xxv}(X_{X_{\cdot}t}(s),u_{X_{\cdot}t}(s))\mathcal{U}_{X_{\cdot}\mathcal{Z}_{\cdot}t}(s)\Big)\,\mathcal{X}_{X_{\cdot}\mathcal{X}_{\cdot}t}(s),\mathcal{X}_{X_{\cdot}\mathcal{X}_{\cdot}t}(s)\,\bigg\rangle \\
    &\qquad\qquad+\bigg\langle \ \Big(l_{xvx}(X_{X_{\cdot}t}(s),u_{X_{\cdot}t}(s))\mathcal{X}_{X_{\cdot}\mathcal{Z}_{\cdot}t}(s)\Big)\,\mathcal{U}_{X_{\cdot}\mathcal{X}_{\cdot}t}(s),\mathcal{X}_{X_{\cdot}\mathcal{X}_{\cdot}t}(s)\,\bigg\rangle \\
    &\qquad\qquad+\bigg\langle \ \Big(l_{xvv}(X_{X_{\cdot}t}(s),u_{X_{\cdot}t}(s))\mathcal{U}_{X_{\cdot}\mathcal{Z}_{\cdot}t}(s)\Big)\,\mathcal{U}_{X_{\cdot}\mathcal{X}_{\cdot}t}(s),\mathcal{X}_{X_{\cdot}\mathcal{X}_{\cdot}t}(s)\,\bigg\rangle \\
    &\qquad\qquad+\dfrac{1}{2}\bigg\langle \ \Big(l_{vvx}(X_{X_{\cdot}t}(s),u_{X_{\cdot}t}(s))\mathcal{X}_{X_{\cdot}\mathcal{Z}_{\cdot}t}(s)\Big)\,\mathcal{U}_{X_{\cdot}\mathcal{X}_{\cdot}t}(s),\mathcal{U}_{X_{\cdot}\mathcal{X}_{\cdot}t}(s)\,\bigg\rangle \\
    &\qquad\qquad\left.+\dfrac{1}{2}\bigg\langle \ \Big(l_{vvv}(X_{X_{\cdot}t}(s),u_{X_{\cdot}t}(s))\mathcal{U}_{X_{\cdot}\mathcal{Z}_{\cdot}t}(s)\Big)\,\mathcal{U}_{X_{\cdot}\mathcal{X}_{\cdot}t}(s),\mathcal{U}_{X_{\cdot}\mathcal{X}_{\cdot}t}(s)\,\bigg\rangle \right]ds\\
    &\qquad+\dfrac{1}{2}\bigg\langle \ \Big(h_{xxx}(X_{X_{\cdot}t}(T))\mathcal{X}_{X_{\cdot}\mathcal{Z}_{\cdot}t}(T)\Big)\,\mathcal{X}_{X_{\cdot}\mathcal{X}_{\cdot}t}(T),\mathcal{X}_{X_{\cdot}\mathcal{X}_{\cdot}t}(T)\,\bigg\rangle\\
    &\qquad+\dfrac{1}{2}\bigg\langle \ \left(D_{X}^{3}\mathbb{E}\left(F_{T}\left((X_{X_{\cdot}t}(T)\otimes m)^{\mathcal{B}_{t}^{T}}\right)\right)(\mathcal{X}_{X_{\cdot}\mathcal{Z}_{\cdot}t}(T))\right)\,(\mathcal{X}_{X_{\cdot}\mathcal{X}_{\cdot}t}(T)),\mathcal{X}_{X_{\cdot}\mathcal{X}_{\cdot}t}(T)\,\bigg\rangle. 
\end{aligned} 
\end{equation}

This formula is not easy to read. One must keep in mind that $D_{X}^{3}\mathbb{E}\left(F((X_{X_{\cdot}t}(s)\otimes m)^{\mathcal{B}_{t}^{s}})\right)$
$\in\mathcal{L}(\mathcal{H}_{m};\mathcal{L}(\mathcal{H}_{m};\mathcal{H}_{m}))$, hence
$\left(D_{X}^{3}\mathbb{E}\left(F((X_{X_{\cdot}t}(s)\otimes m)^{\mathcal{B}_{t}^{s}})\right)(\mathcal{X}_{X_{\cdot}\mathcal{Z}_{\cdot}t}(s))\right)\in\mathcal{L}(\mathcal{H}_{m};\mathcal{H}_{m}).$
The interpretation of the other terms is similar. More specifically, we can express 
{\small \begin{align}
    &\quad \bigg\langle D_{X}^{3}\mathbb{E}\left(F\left((Z_{\cdot}\otimes m)^{\mathcal{B}}\right)\right)(\mathcal{Z}_{\cdot})(Y_{\cdot}),Y_{\cdot}\bigg\rangle = \bigg\langle D_{X} \Big\langle D_{X}^{2}\mathbb{E}\left(F\left((Z_{\cdot}\otimes m)^{\mathcal{B}}\right)\right)(Y_{\cdot}),Y_{\cdot}\Big\rangle ,\mathcal{Z}_{\cdot}\bigg\rangle \nonumber\\
    &=\mathbb{E}\left(\int_{\mathbb{R}^{n}}\left(D^{3}\dfrac{dF}{d\nu} \left((Z_{\cdot}\otimes m)^{\mathcal{B}}\right)(Z_{x})\mathcal{Z}_{x}\right)Y_{x} \cdot Y_{x}dm(x)\right)\nonumber\\
    &\quad+\mathbb{E}\left(\ \mathbb{E}^{\mathcal{B}}\left(\int_{\mathbb{R}^{n}}\mathbb{E}^{1\mathcal{B}_{\cdot}}\left(\int_{\mathbb{R}^{n}} \left(D_{1}^{2}\dfrac{d^{2}F}{d\nu^{2}}\left((Z_{\cdot}\otimes m)^{\mathcal{B}}\right) (Z_{x},Z_{x^{1}}^{1})\mathcal{Z}_{x^{1}}^{1}\right)Y_{x} \cdot Y_{x}dm(x^{1})\right)dm(x)\right)\right)\nonumber\\
    &\quad+\mathbb{E}\left(\ \mathbb{E}^{\mathcal{B}}\left(\int_{\mathbb{R}^{n}}\mathbb{E}^{1\mathcal{B}_{\cdot}}\left(\int_{\mathbb{R}^{n}}\left(D^{2}D_{1}\dfrac{d^{2}F}{d\nu^{2}}\left((Z_{\cdot}\otimes m)^{\mathcal{B}}\right) (Z_{x},Z_{x^{1}}^{1})\mathcal{Z}_{x}\right)Y_{x^{1}}^{1} \cdot Y_{x}dm(x^{1})\right)dm(x)\right)\right)\nonumber\\
&\quad+\mathbb{E}\left(\ \mathbb{E}^{\mathcal{B}}\left(\int_{\mathbb{R}^{n}}\mathbb{E}^{1\mathcal{B}_{\cdot}}\left(\int_{\mathbb{R}^{n}}\left(DD_{1}^{2}\dfrac{d^{2}F}{d\nu^{2}}\left((Z_{\cdot}\otimes m)^{\mathcal{B}}\right) (Z_{x},Z_{x^{1}}^{1})\mathcal{Z}_{x^{1}}^{1}\right)Y_{x^{1}}^{1} \cdot Y_{x}dm(x^{1})\right)dm(x)\right)\right)\nonumber\\
&\quad+\mathbb{E}\left(\ \mathbb{E}^{\mathcal{B}}\left(\int_{\mathbb{R}^{n}}\mathbb{E}^{1\mathcal{B}_{\cdot}}\left(\int_{\mathbb{R}^{n}}\mathbb{E}^{2\mathcal{B}_{\cdot}}\left(\int_{\mathbb{R}^{n}}\left(DD_{1}D_{2}\dfrac{d^{3}F}{d\nu^{3}}\left((Z_{\cdot}\otimes m)^{\mathcal{B}}\right) (Z_{x},Z_{x^{1}}^{1},Z_{x^{2}}^{2})\mathcal{Z}_{x^{2}}^{2}\right)Y_{x^{1}}^{1} \cdot Y_{x}dm(x^{2})\right)dm(x^{1})\right)dm(x)\right)\right). \label{eq:9-12} 
\end{align}}%
% \begin{equation}
% ((D_{X}^{3}\mathbb{E}F((Z_{\cdot}\otimes m)^{\mathcal{B}})(\mathcal{Z}_{\cdot})(Y_{\cdot}),Y_{\cdot}))=((D_{X}((D_{X}^{2}\mathbb{E}F((Z_{\cdot}\otimes m)^{\mathcal{B}})(Y_{\cdot}),Y_{\cdot})),\mathcal{Z}_{\cdot}))=\label{eq:9-12}
% \end{equation}

% \[
% \mathbb{E}\int_{\mathbb{R}^{n}}(D^{3}\dfrac{dF}{d\nu}((Z_{\cdot}\otimes m)^{\mathcal{B}})(Z_{x})\mathcal{Z}_{x})Y_{x}.Y_{x}dm(x)+
% \]

% \[
% +\mathbb{E}\ \mathbb{E}^{\mathcal{B}}\int_{\mathbb{R}^{n}}\mathbb{E}^{1\mathcal{B}_{\cdot}}\int_{\mathbb{R}^{n}}(D^{2}D_{1}\dfrac{dF^{2}}{d\nu^{2}}((Z_{\cdot}\otimes m)^{\mathcal{B}})(Z_{x},Z_{x^{1}}^{1})\mathcal{Z}_{x^{1}}^{1})Y_{x}.Y_{x}dm(x^{1})dm(x)+
% \]

% \[
% +\mathbb{E}\ \mathbb{E}^{\mathcal{B}}\int_{\mathbb{R}^{n}}\mathbb{E}^{1\mathcal{B}_{\cdot}}\int_{\mathbb{R}^{n}}(D^{2}D_{1}\dfrac{dF^{2}}{d\nu^{2}}((Z_{\cdot}\otimes m)^{\mathcal{B}})(Z_{x},Z_{x^{1}}^{1})\mathcal{Z}_{x})Y_{x^{1}}^{1}.Y_{x}dm(x^{1})dm(x)+
% \]

% \[
% +\mathbb{E}\ \mathbb{E}^{\mathcal{B}}\int_{\mathbb{R}^{n}}\mathbb{E}^{1\mathcal{B}_{\cdot}}\int_{\mathbb{R}^{n}}(DD_{1}^{2}\dfrac{dF^{2}}{d\nu^{2}}((Z_{\cdot}\otimes m)^{\mathcal{B}})(Z_{x},Z_{x^{1}}^{1})\mathcal{Z}_{x^{1}}^{1})Y_{x^{1}}^{1}.Y_{x}dm(x^{1})dm(x)+
% \]

% \[
% +\mathbb{E}\ \mathbb{E}^{\mathcal{B}}\int_{\mathbb{R}^{n}}\mathbb{E}^{1\mathcal{B}_{\cdot}}\int_{\mathbb{R}^{n}}\mathbb{E}^{2\mathcal{B}_{\cdot}}\int_{\mathbb{R}^{n}}(DD_{1}D_{2}\dfrac{dF^{3}}{d\nu^{3}}((Z_{\cdot}\otimes m)^{\mathcal{B}})(Z_{x},Z_{x^{1}}^{1},Z_{x^{2}}^{2})\mathcal{Z}_{x^{2}}^{2})Y_{x^{1}}^{1}.Y_{x}dm(x^{2})dm(x^{1})dm(x)
% \]
Thanks to (\ref{eq:9-12}) and assumptions (\ref{eq:8-600}), (\ref{eq:8-601}), (\ref{eq:8-602}), (\ref{eq:8-603}),
we can see that formula (\ref{eq:9-11}) is valid. This completes
the proof.

\end{document}